\numberwithin{equation}{section}
\def\inte#1{
\displaystyle\mathop{#1\kern0pt}^\circ }
\let\e=\varepsilon
\def\cC{{\mathcal C}}
\def\cG{{\mathcal G}}
\def\virgp{\raise 2pt\hbox{,}}
\def\cdotpv{\raise 2pt\hbox{;}}
\def\C{{\mathop{\bf C\kern 0pt}\nolimits}}
\def\DD{{\mathop{\bf D\kern 0pt}\nolimits}}
\def\K{{\mathop{\bf K\kern 0pt}\nolimits}}
\def\N{{\mathop{\mathbb N\kern 0pt}\nolimits}}
\def\PP{{\mathop{\mathbb P\kern 0pt}\nolimits}}
\def\Q{{\mathop{\mathbb Q\kern 0pt}\nolimits}}
\def\R{{\mathop{\mathbb R\kern 0pt}\nolimits}}
\def\SS{{\mathop{\mathbb S\kern 0pt}\nolimits}}
\def\ZZ{{\mathop{\mathbb Z\kern 0pt}\nolimits}}
\def\TT{{\mathop{\mathbb T\kern 0pt}\nolimits}}
\def\BB{{\mathop{\mathbb B\kern 0pt}\nolimits}}
\def\PP{{\mathop{\mathbb P\kern 0pt}\nolimits}}
\newcommand{\ds}{\displaystyle}
\newcommand{\beq}{\begin{equation}}
\newcommand{\eeq}{\end{equation}}
\newcommand{\ben}{\begin{eqnarray}}
\newcommand{\een}{\end{eqnarray}}
\newcommand{\beno}{\begin{eqnarray*}}
\newcommand{\eeno}{\end{eqnarray*}}
\newtheorem{defi}{Definition}[section]
\newtheorem{thm}{Theorem}
\newtheorem{assumption}{Assumption}
\newtheorem{lem}[defi]{Lemma}
\newtheorem{rmk}[defi]{Remark}
\newtheorem{cor}{Corollary}
\newtheorem{prop}[defi]{Proposition}
\begin{document}

  \title[Stability in weak topology of global solutions to the Navier-Stokes equations]{On the stability in   weak topology  of the set of global solutions to the Navier-Stokes equations}

\author[H. Bahouri]{Hajer Bahouri}
\address[H. Bahouri]%
{Laboratoire d'Analyse et de Math{\'e}matiques Appliqu{\'e}es UMR 8050 \\
Universit\'e Paris 12 \\
61, avenue du G{\'e}n{\'e}ral de Gaulle\\
94010 Cr{\'e}teil Cedex, France}
\email{hbahouri@math.cnrs.fr}
\author[I. Gallagher]{Isabelle Gallagher}
\address[I. Gallagher]%
{Institut de Math{\'e}matiques de Jussieu - Paris Rive Gauche
 UMR CNRS 7586 \\
      Universit{\'e} Paris-Diderot (Paris 7) \\
B\^atiment Sophie Germain \\
Case 7012 , 75205 Paris Cedex 13     \/France}
\email{gallagher@math.univ-paris-diderot.fr}
 \thanks{The second  author was partially supported by
the A.N.R grant ANR-08-BLAN-0301-01 "Mathoc\'ean", and  the Institut Universitaire de France.}
 \begin{abstract}
 Let~$X$ be a suitable function space and let~$\cG \subset X$ be the set of divergence free vector fields generating a global, smooth solution to the incompressible, homogeneous three dimensional Navier-Stokes equations. We prove that   a sequence of divergence free vector fields converging in the sense of distributions to an element of~$\cG$ belongs  to~$\cG$ if~$n$ is large enough, provided the convergence holds     ``anisotropically"   in frequency space. Typically that excludes self-similar type convergence.
 Anisotropy   appears as an important  qualitative feature in the analysis of the Navier-Stokes equations; it is also shown that   initial data  which does not belong to~$\cG$ (hence which produces a solution blowing up in finite time) cannot have a strong anisotropy in its   frequency support.
  \end{abstract}
\keywords{Navier-Stokes equations; anisotropy; Besov spaces; profile decomposition; wavelets}
\subjclass[2010]{}

 \maketitle


 \section{Introduction and statement of   results}
\subsection{Setting of the problem}
We are interested in the three dimensional, incompressible Navier-Stokes equations
$$
\begin{cases}
\partial_t u+u\cdot\nabla u- \Delta u=-\nabla p \quad\text{in}\quad \R^+\times \R^3 \\
\mbox{div} \:  u=0\\
u_{|t=0}=u_{0} \, ,
\end{cases}
$$
where~$u(t,x)$ and~$p(t,x)$ are respectively the velocity and the pressure of the fluid at time~$t \geq 0$ and position~$x \in \R^3$. We recall that the pressure may be eliminated by projection onto divergence free vector fields, hence we shall consider   the following version of the equations:
$$
\rm{(NS)} \ \begin{cases}
\partial_t u+{\mathbb P} (u\cdot\nabla u)- \Delta u= 0 \quad\text{in}\quad \R^+\times \R^3 \\
\mbox{div} \:  u=0\\
u_{|t=0}=u_{0} \, ,\end{cases}
$$
where~${\mathbb P} :=  \mbox{Id} - \nabla \Delta^{-1} \mbox{div}$. 

\medskip
\noindent Note also that the Navier-Stokes system may be written as 
$$
\begin{cases}
\partial_t u+ \mbox{div} (u\otimes u) -\Delta u=-\nabla p \quad\text{in}\quad \R^+\times \R^3 \\
\mbox{div} \:  u=0\\
u_{|t=0}=u_{0} \, ,
\end{cases}
$$
where $\ds \mbox{div} (u\otimes u)^j= \sum_{k=1}^d\partial_{k}(u^ju^k) = \mbox{div}
(u^ju). $ The advantage of this weak formulation is that it makes sense for  singular vector fields  and allows to consider weak solutions. 
\medskip
\noindent The question of the existence of  global, smooth (and unique) solutions is a long-standing open problem, and we shall only recall here a few of the many results on this question. We refer for instance  to~\cite{BCD} or~\cite{lemarie} and the references therein, for  a precise definition of weak solutions and recent surveys on the subject. An important point in the study of (NS) is its scale invariance: if~$u$ is a solution of~(NS) on~$\R^+\times\R^d$ associated with the data~$u_0$, then for any~$\lambda>0$,~$u_\lambda(t,x):=\lambda u(\lambda^2 t,\lambda x)$ is a solution on~$\R^+\times\R^d$, associated with the data
\begin{equation} \label{scaling}
u_{0,\lambda} (x) := \lambda u_0(\lambda  x) \, .
\end{equation}
In two space dimensions, $L^2(\R^2)$ is scale invariant, while in three space dimensions that is the case for~$L^3(\R^3)$, the (smaller) Sobolev space~$\dot H^\frac12(\R^3)$, or the  Besov spaces~$\dot B^{-1+\frac3p}_{p,q}(\R^3)$, with~$1 \leq p \leq \infty$ and~$0<q \leq \infty$. We refer to Appendix~\ref{appendixlp} for all necessary definitions and properties of those spaces. Note that    anisotropic spaces  such as~$L^2(\R^2 ; \dot H^\frac12(\R))$ can also be scale invariant under~(\ref{scaling}), but also more generally under the anisotropic scaling
\begin{equation} \label{scalinganisotropic}
f_{ \lambda,\mu} (x)  :=\lambda f(\lambda x_1,\lambda x_2, \mu x_3 ) \, , \quad \forall \lambda,\mu >0 \, .
\end{equation}
Of course (NS) is not invariant through that transformation if~$\lambda \neq \mu$.

\noindent It is well-known that (NS) is globally wellposed if the initial data is small in~$\dot B^{-1+\frac3p}_{p,\infty}$ as long as~$p< \infty$ (see the successive results by~\cite{leray}, \cite{fk},     
\cite{kato}, \cite{Cannone} and~\cite{Planchon}). Let us emphasize  that in all those results, the global solution  lies  in~$\cC(\R^+;X)$ when the Cauchy data belongs to the Banach space $X$. We note that the proof of uniqueness may require the use of more refined spaces. In~\cite{kochtataru}, H. Koch and D. Tataru obtained a unique (in a space we shall not detail here) global in time
solution for data small enough in   the larger space~$ \text{BMO}^{-1}$, consisting of vector fields whose components are derivatives of $\text{BMO}$ 
functions.

\smallskip
\noindent
The smallness assumption is not   necessary in order to obtain global solutions to~(NS), as pointed out for instance in~\cite{cg2}-\cite{cgz}. We also recall that in two space dimensions,~(NS) is globally wellposed as soon as the initial data belongs to~$L^2(\R^2)$, with no restriction on its size (see~\cite{leray2D}); this is due to the fact that the~$L^2(\R^d)$ norm is controlled a priori globally in time. This estimate also allowed J. Leray in~\cite{leray} to prove the existence of global in time weak solutions in  two and three dimensions. J. Leray's result extends   to any dimension, as   shown in~\cite{cdgg} for instance.  

\bigskip
\noindent
In this article we are interested in the structure of the set~$\cG$ of initial data giving rise to a unique, global solution to the Navier-Stokes equations.  More precisely our interest will be in the {\bf global} nature of the solution, as  the uniqueness of the solution will  not be an issue. The solutions will be obtained via a fixed point procedure in an adequate function space.
  It is known that  the set~$\cG$ contains small balls  in~$ \text{BMO}^{-1}$ centered at the origin. But it is known to include many more classes of functions.  
We recall that it was  proved in~\cite{adt} (see~\cite{gip} for the setting of Besov spaces) that~$\cG$  is {\bf open} for the strong topology of~$\text{BMO}^{-1}$,  provided one restricts the setting to the closure of Schwartz-class functions for the~$\text{BMO}^{-1}$ norm. In this paper we address the  same question for   {\bf weak topology}. More precisely we wish to understand under what   conditions a sequence of divergence free vector fields  converging in the sense of distributions to an initial data in~$\cG$, will itself be in~$\cG$ (up to a finite number of terms in the sequence).

\bigskip
\noindent Before going into more details let us discuss some examples. If a sequence converges not only weakly but strongly  in~$\dot B^{-1+\frac3p}_{p,\infty}$, say,  to an element of~$\cG$ then the result is known, see~\cite{gip}. To give another example,    consider  a sequence of divergence free vector fields~$u_{0,n}$, bounded   in~$L^3(\R^3)$, converging  in the sense of distributions to some vector field~$u_0 $ in~$  L^3(\R^3) \cap \cG$. If~$(1+|\cdot|)^{1+\varepsilon} u_{0,n}$ is bounded in~$L^\infty$ for some~$\varepsilon>0$, then it is easy to see that~$u_{0,n}$ generates a global unique solution to~(NS) for~$n$ large enough. This can be seen  using the ``stability of singular points'' of~\cite{jiasverak,rs}, or   more directly using the fact that such a sequence is actually compact\footnote{This  fact can readily be seen by applying a profile decomposition technique and eliminating all profiles except for the weak limit, thanks to the additional bounds satisfied by the sequence.} in~$\dot B^{-1+\frac3p}_{p,\infty}$ for~$p>3$  and applying the strong stability result~\cite{gip}. This example shows that in some cases, the weak convergence assumption
implies  the strong convergence in spaces where stability results are available.
Here we consider a situation where such a reduction does not occur. One way to achieve this is   considering sequences bounded in a scale-invariant space only, with no 
additional bound in a non-scale-invariant space. However in that case clearly some restrictions have to be imposed to hope to prove such a weak openness result; indeed   consider  for instance the sequence
 \begin{equation}\label{defphinisotropic}
\phi_n (x) := 2^{n} \phi (2^{n} x)\,  ,\quad  n \in \N \, ,
\end{equation}
where~$\phi$ is any smooth, divergence free vector field. This sequence converges  to zero in the sense of distributions as~$n$ goes to infinity, and zero belongs to~$\cG$. If one could infer, by weak stability, that~$\phi_n$ gives rise to a global unique solution for large enough~$n $, then so would~$\phi$ by scale invariance and that  would solve the problem of global regularity for the Navier-Stokes equations. Note that the
same can be said of the sequence
 \begin{equation}\label{tildephin}
\widetilde \phi_n (x) := \phi (x-x_n) \, , \quad |x_n| \to \infty \, .
\end{equation}
Since the global regularity problem seems out of reach, we choose here to add   assumptions on the {\bf spectral structure} of the sequences converging weakly to an element of~$\cG$, which  in particular forbid   sequences such as~$\phi_n$ or~$\widetilde \phi_n$ which in a way are ``too isotropic".

\medskip
\noindent
Actually  one has the following  interesting and rather easy result, which highlights  the role   anisotropy can play in the study of the Navier-Stokes equations. This result shows that  initial data generating a solution blowing up in finite time  cannot be too anisotropic in frequency space, meaning that the set of its horizontal and vertical frequency sizes cannot  be too separated; the threshold depends only on the
  norm of the initial data. The result is proved in Appendix~\ref{appendixlp};  its proof relies on   elementary inequalities on the Littlewood-Paley decomposition, which are all recalled in that appendix. The notation~$ \Delta_k^h\Delta_j^v$ appearing in the statement stands for   horizontal and vertical Littlewood-Paley truncations at scale~$2^k$ and~$2^j$ respectively, and is also introduced in Appendix~\ref{appendixlp}. The space~$\dot B^\frac12_{2,1}(\R^3)$ is a scale invariant space, slightly smaller than~$\dot H^\frac12(\R^3)$.
 \begin{thm}\label{anisominimal}
 Let~$\rho >0$ be given. There  is a constant~$N_0 \in \N$  such that
  any    divergence free vector field~$u_0$  of norm~$\rho$  in~$\dot B^\frac12_{2,1}(\R^3)$ satisfying~$
  \displaystyle u_0 = \sum_{|j-k| \geq N_0} \Delta_k^h\Delta_j^v u_0$ gives rise to a global,  unique solution to~{\rm(NS)} in~${\mathcal C}(\R^+;L^3(\R^3))$.
      \end{thm}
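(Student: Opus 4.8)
The plan is to convert the anisotropic spectral localization into a \emph{smallness} statement: although $\|u_0\|_{\dot B^{\frac12}_{2,1}}$ is prescribed to equal $\rho$, the constraint $u_0=\sum_{|j-k|\ge N_0}\Delta_k^h\Delta_j^v u_0$ forces $u_0$ to be small, of size $\rho\,2^{-cN_0}$, in a \emph{scale invariant Lebesgue type Besov space} $\dot B^{-1+\frac3p}_{p,\infty}(\R^3)$ with $2<p<\infty$; the conclusion then follows at once from the classical small data global wellposedness theory in such spaces, together with an elementary propagation of $L^3$ regularity.

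\smallskip\noindent\emph{Step 1 (gain of smallness from anisotropy).} Fix an exponent $p\in(2,\infty)$, say $p=4$, and set $\sigma:=\frac12-\frac1p>0$. The two ingredients, both recalled in Appendix~\ref{appendixlp}, are the anisotropic Bernstein inequality
$$\|\Delta_k^h\Delta_j^v f\|_{L^p}\lesssim 2^{2k\sigma}\,2^{j\sigma}\,\|\Delta_k^h\Delta_j^v f\|_{L^2},$$
where $2^{2k\sigma}$ comes from the two horizontal variables and $2^{j\sigma}$ from the vertical one, and the fact that $\Delta_k^h\Delta_j^v$ is supported, in the full frequency variable, in a dyadic shell of size $2^{\max(j,k)}$, so that $\|\Delta_k^h\Delta_j^v f\|_{L^2}\lesssim\|\widetilde\Delta_{\max(j,k)} f\|_{L^2}$ for a slightly enlarged isotropic block $\widetilde\Delta$. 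Writing $\Delta_\ell u_0=\sum\Delta_\ell\Delta_k^h\Delta_j^v u_0$, the sum running over the pairs with $|j-k|\ge N_0$ and $\max(j,k)$ within $O(1)$ of $\ell$, and splitting according to whether the maximum is realized by $j$ (then $j\simeq\ell$, $k\le\ell-N_0+O(1)$) or by $k$, one is reduced to geometric series in the ``small'' index, which ranges up to $\max(j,k)-N_0$. Since $\sigma>0$ these series converge and each carries a factor $2^{-N_0\sigma}$; computing the exponents (using $-1+\frac3p+3\sigma=\frac12$) then gives
\beq\label{gain}
\|u_0\|_{\dot B^{-1+\frac3p}_{p,\infty}}\le C_p\,2^{-N_0\sigma}\,\|u_0\|_{\dot B^{\frac12}_{2,1}}=C_p\,2^{-N_0\sigma}\,\rho.
\eeq

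\smallskip\noindent\emph{Step 2 (conclusion).} Let $\varepsilon_p>0$ be the smallness threshold in the global wellposedness theorem for~{\rm(NS)} with data in $\dot B^{-1+\frac3p}_{p,\infty}$, $p<\infty$ (see~\cite{Cannone,Planchon}). Choose $N_0\in\N$ so large that $C_p\,2^{-N_0\sigma}\rho\le\varepsilon_p$; this is the only place $N_0$ is fixed, in terms of $\rho$ and of the arbitrary but fixed exponent $p$. By~\eqref{gain} the data $u_0$ is then admissible for that theorem and generates a unique global solution $u$ of~{\rm(NS)}. Finally $u_0\in L^3(\R^3)$, since $\dot B^{\frac12}_{2,1}(\R^3)\hookrightarrow\dot B^0_{3,1}(\R^3)\hookrightarrow L^3(\R^3)$, and the standard persistence argument — estimating the Duhamel term $\int_0^t e^{(t-s)\Delta}{\mathbb P}\,\mbox{div}(u\otimes u)(s)\,ds$ in $L^3$ against the globally small norm of $u$ in the Kato type space in which it was built, and bootstrapping — shows $u\in\cC(\R^+;L^3(\R^3))$, which is the assertion.

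\smallskip\noindent The only genuinely delicate point is Step~1: one must keep careful track of the almost orthogonality overlaps between anisotropic and isotropic Littlewood--Paley blocks, and make sure that every geometric series generated by the summation over the ``small'' frequency index is summed on the side where the relevant exponent has the favorable sign — which is exactly what the restriction $p>2$, i.e. $\sigma>0$, provides. Once~\eqref{gain} is established, Step~2 is merely an invocation of the results recalled in the introduction.
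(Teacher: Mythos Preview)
Your proposal is correct but follows a genuinely different route from the paper's proof. The paper splits $u_0=v_0+w_0$ according to the sign of $j-k$: for $v_0$ (horizontal frequencies dominate, $j<k-N_0$) it shows smallness in the \emph{anisotropic} space $\dot B^{0,\frac12}_{2,1}$, while for $w_0$ (vertical frequencies dominate, $j>k+N_0$) it shows smallness in the isotropic $\dot B^0_{3,1}\subset L^3$. It then solves {\rm(NS)} globally with data $w_0$ by the isotropic small data theory, and treats the remaining piece $v_0$ via the \emph{perturbed} anisotropic wellposedness result of Appendix~\ref{globalsmallaniso} (Theorem~\ref{globalanisoperturbed} with $U=w$), using the embedding $\dot B^{0,\frac12}_{2,1}\hookrightarrow\dot B^{-\frac13,\frac13}_{3,1}$. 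Your argument avoids the splitting altogether: by pushing the anisotropic Bernstein inequality with an exponent $p>2$ (so that $\sigma=\frac12-\frac1p>0$) you obtain smallness of the \emph{full} data $u_0$ in a single isotropic space $\dot B^{-1+\frac3p}_{p,\infty}$, and then invoke only the classical Cannone--Planchon theory. This is more elementary and self-contained, in that it does not need the anisotropic machinery of Appendix~\ref{globalsmallaniso} at all; the paper's approach, on the other hand, exhibits the finer information that the two halves of $u_0$ live in small balls of spaces of rather different structure, and serves as an illustration of the perturbative framework developed there. At the computational level both proofs rest on the same mechanism --- anisotropic Bernstein followed by a geometric sum over the subordinate frequency index --- and both recover a gain of order $2^{-cN_0}$.
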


\medskip
\noindent
Let us  now define the function spaces we shall be working with.
As explained above we want to work in anisotropic spaces, invariant through the scaling~(\ref{scalinganisotropic}). For technical reasons we shall assume quite a lot of smoothness on the sequence of initial data: we choose the sequence bounded in essentially the smallest anisotropic Besov space~$\dot B^{s,s'}_{p,q}$ invariant through~(\ref{scalinganisotropic}). It is likely that this smoothness could be relaxed somewhat, but perhaps not with the method we follow. We shall point out as we go along where those restrictions appear, see in particular Remark~\ref{p=q=1} page~\pageref{p=q=1}.

\begin{defi} We define, for~$0<q \leq \infty$, the space~$ {\mathcal B}^1_q$ by the (quasi)-norm
\begin{equation} \label{firstnorm}
\|f\|_{  {\mathcal B}^1_q }:=\Big( \sum_{j,k \in \ZZ} 2^  {(j+k)q} \|\Delta_k^h \Delta_j^v f\|^q_{L^1(\R^3)} \Big)^\frac1q  \, ,
\end{equation}
 where~$\Delta_k^h$ and~$\Delta_j^v$
are horizontal and vertical frequency localization operators (see Appendix~\ref{appendixlp}).
\end{defi}
\noindent This   corresponds to the space~$\dot B^{1,1}_{1,q}$ defined in Appendix~{\rm \ref{appendixlp}},  where the reader will also find its properties  used in this text.  More generally we define in Appendix~\ref{appendixlp}
$$ \|f\|_{  \dot B^{s,r}_{p,q} }:=\Big( \sum_{j,k \in \ZZ} 2^  {(rj+sk)q} \|\Delta_k^h \Delta_j^v f\|^q_{L^p(\R^3)} \Big)^\frac1q \, .
$$
The norm \eqref{firstnorm} is equivalent to the norm \eqref{defbesovanisoheat}  which is clearly invariant by the scaling~(\ref{scalinganisotropic}), and is slightly larger (if~$q \leq 1$) than the more classical~$\dot B^{0,\frac12}_{2,1}$ norm (for the role of~$\dot B^{0,\frac12}_{2,1}$ in the study of the Navier-Stokes equations see for instance~\cite{cheminzhang},\cite{paicu}).
  Moreover the space~$ {\mathcal B}^1_q $   is {\bf anisotropic} by essence, which   as pointed out above,  will be an important feature of our analysis. 
  
 \medskip \noindent It is proved in Appendix~\ref{globalsmallaniso} that any initial data small enough in~$ {\mathcal B}^1_1$ generates a unique, global solution to (NS) in the space~$ {\mathcal S}_{1,1}:=   \widetilde{L^\infty}(\R^+; {\mathcal B}^1_1)  \cap   {L^1}
         (\R^+;\dot B^{3 ,1}_{1,1}
          \cap \dot B^{ 1  ,3 }_{1,1} ) $, and  if the data is not small then there is a unique   solution in   the local space 
          $$
   {\mathcal S}_{1,1}(T):=  \widetilde{ L^\infty_{loc}}([0,T); {\mathcal B}^1_1)  \cap   L^1_{loc}
         ((0,T);\dot B^{3 ,1}_{1,1} \cap \dot B^{ 1  ,3 }_{1,1} ) 
         $$
         for some~$T>0$.
         
       \medskip \noindent     We provide also in  Appendix~\ref{globalsmallaniso} a {\bf strong} stability result in~$  {\mathcal B}^1_1$, whose proof follows a classical procedure, and the main
 goal of this text is to prove a stability result in the {\bf weak} topology for data in~$  {\mathcal B}^1_q$ for~$0 <q<1$.
  
  \medskip
\noindent Now let us define  our notion of an anisotropically oscillating sequence. We shall  need another more technical assumption later, which is stated in Section~\ref{decompositiondata} (see Assumption~\ref{sousdecompo} page~\pageref{sousdecompo}).
        \begin{assumption}\label{defadf}
        Let~$0<q \leq \infty$ be given. We say that a sequence~$(f_n)_{n \in \N}$, bounded in~$ {\mathcal B}^1_q$, is
        \textit{\textbf{anisotropically oscillating}‚Äâ}  if the following property holds. There exists~$p\geq 2$ such that
 for all sequences~$ (k_n,j_n) $ in~$   \ZZ^\N \times  \ZZ^\N$,
 \begin{equation}\label{frequencyextraction}
   \liminf_{n \to \infty}  \, 2^{k_n(-1+\frac2p)+\frac{j_n}p} \|\Delta_{k_n}^{h} \Delta_{j_n}^v f_n \|_{L^p(\R^3)}   = C >0 \quad
   \Longrightarrow  \:  \lim_{n \to \infty}  \:|j_n - k_n| = \infty \, .
\end{equation}

  \end{assumption}

  \begin{rmk}{\rm \label{rmkdifferentscales}
It is   easy to see (see Appendix~\ref{appendixlp}) that any function~$f$ in~$ {\mathcal B}^1_q $ belongs also to~$\dot B^{-1+\frac2p,\frac1p}_{p,\infty} $ for any~$p \geq 1$ hence
  $$
  f \in {\mathcal B}^1_q   \Longrightarrow  \sup_{(k,j) \in \ZZ^2}2^{k(-1+\frac2p)+\frac jp} \|\Delta_{k}^{h}   \Delta_{j}^v f \|_{L^p}   < \infty  \, .
  $$
The left-hand side of~(\ref{frequencyextraction}) indicates which ranges of frequencies are predominant in the sequence~$(f_n)$: if~$\displaystyle\limsup_{n \to \infty} 2^{k_n(-1+\frac2p)+\frac{j_n}p} \|\Delta_{k_n}^{h} \Delta_{j_n}^v f_n \|_{L^p} $  is zero for a couple of frequencies~$(2^{k_n}, 2^{j_n})$, then
 the sequence~$(f_n)_{n \in \N}$ is ``unrelated" to those frequencies, with the vocabulary of~\cite{G} (see also Lemma~\ref{orthoaniso} in this paper).
 The right--hand side of~(\ref{frequencyextraction})  is then an   {\textbf{anisotropy}} property. Indeed
     one sees easily that a sequence such as~$(\phi_n)_{n \in \N}$ defined in~{\rm (\ref{defphinisotropic})} is precisely not anisotropically oscillating: for the left-hand side of~{\rm (\ref{frequencyextraction})} to hold for~$\phi_n$ one  would need~$j_n \sim k_n \sim n $, which is precisely not the condition required on the right-hand side of~(\ref{frequencyextraction}).
  A typical   sequence satisfying Assumption~\ref{defadf} is  rather (for~$a \in \R^3$)
      $$
   f_n(x) := 2^{\alpha n} f\big(2^{\alpha n} (x_1-a_1),2^{\alpha n}  (x_2-a_2),2^{\beta n}  (x_3-a_3) \big), \quad
   (\alpha, \beta) \in \R^2, \quad\alpha \neq \beta          
   $$
  with~$ f$ smooth. One of the   results of  this paper states that   any sequence   satisfying  Assumption~{\rm \ref{defadf}}    may be written as  the superposition of   such   sequences, up to a small remainder term (see Proposition~\ref{prop:decompositiondata} page~\pageref{prop:decompositiondata}).}
     \end{rmk}

    \subsection{Main results}
  \noindent We prove in this article that~$\cG$ is open for   weak topology, provided the weakly converging sequence is of the type described in Assumption~{\rm \ref{defadf}}.
 \begin{thm}\label{mainresult}
         Let~$q \in ]0,1[$ be given and let~$(u_{0,n})_{n \in \N}$ be a   sequence of divergence free vector fields bounded in~${\mathcal B}^1_q$, converging   towards~$u_0 \in {\mathcal B}^1_q$ in the sense of distributions, and  assume that~$u_0$ generates a unique solution in~$  {\mathcal S}_{1,1}(\infty)$.
  If~$u_0-(u_{0,n} )_{n \in \N}$ is anisotropically oscillating and satisfies Assumption~{\rm \ref{sousdecompo}} page~{\rm \pageref{sousdecompo}}, then for~$n$ large enough,~$u_{0,n}$ generates a unique, global solution to {\rm(NS)} in~${\mathcal S}_{1,1}(\infty)$.
 \end{thm}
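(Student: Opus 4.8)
The plan is to combine a profile decomposition of the sequence $w_{0,n}:=u_{0,n}-u_0$ with the ``anisotropy implies global existence'' mechanism underlying Theorem~\ref{anisominimal}, and to glue the resulting pieces together through the perturbative stability result of Appendix~\ref{globalsmallaniso}. By assumption $(w_{0,n})_{n\in\N}$ is divergence free, bounded in ${\mathcal B}^1_q$, anisotropically oscillating, satisfies Assumption~\ref{sousdecompo}, and converges to $0$ in the sense of distributions. Applying Proposition~\ref{prop:decompositiondata}, for each $L$ one writes
$$
w_{0,n}=\sum_{\ell=1}^L \Lambda_n^\ell\,\phi^\ell+\psi_n^L,
$$
where the $\phi^\ell$ are fixed divergence free profiles, each $\Lambda_n^\ell$ is an anisotropic dilation--translation whose horizontal and vertical scales $(2^{k_n^\ell},2^{j_n^\ell})$ satisfy $|k_n^\ell-j_n^\ell|\to\infty$, the rescaled profiles are pairwise asymptotically orthogonal with $\sum_\ell\|\phi^\ell\|_{{\mathcal B}^1_q}^q$ controlled by the uniform bound on $(w_{0,n})$, and $\psi_n^L\to0$ in $\dot B^{-1+\frac2p,\frac1p}_{p,\infty}$ uniformly in $n$ as $L\to\infty$. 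Since the ${\mathcal B}^1_q$ (quasi)-norm is invariant under~\eqref{scalinganisotropic}, each $\Lambda_n^\ell\phi^\ell$ stays bounded in ${\mathcal B}^1_q$, with a bound independent of $n$.

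The heart of the argument is that \emph{each rescaled profile generates a global solution for $n$ large}. Fix $\ell$. Because $|k_n^\ell-j_n^\ell|\to\infty$, the data $\Lambda_n^\ell\phi^\ell$ is, after discarding a low-anisotropy part $e_n^\ell$ that tends to $0$ in ${\mathcal B}^1_1$ as $n\to\infty$ (this part corresponds, after unscaling, to a bounded window of dyadic bands of the \emph{fixed} profile $\phi^\ell$, a window that drifts off to infinity), concentrated in the range $\{|k-j|\geq N_0\}$, where $N_0$ is the threshold that the ${\mathcal B}^1_1$-version of Theorem~\ref{anisominimal} attaches to the ($n$-independent) size of $\Lambda_n^\ell\phi^\ell$; here it is essential to work with ${\mathcal B}^1_1$ rather than with $\dot B^\frac12_{2,1}$, since the former is invariant under~\eqref{scalinganisotropic}. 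Hence for $n$ large the truncated profile $\Lambda_n^\ell\phi^\ell-e_n^\ell$ generates a global solution $U_n^\ell\in{\mathcal S}_{1,1}(\infty)$, with norms bounded uniformly in $n$ and summable over $\ell$ by the asymptotic orthogonality of the profiles. Absorbing the $e_n^\ell$ into the remainder, for $L$ and then $n$ large the field $\psi_n^L+\sum_{\ell\leq L}e_n^\ell$ is small in $\dot B^{-1+\frac2p,\frac1p}_{p,\infty}$, hence generates a global solution $R_n^L\in{\mathcal S}_{1,1}(\infty)$ of small norm.

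One then forms the approximate solution $u_n^{\rm app}:=u+\sum_{\ell=1}^L U_n^\ell+R_n^L$, where $u\in{\mathcal S}_{1,1}(\infty)$ is the global solution issued from $u_0$; by construction $u_n^{\rm app}|_{t=0}=u_0+w_{0,n}=u_{0,n}$ exactly. Inserting $u_n^{\rm app}$ into {\rm(NS)}, the defect is a finite sum of bilinear terms ${\mathbb P}(a\cdot\nabla b+b\cdot\nabla a)$ with $a\neq b$ ranging over $\{u,U_n^1,\dots,U_n^L,R_n^L\}$, the self-interactions producing no defect since each constituent solves {\rm(NS)}. The interaction of two distinct profiles tends to $0$ as $n\to\infty$ by asymptotic orthogonality of their scales or translations, and the interactions involving $u$ or $R_n^L$ are small because a fixed (resp.\ small) vector field and a profile living at asymptotically separated frequency scales interact negligibly in $L^1(\R^+;\dot B^{1,1}_{1,1}\cap\dot B^{-1,3}_{1,1})$, by the anisotropic product estimates in $\dot B^{s,r}_{p,q}$ recalled in the appendices. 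Thus the defect goes to $0$ in that space as $L$, then $n$, go to infinity, and the perturbation argument underlying the strong stability result of Appendix~\ref{globalsmallaniso}, applied around $u_n^{\rm app}$, yields for $n$ large a genuine global solution to {\rm(NS)} in ${\mathcal S}_{1,1}(\infty)$ with data $u_{0,n}$; uniqueness comes with the fixed point construction.

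The step I expect to be the main obstacle is the control of the cross interaction terms: the self-interaction of a single profile is taken care of by the anisotropy mechanism, but showing that the interaction of the fixed limit $u$ (and of the lower-order profiles) against a highly anisotropically oscillating profile is small in a norm strong enough to close the perturbation scheme requires delicate anisotropic Littlewood--Paley bookkeeping. This is also where the restriction $q<1$ enters, together with Assumption~\ref{sousdecompo}: they provide the profile decomposition of Proposition~\ref{prop:decompositiondata} and the summability of $(\|\phi^\ell\|_{{\mathcal B}^1_q})_\ell$ (which follows from the $\ell^q$ orthogonality of the profiles since $\ell^q\subset\ell^1$ for $q<1$), both needed to make the above estimates summable over $\ell$ and uniform in $n$.
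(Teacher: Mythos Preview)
Your overall architecture (profile decomposition, solve for each profile, superpose, close by perturbation) coincides with the paper's, but the mechanism you invoke for the key step --- global existence for each individual rescaled profile --- is not the paper's, and as stated it does not work.

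You propose to handle each $\Lambda_n^\ell\phi^\ell$ via ``the ${\mathcal B}^1_1$-version of Theorem~\ref{anisominimal}'': since $|k_n^\ell-j_n^\ell|\to\infty$, the rescaled profile is eventually supported in $\{|k-j|\geq N_0\}$ and hence, you claim, generates a global solution. The difficulty is that no such ${\mathcal B}^1_1$-version is available, and the proof of Theorem~\ref{anisominimal} does not transfer. That proof rests on the embedding $\dot B^{\frac12}_{2,1}\hookrightarrow \dot B^{\frac12,0}_{2,1}$, which trades vertical for horizontal regularity and produces a factor $2^{-N_0/2}$; this is possible precisely because $\dot B^{\frac12}_{2,1}$ is \emph{not} invariant under the anisotropic scaling~\eqref{scalinganisotropic}. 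In ${\mathcal B}^1_1$ no such trade is possible: the norm is anisotropically invariant, so a profile $\Lambda_n^\ell\phi^\ell$ with $\eta_n^\ell/\delta_n^\ell\to 0$ (horizontal scale $\ll$ vertical scale) keeps a fixed ${\mathcal B}^1_1$ norm and is \emph{not} small in any of the usual scale-invariant spaces ($\dot B^{0,\frac12}_{2,1}$, $L^3$, $\dot B^{-1}_{\infty,\infty}$, \dots). A direct computation on $\varepsilon_n^{-1}\phi(x_h/\varepsilon_n,x_3/\gamma_n)$ with $\varepsilon_n/\gamma_n\to 0$ shows its $\dot B^{0,\frac12}_{2,1}$ norm is constant and its $L^3$ and $\dot H^{\frac12}$ norms blow up. This case is exactly the one the paper treats via the ``slowly varying in one direction'' theory of~\cite{cg3} (Lemma~\ref{globalfirstprofile}, third bullet, and Lemma~\ref{globalsecondprofile}): the profile is close to a two-dimensional solution, and global existence comes from global $2$D well-posedness and a genuine PDE argument, not from smallness. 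To make this work, the paper decomposes the data component-wise (third component first, then the $2$D-divergence-free part of the horizontal component) so that each profile has precisely the form $(\tilde\phi^{h,\ell}-\tfrac{\eta_n^\ell}{\delta_n^\ell}\nabla_h\Delta_h^{-1}\partial_3\phi^{\kappa(\ell)},\,\phi^{\kappa(\ell)})(x_h,\tfrac{\eta_n^\ell}{\delta_n^\ell}x_3)$ to which~\cite{cg3} applies; your generic divergence-free profiles do not have this structure.

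A second point: you locate Assumption~\ref{sousdecompo} only in the summability of the profiles. In the paper it plays a different, sharper role: it is used in the interaction estimates (Section~\ref{studyforcingterm}) to kill cross terms between two profiles sharing the same horizontal scale but with comparable-to-infinite ratios of vertical scales. In that situation neither smallness nor orthogonality of cores suffices; one needs $\tilde\phi^{h,\ell}(\cdot,0)\equiv 0$ so that the slow profile, evaluated along the trace dictated by the fast one, is $O(\alpha)+o(1)$ (see~\eqref{smallerthanalphaatz3} and~\eqref{smallatzeroUhl}). Your sketch (``a fixed field and a profile at separated frequency scales interact negligibly'') does not cover this case, which is precisely when the scales are \emph{not} separated horizontally.
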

 
 \begin{rmk}
{\rm Theorem~{\rm \ref{mainresult}} may be generalized  by adding two more sequences to~$u_{0,n}$,   where in each additional sequence the ``privileged"   direction is not~$x_3$ but~$x_1$ or~$x_2$. It is clear from the proof that the same result holds, but we choose not to present the proof of that more general result due to its
technical cost. Actually a more interesting generalization would consist in considering more geometrical assumptions, but that requires  more work and ideas, and will not be addressed here.}
\end{rmk}
 
 \begin{rmk}{\rm
  Assumption~\ref{sousdecompo} is stated page~\pageref{sousdecompo}, along with some comments (see in particular Remarks~\ref{rmkass2},~\ref{ifalsoell2} and~\ref{ifalsoell22}).
  Its statement requires the introduction of the profile decomposition of the sequence of initial data and it requires    that some of the profiles vanish at zero. 
  }\end{rmk}

\begin{rmk}{\rm
Theorem~\ref{mainresult} generalizes the result of~\cite{cgz}, where it is shown that   the  initial data 
$$
u_0(x)+ \sum_{j= 1}^J (v^{1 (j)}_0 + \e_j w_0^{1 (j)} , v^{2 (j)}_0 + \e_j w_0^{2 (j)} ,w_0^{3
(j)})(x_1,x_2,\e_j x_3)
$$
generates a global solution if~$u_0 $ belongs to~$\dot H^\frac12(\R^3) \cap  \cG$, if  the profiles~$(v^{1 (j)}_0,v^{2 (j)}_0,0)$ and~$w^{(j)}_0$  are divergence free and in~$L^2(\R_{x_3}; \dot H^{-1}(\R^2))$, as well as all their  derivatives,    if~$\e_1,\dots \varepsilon_J >0$ are small enough, and finally under the assumption that~$v^{1 (j)}_0 (x_1,x_2,0) \equiv v^{2 (j)}_0 (x_1,x_2,0) \equiv 0$ and~$w_0^{3
(j)}(x_1,x_2,0) \equiv 0$. Those last requirements  are analogous to Assumption~{\rm \ref{sousdecompo}}. Note that even in the case when~$u_0 \equiv 0$, such initial data cannot be dealt with simply using Theorem~\ref{anisominimal} since it is not bounded in~$\dot B^\frac12_{2,1}$.
Note also that as in~\cite{cgz}, the special structure of (NS) is used in the proof of Theorem~\ref{mainresult}.}
\end{rmk}

\begin{rmk}\label{globalintime}{\rm
 Notice that it is not assumed that the  global solution associated with~$u_0$ satisfies   uniform, global in time integral bounds. Similarly to~\cite{adt} and~\cite{gip} such  bounds may be derived a posteriori from the fact that the solution is global: see Appendix~\ref{globalsmallaniso}, Corollary~\ref{strongstability}.
     }\end{rmk}

\begin{rmk}{\rm
One can see from the proof of Theorem~\ref{mainresult} that the solution~$u_n(t)$ associated with~$u_{0,n}$ converges for all times, in the sense of distributions to the solution associated with~$u_0$.  In this sense the Navier-Stokes equations are stable by weak convergence.}
\end{rmk}

\noindent
The proof of  Theorem~{\rm \ref{mainresult}}  enables us to infer   easily the following results. The first corollary generalizes the statement of Theorem~{\rm \ref{mainresult}}  to the case when~$u_0 \notin \cG$.
\begin{cor}\label{corthm1}
  Let~$(u_{0,n})_{n \in \N}$ be a   sequence of divergence free vector fields bounded in the space~${\mathcal B}^1_q$ for some~$0<q<1$, converging   towards some~$u_0\in {\mathcal B}^1_q$ in the sense of distributions, with~$u_0-(u_{0,n} )_{n \in \N}$   anisotropically oscillating and satisfying Assumption~{\rm\ref{sousdecompo}}. Let~$u$ be the solution to the Navier-Stokes equations associated with~$u_0$ and assume that the life span of~$u$ is~$T^*< \infty$. Then for all~$T < T^*$, there is a subsequence such that  the life span
 of the solution associated with~$u_{0,n}$ is at least~$T$.
   \end{cor}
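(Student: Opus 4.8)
The plan is to deduce Corollary~\ref{corthm1} from Theorem~\ref{mainresult} by a truncation-in-time argument. Fix $T<T^*$. Since $u$ is a solution with life span $T^*$, it belongs to $\mathcal S_{1,1}(T')$ for every $T'<T^*$; choose $T<T'<T^*$. The idea is to replace $u_0$ by a modified datum whose associated solution is \emph{global}, so that Theorem~\ref{mainresult} applies, while keeping enough contact with the original sequence on $[0,T]$.

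\medskip
\noindent First I would introduce the "frozen" initial data obtained by restarting the equation: for a suitable time $t_0 \in (T,T')$ consider $\widetilde u_0 := u(t_0)$ (or a mollified version of it), which by the smoothing effect of (NS) lies in $\mathcal B^1_1$ — indeed the local theory in Appendix~\ref{globalsmallaniso} gives $u(t_0) \in \dot B^{3,1}_{1,1}\cap \dot B^{1,3}_{1,1}$ for a.e.\ $t_0$, hence in particular in $\mathcal B^1_1 \subset \mathcal B^1_q$ — and whose solution is simply $t \mapsto u(t_0+t)$ on $[0,T^*-t_0)$, again with finite life span. This does not immediately help; the correct move is rather the opposite. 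Alternatively — and this is the approach I would actually pursue — one works directly with the sequence: Theorem~\ref{mainresult}'s proof constructs, for each $n$, the solution $u_n$ by a fixed point and shows it is global provided $u_0 \in \cG$; when $u_0 \notin \cG$ the same fixed point construction still produces a solution on $[0,T]$ for any fixed $T<T^*$, for $n$ large (along a subsequence), because all the estimates in that proof are \emph{local in time} and only require that the limiting solution $u$ exists and is controlled on $[0,T]$ (i.e.\ $u \in \mathcal S_{1,1}(T')$). In other words, one inspects the proof of Theorem~\ref{mainresult} and notices that "$u_0 \in \cG$" is used only to guarantee $u \in \mathcal S_{1,1}(\infty)$, whereas every quantitative step — the profile decomposition of $u_0-u_{0,n}$, the smallness of the remainder, the stability estimate around $u$, the control of the profile interactions — makes sense with $\mathcal S_{1,1}(\infty)$ replaced by $\mathcal S_{1,1}(T')$ throughout.

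\medskip
\noindent Concretely, the key steps are: (1) fix $T<T'<T^*$ and record that $u \in \mathcal S_{1,1}(T')$ with $\|u\|_{\mathcal S_{1,1}(T')}=:M<\infty$; (2) run verbatim the decomposition of the anisotropically oscillating sequence $u_0-u_{0,n}$ from Proposition~\ref{prop:decompositiondata} (this is independent of $T^*$) together with Assumption~\ref{sousdecompo}, obtaining finitely many profiles plus a remainder small in $\mathcal B^1_1$; (3) solve (NS) on $[0,T]$ for each profile and for the sum $u_{0,n}$, using that the profiles are smooth with good decay so their life spans are $\geq T'$ after the finitely many of them have been controlled, and that the small remainder generates a global solution by the smallness theorem in Appendix~\ref{globalsmallaniso}; (4) glue these pieces by the stability/perturbation estimate of Theorem~\ref{mainresult}, which closes on $[0,T]$ because $T<T'$ leaves room to absorb error terms, yielding for $n$ large (along a subsequence) a solution $u_n$ on $[0,T]$, i.e.\ life span $\geq T$.

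\medskip
\noindent The main obstacle is step (4): making precise that the perturbative bootstrap in the proof of Theorem~\ref{mainresult} can be truncated at time $T$ rather than run for all time. One must check that none of the constants blow up as $T \to T^*$ for fixed $T<T^*$ — they do depend on $M=\|u\|_{\mathcal S_{1,1}(T')}$, which is finite precisely because $T'<T^*$, and this is why one cannot simply take $T=T^*$. A secondary technical point is that the profile construction and Assumption~\ref{sousdecompo} are formulated for the full sequence, so one should verify the extraction of the relevant subsequence is compatible with the local-in-time solvability of each profile (the profiles being smooth, each has a positive life span, and only finitely many profiles carry nontrivial energy up to a small remainder, so a diagonal extraction suffices to ensure all their life spans exceed $T'$). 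Everything else is a rerun of the proof of Theorem~\ref{mainresult} with $\infty$ replaced by $T'$, so I would phrase the corollary's proof as: "The proof is identical to that of Theorem~\ref{mainresult}, replacing the global space $\mathcal S_{1,1}(\infty)$ by $\mathcal S_{1,1}(T')$ for any $T<T'<T^*$ and noticing that the assumption $u_0\in\cG$ was used there only through the finiteness of $\|u\|_{\mathcal S_{1,1}(\infty)}$, which we now replace by the finiteness of $\|u\|_{\mathcal S_{1,1}(T')}$."
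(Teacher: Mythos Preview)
Your proposal is correct and follows essentially the same route as the paper: the paper's proof literally says to retrace all the steps of Theorem~\ref{mainresult} with $\R^+$ replaced by $[0,T]$ for $T<T^*$, which is exactly your final formulation. One minor simplification: your step~(3) worry about the profiles' life spans is unnecessary, since Lemmas~\ref{globalfirstprofile} and~\ref{globalsecondprofile} show each profile lies in~$\cG$ (hence is global) independently of whether~$u_0\in\cG$; the only object that must be restricted to~$[0,T]$ is the weak-limit solution~$u$ itself.
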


\smallskip
\noindent    The second corollary deals with the case when the {\bf sequence} belongs to~$\cG$, with an a priori boundedness assumption on the solution (which could actually be generalized but we choose not to complicate things too much at this stage; see Appendix~\ref{appendixlp}
 for  definitions), and infers that the weak limit also belongs to~$\cG$.
\begin{cor}\label{corthm}
Assume~$(u_n^0)_{n \in \N}$ is a sequence of initial data, such that the associate solution~$u_n$ is uniformly bounded in~$\widetilde{L^2}\big (\R^+; \dot B^{\frac23, \frac13}_{3,1}  \big) $.  If~$u_n^0$ converges in the sense of distributions to some~$u_0$, with~$u_0-(u_{0,n} )_{n \in \N}$   anisotropically oscillating and satisfying Assumption~{\rm\ref{sousdecompo}}, then~$u_0$ gives rise to a unique, global solution in~${\mathcal S}_{3,1}(\infty)$.
\end{cor}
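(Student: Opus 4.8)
The plan is to run in reverse the mechanism behind Theorem~\ref{mainresult}: there one starts from a global solution for~$u_0$ and manufactures global solutions for the~$u_{0,n}$, whereas here one is \emph{given} a sequence of global solutions~$u_n$ with a uniform bound and wants to conclude that~$u_0$ itself belongs to~$\cG$. Denote by~$u$ the solution to~{\rm(NS)} associated with~$u_0$ and by~$T^\star\in\,]0,+\infty]$ its life span; the goal is to show~$T^\star=+\infty$ together with~$u\in{\mathcal S}_{3,1}(\infty)$. I would first check that the hypotheses put us in the framework of the proof of Theorem~\ref{mainresult}: the sequence~$u_0-(u_{0,n})_{n\in\N}$ is anisotropically oscillating and satisfies Assumption~\ref{sousdecompo}, so Proposition~\ref{prop:decompositiondata} applies; and the Duhamel formula for~$u_n$, combined with the bilinear estimate underlying the~${\mathcal S}_{3,1}$ Cauchy theory and the uniform bound in~$\widetilde{L^2}(\R^+;\dot B^{\frac23,\frac13}_{3,1})$, gives a uniform bound on~$u_{0,n}=u_n(0)$ in the data space for~${\mathcal S}_{3,1}$; hence its distributional limit~$u_0$ lies in that space as well, the Cauchy theory of Appendix~\ref{globalsmallaniso} launches, and~$T^\star$ is well defined.

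The heart of the proof is a uniform bound on~$u$ over~$[0,T^\star)$. Fix~$T<T^\star$. Because~$u$ exists on~$[0,T]$, the perturbative analysis of the proof of Theorem~\ref{mainresult} (the same one that yields Corollary~\ref{corthm1}) provides, for~$n$ large depending on~$T$, a decomposition on~$[0,T]$
$$
u_n = u + \sum_{\ell=1}^{L} U_n^\ell + w_n^{L},
$$
where the~$U_n^\ell$ are the solutions generated by the rescaled, strongly anisotropic profiles of~$u_0-(u_{0,n})_{n\in\N}$ --- \emph{global} and, thanks to Assumption~\ref{sousdecompo} and the algebraic structure of~{\rm(NS)}, bounded in~$\widetilde{L^2}(\R^+;\dot B^{\frac23,\frac13}_{3,1})$ with~$\sup_{n}\sum_{\ell\geq 1}\|U_n^\ell\|_{\widetilde{L^2}(\R^+;\dot B^{\frac23,\frac13}_{3,1})}<\infty$ by orthogonality of the profiles --- and where the error term~$w_n^{L}$ (containing the free evolution of the profile remainder together with genuinely quadratic cross terms) satisfies~$\limsup_{n\to\infty}\|w_n^L\|_{\widetilde{L^2}([0,T];\dot B^{\frac23,\frac13}_{3,1})}\leq\varepsilon_L$ with~$\varepsilon_L\to0$ as~$L\to\infty$. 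Isolating~$u$, taking the~$\widetilde{L^2}([0,T];\dot B^{\frac23,\frac13}_{3,1})$ norm, using the uniform bound on~$(u_n)_{n\in\N}$ assumed in the statement, and then letting first~$n\to\infty$ and afterwards~$L\to\infty$, one obtains a constant~$C$ \emph{independent of~$T$} such that~$\|u\|_{\widetilde{L^2}([0,T];\dot B^{\frac23,\frac13}_{3,1})}\leq C$ for every~$T<T^\star$.

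Letting~$T\uparrow T^\star$ gives~$u\in\widetilde{L^2}\big([0,T^\star);\dot B^{\frac23,\frac13}_{3,1}\big)$; the continuation criterion for~{\rm(NS)} at the~$p=3$ level proved in Appendix~\ref{globalsmallaniso} then forces~$T^\star=+\infty$. Once the solution is known to be global, the a posteriori integral estimates, exactly as in Remark~\ref{globalintime} and Corollary~\ref{strongstability}, upgrade the~$\widetilde{L^2}$ bound to membership in the full space~${\mathcal S}_{3,1}(\infty)$, and uniqueness there is again the one furnished by the Cauchy theory of Appendix~\ref{globalsmallaniso}. The one serious obstacle is imported directly from the proof of Theorem~\ref{mainresult}: showing that the profile solutions~$U_n^\ell$ are global with~$\widetilde{L^2}(\R^+;\dot B^{\frac23,\frac13}_{3,1})$ norms that are simultaneously uniform in~$n$ and summable in~$\ell$. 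This is where Assumption~\ref{sousdecompo} (vanishing of the relevant profiles on~$\{x_3=0\}$) and the special structure of the Navier--Stokes nonlinearity genuinely enter; transporting these bounds, originally produced in the~${\mathcal B}^1_1$/${\mathcal S}_{1,1}$ setting, to the~$p=3$ scale-invariant norms is routine since the profiles are smooth functions.
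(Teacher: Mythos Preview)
Your proposal is correct and follows essentially the same approach as the paper. The paper organizes the argument as a proof by contradiction (assume~$T^\star<\infty$, pick~$T<T^\star$ with~$\|u\|_{\widetilde{L^2}([0,T];\dot B^{\frac23,\frac13}_{3,1})}\geq 2C$, and derive a contradiction from the decomposition~$u=u_n-{\mathcal U}_n^L-{\mathcal V}_n^L-w_n^L$ together with the uniform bound on~$u_n$, Lemma~\ref{lemdrift}, and the smallness of~$w_n^L$), whereas you phrase it directly via the continuation criterion~(\ref{critereexplosion}); the underlying estimates are identical. One small organizational difference: the paper keeps an additional parameter~$\alpha$ (from the regularization of the profiles) and the heat-flow remainder~${\mathcal V}_n^L$ separate from~$w_n^L$, so the limit reads~$\lim_{\alpha\to0}\limsup_{n\to\infty}\|w_n^L\|\to0$ as~$L\to\infty$ rather than a single~$\varepsilon_L$, but this is purely cosmetic.
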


 \subsection{Notation}
  For all points~$x = (x_1,x_2,x_3) $ in~$\R^3$ and all   vector fields~$v = (v^1,v^2,v^3)$, we shall denote by
  $$
  x_h:= (x_1,x_2) \quad \mbox{and} \quad v^h := (v^1,v^2)
  $$
   their horizontal parts. We shall also define   horizontal differentiation operators~$\nabla^h := (\partial_1, \partial_2)$ and~$\mbox{div}_h := \nabla^h \cdot  $, as well as~$\Delta_h:=\partial_1^2 + \partial_2^2$.

\smallskip \noindent We shall also use the shorthand notation for function spaces~$X$ (defined on~$\R^2$) and~$Y$ (defined on~$\R$): $X_h Y_v:=X(\R^2;Y(\R))$.

\smallskip \noindent Finally we shall denote by~$C$ a constant which does not depend on the various parameters appearing in this paper, and which may change from line to line. We shall also denote sometimes~$x \leq Cy$ by~$x \lesssim y$.

   \subsection{General scheme of the proof and organization of the paper}
   The main arguments leading to Theorem~\ref{mainresult} are the following: by a profile decomposition argument,  the sequence of initial data   is decomposed into the sum of the weak limit~$u_0$ and a sequence of ``orthogonal" profiles, up to a small remainder term. 
 Under Assumptions~\ref{defadf} and~\ref {sousdecompo} and using scaling arguments it is   proved that each individual profile belongs to~${\mathcal G}$; this step relies crucially on the results of~\cite{cg3} and~\cite{cgz}. The mutual orthogonality of each term in the decomposition of the initial data implies finally that the sum of the solutions associated to each profile is itself an approximate solution to (NS), globally in time, which concludes the proof.   
  \medskip
  
  \noindent  The paper is organized
  in the following way:
  \begin{itemize}
  \item In Section~{\rm\ref{decompositiondata}} we provide an ``anisotropic profile decomposition" of the sequence of initial data, based on a general result, Theorem~\ref{anisocompthm} stated and proved in Section~\ref{profile} page~\pageref{anisocompthm}. This enables us to replace the sequence of initial data, up to   an arbitrarily small remainder term, by a finite (but large) sum of   profiles.

\smallskip

\item Section~\ref{globalprofile} is then devoted to the
construction of an approximate solution by propagating globally in time  each individual profile of the decomposition. The propagation is through either the Navier-Stokes flow or transport-diffusion equations.

 \smallskip

\item In Section~{\rm \ref{superpositionglobal}}
we prove
 that the construction of the previous step does provide an approximate solution to the Navier-Stokes equations, thus completing the proof of Theorem~{\rm \ref{mainresult}}, while Corollaries~{\rm \ref{corthm1}} and~{\rm \ref{corthm}} are proved at the end of Section~{\rm \ref{superpositionglobal}}. That section is the most technical part of the proof, as one must check
 that the nonlinear interactions of all the functions constructed in the previous step are negligible. It also relies on results proved in Appendix~{\rm \ref{globalsmallaniso}}, on the  global regularity for the Navier-Stokes equation (and perturbed versions of that equation) for  small data and forces in anisotropic Besov spaces.

\smallskip

\item Finally in Appendix~{\rm \ref{appendixlp}} we collect   useful results   on isotropic and anisotropic spaces which are used in this text, and we prove Theorem~\ref{anisominimal}.

 \end{itemize}

 \section{Profile decomposition of the initial data}\label{decompositiondata}
 In this section we
consider a sequence of initial data
as given in Theorem~{\rm \ref{mainresult}}, and write down an anisotropic profile decomposition for that sequence.
We shall   constantly be using the following  scaling operators.
  \begin{defi}\label{notationprofiles} For any two sequences~$\boldsymbol{\varepsilon}=(\varepsilon_n)_{n \in \N}$ and~$\boldsymbol{\gamma}=(\gamma_n)_{n \in \N}$  of positive real numbers and any sequence~$\boldsymbol{x}=(x_n)_{n \in \N}$  in~$\R^3$ we define the scaling operator
  $$
  \Lambda^n_{\boldsymbol{\varepsilon} ,\boldsymbol{\gamma},\boldsymbol{x} } \phi (x) :=
  \frac1 {\varepsilon_n}  \phi\left( \frac{x_h-x_{n,h} }{\varepsilon_n}, \frac{x_3-x_{n,3}} {\gamma_n} \right) \, .
  $$
       \end{defi}
       \begin{rmk}\label{isometry}{\rm
The operator~$  \Lambda^n_{\boldsymbol{\varepsilon} ,\boldsymbol{\gamma},\boldsymbol{x} } $   is an isometry in the space~$\dot B^{-1+\frac2p,\frac1p}_{p,q}$ for any~$1 \leq p \leq \infty$ and~$0<q \leq \infty$.}
\end{rmk}

\noindent Then we define the notion of orthogonal cores/scales as follows (see also Section~\ref{profile}).
\begin{defi}\label{orthoseq}
We say that two triplets of sequences~$(\boldsymbol{\varepsilon^\ell} ,\boldsymbol{\gamma^\ell},\boldsymbol{x^\ell}  )$ for~$\ell \in \{1,2\}$, where~$(\boldsymbol{\varepsilon^\ell} ,\boldsymbol{\gamma^\ell})$ are two sequences  of positive real numbers and $\boldsymbol{x^\ell} $  are  sequences  in~$ \R^3$,   are   \textit{\textbf{orthogonal}} if
$$
  \begin{aligned}
 \mbox{either} \quad \frac{\varepsilon_n^{1}}{\varepsilon_n^{2}} +  \frac{\varepsilon_n^{2}}{\varepsilon_n^{1}}
  +  \frac{\gamma_n^{1}}{\gamma_n^{2}} +  \frac{\gamma_n^{2}}{\gamma_n^{1}}
  \to \infty\,
, \quad n \to \infty\,
  \\
  \mbox{or}\quad  (\varepsilon_n^{1} , \gamma_n^1)= (\varepsilon_n^{2}, \gamma_n^2)\quad \mbox{and} \quad
  | (x_n^1)^{\varepsilon^1,\gamma^1} - (x_n^2)^{\varepsilon^1,\gamma^1} |   \to \infty\,
, \quad n \to \infty \, ,
\end{aligned}
  $$
  where we have denoted
    $\displaystyle
 ( \boldsymbol{x^\ell} )^{\boldsymbol{\varepsilon^k} ,\boldsymbol{\gamma^k}} := \big(
 \frac{ \boldsymbol{x^{\ell}_h}}{\boldsymbol{\varepsilon^k} } ,
  \frac{ \boldsymbol{x^{\ell}_3}}{\boldsymbol{\gamma^k} }
 \big).
  $
  \end{defi}
\noindent   Note that up to extracting a subsequence, any sequence of positive real numbers can be assumed to converge either to~0, to $\infty$, or to a constant. In the rest of this paper, up to rescaling the profiles by a fixed constant, we shall assume that if the limit of any one of the sequences~$\boldsymbol{\varepsilon^\ell} ,\boldsymbol{\gamma^\ell}, \boldsymbol{\eta^\ell} ,\boldsymbol{\delta^\ell} $ is a constant, then it is one.

 \medskip
\noindent The main result of this section is the following.
\begin{prop}\label{prop:decompositiondata}
Under the assumptions of Theorem~{\rm\ref{mainresult}}, the following holds.  Let~$2 \leq  p \leq \infty$ be given  as in Assumption~\ref{defadf}.
For all integers~$\ell $ there are two sets of   orthogonal sequences in the sense of Definition~{\rm \ref{orthoseq}}, $(\boldsymbol{\varepsilon^\ell} ,\boldsymbol{\gamma^\ell},\boldsymbol{x^\ell})$ and~$(\boldsymbol{\eta^\ell} ,\boldsymbol{\delta^\ell} ,\boldsymbol{\tilde x^\ell})$ and for all~$\alpha \in (0,1)$
 there are  arbitrarily smooth divergence free vector fields~$( \tilde  \phi_\alpha^{h,\ell},0) $ and~$(- \nabla_h \Delta_h^{-1} \partial_3  \phi_\alpha^{\ell}  ,\phi_\alpha^{\ell})  $  such that up to extracting a subsequence,   one can write
$$
\begin{aligned}
u_{0,n}   =  u_0  & +
 \sum_{\ell = 1}^{\tilde L}
  \Lambda^n_{\boldsymbol{\eta^{\ell}} ,\boldsymbol{\delta^{\ell}},\boldsymbol{\tilde x^{\ell}} }
\left(\tilde  \phi_\alpha^{h,\ell} +\tilde  r_\alpha^{h,\ell} ,0\right) +  \sum_{\ell  = 1}^L  \Lambda^n_{\boldsymbol{\varepsilon^{\ell}} ,\boldsymbol{\gamma^{\ell}},\boldsymbol{x^{\ell}} } \left( -\frac{ \varepsilon_n^{\ell} }{\gamma_n^{\ell} }
 \nabla_h \Delta_h^{-1} \partial_3 ( \phi_\alpha^{\ell} + r_\alpha^{\ell}   )  ,  \phi_\alpha^{\ell}+ r_\alpha^{\ell}  \right)\\
& +( \tilde \psi_n^{h,\tilde L}  -\nabla_h \Delta_h^{-1} \partial_3  \psi_n^L  ,\psi_n^L ) \, , \quad
\mbox{\rm div} \: \tilde  r_\alpha^{h,\ell} = 0\, , \quad \| \tilde  r_\alpha^{h,\ell}\|_{ {\mathcal B}^{1}_q} +\|    r_\alpha^{\ell}\|_{ {\mathcal B}^{1}_q} \leq \alpha \, ,
  \end{aligned}
 $$
 with~$\tilde \psi_n^{h,\tilde L}$ and~$\psi_n^L$ independent of~$\alpha$ and uniformly bounded (in~$n$ and~$L$) in~${\mathcal B}^{1}_q$, and
 \begin{equation}\label{smallremainderpsi}
\limsup_{n \to \infty} \Big(   \|\tilde \psi_n^{h,  L}\|_{\dot B^{-1+\frac2p, \frac1p}_{p,1}  } + \| \psi_n^{L}\|_{\dot B^{-1+\frac2p, \frac1p}_{p,1}  } \Big)\to 0\, , \quad L \to \infty \, .
\end{equation}
Moreover the following properties hold:
\begin{equation}\label{anisoscales}
\forall \ell \in \N, \quad   \displaystyle  \displaystyle \lim_{n \to \infty}  \:(\delta_n^\ell)^{-1} \eta_n^{\ell}  \in \{0 , \infty\}\,  , \quad\lim_{n \to \infty}  \: (\gamma_n^\ell)^{-1} \varepsilon_n^{\ell}  = 0 \, ,
  \end{equation}
\medskip
\noindent
as well as the following stability result:
    \begin{equation}\label{orthonorms}
   \sum_{\ell \in \N} \big(\|\tilde  \phi_\alpha^{h,\ell} \|_{{\mathcal B}^{1}_q} +   \|   \phi_\alpha^{\ell}  \|_{{\mathcal B}^{1}_q}  \big) \lesssim \sup_n \|u_{0,n}\|_{{\mathcal B}^{1}_q} + \|u_0\|_{{\mathcal B}^{1}_q} \, .
  \end{equation}
\end{prop}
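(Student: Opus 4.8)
The plan is to build the decomposition in two stages: first extract profiles according to the \emph{vertical--horizontal} scaling structure, then within each fixed-scale class extract profiles according to translations, closely following the abstract profile decomposition machinery (Theorem~\ref{anisocompthm} and Lemma~\ref{orthoaniso}). I would start from the sequence $f_n := u_{0,n} - u_0$, which by hypothesis is bounded in ${\mathcal B}^1_q$ and anisotropically oscillating with exponent $p$ as in Assumption~\ref{defadf}. The key observation is that ${\mathcal B}^1_q \hookrightarrow \dot B^{-1+\frac2p,\frac1p}_{p,\infty}$ (Remark~\ref{rmkdifferentscales}), and $\Lambda^n_{\boldsymbol{\varepsilon},\boldsymbol{\gamma},\boldsymbol{x}}$ is an isometry of the latter space (Remark~\ref{isometry}); so the compactness/profile extraction takes place in $\dot B^{-1+\frac2p,\frac1p}_{p,\infty}$ while the uniform ${\mathcal B}^1_q$ bounds are preserved along the way and pass to the limit by lower semicontinuity, which gives~\eqref{orthonorms}.

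Concretely, I would iterate the following extraction. Given the $n$-dependent remainder, look at the largest (in the appropriate sense) frequency-localized piece $2^{k_n(-1+\frac2p)+\frac{j_n}p}\|\Delta_{k_n}^h\Delta_{j_n}^v \cdot\|_{L^p}$; if after extraction of a subsequence this $\liminf$ is a positive constant $C$, then Assumption~\ref{defadf} forces $|j_n-k_n|\to\infty$. Set the horizontal scale $\varepsilon_n \sim 2^{-k_n}$ and vertical scale $\gamma_n \sim 2^{-j_n}$; the dichotomy $|j_n-k_n|\to\infty$ means either $\gamma_n/\varepsilon_n\to\infty$ or $\to 0$, which is exactly the anisotropy alternative in~\eqref{anisoscales}. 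Rescaling by $(\Lambda^n)^{-1}$, the rescaled piece is bounded in ${\mathcal B}^1_q$ and, after a further subsequence, converges weakly in $\dot B^{-1+\frac2p,\frac1p}_{p,\infty}$ (using local compactness, e.g. in $\dot B^{-1+\frac2p,\frac1p}_{p,1}$ locally, cf. the proof of Theorem~\ref{anisocompthm}) to a nonzero profile. Depending on whether $\gamma_n \gg \varepsilon_n$ or $\gamma_n \ll \varepsilon_n$ one obtains a profile of the ``vertical" type or the ``horizontal" type; the divergence-free condition is propagated by $\Lambda^n$ up to the anisotropic scaling factor, which explains the precise algebraic form $(-\tfrac{\varepsilon_n^\ell}{\gamma_n^\ell}\nabla_h\Delta_h^{-1}\partial_3\phi^\ell,\phi^\ell)$ and $(\tilde\phi^{h,\ell},0)$ appearing in the statement --- one writes $\mbox{div}\, v = \mbox{div}_h v^h + \partial_3 v^3 = 0$ in the rescaled variables and solves for $v^h$ or $v^3$ accordingly. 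Within a family of mutually non-orthogonal scales one then does the usual translation extraction to produce the cores $\boldsymbol{x^\ell}$, $\boldsymbol{\tilde x^\ell}$, obtaining orthogonality in the sense of Definition~\ref{orthoseq}. The ${\mathcal B}^1_q$ profiles $\phi^\ell$, $\tilde\phi^{h,\ell}$ are then smoothed (truncated in frequency) to get the arbitrarily smooth $\phi_\alpha^\ell$, $\tilde\phi_\alpha^{h,\ell}$ at the cost of remainders $r_\alpha^\ell$, $\tilde r_\alpha^{h,\ell}$ of ${\mathcal B}^1_q$-norm $\le\alpha$, with the divergence-free structure preserved by applying the Leray projector-type correction; the part of the remainder not captured by finitely many profiles is $\psi_n^L$, $\tilde\psi_n^{h,L}$, and a standard argument (the profiles carry definite amounts of $\dot B^{-1+\frac2p,\frac1p}_{p,1}$-``mass", which is summable by~\eqref{orthonorms}) shows that the $n$-$\limsup$ of their $\dot B^{-1+\frac2p,\frac1p}_{p,1}$ norm tends to $0$ as $L\to\infty$, which is~\eqref{smallremainderpsi}; their uniform ${\mathcal B}^1_q$ bound is inherited from $f_n$ and the triangle inequality.

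The main obstacle I anticipate is \textbf{bookkeeping the two-parameter scaling dichotomy together with the translations in the profile extraction}, and in particular making sure the iteration terminates in the right sense: one must show that extracting one more profile strictly decreases a quantity (the $\dot B^{-1+\frac2p,\frac1p}_{p,1}$ ``defect'') by a definite amount so that the sum over $\ell$ of profile norms converges and the remainder $\psi_n^L$ genuinely becomes small in $\dot B^{-1+\frac2p,\frac1p}_{p,1}$. This is where the strict inequality $p\ge 2$ (Assumption~\ref{defadf}) and the interpolation/orthogonality Lemma~\ref{orthoaniso} do the work: non-orthogonal scales cannot both carry mass, and the anisotropic orthogonality ensures the rescaled profiles are asymptotically ``independent'' so their norms add up. The second delicate point is verifying that when two scale-triplets are \emph{not} orthogonal (same scales, bounded translation difference) they genuinely define the \emph{same} profile after passing to a subsequence, so that one does not over-count; this is the anisotropic analogue of the classical profile decomposition argument and is handled exactly as in Section~\ref{profile}. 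Everything else --- the divergence-free algebra, the smoothing in $\alpha$, the lower-semicontinuity passage for~\eqref{orthonorms}, and the conclusion~\eqref{anisoscales} from $|j_n-k_n|\to\infty$ --- is routine once this core extraction scheme is in place.
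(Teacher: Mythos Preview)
Your outline has the right ingredients but diverges from the paper's proof in a way that creates two concrete gaps.

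\textbf{Gap 1 (structure of the two profile families).} You propose one profile decomposition of the full vector field $f_n=u_{0,n}-u_0$, then sort the resulting profiles according to whether $\gamma_n\gg\varepsilon_n$ or $\gamma_n\ll\varepsilon_n$, letting the divergence-free condition ``in rescaled variables'' force the two algebraic forms. This does not produce the statement: the proposition has \emph{two independent} families of scales/cores, $(\boldsymbol{\varepsilon^\ell},\boldsymbol{\gamma^\ell},\boldsymbol{x^\ell})$ and $(\boldsymbol{\eta^\ell},\boldsymbol{\delta^\ell},\boldsymbol{\tilde x^\ell})$, which need not be related, and the second type of profile carries the explicitly $n$-dependent coefficient $\varepsilon_n^\ell/\gamma_n^\ell$ on its horizontal part. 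A single vector-valued application of Theorem~\ref{anisocompthm} yields $n$-independent profiles $\Phi^\ell$ and a \emph{single} family of scales; it does not produce that $n$-dependent prefactor, nor does the divergence-free condition pass cleanly to each $\Phi^\ell$ (the remainder $\psi_n^L$ is not itself divergence-free). The paper's route avoids this entirely by performing the Helmholtz-type split \emph{before} any profile extraction: write $u_{0,n}^h=\nabla_h^\perp C_{0,n}-\nabla_h\Delta_h^{-1}\partial_3 u_{0,n}^3$, then apply Theorem~\ref{anisocompthm} \emph{separately} to the scalar $u_{0,n}^3-u_0^3$ (this gives $(\boldsymbol{\varepsilon^\ell},\boldsymbol{\gamma^\ell},\boldsymbol{x^\ell})$ and scalar profiles $\varphi^\ell$) and to the $2$D-divergence-free field $\nabla_h^\perp C_{0,n}-\nabla_h^\perp\varphi$ (this gives $(\boldsymbol{\eta^\ell},\boldsymbol{\delta^\ell},\boldsymbol{\tilde x^\ell})$ and profiles $\tilde\phi^{h,\ell}$, with $\mbox{div}_h\tilde\phi^{h,\ell}=0$ via Lemma~\ref{orthoanisodiv2}). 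The form $(-\tfrac{\varepsilon_n^\ell}{\gamma_n^\ell}\nabla_h\Delta_h^{-1}\partial_3\phi^\ell,\phi^\ell)$ is then a direct algebraic reconstruction from the scalar profile, and the $\alpha$-smoothing and stability~\eqref{orthonorms} come straight from~\eqref{ortogonal}.

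\textbf{Gap 2 (the direction in~\eqref{anisoscales}).} You write that~\eqref{anisoscales} ``is routine once $|j_n-k_n|\to\infty$''. Assumption~\ref{defadf} together with Lemma~\ref{orthoaniso} only gives $\lim \varepsilon_n^\ell/\gamma_n^\ell\in\{0,\infty\}$; it does \emph{not} select the value $0$. The paper obtains $\varepsilon_n^\ell/\gamma_n^\ell\to 0$ from an additional input: the divergence-free condition gives $\partial_3 u_{0,n}^3=-\mbox{div}_h u_{0,n}^h$ bounded in $\dot B^{0,1}_{1,q}$, and Lemma~\ref{orthoanisodiv1} then rules out $\infty$. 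This step is not routine and is absent from your sketch. (A smaller point in the same spirit: \eqref{smallremainderpsi} is stated in $\dot B^{-1+\frac2p,\frac1p}_{p,1}$, whereas Theorem~\ref{anisocompthm} only outputs smallness in $\dot B^{-1+\frac2p,\frac1p}_{p,p}$; the upgrade uses H\"older interpolation against the uniform ${\mathcal B}^1_q$ bound, and it is here that $q<1$ is used.)
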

\begin{proof}[Proof of Proposition~{\rm \ref{prop:decompositiondata}}]
The proof is divided into two steps. First we decompose the third component~$ u_{0,n}^3$ according to Theorem~\ref{anisocompthm} in Section~{\rm \ref{profile}}, and then we decompose the horizontal component~$ u_{0,n}^h$  using both the first step and Theorem~\ref{anisocompthm}  again (for the divergence free part of~$u_{0,n}^h$).

\medskip
\noindent
{\bf Step 1. Decomposition of~$ u_{0,n}^3$. }
Let us apply Theorem~{\rm \ref{anisocompthm}} of Section~{\rm \ref{profile}} (see page~\pageref{anisocompthm}) to the sequence~$u_{0,n}^3 - u_0^3$. With the notation of Theorem~{\rm \ref{anisocompthm}}, we define
$$
\begin{aligned}
 \varepsilon_n^\ell & := 2^{-j_1 (\lambda_\ell (n))} \\
 \gamma_n^\ell & := 2^{-j_2 (\lambda_\ell (n))} \\
 x_{n,h}^{\ell}& := 2^{-j_1 (\lambda_\ell (n))} k_1(\lambda_\ell(n)) \\
  x_{n,3}^{\ell}& := 2^{-j_2 (\lambda_\ell (n))} k_2(\lambda_\ell(n)) \, .
\end{aligned}
$$
The orthogonality of the sequences $(\boldsymbol{\varepsilon^\ell} ,\boldsymbol{\gamma^\ell},\boldsymbol{x^\ell})$, as given in Definition~{\rm \ref{orthoseq}}, is
a consequence of the orthogonality property stated in Theorem~{\rm \ref{anisocompthm}} along with Remark~\ref{equalscales}. According to that theorem we can write
\begin{equation}\label{decompositionthirdcomponent}
u_{0,n}^3  - u_0^3  =
   \sum_{\ell  = 1}^L  \Lambda^n_{\boldsymbol{\varepsilon^{\ell}} ,\boldsymbol{\gamma^{\ell}},\boldsymbol{x^{\ell}} }    \varphi^{\ell}
 +    \psi_n^L   \, ,
\end{equation}
where due to~(\ref{ortogonal}) in Theorem~{\rm \ref{anisocompthm}},
$$
\sum_{\ell \in \N}   \|  \varphi^{\ell}   \|_{{\mathcal B}^{1}_q}  \lesssim \sup_n \|u_{0,n}^3 - u_0^3\|_{{\mathcal B}^{1}_q}   < \infty \, .
$$
In particular~$\psi_n^{L}$ is uniformly bounded (in~$n$ and~$L$) in~${\mathcal B}^{1}_q \subset \dot B^{-1+\frac2p, \frac1p}_{p,q}$, and   Theorem~{\rm \ref{anisocompthm}} gives
$$
 \limsup_{n \to \infty}  \|\psi_n^{L}\|_{\dot B^{-1+\frac2p, \frac1p}_{p,p}  } \to 0 \, , \quad L \to \infty \, .
$$
The result~(\ref{smallremainderpsi}) then follows by H\"older's inequality for sequences.
Note that we have used here the fact that~$q<1$.

\smallskip
\noindent Using horizontal and vertical frequency truncations, given any~$\alpha>0$ we may further decompose~$  \varphi^{\ell} $ into
\begin{equation}\label{rlalpha}
  \varphi^{\ell} =   \phi_\alpha^{\ell}  + r_\alpha^{\ell} \, , \quad   \mbox{ with } \phi_\alpha^{\ell} \, \,  \mbox{arbitrarily  smooth  and}\, \, \|r_\alpha^{\ell}\|_{{\mathcal B}^{1}_q} \leq \alpha \, ,
\end{equation}
and we have, by this choice of regularization,
$$
\|   \phi_\alpha^{\ell} \|_{{\mathcal B}^{1}_q} +\|   r_\alpha^{\ell} \|_{{\mathcal B}^{1}_q}  \leq 2 \| \varphi^{\ell} \|_{{\mathcal B}^{1}_q} \, .
$$
This implies~(\ref{orthonorms}) for~$ \phi_\alpha^{\ell} $.

\medskip
\noindent
Now let us prove that
$$
\forall \ell \in \N, \quad   \displaystyle \lim_{n \to \infty}  \: (\gamma_n^\ell)^{-1} \varepsilon_n^{\ell}  = 0 \, .
 $$
Assumption~\ref{defadf}
along with Lemma {\rm \ref{orthoaniso}} page~\pageref{orthoaniso} imply that the limit of~$\displaystyle
  (\gamma_n^\ell)^{-1} \varepsilon_n^{\ell}$ belongs to~$ \{0,\infty\}$. Moreover by the divergence free condition on~$u_{0,n}$ we have~$\mbox{div}_h \:  u_{0,n}^h = - \partial_3  u_{0,n}^3$
and since~$u_{0,n}^h$ is bounded in~${\mathcal B}^{1}_q$ we infer that~$ \partial_3  u_{0,n}^3$ is bounded in~$
\dot B^{0,1}_{1,q}$ and~$ \partial_3  u_{0}^3$ also belongs to~$
\dot B^{0,1}_{1,q}$. This in turn, due to Lemma~{\rm \ref{orthoanisodiv1}},  implies that
$$
\lim_{n \to \infty}  \: (\gamma_n^\ell)^{-1} \varepsilon_n^{\ell}  = 0 \, .
$$

 \noindent
{\bf Step 2. Decomposition of~$ u_{0,n}^h$. }
 The  divergence free assumption on the
initial data enables us to recover from the previous step a profile decomposition for~$ u_{0,n}^h$.
Indeed
 there is a two-dimensional, divergence free vector field~$\nabla^\perp_h C_{0,n}$
such that
$$
u_{0,n}^h   = \nabla^\perp_h C_{0,n}  - \nabla_h \Delta_h^{-1} \partial_3  u_{0,n}^3 \, ,
$$
where~$\nabla^\perp_h = (-\partial_1, \partial_2)$. Similarly there is some function~$\varphi$ such that
$$
u_{0}^h   = \nabla^\perp_h \varphi- \nabla_h \Delta_h^{-1} \partial_3  u_{0}^3 \, .
$$
Furthermore as recalled in the previous step~$ \partial_3  u_{0,n}^3$ is bounded in~$
\dot B^{0,1}_{1,q}$. This implies that the sequence~$\nabla^\perp_h C_{0,n} $ is bounded in~${\mathcal B}^{1}_q$ and arguing similarly for~$ \nabla^\perp_h \varphi$,  the profile decomposition of  Section~{\rm \ref{profile}}  may also be applied to~$\nabla^\perp_h C_{0,n}(x) -  \nabla^\perp_h \varphi$: we get
$$
\nabla^\perp_h C_{0,n}   -  \nabla^\perp_h \varphi  =
 \sum_{\ell  = 1}^L   \Lambda^n_{\boldsymbol{\eta^{\ell}} ,\boldsymbol{\delta^{\ell}},\boldsymbol{\tilde x^{\ell}} }
\tilde  \phi^{h,\ell}   + \tilde \psi_n^{h,L}
$$
with~$\displaystyle \limsup_{n \to \infty} \|\tilde \psi_n^L\|_{\dot B^{-1+\frac2p,\frac1p}_{p,p} } \to 0 $ as $  L \to \infty$  and~$\mbox{div}_h \:  \tilde \phi^{h,\ell} =0$ thanks to Lemma {\rm \ref{orthoanisodiv2}}. Finally~$\eta_n^\ell /\delta_n^\ell \to 0 $ or~$\infty$ due to the anisotropy assumption as in the previous step. The rest of the construction is   identical to Step~1. The proposition is proved.
\end{proof}

\noindent Before evolving in time the decomposition provided in Proposition~{\rm \ref{prop:decompositiondata}} we notice that it may happen that the cores and scales of concentration~$(\boldsymbol{\eta^\ell} ,\boldsymbol{\delta^\ell} ,\boldsymbol{\tilde x^\ell})$ 
 appearing in the decomposition of~$\nabla_h^\perp C_{0,n}$ coincide with (or more generally are non orthogonal to)  those appearing in the decomposition of~$u_{0,n}^3$, namely~$(\boldsymbol{\varepsilon^\ell} ,\boldsymbol{\gamma^\ell},\boldsymbol{x^\ell})$. In that case the corresponding profiles should be evolved together in time. 
 This leads naturally to the next definition.
 \begin{defi}\label{defkappaell}
 For each~$\ell \in \N$, we define~$\kappa(\ell)$    by  the condition (with the notation of Definition~{\rm \ref{orthoseq}})
\begin{equation}\label{existencekappa}
\lim_{n \to \infty}\Big (\frac{\boldsymbol{\varepsilon^{\kappa (\ell)}}}{\boldsymbol{\eta^{\ell}} } ,
\frac{\boldsymbol{\gamma^{\kappa (\ell)}}}{\boldsymbol{\delta^{\ell}}}
,\boldsymbol{ ( x^{\kappa (\ell)}}   - \boldsymbol{ {\tilde x^{\ell}})^{\varepsilon^{\kappa (\ell)}, \gamma^{\kappa (\ell)} }} \Big) = (\lambda_1,\lambda_2,a ) \, ,  \quad \lambda_1,\lambda_2>0 \, ,\, a \in \R^3 \, .
\end{equation}
We also define for each~$L \in \N$ the set
 $$
\begin{aligned}
 {\mathcal K}(  L) := \Big\{\ell \in \N \, / \,   \ell = \kappa(\tilde\ell)\,, \, \, \tilde\ell \in \{1, \dots ,   L\} \Big\}\, .
\end{aligned}
 $$
  \end{defi}
\begin{rmk}{\rm 
  Note that for each~$\ell$ there is at most one such~$\kappa (\ell) $  by orthogonality. Moreover up to rescaling-translating the profiles we can assume   that~$\lambda_1=\lambda_2=1$ and~$a = 0$.
  } 
\end{rmk}
 \noindent 
   The decomposition of Proposition~{\rm \ref{prop:decompositiondata}} can now be written, for any~$ L\in \N$ in the following way. The interest of the next formulation is that as  we shall see,   each
   profile   is either small, or orthogonal to all the others.
  In the next formula we decide, to simplify notation that the   profile~$ \phi_\alpha^{\kappa (\ell)} $ is   equal to zero if~(\ref{existencekappa}) does not hold. We also have changed slightly the
    remainder terms~$r_\alpha^\ell$ and~$\psi_n^L$,  without altering their smallness properties (and keeping their notation for simplicity), due to the fact that in Definition~\ref{defkappaell} the ratios converge to a fixed limit but are in fact  not strictly equal to the limit.
So we write
    \begin{eqnarray}\label{newformulationdecompositiondata}
u_{0,n}  & =  &u_0   +
 \sum_{\ell  = 1}^{   L}    \Lambda^n_{\boldsymbol{\eta^{\ell}} ,\boldsymbol{\delta^{\ell}},\boldsymbol{\tilde x^{\ell}} } \left(\tilde  \phi_\alpha^{h,\ell} +\tilde  r_\alpha^{h,\ell}
  -\frac{ \eta_n^{\ell} }{\delta_n^{\ell} }
 \nabla_h \Delta_h^{-1} \partial_3 ( \phi_\alpha^{\kappa(\ell)} + r_\alpha^{\kappa(\ell)})
  ,  \phi_\alpha^{\kappa(\ell)} + r_\alpha^{\kappa(\ell)}\right) \nonumber
\\
& \quad  &{}+      \sum_{
\kappa( \ell ) =1 \atop
 \kappa( \ell ) \in  {\mathcal K}(  \infty) \setminus     {\mathcal K}(  L)  }^L   \Lambda^n_{\boldsymbol{\eta^{\ell}} ,\boldsymbol{\delta^{\ell}},\boldsymbol{\tilde x^{\ell}} }  \left(\tilde  \phi_\alpha^{h,\ell} +\tilde  r_\alpha^{h,\ell}-\frac{ \eta_n^{\ell} }{\delta_n^{\ell} }
 \nabla_h \Delta_h^{-1} \partial_3 ( \phi_\alpha^{\kappa(\ell)} + r_\alpha^{\kappa(\ell)})     , \phi_\alpha^{\kappa(\ell)} + r_\alpha^{\kappa(\ell)}\right)
\nonumber \\
&  \quad &{}+  \sum_{
   \ell = 1\atop
 \ell \notin  {\mathcal K}(  \infty)  }^{L } \Lambda^n_{\boldsymbol{\varepsilon^{\ell}} ,\boldsymbol{\gamma^{\ell}},\boldsymbol{x^{\ell}} } \left( -\frac{ \varepsilon_n^{\ell} }{\gamma_n^{\ell} }
 \nabla_h \Delta_h^{-1} \partial_3 ( \phi_\alpha^{\ell} + r_\alpha^{\ell}   )  ,  \phi_\alpha^{\ell}+ r_\alpha^{\ell}  \right)  \\
&  \quad & {}-    \!  \!  \! \sum_{
 \ell >L \atop
 \ell \in  {\mathcal K}(  L)  }  \!  \!  \!  \Lambda^n_{\boldsymbol{\varepsilon^{\ell}} ,\boldsymbol{\gamma^{\ell}},\boldsymbol{x^{\ell}} } \left( -\frac{ \varepsilon_n^{\ell} }{\gamma_n^{\ell} }
 \nabla_h \Delta_h^{-1} \partial_3  \phi^{\ell}     ,  \phi^{\ell}  \right) - \!  \!   \!  \!  \!   \sum_{
 \ell >L \atop
1 \leq \kappa( \ell ) \leq L  }  \!  \!   \!  \!  \!  \Lambda^n_{\boldsymbol{\eta^{\ell}} ,\boldsymbol{\delta^{\ell}},\boldsymbol{\tilde x^{\ell}} }  \big( \tilde  \phi_\alpha^{h,\ell} +\tilde  r_\alpha^{h,\ell}   ,  0 \big) \nonumber\\
&\quad &{}+\big( \tilde \psi_n^{h,  L} -\nabla_h \Delta_h^{-1} \partial_3  \psi_n^{L} ,\psi_n^{L} \big)   \, .\nonumber
   \end{eqnarray}

\medskip

 \noindent Before moving on to the time evolution of~(\ref{newformulationdecompositiondata}), we are now in position to
 state the second assumption entering in the statement of Theorem~\ref{mainresult}.
  \begin{assumption}\label{sousdecompo}
 With the notation of  Proposition~{\rm \ref{prop:decompositiondata}}, there is~$L_0$ such that for every~$L \geq L_0$, the following holds.

\medskip

   $\bullet $ $ $     Suppose there are   two indexes~$\ell_1 \neq \ell_2$ in~$\{1,\dots,L\}$
  such that the following properties are satisfied:
  \begin{equation}\label{assumptions}
 \begin{aligned}
  \eta_n^{\ell_1} =  \eta_n^{\ell_2}  \,, \quad   \delta_n^{\ell_1} \to \infty \,, \quad  \delta_n^{\ell_2} \to  1\:  \: \mbox{or} \:  \:  \infty
  \quad \mbox{with}\quad \delta_n^{\ell_1} /  \delta_n^{\ell_2} \to \infty \, ,\\
\mbox{and} \:  \: (\tilde x_{n}^{\ell_1} - \tilde x_{n}^{\ell_2} )^{\eta_n^{\ell_2}, \delta_n^{\ell_2}} \to a^{\ell_1,\ell_2} \in \R^3 \, ,
\quad   \frac{\tilde x_{n,3}^{\ell_2}}{ \delta_n^{\ell_2} } \to  a_3^k \in \R\, .
   \end{aligned}
\end{equation}
   Then one has~$  \tilde \phi^{h,\ell_1} (\cdot , 0) := ( \tilde \phi^{h,\ell_1}_\alpha+ \tilde r^{h,\ell_1}_\alpha )(\cdot , 0)  \equiv 0 .$

\medskip

  $\bullet $ $ $ If~$u_0 \not\equiv 0$ and if there are~$\ell_1 \neq \ell_2 \in \{1,\dots,L\}$ such that for~$i \in \{1,2\}$,~$\eta_n^{\ell_i} = 1$ with~$\delta_n^{\ell_i} \to \infty$ while~$\tilde x_{n,h}^{\ell_i}$  is bounded
   and~$\tilde x_{n,3}^{\ell_i} / \delta_n^{\ell_i} \to \tilde a_{3}^{\ell_i}\in \R$,
  then~$\tilde \phi^{h,\ell _i }(\cdot , -\tilde a_{3}^{\ell_i}  ) \equiv 0$ for each~$i \in \{1,2\}$.

\medskip

  $\bullet $ $ $ A similar result holds for the profiles~$\phi^\ell:=\tilde \phi^{h,\ell}_\alpha+   r^{h,\ell}_\alpha$, with the corresponding assumptions on the scales and cores.
  \end{assumption}
   
 \begin{prop}\label{zeroatzeroagain}
  With the notation of  Proposition~{\rm \ref{prop:decompositiondata}} assume the following:

     $\bullet $ $ $  If~$\ell_1 \neq \ell_2$ in~$\{1,\dots,L\}$ are two indexes satisfying~{\rm (\ref{assumptions})}, then a weak limit of the sequence~$\eta_n^{\ell_2} (u_{0,n}^h - u^h_0 -   \tilde \psi_n^{h,  L} +\nabla_h \Delta_h^{-1} \partial_3  \psi_n^L ) (\eta_n^{\ell_2} y_h +\tilde x_{n,h}^{\ell_2} ,\delta_n^{\ell_2} y_3+ \tilde x_{n,3}^{\ell_2}  )$  is~$\tilde \phi^{h,\ell_2}(y)$.

  $\bullet $ $ $  A similar result holds for~$\varepsilon_n^{\ell_2}(u_{0,n}^3 - u^3_0 -   \tilde \psi_n^{3,\tilde L} - \psi_n^L )  (\varepsilon_n^{\ell_2} y_h + x_{n,h}^{\ell_2} ,\gamma_n^{\ell_2} y_3+ x_{n,3}^{\ell_2}  )$, with the corresponding assumptions on the scales and cores.

\medskip
\noindent
  Then Assumption~{\rm \ref{sousdecompo}} holds.
      \end{prop}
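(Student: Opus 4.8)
\noindent The plan is to run a consistency computation: plug the profile decomposition of Proposition~\ref{prop:decompositiondata} into the rescaled sequences whose weak limits the hypothesis prescribes, compute these limits term by term, and read off the conclusions of Assumption~\ref{sousdecompo}. First I would fix $L\ge L_{0}$ large enough to contain every index appearing below, and rewrite the decomposition of the horizontal component with the ($\alpha$-independent) profiles $\tilde\phi^{h,\ell}=\tilde\phi^{h,\ell}_{\alpha}+\tilde r^{h,\ell}_{\alpha}$ and $\varphi^{\ell}=\phi^{\ell}_{\alpha}+r^{\ell}_{\alpha}$, in the form
$$
u_{0,n}^{h}-u_{0}^{h}-\tilde\psi_{n}^{h,L}+\nabla_{h}\Delta_{h}^{-1}\partial_{3}\psi_{n}^{L}=\sum_{\ell=1}^{L}\Lambda^{n}_{\boldsymbol{\eta^{\ell}},\boldsymbol{\delta^{\ell}},\boldsymbol{\tilde x^{\ell}}}\tilde\phi^{h,\ell}-\sum_{\ell=1}^{L}\frac{\varepsilon_{n}^{\ell}}{\gamma_{n}^{\ell}}\,\Lambda^{n}_{\boldsymbol{\varepsilon^{\ell}},\boldsymbol{\gamma^{\ell}},\boldsymbol{x^{\ell}}}\bigl(\nabla_{h}\Delta_{h}^{-1}\partial_{3}\varphi^{\ell}\bigr),
$$
so that the $\psi$-remainders cancel exactly. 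Then I would apply the rescaling of the first hypothesis — multiplication by $\eta_{n}^{\ell_{2}}$ and evaluation at $(\eta_{n}^{\ell_{2}}y_{h}+\tilde x_{n,h}^{\ell_{2}},\delta_{n}^{\ell_{2}}y_{3}+\tilde x_{n,3}^{\ell_{2}})$ — and pass to the weak limit in ${\mathcal D}'(\R^{3})$, up to extraction. From the elementary behaviour of the operators $\Lambda^{n}$ under such a rescaling (scaling, concentration, translation), from Lemma~\ref{orthoaniso}, and from the bound $\varepsilon_{n}^{\ell}/\gamma_{n}^{\ell}\to0$ in~(\ref{anisoscales}), I expect: the term $\ell=\ell_{2}$ to contribute $\tilde\phi^{h,\ell_{2}}(y)$; every term carrying the factor $\varepsilon_{n}^{\ell}/\gamma_{n}^{\ell}$ to contribute $0$; and a term $\ell\neq\ell_{2}$ to contribute $0$ unless $\eta_{n}^{\ell}\sim\eta_{n}^{\ell_{2}}$, $\delta_{n}^{\ell}/\delta_{n}^{\ell_{2}}\to\infty$, and the ratios $(\tilde x_{n,h}^{\ell}-\tilde x_{n,h}^{\ell_{2}})/\eta_{n}^{\ell_{2}}$ and $(\tilde x_{n,3}^{\ell}-\tilde x_{n,3}^{\ell_{2}})/\delta_{n}^{\ell}$ remain bounded, in which case it contributes a ``slab'' $y\mapsto\tilde\phi^{h,\ell}(y_{h}+c_{h}^{\ell},c_{3}^{\ell})$, independent of $y_{3}$, where $c_{h}^{\ell},c_{3}^{\ell}$ are the limits of those ratios. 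Writing $S$ for the finite set of such indices, Assumption~(\ref{assumptions}) puts $\ell_{1}$ in $S$, and since $(\tilde x_{n,3}^{\ell_{1}}-\tilde x_{n,3}^{\ell_{2}})/\delta_{n}^{\ell_{2}}$ converges while $\delta_{n}^{\ell_{2}}/\delta_{n}^{\ell_{1}}\to0$, its level is $c_{3}^{\ell_{1}}=0$.

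Comparing with the first hypothesis forces $\sum_{\ell\in S}\tilde\phi^{h,\ell}(\cdot+c_{h}^{\ell},c_{3}^{\ell})\equiv0$, and the heart of the proof is to disentangle this finite sum. Since the profiles indexed by $S$ are pairwise orthogonal with common horizontal scale $\eta_{n}^{\ell_{2}}$, and two of them with equal vertical scale would have separated cores — excluded by the boundedness of the core ratios above — the set $S$ is linearly ordered by vertical-scale growth, $S=\{\ell^{(1)},\dots,\ell^{(m)}\}$ with $\delta_{n}^{\ell^{(i)}}/\delta_{n}^{\ell^{(i+1)}}\to0$ and $\ell_{1}=\ell^{(j_{0})}$. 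One then checks, exactly as for the pair $(\ell_{1},\ell_{2})$ and using precisely the auxiliary conditions in~(\ref{assumptions}) — in particular $\tilde x_{n,3}^{\ell_{2}}/\delta_{n}^{\ell_{2}}\to a_{3}^{k}$ together with $c_{3}^{\ell_{1}}=0$ — that the pairs $(\ell_{1},\ell^{(i)})$ with $i<j_{0}$, and more generally $(\ell^{(i'')},\ell^{(i)})$ with $i<i''$ and $c_{3}^{\ell^{(i'')}}=0$, again fall under~(\ref{assumptions}); rescaling around $\ell^{(i)}$ and invoking the hypothesis there gives, up to a common horizontal shift, the vanishing of $\sum_{i'>i}\tilde\phi^{h,\ell^{(i')}}(\cdot+c_{h}^{\ell^{(i')}},c_{3}^{\ell^{(i')}})$. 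Telescoping these identities over the nested upper sets $\{i'>i\}$ then isolates each slab, yielding in particular $\tilde\phi^{h,\ell_{1}}(\cdot,0)\equiv0$, which is the first assertion of Assumption~\ref{sousdecompo}. The main obstacle I anticipate is exactly this bookkeeping of cores and vertical scales, i.e.\ verifying at each step that the relevant index pairs satisfy~(\ref{assumptions}). The third bullet of Assumption~\ref{sousdecompo} then follows verbatim from the second hypothesis, with the $(\boldsymbol{\varepsilon^{\ell}},\boldsymbol{\gamma^{\ell}},\boldsymbol{x^{\ell}})$-scales and the profiles $\phi^{\ell}$ in place of the $(\boldsymbol{\eta^{\ell}},\boldsymbol{\delta^{\ell}},\boldsymbol{\tilde x^{\ell}})$-scales and $\tilde\phi^{h,\ell}$.

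For the second bullet I would argue in the \emph{original} coordinates. When $\eta_{n}^{\ell_{i}}=1$, $\delta_{n}^{\ell_{i}}\to\infty$, $\tilde x_{n,h}^{\ell_{i}}$ is bounded (hence convergent, say to $\tilde x_{\infty,h}^{\ell_{i}}$) and $\tilde x_{n,3}^{\ell_{i}}/\delta_{n}^{\ell_{i}}\to\tilde a_{3}^{\ell_{i}}$, the profile $\Lambda^{n}_{\boldsymbol{\eta^{\ell_{i}}},\boldsymbol{\delta^{\ell_{i}}},\boldsymbol{\tilde x^{\ell_{i}}}}\tilde\phi^{h,\ell_{i}}$ converges weakly in ${\mathcal D}'(\R^{3})$ to the slab $x\mapsto\tilde\phi^{h,\ell_{i}}(x_{h}-\tilde x_{\infty,h}^{\ell_{i}},-\tilde a_{3}^{\ell_{i}})$. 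Passing to the weak limit in the decomposition of $u_{0,n}^{h}$ and using that $u_{0,n}\to u_{0}$ in ${\mathcal D}'$ — this is where the non-degeneracy hypothesis $u_{0}\not\equiv0$ enters, fixing $(1,1,0)$ as the reference scale against which the comparison is made — the slabs produced by these dilating profiles must cancel against the (concentrating, escaping, or bounded-scale) weak limits of the remaining terms, and the same linear-ordering and telescoping argument forces $\tilde\phi^{h,\ell_{i}}(\cdot,-\tilde a_{3}^{\ell_{i}})\equiv0$ for each $i$, and likewise for the profiles $\phi^{\ell}$.
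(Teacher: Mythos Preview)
Your approach is essentially the same as the paper's: plug the profile decomposition into the rescaled sequence, compute the weak limit term by term, identify which profiles survive as ``slabs'' independent of $y_3$, and disentangle the resulting finite sum of slabs by induction on the vertical scales. The identification of the surviving set $S$, the observation that $c_3^{\ell_1}=0$, and the handling of the second bullet in original coordinates all match the paper.

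The one organizational difference is worth noting. You fix an arbitrary pair $(\ell_1,\ell_2)$ satisfying~(\ref{assumptions}), obtain the full set $S$ of slab contributions, and then try to telescope over all of $S$ by repeatedly rescaling around each $\ell^{(i)}\in S$. This forces you to verify, at each step, that some pair $(\ell^{(i'')},\ell^{(i)})$ again satisfies~(\ref{assumptions}); as you yourself flag, this bookkeeping of cores and vertical scales is delicate, and in particular requires knowing $c_3^{\ell^{(i'')}}=0$ before the induction has produced it. The paper sidesteps this entirely: it first chooses $\ell_1$ to have the \emph{largest} vertical scale among all indices appearing in any pair satisfying~(\ref{assumptions}), and then chooses $\ell_2$ with the largest vertical scale among those paired with this $\ell_1$. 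With these maximal choices, any third index $k$ that would contribute a slab when rescaling around $\ell_2$ would itself satisfy~(\ref{assumptions}) with a vertical scale strictly larger than $\delta_n^{\ell_2}$, contradicting maximality; hence $S=\{\ell_1\}$ and the conclusion $\tilde\phi^{h,\ell_1}(\cdot,0)\equiv0$ follows immediately. The induction then proceeds to the next-largest $\ell_1$, using that the previously treated slabs are already known to vanish. This top-down ordering makes the telescoping trivial and removes the need to check~(\ref{assumptions}) for auxiliary pairs.
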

 \begin{proof}[Proof of Proposition~{\rm \ref{zeroatzeroagain}}]

  $\bullet $ $ $ We shall start by proving the result for a couple~$\ell_1\neq\ell_2$  chosen in~$\{1,\dots , L\}$  so that~$\delta_n^{\ell_1}$
 is the largest vertical scale among the vertical scales associated with all couples satisfying~{\rm(\ref{assumptions})}.

 \noindent We begin by noticing that the limit   (after extraction)  of~$  \displaystyle\frac{\tilde x_{n,3}^{\ell_1}}{ \delta_n^{\ell_1} }$ is  necessarily  zero since
 \begin{equation}\label{limxelldeltaell}
   \frac{\tilde   x_{n,3}^{\ell_1}}{ \delta_n^{\ell_1} } =  \left( \frac{\tilde x_{n,3}^{\ell_1} -\tilde x_{n,3}^{\ell_2} }{ \delta_n^{\ell_2} }
   + \frac{\tilde x_{n,3}^{\ell_2} }{ \delta_n^{\ell_2} }
    \right)\frac{ \delta_n^{\ell_2} }{ \delta_n^{\ell_1} } \to 0 \, .
 \end{equation}
  Without loss of generality we may also assume that for the index~$\ell_1$ we have chosen, $\delta_n^{\ell_2}$ is the largest vertical scale satisfying~(\ref{assumptions}).
By the   hypothesis of Proposition~\ref{zeroatzeroagain} we know that the weak limit of~$\eta_n^{\ell_2} \big(u_{0,n}^h - u_0^h-   \tilde \psi_n^{h,\tilde L} +\nabla_h \Delta_h^{-1} \partial_3  \psi_n^L \big) (\eta_n^{\ell_2} y_h + \tilde  x_{n,h}^{\ell_2} ,\delta_n^{\ell_2} y_3+  \tilde x_{n,3}^{\ell_2}  )$ is~$\tilde \phi^{h,\ell_2}(y)$. This weak limit may be explicitly computed: noticing that for any integer~$k$,
$$
\eta_n^{\ell_2}   \big( \Lambda^n_{\boldsymbol{\eta^{k}} ,\boldsymbol{\delta^{k}},\boldsymbol{\tilde x^{k}} }
\tilde \phi^{h,k}  \big)(\eta_n^{\ell_2} y_h + \tilde  x_{n,h}^{\ell_2} ,\delta_n^{\ell_2} y_3+  \tilde x_{n,3}^{\ell_2}  ) =
  \Lambda^n_{\frac{\boldsymbol{\eta^{k}}}{ \boldsymbol{\eta^{\ell_2}} } ,\frac{\boldsymbol{\delta^{k}}}{ \boldsymbol{\delta^{\ell_2}} } ,{\boldsymbol {\tilde x^{k,\ell_2}}} } \tilde \phi^{h,k} (y), $$
  with~$  {\tilde x_{n}^{k,\ell_2}} :=
   (\tilde x_{n}^{k} - \tilde x_{n}^{\ell_2} )^{\eta_n^{\ell_2}, \delta_n^{\ell_2}} \, ,  $
 we find that the weak  limit of such a term is zero except in three situations : if~$k = \ell_2$, if~$k = \ell_1$, or if
  \begin{equation}\label{lastpossibility}
  \eta_n^{k} =  \eta_n^{\ell_2} \, , \quad     \delta_n^{k} /  \delta_n^{\ell_2} \to \infty \, ,\quad
  (\tilde x_{n}^{k} - \tilde x_{n}^{\ell_2} )^{\eta_n^{k}, \delta_n^{k}}  \to a^{k,\ell_2} \in \R ^3 \,  .
\end{equation}
  If~$k = \ell_2$, then the function is simply equal to~$\tilde\phi^{h,\ell_2} (y)$, and if~$k = \ell_1$  then by~(\ref{assumptions}) the weak  limit is equal to~$\tilde  \phi^{h,\ell_1} (y_h + a_{h}^{\ell_1,\ell_2} , 0 ). $ Finally if~(\ref{lastpossibility}) were to hold then
  in particular~$k$ would satisfy the same properties as~$\ell_2$ in the statement of the proposition, while~$  \delta_n^{k} /  \delta_n^{\ell_2} \to \infty$, and that is impossible by choice of~$\ell_2$ as corresponding to the largest vertical scale satisfying~(\ref{assumptions}).

\noindent So finally the weak limit of~$\eta_n^{\ell_2} \big(u_{0,n}^h - u_0^h-   \tilde \psi_n^{h,\tilde L} +\nabla_h \Delta_h^{-1} \partial_3  \psi_n^L \big) (\eta_n^{\ell_2} y_h + \tilde  x_{n,h}^{\ell_2} ,\delta_n^{\ell_2} y_3+  \tilde x_{n,3}^{\ell_2}  )$  is~$\tilde\phi^{h,\ell_2} (y) +
\tilde  \phi^{\ell_1} (y_h + a_{h}^{\ell_1,\ell_2} , 0 ) ,
 $ hence necessarily by the assumptions of Proposition~\ref{zeroatzeroagain}, we have that~$\tilde  \phi^{h,\ell_1} (y_h + a_{h}^{\ell_1,\ell_2} , 0 ) \equiv 0$ so the result is proved in the case of the largest possible vertical scale.

\smallskip
\noindent Now  we can argue by induction for the other possible~$\ell^1$'s: suppose that~$\ell^1$  corresponds to the second largest for instance, then calling~$\delta_n^{\ell_0}$ the largest one, the same argument implies that the weak limit of the sequence~$\eta_n^{\ell_2} \big(u_{0,n}^h - u_0^h-   \tilde \psi_n^{h,\tilde L} +\nabla_h \Delta_h^{-1} \partial_3  \psi_n^L \big) (\eta_n^{\ell_2} y_h + \tilde  x_{n,h}^{\ell_2} ,\delta_n^{\ell_2} y_3+  \tilde x_{n,3}^{\ell_2}  )$  is the function~$\tilde\phi^{\ell_2} (y) +
\tilde  \phi^{h,\ell_1} (y_h + a_{h}^{\ell_1,\ell_2} , 0 )
 + \tilde  \phi^{h,\ell_0} (y_h + a_{h}^{\ell_0,\ell_2} , 0 )  = \tilde\phi^{\ell_2} (y) +
\tilde  \phi^{h,\ell_1} (y_h + a_{h}^{\ell_1,\ell_2} , 0 ) $ hence~$\tilde  \phi^{\ell_1} (y_h + a_{h}^{\ell_1,\ell_2} , 0 )  \equiv 0$
and by induction, the result is proved.

    \medskip
 \noindent   $\bullet $ $ $ The proof of the second point is  very similar: we first consider~$\ell_1$ 
  corresponding to the largest vertical scale  among the indexes satsfying~$\eta_n^\ell = 1$,~$\delta_n^\ell \to \infty$,~$\tilde x_{n,h}^\ell \to \tilde a_{h}^\ell$    bounded
   and~$\tilde x_{n,3}^\ell / \delta_n^\ell \to \tilde a_{3}^\ell\in \R$. If there is no other
  index satisfying those requirements then we notice that the weak limit of~$u_{0,n}^h - u_0^h-   \tilde \psi_n^{h,\tilde L} +\nabla_h \Delta_h^{-1} \partial_3  \psi_n^L$ is~$ \tilde\phi^{\ell} (y_h -  \tilde a_{h}^\ell , -  \tilde a_{3}^\ell) $, while we also know that it is zero, so the result follows. If there is a  second index satisfying those requirements, then we consider~$\delta_n^{\ell_2}$ the next largest vertical scale (by orthogonality it cannot be equal to~$\delta_n^{\ell_1}$) 	and we use the assumption of Proposition~\ref{zeroatzeroagain}, which implies that the weak limit of the sequence~$ (u_{0,n}^h - u_0^h-   \tilde \psi_n^{h,\tilde L} +\nabla_h \Delta_h^{-1} \partial_3  \psi_n^L ( y_h + \tilde  x_{n,h}^{\ell_2} ,\delta_n^{\ell_2} y_3+  \tilde x_{n,3}^{\ell_2}  )$  is the function~$\tilde\phi^{h,\ell_2} (y) $ while a direct computation gives the limit~$ \tilde\phi^{h,\ell_2} (y) +
\tilde  \phi^{h,\ell_1} (y_h - \tilde a_{h}^{\ell_1} + \tilde a_{h}^{\ell_2},    -  \tilde a_{3}^\ell )  $  and again we get the result.

\noindent  The rest of the argument is as above, by induction on the size of the vertical scales.

    \medskip
 \noindent $\bullet $ $ $  The proof is identical for the profiles~$\phi^\ell$.

    \medskip
  \noindent Proposition \ref{zeroatzeroagain} is proved.
 \end{proof}
\begin{rmk}\label{rmkass2}
 {\rm Assuming the hypotheses of Proposition~\ref{zeroatzeroagain} is actually quite natural. Indeed for any choice of sequences of cores~$(x_{n,h}^\ell)_{n \in \N}$ and of scales~$(\eta_n^\ell)_{n \in \N}$, one has that the sequence~$\eta_n^\ell (u_{0,n}^h - u^h_0 -   \tilde \psi_n^{h,\tilde L} +\nabla_h \Delta_h^{-1} \partial_3  \psi_n^L ) (\eta_n ^\ell y_h + x_{n,h}^\ell ,\delta_n ^\ell y_3+ x_{n,3}  )$  converges in~${\mathcal S}'$, and it is assumed here that the weak limit is precisely the profile~$\tilde \phi^{h,\ell}$. Note that
 for a profile decomposition in the space~$\dot B^{s,s'}_{p,q}$ that is obvious as soon as~$s<2/p$ and~$s'<1/p$. Here we have~$s' = 1/p$
 so this is a true assumption (in the same way as the sequence~$f(x_h,\varepsilon x_3)$ does not necessarily converge weakly to zero with~$\varepsilon$).

\smallskip
\noindent    For example the sequence provided in Remark~{\rm \ref{rmkdifferentscales}} satisfies Assumption~\ref{sousdecompo} since there is only one profile involved.

\smallskip
\noindent   More generally consider the sequence (assuming that~$0 \neq  \alpha  \neq \gamma,$ and~$ \beta_1,\beta_2 \neq \alpha, \beta_1 \neq \gamma$)
     $$
      2^{\alpha  n} \Big( f_1\big(2^{\alpha  n} x_1,2^{\alpha  n}  x_2 ,2^{\beta_1 n}  x_3  -a_3 \big) +    f_2\big(2^{\alpha n} x_1 ,2^{\alpha n} x_2,2^{\beta_2 n}   x_3\big)\Big) + 2^{\gamma n} f_3\big(2^{\gamma n} x_1,2^{\gamma n}  x_2 ,2^{\beta_1 n}  x_3 \big)  \, .
            $$
It clearly    satisfies   Assumption~\ref{defadf}. If~$\beta_2 = \beta_1$ then Assumption~\ref{sousdecompo} is also satisfied. If ~$\beta_1 < \beta_2<0$   then one must have~$f_1 (\cdot, -a_3)  \equiv 0$ to ensure Assumption~\ref{sousdecompo}: if there are two profiles with the same horizontal scale (here~$2^{-\alpha n}$) and different vertical scales going both to infinity (since~$\beta_2 \neq \beta_1$ and both are negative), then the profile with the largest vertical scale (here~$f_1\big(2^{\alpha  n} x_1,2^{\alpha  n}  x_2 ,2^{\beta_1 n}  x_3  -a_3 \big)$ since~$\beta_1 < \beta_2$), must vanish at~$x_3 = 0$. 
}
 \end{rmk}

\begin{rmk}\label{ifalsoell2}
 {\rm If it is assumed that the initial data is bounded also in~$L^2(\R^3)$, then    the same arguments as those leading to Lemma~\ref{orthoanisodiv1}
 allow to infer that the vertical scales~$\gamma_n^\ell$ and~$\delta_n^\ell$ must all go to zero. In particular    Assumption~\ref{sousdecompo} is unnecessary in that case since the hypotheses are never met.
 }
 \end{rmk}

\begin{rmk} \label{ifalsoell22}
 {\rm   Assumption~\ref{sousdecompo} is used in the following to show that profiles do not interact one with another (see Paragraph~\ref{studyforcingterm}).
 }
 \end{rmk}


 \section{Time evolution  of each profile, construction of an approximate solution}\label{globalprofile}

 In this section we
 shall construct an approximate solution to the Navier-Stokes equations by evolving in time each individual profile provided in Proposition~{\rm \ref{prop:decompositiondata}} -- or rather the version written in~(\ref{newformulationdecompositiondata}) --  either by the Navier-Stokes flow or by a linear transport-diffusion equation, depending on  the profiles. First we shall be needing a time-dependent version of the scaling operator~$ \Lambda^n_{\boldsymbol{\varepsilon} ,\boldsymbol{\gamma},\boldsymbol{x} }$ given in Definition~\ref{notationprofiles}.
    \begin{defi}\label{notationprofilestime} For any two sequences~$\boldsymbol{\varepsilon}=(\varepsilon_n)_{n \in \N}$ and~$\boldsymbol{\gamma}=(\gamma_n)_{n \in \N}$  of positive real numbers and any sequence~$\boldsymbol{x}=(x_n)_{n \in \N}$  in~$\R^3$ we define the scaling operator
 $$
 \widetilde \Lambda^n_{\boldsymbol{\varepsilon} ,\boldsymbol{\gamma},\boldsymbol{x} } \phi (t,x) :=
  \frac1 {\varepsilon_n}  \phi \left( \frac t {\varepsilon_n^2}, \frac{x_h-x_{n,h} }{\varepsilon_n}, \frac{x_3-x_{n,3}} {\gamma_n} \right) \, .
  $$
       \end{defi}

\noindent
Next  let us introduce some notation for function spaces naturally associated with the  resolution of the Navier-Stokes equations.  We refer to Appendix~{\rm \ref{appendixlp}} for   definitions.
   \begin{defi}\label{notationspaces}
   We define the following function spaces, for~$1 \leq p \leq \infty$ and~$0<q\leq \infty$:
    $$
     \begin{aligned}
       {\mathcal I}_{p,q} &:=   \bigcap_{r=1}^\infty \widetilde{L^r}\big(\R^+; \dot B^{-1+\frac 3p+\frac2r}_{p,q}(\R^3)\big) \, , \\
     {\mathcal A}_{p,q} &:=
      \bigcap_{r=1}^\infty \,      \widetilde{L^r}(\R^+;\dot B^{-1+\frac2p+\frac2r ,\frac1p  }_{p,q}) \, ,\\
          {\mathcal S}_{p,q}&:=  \widetilde{L^\infty}(\R^+;\dot B^{-1+\frac2p  ,\frac1p  }_{p,q} )  \cap \widetilde {L^1}
         (\R^+;\dot B^{1+\frac2p  ,\frac1p  }_{p,q} \cap \dot B^{ -1+\frac2p  ,2+\frac1p  }_{p,q} ) \, .
     \end{aligned}
     $$
            \end{defi}
        \begin{rmk}\label{onthefunctionspaces}{\rm
The spaces defined above are
 natural spaces for the resolution of the Navier-Stokes equations: for instance
 $   {\mathcal I}_{p,\infty}$ is associated with small data in~$\dot B^{-1+\frac 3p}_{p,\infty}(\R^3)$ (see~\cite{Cannone},\cite{Planchon}, as well as~\cite{BCD}) and~$ {\mathcal S}_{p,1}$ with small  data in~${\dot B}^{-1+\frac2p,\frac1p}_{p,1}$ (see Appendix~{\rm \ref{globalsmallaniso}}).  Note that~$ {\mathcal A}_{p,q}$  contains strictly~${\mathcal S}_{p,q}$ and~${\mathcal A}_{p_1,q}$ is embedded in~${\mathcal A}_{p_2,q}$ as soon as~$p_1 \leq p_2$, and similarly for~${\mathcal S}_{p_1,q}$ and~${\mathcal S}_{p_2,q}$.}
\end{rmk}
\begin{rmk}\label{tildelambdainvariant} {\rm
The operator~$  \widetilde \Lambda^n_{\boldsymbol{\varepsilon} ,\boldsymbol{\gamma},\boldsymbol{x} } $   is an isometry in~$   {\mathcal A}_{p,q}$ for all~$1 \leq p \leq \infty$ and~$0<q\leq \infty$. That is however not the case for the space~${\mathcal S}_{p,q}$.}
\end{rmk}

\noindent  Now let us consider the decomposition~(\ref{newformulationdecompositiondata}), and evolve   each term in time so as to construct by superposition an approximate solution to the Navier-Stokes equations with data~$u_{0,n}$. We leave to Section~\ref{superpositionglobal} the proof that the superposition is indeed an approximate solution to~(NS).

 \medskip
 \noindent
 $\bullet$ $ $ The first term of the decomposition~(\ref{newformulationdecompositiondata}) is the weak limit~$u_0 \in {\mathcal B}^1_q  $, which gives rise to a unique, global solution by assumption: we  define~$u \in  {\mathcal S}_{1,1}(\infty)$ the associate global solution.
 Due to Corollary~\ref{strongstability} stated page~\pageref{strongstability}, we know that actually~$u$ belongs to~$  {\mathcal S}_{1,1}$.
 
   \bigskip
 \noindent
 $\bullet$ $ $ Let us
 turn to the profiles   in the decomposition~(\ref{newformulationdecompositiondata}), namely first the terms
  $$
\tilde  \varphi_{0,n}^{\ell} :=\displaystyle \Lambda^n_{\boldsymbol{\eta^{\ell}} ,\boldsymbol{\delta^{\ell}},\boldsymbol{\tilde x^{\ell}} } \left(
\tilde  \phi_\alpha^{h,\ell}   -\frac{ \eta_n^{\ell} }{\delta_n^{\ell} }
 (\nabla_h \Delta_h^{-1} \partial_3  \phi_\alpha^{\kappa(\ell)}  )
,  \phi_\alpha^{\kappa(\ell)}
\right) $$
for  any~$\ell \in \N$.
We use the notation of Appendix~\ref{globalsmallaniso}, and in particular that of Theorem~\ref{globalaniso}.

\begin{lem}\label{globalfirstprofile}
Let~$\ell \in \N$. There is~$ \tilde L_0$, independent of~$n$ and~$\alpha$, such that the following properties hold.

\medskip
\noindent $\bullet$ $ $  If~$\ell \geq \tilde L_0$ and~$\kappa(\ell) \geq \tilde L_0$, then
for all~$\alpha \in (0,1)$ and~$n$ large enough, $\tilde  \varphi_{0,n}^{\ell} $ belongs to~$\cG$ and the associate solution~$ \tilde u^\ell_n$ to~{\rm(NS)} satisfies
\begin{equation}\label{smallXp}
 \forall \ell \geq \tilde L_0\, \, s.t.\,  \, \kappa(\ell) \geq \tilde L_0  \,, \quad \|\tilde u^\ell_n  \|_{ {\mathcal S}_{3,1}} \leq 2 \big( \|\tilde  \phi_\alpha^{h,\ell} \|_{\dot B^{-\frac13,\frac13}_{3,1}} + \|   \phi_\alpha^{\kappa(\ell)} \|_{\dot B^{-\frac13,\frac13}_{3,1}} \big)\leq 2 c_0  \, .
\end{equation}
 \medskip
\noindent $\bullet$ $ $  For every~$\ell \in \N$, if~$\eta_n^\ell /\delta_n^\ell $ converges to~$\infty$ when~$n$ goes to infinity, then for all~$\alpha \in (0,1)$ and for~$n$ large enough~$\tilde  \varphi_{0,n}^{\ell} $ belongs to~$\cG$: the associate solution~$ \tilde u^\ell_n$  to~{\rm(NS)} is bounded in~${\mathcal S}_{3,1}$ and satisfies for all~$1 \leq r \leq \infty$ and all~$  \displaystyle   \frac13  \leq  \sigma \leq  \frac13 +\frac2r  $
\begin{equation}\label{smallkato}
 \tilde u^\ell_n  \to 0  \quad \mbox{in} \quad
\widetilde{L^{r}}(\R^+; \dot B^{-\frac13 + \sigma ,\frac2r - \sigma+\frac13}_{3,1})  \, , \quad  n \to \infty \, .
 \end{equation}

\medskip
\noindent $\bullet$ $ $  For every~$\ell \in \N$, if~$\eta_n^\ell /\delta_n^\ell $ converges to~$0$ when~$n$ goes to infinity,  then for all~$\alpha \in (0,1)$ and for~$n$ large enough~$\tilde  \varphi_{0,n}^{\ell}$  belongs to~$\cG$:  the associate solution~$ \tilde u^\ell_n$  to~{\rm(NS)}  is uniformly bounded in the space~${\mathcal S}_{1,1}$ and satisfies for all~$\alpha \in (0,1)$
\begin{equation}\label{explicitformjlarge}
\begin{aligned}
  \tilde u^\ell_n   =\widetilde \Lambda^n_{\boldsymbol{\eta^{\ell}} ,\boldsymbol{\delta^{\ell}},\boldsymbol{\tilde x^{\ell}} }
  \left(\tilde  U^{h,\ell}  +\frac{ \eta_n^{\ell} }{\delta_n^{\ell} }
   U_n^{\kappa(\ell),h}    ,U_n^{\kappa(\ell),3}
 \right)
  + \tilde R_n^{\ell} \,  ,  \quad \tilde R_n^{\ell} \,  \mbox{bounded in} \,\, {\mathcal S}_{3,1}  \\
  \mbox{with}  \quad  \tilde R_n^{\ell}  \to 0 \: \: \mbox {in} \: \:\widetilde{L^{2}}(\R^+; \dot B^{\frac23 ,\frac13}_{3,1})\cap L^1(\R^+ ; \dot B^{\frac53, \frac13}_{3,1} \cap \dot B^{\frac23, \frac43}_{3,1}) \, , \quad  n \to \infty \,   ,
\end{aligned}
\end{equation}
 while~$\tilde  U^{h,\ell}$,~$U_n^{\kappa(\ell),3}$ and~$\displaystyle   \frac{ \eta_n^{\ell} }{\delta_n^{\ell} }
   U_n^{\kappa(\ell),h}   $ are      smooth and bounded in~${\mathcal S}_{1,1}$.

  \noindent Finally if~$\tilde  \phi^{h,\ell}(\cdot,z_3) \equiv   \phi^{{\kappa(\ell)} }(\cdot,z_3) \equiv 0$ for some~$z_3 \in \R$, then for all~$s \geq 0$,
\begin{equation}\label{smallerthanalphaatz3}
  \limsup_{n \to \infty} \big\|   \tilde  U^{h,\ell} (\cdot, z_3)  +  U_n^{\kappa(\ell),3}   (\cdot, z_3)\big\|_{L^\infty(\R^+;H^s(\R^2)) \cap L^2(\R^+;\dot H^{s+1}(\R^2))}\lesssim \alpha   \, .
\end{equation}
 \end{lem}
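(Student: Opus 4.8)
The plan is to treat the three items of the lemma separately. The common tools are: $\Lambda^n_{\boldsymbol{\eta^\ell},\boldsymbol{\delta^\ell},\boldsymbol{\tilde x^\ell}}$ is an isometry on each critical space $\dot B^{-1+\frac2p,\frac1p}_{p,q}$ and $\widetilde\Lambda^n_{\boldsymbol{\eta^\ell},\boldsymbol{\delta^\ell},\boldsymbol{\tilde x^\ell}}$ one on $\cA_{p,q}$ (Remarks~\ref{isometry} and~\ref{tildelambdainvariant}); (NS) is invariant under the parabolic rescaling $u\mapsto\lambda u(\lambda^2\cdot,\lambda\cdot)$; and rescaling parabolically by $\eta_n^\ell$ turns each $\tilde\varphi_{0,n}^\ell$ into a datum depending on $n$ only through $\mu_n:=\eta_n^\ell/\delta_n^\ell$, which by \eqref{existencekappa}--\eqref{anisoscales} tends to $0$ when $\kappa(\ell)$ is defined and to $\infty$ otherwise (in which case $\phi_\alpha^{\kappa(\ell)}\equiv0$). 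Only the smooth parts $\tilde\phi_\alpha^{h,\ell},\phi_\alpha^{\kappa(\ell)}$ of the profiles are propagated here. For the first item I would fix $\tilde L_0$ from the tail of a convergent series: \eqref{orthonorms} and the embedding ${\mathcal B}^1_q\hookrightarrow\dot B^{-\frac13,\frac13}_{3,1}$ (valid for $q<1$, by Bernstein together with $\ell^q\hookrightarrow\ell^1$) bound $\sum_\ell\bigl(\|\tilde\phi_\alpha^{h,\ell}\|_{\dot B^{-\frac13,\frac13}_{3,1}}+\|\phi_\alpha^\ell\|_{\dot B^{-\frac13,\frac13}_{3,1}}\bigr)$ uniformly in $\alpha$ --- using $\|\phi_\alpha^\ell\|_{{\mathcal B}^1_q}\le2\|\varphi^\ell\|_{{\mathcal B}^1_q}$ from \eqref{rlalpha} --- so I pick $\tilde L_0$ so that this tail is below half the smallness threshold $c_0$ of Theorem~\ref{globalaniso}. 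For $\ell,\kappa(\ell)\ge\tilde L_0$ the correction $\mu_n\nabla_h\Delta_h^{-1}\partial_3\phi_\alpha^{\kappa(\ell)}$ has $\dot B^{-\frac13,\frac13}_{3,1}$--norm $O(\mu_n)\to0$ (by the smoothness of $\phi_\alpha^{\kappa(\ell)}$), so by the isometry property $\|\tilde\varphi_{0,n}^\ell\|_{\dot B^{-\frac13,\frac13}_{3,1}}\le c_0$ for $n$ large, and Theorem~\ref{globalaniso} yields membership in $\cG$ together with \eqref{smallXp}.

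For the second item ($\mu_n\to\infty$), the profile is $\Lambda^n_{\boldsymbol{\eta^\ell},\boldsymbol{\delta^\ell},\boldsymbol{\tilde x^\ell}}(\tilde\phi_\alpha^{h,\ell},0)$, which after the parabolic rescaling becomes $(\tilde\phi_\alpha^{h,\ell},0)(y_h,\mu_n y_3)$: a fixed smooth divergence free profile oscillating faster and faster in the vertical variable. Since $\tilde\phi_\alpha^{h,\ell}$ has vertical frequencies confined to a fixed annulus, this datum has $\dot B^{-1+\frac3p}_{p,\infty}$--norm of order $\mu_n^{-1+\frac2p}\to0$ for every $p>2$. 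I would then invoke the classical small-data theory (\cite{Cannone},\cite{Planchon}, supplemented by \cite{cg3} for the anisotropic estimates) to get a global solution, and run a bootstrap in $\cS_{3,1}$ --- the free evolution being controlled by the finite (but not small) norm $\|\tilde\varphi_{0,n}^\ell\|_{\dot B^{-\frac13,\frac13}_{3,1}}$ and the Duhamel term by the small $\dot B^{-1+\frac3p}_{p,\infty}$--scale norms --- to obtain the uniform $\cS_{3,1}$--bound. Interpolating this uniform bound against the vanishing $\dot B^{-1+\frac3p}_{p,\infty}$--norm of the data (for $p>2$) then yields the convergence \eqref{smallkato}.

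For the third item ($\mu_n\to0$), I would compute --- using $\partial_3\bigl(\phi(y_h,\mu y_3)\bigr)=\mu(\partial_3\phi)(y_h,\mu y_3)$ and the fact that $\nabla_h\Delta_h^{-1}$ acts only on the horizontal variables --- that the parabolic rescaling of $\tilde\varphi_{0,n}^\ell$ by $\eta_n^\ell$ equals $g_{\mu_n}:=\bigl(\Phi_h^n-\nabla_h\Delta_h^{-1}\partial_3\Phi_3^n,\ \Phi_3^n\bigr)$ with $\Phi_h^n(y):=\tilde\phi_\alpha^{h,\ell}(y_h,\mu_n y_3)$ and $\Phi_3^n(y):=\phi_\alpha^{\kappa(\ell)}(y_h,\mu_n y_3)$: a divergence free datum slowly varying in the vertical direction, which is exactly the class for which \cite{cgz} (and \cite{cg3}) prove global well-posedness for $\mu_n$ small, together with an asymptotic expansion of the solution. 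Transcribing those results and undoing the rescaling: $g_{\mu_n}\in\cG$, the solution is $\widetilde\Lambda^n_{\boldsymbol{\eta^\ell},\boldsymbol{\delta^\ell},\boldsymbol{\tilde x^\ell}}\bigl(\tilde U^{h,\ell}+\mu_n U_n^{\kappa(\ell),h},U_n^{\kappa(\ell),3}\bigr)+\tilde R_n^\ell$ with $\tilde R_n^\ell$ bounded in $\cS_{3,1}$ and tending to $0$ in the weaker norms of \eqref{explicitformjlarge}, where $(\tilde U^{h,\ell},U_n^{\kappa(\ell),3})$ solves the limit system --- a family, parametrised by $y_3$, of two-dimensional Navier--Stokes equations in the horizontal variables coupled to a transport--diffusion equation, with data $(\tilde\phi_\alpha^{h,\ell},\phi_\alpha^{\kappa(\ell)})$ --- and is smooth and $\cS_{1,1}$--bounded because the profiles are; $\tilde u_n^\ell$ itself is then uniformly bounded in $\cS_{1,1}$. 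Finally, when $\tilde\phi^{h,\ell}(\cdot,z_3)\equiv\phi^{\kappa(\ell)}(\cdot,z_3)\equiv0$, the data of these limit systems at vertical level $z_3$ reduces, for a suitable choice of the regularisation \eqref{rlalpha}, to the restriction of $-\tilde r_\alpha^{h,\ell}$ and $-r_\alpha^{\kappa(\ell)}$ to $\{y_3=z_3\}$, which a trace inequality for ${\mathcal B}^1_q$ bounds by $\lesssim\alpha$; since the limit systems propagate smallness of their data, \eqref{smallerthanalphaatz3} follows, the $\limsup_n$ absorbing the $\mu_n U_n^{\kappa(\ell),h}$ term and the residual $n$--dependence. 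This last point is exactly where Assumption~\ref{sousdecompo} enters.

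The main obstacle will be the third item: one has to verify that the rescaled profiles genuinely satisfy the hypotheses of the global well-posedness theorem of \cite{cgz} for slowly varying data --- that $g_\mu=(\Phi_h-\nabla_h\Delta_h^{-1}\partial_3\Phi_3,\Phi_3)$ is precisely the divergence free structure required there and that Assumption~\ref{sousdecompo} supplies exactly the vanishing needed to solve the limit system globally --- and then extract from that theorem the precise form of the expansion \eqref{explicitformjlarge} and the refined bound \eqref{smallerthanalphaatz3}; the trace inequality and a choice of regularisation compatible with it are the delicate technical points. The second item is easier but still needs care with the anisotropic norms, and the first is the routine small-data case.
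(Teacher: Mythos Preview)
Your proposal follows essentially the same route as the paper, and the overall structure is sound. A few points of comparison and one correction:

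For the second bullet, the paper does not rescale first: it computes directly that $\|\Lambda^n_{\boldsymbol{\eta^\ell},\boldsymbol{\delta^\ell},\boldsymbol{\tilde x^\ell}}(\tilde\phi_\alpha^{h,\ell},0)\|_{\dot B^0_{3,1}}\lesssim(\delta_n^\ell/\eta_n^\ell)^{1/3}$, applies the isotropic small-data theory of \cite{kato,Planchon} in $\widetilde L^\infty(\dot B^0_{3,1})\cap\widetilde L^1(\dot B^2_{3,1})$, and then uses Proposition~\ref{propanisoiso} (the embedding of isotropic into anisotropic Besov spaces) plus interpolation to obtain \eqref{smallkato}. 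Your rescaled version with $\dot B^{-1+3/p}_{p,\infty}$ for $p>2$ is equivalent and your rate $\mu_n^{-1+2/p}$ is correct, but the paper's direct isotropic argument is cleaner and avoids the somewhat vague ``bootstrap in $\cS_{3,1}$'' you describe; the $\cS_{3,1}$ bound comes instead from the fact that $\widetilde L^2(\dot B^{2/3,1/3}_{3,1})$ controls (NS) via Theorem~\ref{globalaniso}.

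For the third bullet, the key reference is Theorem~3 of \cite{cg3} (not \cite{cgz}), and here you have a genuine misconception: Assumption~\ref{sousdecompo} is \emph{not} needed to solve the limit system globally. The limit system is a family of two-dimensional Navier--Stokes equations (globally well-posed for any data) coupled with a linear transport--diffusion equation $(T^\varepsilon_{\underline v})$, and \cite{cg3} gives global existence for the full slowly-varying datum with no vanishing hypothesis whatsoever. The vanishing condition $\tilde\phi^{h,\ell}(\cdot,z_3)\equiv0$ is the \emph{hypothesis} of the final statement \eqref{smallerthanalphaatz3}, not an ingredient of its proof; Assumption~\ref{sousdecompo} only enters later (Section~\ref{superpositionglobal}) to \emph{supply} that hypothesis when controlling interactions between distinct profiles. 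Your identification of the trace argument for \eqref{smallerthanalphaatz3} as the delicate point is apt; the paper handles it tersely via the construction \eqref{rlalpha} (so that $\tilde\phi_\alpha^{h,\ell}(\cdot,z_3)=-\tilde r_\alpha^{h,\ell}(\cdot,z_3)$) together with Proposition~3.2 of \cite{cgz} for $U_n^{\kappa(\ell)}$.
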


\begin{proof} [Proof of Lemma~{\rm \ref{globalfirstprofile}}]
 $\bullet$ $ $ By the stability property~{\rm (\ref{orthonorms})}, for all~$\beta >0$ there is~$\tilde L (\beta)$ such that
if~$\ell \geq \tilde L (\beta)$ and~$\kappa(\ell)\geq \tilde L (\beta)$, then
$$
 \|\tilde  \phi_\alpha^{h,\ell} \|_{{\mathcal B}^{1}_q} + \|   \phi_\alpha^{\kappa(\ell)} \|_{{\mathcal B}^{1}_q}  \leq \beta \, .
$$
Then if~$\beta$ is small enough,  in particular~$\tilde  \phi_\alpha^{h,\ell}$ is smaller than, say~$c_0/2$ in~$\dot B^{-\frac13,\frac13}_{3,1}$ (by Sobolev embeddings).

\noindent
Now let~$\alpha>0$ be given and let us consider the initial data~$(\displaystyle -\frac{ \eta_n^{\ell} }{\delta_n^{\ell} }
\nabla_h \Delta_h^{-1} \partial_3  \phi_\alpha^{\kappa(\ell)}
,   \phi_\alpha^{\kappa(\ell)} )$. 
Notice that  the only possible limit for the ratio of scales associated with~$\phi_\alpha^{\kappa(\ell)} $ is zero by Proposition~\ref{prop:decompositiondata}, so we can restrict our attention here to the case when~${ \eta_n^{\ell} } / {\delta_n^{\ell} } \to 0$. 
By construction of~$   \phi_\alpha^{ \kappa(\ell)}$ in~(\ref{rlalpha}), 
 the vector field~$
\nabla_h \Delta_h^{-1} \partial_3  \phi_\alpha^{\kappa(\ell)}
$ belongs to~${\mathcal B}^{1}_q$ for each given~$\alpha$, hence since~$\eta_n^\ell /\delta_n^\ell $ converges to~$0$ when~$n$ goes to infinity, then for~$n$ large enough and for~$\kappa(\ell)\geq \tilde L (\beta)$
$$
\Big\| -\frac{ \eta_n^{\ell} }{\delta_n^{\ell} }
 (\nabla_h \Delta_h^{-1} \partial_3  \phi_\alpha^{\kappa(\ell)}  )
,  \phi_\alpha^{\kappa(\ell)} \Big\|_{{\mathcal B}^{1}_q} \leq 2 \beta \, .
$$
Finally choosing~$\beta \leq c_0/4$, for~$\ell \geq \tilde L (\beta)$,~$\kappa(\ell)\geq \tilde L (\beta)$
and~$n$ large enough (depending on~$\ell$ and~$\alpha$)  Theorem~{\rm \ref{globalaniso}}  applies (using also Remark~{\rm \ref{isometry}}) to yield that~$\tilde  \varphi_{0,n}^{\ell}$  belongs to~$\cG$  and~{\rm (\ref{smallXp})} holds.

\medskip\noindent
  $\bullet$ $ $If~$\eta_n^\ell /\delta_n^\ell $ converges to~$\infty$,  then we observe that~$\phi_\alpha^{\kappa(\ell)} \equiv 0$ (since  as recalled above the only possible limit for the ratio of scales associated with~$\phi_\alpha^{\kappa(\ell)} $ is zero) and we have by a direct computation
   $$
\left\| \Lambda^n_{\boldsymbol{\eta^{\ell}} ,\boldsymbol{\delta^{\ell}},\boldsymbol{\tilde x^{\ell}} } \left(
\tilde  \phi_\alpha^{h,\ell} ,0\right)\right\|_{ \dot B^{0}_{3,1} } \lesssim  \left( \frac{\delta_n^\ell
}{\eta_n^\ell} \right)^\frac13 \, .
$$
In particular for~$n$ large enough the data is small in~$ \dot B^{0}_{3,1} $   so small data theory of~\cite{kato} and~\cite{Planchon} (see also~\cite{BCD}) gives the result:
there is a global solution to (NS) associated with that initial data,  which goes to zero (like~$({\delta_n^\ell
}/{\eta_n^\ell})^\frac13$) in~$\widetilde {L^\infty}(\R^+ ; \dot B^{0}_{3,1}) \cap\widetilde {L^1}(\R^+ ; \dot B^{2}_{3,1} ) $. By
  Proposition~\ref{propanisoiso} and interpolation,  it therefore goes to zero in~$\widetilde{L^{r}}(\R^+; \dot B^{-\frac13 + \sigma ,\frac2r - \sigma+\frac13}_{3,1}) $ for all~$1 \leq r \leq \infty$ and all~$\sigma \in \displaystyle [\frac13 \, , \frac13 +\frac2r  ]$, as expected.

\smallskip
\noindent In particular~$\widetilde u_n^\ell$ is bounded in~$ \widetilde{L^{2}}(\R^+; \dot B^{\frac23,\frac13}_{3,1})$ which controls the Navier-Stokes equation  for data in~$\dot B^{-\frac13,\frac13}_{3,1}$  (see Theorem~\ref{globalaniso}), so we get in particular that~$\widetilde u_n^\ell$ is bounded in~$ {\mathcal S}_{3,1}$.

\medskip
  \noindent
   $\bullet$ $ $Conversely let us suppose that~$\eta_n^\ell /\delta_n^\ell $ converges to~$0$.
  Then by  (isotropic) scale and translation invariance of (NS) we can first rescale by~$\eta_n^\ell $ and translate by~$\widetilde x_n^\ell$, hence   consider the initial data
   $$
  \begin{aligned}
\widetilde \phi_{0,n}^\ell (x) & :=  \displaystyle \Lambda^n_{\boldsymbol{1} ,\boldsymbol{\frac{\delta^\ell}{\eta^{\ell}}},\boldsymbol{0}}
 \left( \tilde  \phi_\alpha^{h,\ell}   -\frac{ \eta_n^{\ell} }{\delta_n^{\ell} }
 (\nabla_h \Delta_h^{-1} \partial_3  \phi_\alpha^{\kappa(\ell)}  ) ,
  \phi_\alpha^{\kappa(\ell)}
\right) (x)\\
& =  \left( \tilde  \phi_\alpha^{h,\ell} -\frac{ \eta_n^{\ell} }{\delta_n^{\ell} }
 (\nabla_h \Delta_h^{-1} \partial_3  \phi_\alpha^{\kappa(\ell)}  ),  \phi_\alpha^{\kappa(\ell)} \right) (  x_h ,  \frac{\eta^{\ell}_n} {\delta^{\ell}_n} x_3 )
 \,.
  \end{aligned}$$
 Since~$\eta_n^\ell /\delta_n^\ell \to 0$ as~$n$ goes to infinity,
we can rely on Theorem 3 in~\cite{cg3} which states that as soon as~$\eta_n^\ell /\delta_n^\ell$ is small enough (depending on norms of the profiles~$\tilde \phi_\alpha^{h,\ell} ,  \phi_\alpha^{\kappa(\ell)} $), then~$\widetilde \phi_{0,n}^\ell$ belongs to~$\cG$ and  according to~\cite{cg3} the solution to~(NS) associated with~$\widetilde \phi_{0,n}^\ell$ is of the form
$$
    \big (\tilde  U^{h,\ell}   +\frac{ \eta_n^{\ell} }{\delta_n^{\ell} }
 U_n^{\kappa(\ell),h}    ,U_n^{\kappa(\ell),3}
\big ) (  t, x_h ,  \frac{\eta^{\ell}_n} {\delta^{\ell}_n} x_3 )
  + \tilde r_n^{\ell} (t,x)
$$
where for each~$z_3 \in \R$, $\tilde  U^{h,\ell}(\cdot,z_3)$ is the global solution to the two-dimensional Navier-Stokes equations with data~$\tilde  \phi_\alpha^{h,\ell} (\cdot,z_3)$, while~$U_n^{\kappa(\ell)}$ is a divergence-free vector field solving the linear transport-diffusion equation~$(T_{\underline v}^{\varepsilon} )$ of~\cite{cg3}
 with~$\underline v = \tilde  U^{h,\ell}$ and~$\varepsilon =  \eta_n^{\ell} /\delta_n^{\ell} $, with data~$  \displaystyle \big( - \nabla_h \Delta_h^{-1} \partial_3  \phi_\alpha^{\kappa(\ell)}    ,
  \phi_\alpha^{\kappa(\ell)} \big)$: we have, for some pressure~$p_n^{\kappa(\ell)} $
 $$
\partial_tU_n^{\kappa(\ell)} +\tilde  U^{h,\ell} \cdot \nabla^hU_n^{\kappa(\ell)} -\Delta_hU_n^{\kappa(\ell)}
-\left(\frac{ \eta_n^{\ell} }{\delta_n^{\ell} } \right)^2\partial_3^2U_n^{\kappa(\ell)}= -\left(\nabla^h,  \Big(\frac{ \eta_n^{\ell} }{\delta_n^{\ell} } \Big)^2 \partial_{3}\right) p_{n} ^{\kappa(\ell)}  \, .
 $$
 Both~$ \tilde  U^{h,\ell}$ and~$U_n^{\kappa(\ell)}$ are as smooth as needed.

 \smallskip
 \noindent In particular relying on~\cite{cg3} Proposition~3.2, and~\cite{ghz} (where estimates in the -- more difficult -- inhomogeneous situation are obtained), we have that~$\tilde  U^{h,\ell}$, $U_n^{\kappa(\ell),3}$ and~$\displaystyle   \frac{ \eta_n^{\ell} }{\delta_n^{\ell} }
   U_n^{\kappa(\ell),h}   $ are    bounded in~${\mathcal S}_{2,1} $. It is not difficult to prove also (for instance using the estimates of Appendix~\ref{globalsmallaniso}) that they are bounded in~${\mathcal S}_{1,1} $.

 \smallskip
 \noindent
 Furthermore~$\widetilde r_n^\ell $ goes to zero in~${\mathcal I}_{2,1}$    by~\cite{cg3}  (actually the result of~\cite{cg3} only states the convergence to zero in~$L^\infty(\R^+;\dot H^\frac12) \cap  L^2(\R^+;\dot H^\frac32)$  but it is clear from the proof that it can be extended all the way to~${\mathcal I}_{2,1} $). It then suffices to unscale to the original data to find the form~{\rm (\ref{explicitformjlarge})},  with~$\widetilde R_n^\ell   $ going to zero in~${\mathcal I}_{2,1}$. We infer in particular  by Proposition~\ref{propanisoiso} and Sobolev embeddings that~$\widetilde R_n^\ell   $ goes to zero in~$\widetilde{L^{2}}(\R^+; \dot B^{\frac23 ,\frac13}_{3,1})\cap L^1(\R^+ ; \dot B^{\frac53, \frac13}_{3,1} \cap \dot B^{\frac23, \frac43}_{3,1})$ as required.
   Finally let us prove that~$\widetilde R_n^\ell $ is bounded in~${\mathcal S}_{3,1}$. We notice that due to the above bounds, the function~$\widetilde u_n^\ell$ solves~(NS) and is bounded in~$\widetilde{L^{2}}(\R^+; \dot B^{\frac23 ,\frac13}_{3,1})$ since that holds for
  the right-hand side of~(\ref{explicitformjlarge})  by direct inspection.
    By Theorem~\ref{globalaniso} this implies that~$\widetilde u_n^\ell$ is bounded in particular in~$\widetilde{L^{\infty}}(\R^+; \dot B^{-\frac13 ,\frac13}_{3,1})$, which proves the result for~$\widetilde R_n^\ell $ again inspecting   the formula~(\ref{explicitformjlarge}) giving~$ \widetilde u_n^\ell - \widetilde R_n^\ell $ and recalling that~$\eta_n^\ell /\delta_n^\ell \to 0$ as~$n$ goes to infinity.

\medskip
\noindent To conclude suppose that~$\tilde  \phi^{h,\ell}(\cdot,z_3) \equiv \phi^{\kappa(\ell)}(\cdot,z_3) \equiv 0$ for some~$z_3 \in \R$. Then by construction of~$\phi_\alpha^\ell$ in~(\ref{rlalpha}) and that of~$\tilde  U^{h,\ell} $ recalled above, the result follows for~$\tilde  U^{h,\ell} (t,\cdot,z_3)$. For~$U_n^{\kappa(\ell)}(t,\cdot,z_3)$ we get the result from Proposition 3.2 of~\cite{cgz}.

\medskip
\noindent
 Lemma~{\rm \ref{globalfirstprofile}} is proved.
  \end{proof}

 \bigskip
 \noindent
 $\bullet$ $ $ Now let us
 consider~$\displaystyle  \Lambda^n_{\boldsymbol{\varepsilon^{\ell}} ,\boldsymbol{\gamma^{\ell}},\boldsymbol{x^{\ell}} } \left( -\frac{ \varepsilon_n^{\ell} }{\gamma_n^{\ell} }
 (\nabla_h \Delta_h^{-1} \partial_3  \phi_\alpha^{\ell}  ) (x)
,
  \phi_\alpha^{\ell} (x)
\right)$, when~$\ell \notin {\mathcal K}(\infty)$.
 \begin{lem}\label{globalsecondprofile}
Assume~$\ell \notin {\mathcal K}(\infty)$.
Then there is~$  L_0$, independent of~$n$ such that the following result holds. For any~$\ell$ and for~$n$ large enough,~$\displaystyle  \Lambda^n_{\boldsymbol{\varepsilon^{\ell}} ,\boldsymbol{\gamma^{\ell}},\boldsymbol{x^{\ell}} } \left( -\frac{ \varepsilon_n^{\ell} }{\gamma_n^{\ell} }
 (\nabla_h \Delta_h^{-1} \partial_3  \phi_\alpha^{\ell}  ) (x)
,
  \phi_\alpha^{\ell} (x)
\right)$  belongs to~$\cG$ and  the associate solution~$   u^\ell_n$ to~{\rm(NS)} enjoys the following properties.

\medskip
\noindent $\bullet$ $ $  For every~$\ell \geq   L_0$, $\alpha \in (0,1)$ and~$n \in \N$ large enough,
\begin{equation}\label{smallXpsecond}
 \|  u^\ell_n  \|_{ {\mathcal S_{3,1}}} \leq 2 \|   \phi_\alpha^{\ell} \|_{\dot B^{-\frac13,\frac13}_{3,1}} \leq 2 c_0 \, .
\end{equation}

\noindent $\bullet$ $ $  For every~$\ell \in \N$,  $\alpha \in (0,1)$ and~$n$ large enough, the sequence~$ u^\ell_n$ is uniformly bounded in the space~$\widetilde{L^{\infty}}(\R^+; \dot B^{-\frac13 ,\frac13}_{3,1})\cap L^1(\R^+ ; \dot B^{\frac53, \frac13}_{3,1} \cap \dot B^{\frac23, \frac43}_{3,1})$ and satisfies
\begin{equation}\label{explicitformjlargesecond}
\begin{aligned}
  u^\ell_n   =  \widetilde \Lambda^n_{\boldsymbol{\varepsilon^{\ell}} ,\boldsymbol{\gamma^{\ell}},\boldsymbol{x^{\ell}} } \left( \frac{ \varepsilon_n^{\ell} }{\gamma_n^{\ell} }
  U_n^{\ell,h}
,
  U_n^{\ell,3}
 \right)
+   R_n^{\ell}  \quad \mbox{where}  \\
   R_n^{\ell}  \to 0 \: \: \mbox {in} \: \:  \widetilde{L^{2}}(\R^+; \dot B^{\frac23 ,\frac13}_{3,1}) \cap L^1(\R^+ ; \dot B^{\frac53, \frac13}_{3,1} \cap \dot B^{\frac23, \frac43}_{3,1}) \, ,\quad  n \to \infty  \, ,
\end{aligned}
\end{equation}
and all the properties stated in Lemma~\ref{globalfirstprofile} hold.
 \end{lem}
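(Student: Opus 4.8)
The plan is to mirror the proof of Lemma~\ref{globalfirstprofile}, using two structural simplifications available when~$\ell\notin{\mathcal K}(\infty)$: there is no accompanying horizontal profile attached to~$\phi_\alpha^\ell$, and, by Proposition~\ref{prop:decompositiondata}, the scale ratio~$\varepsilon_n^\ell/\gamma_n^\ell$ always tends to zero. Thus one is permanently in the regime of the third bullet of Lemma~\ref{globalfirstprofile}, with the two-dimensional profile~$\tilde U^{h,\ell}$ set identically to zero.

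First I would dispose of the large-$\ell$ case. By the stability estimate~(\ref{orthonorms}) there is~$L_0$, independent of~$n$, such that for~$\ell\geq L_0$ the profile~$\phi_\alpha^\ell$ is arbitrarily small in~${\mathcal B}^1_q$, hence (by Sobolev embedding, cf. Proposition~\ref{propanisoiso}) bounded by~$c_0/2$ in~$\dot B^{-\frac13,\frac13}_{3,1}$. Since~$\nabla_h\Delta_h^{-1}\partial_3\phi_\alpha^\ell\in{\mathcal B}^1_q$ for each fixed~$\alpha$ and~$\varepsilon_n^\ell/\gamma_n^\ell\to 0$, the full divergence-free datum~$\big(-\frac{\varepsilon_n^\ell}{\gamma_n^\ell}\nabla_h\Delta_h^{-1}\partial_3\phi_\alpha^\ell,\phi_\alpha^\ell\big)$ is below~$c_0$ in~$\dot B^{-\frac13,\frac13}_{3,1}$ for~$n$ large, so Theorem~\ref{globalaniso} together with Remark~\ref{isometry} yields the global solution and the bound~(\ref{smallXpsecond}).

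For a general index~$\ell$ I would follow verbatim the argument of the third bullet of Lemma~\ref{globalfirstprofile}. Using the isotropic scale and translation invariance of~(NS), one rescales by~$\varepsilon_n^\ell$ and translates by~$x_n^\ell$, reducing to the datum~$\big(-\frac{\varepsilon_n^\ell}{\gamma_n^\ell}\nabla_h\Delta_h^{-1}\partial_3\phi_\alpha^\ell,\phi_\alpha^\ell\big)(x_h,\frac{\varepsilon_n^\ell}{\gamma_n^\ell}x_3)$; since~$\varepsilon_n^\ell/\gamma_n^\ell\to 0$, Theorem~3 of~\cite{cg3} applies, now with transport velocity~$\underline v\equiv 0$, so that the equation~$(T_{\underline v}^{\varepsilon})$ of~\cite{cg3} with~$\varepsilon=\varepsilon_n^\ell/\gamma_n^\ell$ is merely an anisotropic heat equation. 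One obtains, for~$n$ large, that the datum lies in~$\cG$ and that the associated solution has the profile-plus-remainder form, with~$U_n^\ell$ the (smooth) solution of that linear equation with datum~$\big(-\nabla_h\Delta_h^{-1}\partial_3\phi_\alpha^\ell,\phi_\alpha^\ell\big)$. From~\cite{cg3} Proposition~3.2 and~\cite{ghz} one gets~${\mathcal S}_{2,1}$ bounds on~$U_n^{\ell,3}$ and~$\frac{\varepsilon_n^\ell}{\gamma_n^\ell}U_n^{\ell,h}$, upgraded to~${\mathcal S}_{1,1}$ via the estimates of Appendix~\ref{globalsmallaniso}, while the remainder goes to zero in~${\mathcal I}_{2,1}$. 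Unscaling (isotropically) yields the form~(\ref{explicitformjlargesecond}) with~$R_n^\ell\to 0$ in~${\mathcal I}_{2,1}$, and Proposition~\ref{propanisoiso} with Sobolev embeddings gives its convergence to zero in~$\widetilde{L^2}(\R^+;\dot B^{\frac23,\frac13}_{3,1})\cap L^1(\R^+;\dot B^{\frac53,\frac13}_{3,1}\cap\dot B^{\frac23,\frac43}_{3,1})$. The boundedness of~$R_n^\ell$ in~${\mathcal S}_{3,1}$ is then obtained exactly as in Lemma~\ref{globalfirstprofile}: inspection of~(\ref{explicitformjlargesecond}) shows~$u_n^\ell$ is bounded in~$\widetilde{L^2}(\R^+;\dot B^{\frac23,\frac13}_{3,1})$, which by Theorem~\ref{globalaniso} propagates to an~${\mathcal S}_{3,1}$ bound on~$u_n^\ell$, hence on~$R_n^\ell$; and the vanishing property at a height~$z_3$ where~$\phi^\ell(\cdot,z_3)\equiv 0$ follows from Proposition~3.2 of~\cite{cgz}.

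I do not expect a genuine obstacle: the statement is essentially a specialization of Lemma~\ref{globalfirstprofile}. The only point requiring care is the bookkeeping that identifies~(\ref{explicitformjlargesecond}) with~(\ref{explicitformjlarge}) when the two-dimensional component is absent — checking that each estimate quoted from~\cite{cg3},~\cite{ghz} and Appendix~\ref{globalsmallaniso} indeed holds with~$\underline v\equiv 0$ — and keeping track of the powers of~$\varepsilon_n^\ell/\gamma_n^\ell$ so that the horizontal completion term~$\frac{\varepsilon_n^\ell}{\gamma_n^\ell}\nabla_h\Delta_h^{-1}\partial_3\phi_\alpha^\ell$ is genuinely negligible in the relevant norms.
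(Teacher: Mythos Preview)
Your proposal is correct and follows essentially the same approach as the paper: reduce to Lemma~\ref{globalfirstprofile} in the simplified regime where there is no horizontal profile and~$\varepsilon_n^\ell/\gamma_n^\ell\to 0$, so that the transport-diffusion system~$(T^\varepsilon_{\underline v})$ of~\cite{cg3} degenerates to an anisotropic heat equation with~$\underline v\equiv 0$. The only cosmetic difference is that the paper states the smallness for~$\ell\geq L_0$ in~$\dot B^{0,\frac12}_{2,1}$ rather than~$\dot B^{-\frac13,\frac13}_{3,1}$, but both are controlled by~${\mathcal B}^1_q$ and feed into Theorem~\ref{globalaniso} in the same way.
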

\begin{proof} [Proof of Lemma~{\rm \ref{globalsecondprofile}}]
The proof follows
the lines of the proof of Lemma~{\rm \ref{globalfirstprofile}}, and is in fact easier. One first uses the stability property~{\rm (\ref{orthonorms})} to obtain the existence of~$L_0$ such that
for all~$\ell \geq L_0$, for each~$\alpha \in (0,1)$ and for~$n$ large enough,
$$
 \| (\nabla_h \Delta_h^{-1}\partial_3  \phi_\alpha^{\ell},   \phi_\alpha^{\ell} )\|_{\dot B^{0,\frac12}_{2,1}} \leq c_0
$$
and  Theorem~{\rm \ref{globalaniso}} applies.
  Then we notice again that  by rescaling and translation it is enough to consider the vector field~$\displaystyle  \Lambda^n_{\boldsymbol{1} ,\boldsymbol{
\frac{\gamma^{\ell}}{\varepsilon^{\ell}}},\boldsymbol{0} } \left( -\frac{ \varepsilon_n^{\ell} }{\gamma_n^{\ell} }
 (\nabla_h \Delta_h^{-1} \partial_3  \phi_\alpha^{\ell}  ) (x)
,
  \phi_\alpha^{\ell} (x)
\right)
 $, and again~\cite{cg3} gives the result (recalling that~$\varepsilon_n^{\ell} / \gamma_n^{\ell} $ goes to zero by Proposition~{\rm \ref{prop:decompositiondata})}.
Compared with the proof of Lemma~{\rm \ref{globalfirstprofile}}, in this case the profile~$U^\ell_n$ is simply a solution to the heat equation in~$\R^3$ with viscosity~$({\varepsilon^{\ell}_n}/{\gamma^{\ell}_n})^2$ in the third direction (see~\cite{cg3} system~$(T^\e_{\underline v})$, with~$\underline v \equiv 0$ and~$\e = {\varepsilon^{\ell}_n}/{\gamma^{\ell}_n}$).  The lemma is proved.
 \end{proof}

 \noindent
 In the following we define, with the notation of Lemmas~\ref{globalfirstprofile} and~\ref{globalsecondprofile},
 \begin{equation}\label{defmathcalU}
 \begin{aligned}
 {\mathcal U}_n^L :=  \sum_{1 \leq \ell \leq L} \tilde u_n^\ell + \sum_{1 \leq \kappa(\ell) \leq L \atop \ell >L} \tilde u_n^\ell +   \sum_{\ell = 1}^L u_n^\ell \,  ,
 \quad \mbox{and} \\
  {\mathcal R}_n^L :=  \sum_{1 \leq \ell \leq L} \tilde R_n^\ell  \, +  \sum_{1 \leq \kappa(\ell) \leq L \atop \ell >L}  \tilde R_n^\ell  \, + \sum_{\ell = 1}^{  L}   R_n^\ell \, ,
 \end{aligned}
 \end{equation}
and we recall that
 \begin{equation}\label{RnJto0}
 \forall L\, , \quad \lim_{n \to \infty} \|{\mathcal R}_n^L\|_{\widetilde{L^{2}}(\R^+; \dot B^{\frac23 ,\frac13}_{3,1})\cap L^1(\R^+ ; \dot B^{\frac53, \frac13}_{3,1} \cap \dot B^{\frac23, \frac43}_{3,1})}¬†  = 0   \, .
 \end{equation}

 \bigskip
 \noindent
 $\bullet$ $ $ Finally we propagate all the
remaining terms in~(\ref{newformulationdecompositiondata}) by the heat equation: we define
\begin{equation}\label{defmathcalV}
{\mathcal V}_n^L := \rho_n^L+\Psi_n^L
  \end{equation}
with
$$
\begin{aligned}
 \Psi_n^L (t):=e^{t \Delta}  \Big( (\tilde \psi_n^{h,L}
  -\nabla_h \Delta_h^{-1} \partial_3  \psi_n^L
  , \psi_n^{L}  ) -   \sum_{  \ell >L \atop
 \ell \in  {\mathcal K}(  L)   }  \Lambda^n_{\boldsymbol{\varepsilon^{\ell}} ,\boldsymbol{\gamma^{\ell}},\boldsymbol{x^{\ell}} } \big( -\frac{ \varepsilon_n^{\ell} }{\gamma_n^{\ell} }
 \nabla_h \Delta_h^{-1} \partial_3  \phi^{\ell}      ,  \phi^{\ell}  \big)
 \\
 - \sum_{
\ell >L \atop
 1 \leq \kappa(\ell) \leq L   }  \!  \!   \!  \!  \! \Lambda^n_{\boldsymbol{\eta^{\ell}} ,\boldsymbol{\delta^{\ell}},\boldsymbol{\widetilde x^{\ell}} } \big( \tilde  \phi_\alpha^{h, \ell } 
   ,  0 \big)    \Big)
\end{aligned}
$$
and
$$
\begin{aligned}
\rho_n^L (t) := e^{t \Delta} \Big(
 \sum_{\ell  = 1}^{   L}    \Lambda^n_{\boldsymbol{\eta^{\ell}} ,\boldsymbol{\delta^{\ell}},\boldsymbol{\tilde x^{\ell}} }  (\tilde  r_\alpha^{h,\ell}
   -\frac{ \eta_n^{\ell} }{\delta_n^{\ell} }
 \nabla_h \Delta_h^{-1} \partial_3 r_\alpha^{\kappa(\ell)} ,  r_\alpha^{\kappa(\ell)}
 ) \\
 +   \sum_{\kappa( \ell ) =1 \atop
\kappa( \ell ) \in  {\mathcal K}(  \infty) \setminus     {\mathcal K}(  L)  }^L     \Lambda^n_{\boldsymbol{\eta^{\ell}} ,\boldsymbol{\delta^{\ell}},\boldsymbol{\tilde x^{\ell}} }( \tilde  r_\alpha^{h,\ell}-\frac{ \eta_n^{\ell} }{\delta_n^{\ell} }  \nabla_h \Delta_h^{-1} \partial_3  r_\alpha^{\kappa(\ell)})     ,  r_\alpha^{\kappa(\ell)} ) \\
+ \sum_{
    \ell = 1\atop
 \ell \notin  {\mathcal K}(  \infty)  }^{L } \Lambda^n_{\boldsymbol{\varepsilon^{\ell}} ,\boldsymbol{\gamma^{\ell}},\boldsymbol{x^{\ell}} }  
    ( -\frac{ \varepsilon_n^{\ell} }{\gamma_n^{\ell} }
\nabla_h \Delta_h^{-1} \partial_3 
 r_\alpha^{\ell} , r_\alpha^{\ell}   ) 
 -  \sum_{ 
  \ell >L \atop
1 \leq \kappa( \ell ) \leq L 
 }  \Lambda^n_{\boldsymbol{\eta^{\ell}} ,\boldsymbol{\delta^{\ell}},\boldsymbol{\tilde x^{\ell}} }  (\tilde  r_\alpha^{h,\ell}   ,  0)
 \Big)
\end{aligned}
$$
 We notice that by~(\ref{rlalpha})
 \begin{equation} \label{ytilde}
\begin{aligned}
& \forall L \in \N \, ,
  \quad \limsup_{n\to \infty} \|\rho_n^L\|_{{\mathcal S}_{3,1}} \leq C(L) \, \alpha \, ,   \\
 &\mbox{and} \quad \limsup_{n\to \infty} \big( \|\Psi_n^{L,h}\|_{{\mathcal S}_{3,1}+\widetilde {\mathcal S}_{3,1} } + \|\Psi_n^{L,3}\|_{{\mathcal S}_{3,1}  } \big) \to 0 \, , \quad L \to \infty \,   \,   \mbox{uniformly in} \, \alpha \,,
\end{aligned}
  \end{equation}
   {where} $\displaystyle \widetilde {\mathcal S}_{3,1}:=      \bigcap_{r=1}^\infty \,
      \bigcap_{\sigma=0}^\frac2r \,      \widetilde{L^r}(\R^+;\dot B^{ \frac23+\sigma ,\frac2r - \sigma-\frac23 }_{3,1}) .$   The presence of that space
      is due to  terms of the type~$ \nabla_h \Delta_h^{-1} \partial_3  \phi^{\ell}   $ and those bounds are due to~(\ref{smallremainderpsi}) as well as the stability property~(\ref{orthonorms}) and the fact that~$\varepsilon^\ell_n / \gamma^\ell_n \to 0$. In particular
     \begin{equation}\label{smallpsiY}
       \limsup_{n\to \infty} \big( \|\Psi_n^{L}\|_{L^1(\R^+; \dot B^{\frac53  ,\frac13 }_{3,1} \cap\dot B^{\frac23  ,\frac43 }_{3,1})\cap \widetilde{L^2}(\R^+; \dot B^{\frac23,\frac13}_{3,1} ) } \big) \to 0 \, , \quad L \to \infty \,   \,   \mbox{uniformly in} \,  \, \alpha \,.
      \end{equation}

 \section{Global regularity for the profiles superposition}\label{superpositionglobal}

  Now we need to superpose each of the solutions constructed in the previous section, and check that the superposition is indeed a good approximate solution. This will prove Theorem~{\rm \ref{mainresult}}, and at the end of this section we shall show how the methods developed here give easily Corollaries~{\rm \ref{corthm1}} and~{\rm \ref{corthm}}.

\subsection{Statement of the   superposition result and main steps of its proof}
 The main result is the following, where we use the notation of the previous section.
  \begin{prop}\label{decompositionsolution}
 For~$n$ and~$L$ large enough, $\alpha$ small enough and up to an extraction, we have
 \begin{equation}\label{writeun}
  u_n = u +{\mathcal U}_n^L+ {\mathcal V}_n^L + w_n^{L}¬†\, ,
 \end{equation}
 where~$w_n^L $ belongs to~${\mathcal S}_{3,1}$ with
 $
 \displaystyle \lim_{\alpha \to 0} (\limsup_{n \to \infty} \|w_n^{L}\|_{{\mathcal S}_{3,1}} ) \to 0$ as~$ L \to \infty.
 $
\end{prop}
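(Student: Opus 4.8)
The plan is to prove Proposition~\ref{decompositionsolution} by a stability argument for (NS) in the space~${\mathcal S}_{3,1}$: we set~$w_n^L := u_n - (u + {\mathcal U}_n^L + {\mathcal V}_n^L)$ and show that it solves a perturbed Navier--Stokes system with a forcing term that is small in a suitable norm, then invoke the small-data global regularity/stability result of Appendix~\ref{globalsmallaniso} (Theorem~\ref{globalaniso} and its perturbative version). First I would write down the equation satisfied by~$u^{app}_n := u + {\mathcal U}_n^L + {\mathcal V}_n^L$. Each individual building block ($u$, the profiles~$\tilde u_n^\ell$ and~$u_n^\ell$, the heat-evolved remainders in~${\mathcal V}_n^L$) solves either (NS) or a linear transport-diffusion or heat equation exactly, so plugging~$u^{app}_n$ into (NS) produces a forcing term~$F_n^L$ which is a sum of \emph{interaction terms}: bilinear expressions~$\mathbb P\,\mathrm{div}(a \otimes b)$ where~$a,b$ range over distinct building blocks, plus the defects coming from the fact that the profiles with ratio of scales tending to~$0$ solve a transport-diffusion equation rather than (NS) exactly (the discrepancy between~$-\Delta$ and~$-\Delta_h - (\eta_n^\ell/\delta_n^\ell)^2 \partial_3^2$ and the associated pressure terms), plus the linear error~$e^{t\Delta}$ applied to the remainder data against the exact evolution. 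The goal is to show~$\limsup_{n\to\infty}\|F_n^L\|_{{\mathcal N}}$ is small, where~${\mathcal N}$ is the natural space in which ${\mathcal S}_{3,1}$-solutions are controlled by their forcing (something like~$\widetilde{L^1}(\R^+;\dot B^{-1+\frac23,\frac13}_{3,1}) + \dots$), first letting~$n\to\infty$ (orthogonality of scales/cores kills cross terms), then~$L\to\infty$ (the remainder~$\psi_n^L$ and tail sums become small by~(\ref{smallremainderpsi}), (\ref{smallpsiY})), then~$\alpha\to 0$ (the~$r_\alpha$ pieces become small by~(\ref{rlalpha}), (\ref{ytilde})).

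The key steps, in order, are: (i) derive the equation~$\partial_t w_n^L + \mathbb P\,\mathrm{div}(w_n^L \otimes u^{app}_n + u^{app}_n \otimes w_n^L + w_n^L \otimes w_n^L) - \Delta w_n^L = F_n^L$ with~$w_n^L|_{t=0}=0$; (ii) establish an a priori bound showing~$\|u^{app}_n\|_{{\mathcal S}_{3,1}}$ is controlled uniformly in~$n$ (for fixed~$L$) and in~$L$ up to the~$\alpha$-dependent remainder, using the orthogonality of scales to turn~$\|\sum_\ell \cdot\|_{{\mathcal S}_{3,1}}^{\text{(some power)}}$ into (essentially)~$\sum_\ell \|\cdot\|^{\text{(some power)}}$ via almost-orthogonality in Besov spaces (Lemma~\ref{orthoaniso} and its variants) — this is where one must be careful because~${\mathcal S}_{3,1}$ is \emph{not} invariant under~$\widetilde\Lambda^n$, but~${\mathcal A}_{3,1}$ is, so one splits each profile as a bounded-in-${\mathcal S}_{3,1}$ main part plus a piece going to zero in the wider space; (iii) estimate each type of interaction term in~$F_n^L$: products of two profiles with orthogonal scales/cores go to zero by Lemma~\ref{orthoaniso}; products involving~$u$ and a concentrating/escaping profile go to zero because~$u$ is fixed and smooth while the profile escapes in frequency or space; products involving~${\mathcal V}_n^L$ are handled by~(\ref{smallpsiY}) and~(\ref{ytilde}); the transport-diffusion defects are handled by the smallness of~$\eta_n^\ell/\delta_n^\ell$ (resp.\ $\varepsilon_n^\ell/\gamma_n^\ell$) using the explicit forms~(\ref{explicitformjlarge}), (\ref{explicitformjlargesecond}); (iv) the genuinely delicate interactions are those between two \emph{non-orthogonal} profiles sharing the same scales — these are exactly the pairs addressed by Definition~\ref{defkappaell} (so they are propagated \emph{together}) and the residual ones controlled by Assumption~\ref{sousdecompo}, which forces the relevant profiles to vanish on the trace~$x_3 = z_3$, so that~(\ref{smallerthanalphaatz3}) makes the remaining interaction~$O(\alpha)$; (v) conclude by the fixed-point/stability lemma: since~$F_n^L$ is small in~${\mathcal N}$ and the linearized operator around~$u^{app}_n$ is invertible with controlled norm (because~$u^{app}_n$ is bounded in~${\mathcal S}_{3,1}$, which is the relevant smallness-type hypothesis once~$\alpha$ is small — here one uses that the \emph{global} nature of~$u$ gives, via Corollary~\ref{strongstability}, a genuine bound, not just finiteness), there is a unique global~$w_n^L \in {\mathcal S}_{3,1}$ with the claimed smallness, and hence~$u_n = u^{app}_n + w_n^L$ is global.

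The main obstacle I expect is step~(iv): controlling the self-interaction of profiles that live at the same scale and are only separated by their vertical core or whose vertical scales both diverge. Unlike the orthogonal case there is no decay from mismatched frequencies or translations, so one cannot simply say "the product tends to zero"; instead one must exploit the special \emph{structure} of (NS) (as emphasized in the remark comparing with~\cite{cgz}) — specifically that for these profiles the limiting horizontal dynamics is two-dimensional and globally well-posed, and the dangerous term is a vertical-derivative interaction that, thanks to Assumption~\ref{sousdecompo}, acts on a function vanishing at the relevant height, so its size is governed by~(\ref{smallerthanalphaatz3}) and is~$O(\alpha)$ after integrating the Taylor expansion in~$x_3$ around~$z_3$. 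Making this rigorous requires carefully tracking how~$\widetilde\Lambda^n$ transforms the product, identifying that the rescaled interaction concentrates near the common scale, and Taylor-expanding in the slow variable; this is the technically heaviest part and is precisely where the hypotheses of the theorem (both Assumption~\ref{defadf} and Assumption~\ref{sousdecompo}) are indispensable. A secondary difficulty is bookkeeping: there are many sums with index constraints (e.g.\ $\ell > L$, $\kappa(\ell)\le L$) and one must verify that the tail sums are uniformly small in~$n$ — this is routine given the stability estimate~(\ref{orthonorms}) but demands care in separating the roles of the three limits~$n\to\infty$, $L\to\infty$, $\alpha\to 0$, taken in that order.
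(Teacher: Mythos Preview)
Your proposal is correct and follows essentially the same strategy as the paper: define $w_n^L$, derive the perturbed (NS) equation it satisfies with drift $G_n^L = u + \mathcal{U}_n^L + \mathcal{V}_n^L$ and forcing $Z_n^L$, establish a uniform bound on the drift (Lemma~\ref{lemdrift}) and smallness of the forcing (Lemma~\ref{lemforce}) in the spaces $\mathcal{Y}$ and $\mathcal{X}$, then apply Theorem~\ref{globalanisoperturbed} with zero initial data. Two small corrections: the profiles $\tilde u_n^\ell$, $u_n^\ell$ are defined as \emph{exact} (NS) solutions (the transport--diffusion structure is only used to \emph{analyse} them via the remainders $\tilde R_n^\ell$, $R_n^\ell$), so there are no ``transport--diffusion defects'' in the forcing; and the order of limits must be $L$ outermost, then $\alpha$, then $n$ innermost, since $\|\rho_n^L\|\lesssim C(L)\,\alpha$ is not uniform in $L$ --- your stated order with $\alpha$ outermost would not close.
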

 \begin{rmk}{\rm
The choice of the function space~${\mathcal S}_{3,1}$ in the statement of Proposition~\ref{decompositionsolution} is for convenience, we have not tried to optimize on the integrability index here and other spaces would certainly do as well.
}
 \end{rmk}
 \begin{rmk}{\rm
This proposition proves Theorem~{\rm \ref{mainresult}}. Indeed the sequence~$(u_n)$ belongs in particular to the space~$\widetilde{L^2}(\R^+;\dot B^{\frac23,\frac13}_{3,1})$, since the results of the previous section show   that this is the case   for all the terms in the right-hand side of~({\rm\ref{writeun}}). But we know from Theorem~\ref{globalaniso} that this norm controls the equation so the result follows.
}
 \end{rmk}
 \begin{proof}[Proof of Proposition~{\rm \ref{decompositionsolution}}]
  Let~$u_n$ be the solution of~(NS) associated with the data~$u_{0,n}$, which a priori has a finite life span~$T_n^*$, and define
   $$
w_n^{L}  := u_n  - G_n^L \quad \mbox{with}  \quad G_n^L := u + F_n^L \quad \mbox{and}  \quad   F_n^L  :=  {\mathcal U}_n^L + {\mathcal V}_n^L \, .
$$
 The vector field~$w_n^{L}$ satisfies 
  $$
 \partial_t w_n ^{L}+ {\mathbb P} (w_n^{L} \cdot \nabla w_n^{L} + G_n^L \cdot \nabla w_n^{L} + w_n ^{L}\cdot \nabla  G_n^L ) - \Delta w_n^{L}
  = -  {\mathbb P}  Z_n^L   \, , \quad \mbox{div} \: w_n^{L} = 0
  $$
with initial data~$w_{n |t = 0}^{L} = 0$, and where, recalling the definitions of~$ {\mathcal U}_n^L $ and~${\mathcal V}_n^L $ in~(\ref{defmathcalU}) and~(\ref{defmathcalV}) respectively,
$$
\begin{aligned}
Z_n^L  := \sum_{\ell \neq k}  u_n^\ell \cdot \nabla u_n^k +
 \sum_{\ell \neq k}
\tilde u_n^\ell ({\mathbf 1}_{1 \leq \ell \leq L} + {\mathbf 1}_{1 \leq \kappa(\ell) \leq L \atop \ell >L} ) \cdot \nabla  \tilde u_n^k ({\mathbf 1}_{1 \leq k \leq L} + {\mathbf 1}_{1 \leq \kappa(k) \leq L \atop \ell >L} )\\
{}  +  \sum_{\ell \neq k}
\big(\tilde u_n^\ell  ({\mathbf 1}_{1 \leq \ell \leq L} + {\mathbf 1}_{1 \leq \kappa(\ell) \leq L \atop \ell >L} )  \cdot \nabla u_n^k + u_n^\ell \cdot \nabla \tilde u_n^k ({\mathbf 1}_{1 \leq k \leq L} + {\mathbf 1}_{1 \leq \kappa(k) \leq L \atop \ell >L} )  \big) \\
{} + u \cdot \nabla  F_n^L  +  F_n^L \cdot \nabla u  +  {\mathcal U}_n^L\cdot  \nabla {\mathcal V}_n^L + {\mathcal V}_n^L \cdot  \nabla {\mathcal U}_n^L +
  {\mathcal V}_n^L \cdot  \nabla {\mathcal V}_n^L  \, .
\end{aligned}
$$
   The proposition follows from the two following lemmas.
      \begin{lem}\label{lemdrift}
      Define~${\mathcal Y}:=    {L^2}(\R^+; \dot B^{\frac23,\frac13}_{3,1} ) \cap L^1(\R^+; \dot B^{\frac53  ,\frac13 }_{3,1}\cap\dot B^{\frac23  ,\frac43 }_{3,1} ) $. With the notation of Lemmas~\ref{globalfirstprofile} and~\ref{globalsecondprofile}, there is a constant~$K$ (depending on~$L_0, \widetilde L_0$ and  bounds on~$u_0$, $(u_n)$ and~$u$) such that
      one can decompose~$G_n^L =    {\mathcal G}_n^{L,1} +  {\mathcal G}_n^{L,2}   $,
      with the following properties: for each~$L¬†\in \N$ and each~$\alpha \in (0,1)$ there is~$N(L,\alpha)$ such that
      $$
      \|{\mathcal G}_n^{L,1} \|_{ \mathcal Y}   \leq K \quad  \mbox{ for} \, \,  \,  n \geq N(L,\alpha)   \,,
      $$
     while  for all~$L \in \N$ there is~$\alpha _0>0$ such that
      $$
      \forall \,  0< \alpha \leq \alpha_0\,  , \quad    \|{\mathcal G}_n^{L,2} \|_{\mathcal Y} \leq K\quad \mbox{ uniformly in } \, n \, .
      $$
          \end{lem}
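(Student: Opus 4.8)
The plan is to isolate, among the building blocks of $G_n^L = u + {\mathcal U}_n^L + {\mathcal V}_n^L$, the finitely many "dangerous" profiles whose $\mathcal Y$-norm we can only control for $\alpha$ small, and to put everything else into ${\mathcal G}_n^{L,1}$, for which we have bounds that are uniform in $\alpha$ (though possibly large, and possibly only valid for $n$ large). Concretely: the weak-limit solution $u$ belongs to ${\mathcal S}_{1,1}\hookrightarrow\mathcal Y$ with a bound depending only on $u_0$; the profiles $\tilde u_n^\ell$ and $u_n^\ell$ with $\ell\ge\tilde L_0$ (resp.\ $\ell\ge L_0$) are small in ${\mathcal S}_{3,1}\hookrightarrow\mathcal Y$ by \eqref{smallXp}, \eqref{smallXpsecond}, and by the stability estimate \eqref{orthonorms} their sum over all such $\ell$ is bounded by $\sup_n\|u_{0,n}\|_{{\mathcal B}^1_q}+\|u_0\|_{{\mathcal B}^1_q}$ — again uniformly in $\alpha$ and, after taking $n$ large, in $n$. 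These contributions, together with the $\Psi_n^L$ part of ${\mathcal V}_n^L$ (which by \eqref{smallpsiY} is even $o(1)$ in $\mathcal Y$ as $L\to\infty$, uniformly in $\alpha$, once $n$ is large), make up ${\mathcal G}_n^{L,1}$; the point is that each of these bounds either is $\alpha$-independent or improves with $\alpha$, so a single constant $K$ depending on $L_0,\tilde L_0$ and the stated data bounds works, at the cost of taking $n\ge N(L,\alpha)$.

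The remaining, finitely many terms — the "low-index" profiles $\tilde u_n^\ell$ for $1\le\ell\le L$ or $1\le\kappa(\ell)\le L<\ell$, the profiles $u_n^\ell$ for $1\le\ell\le L$, and the regularization remainder $\rho_n^L$ — form ${\mathcal G}_n^{L,2}$. For the profiles, Lemmas~\ref{globalfirstprofile} and~\ref{globalsecondprofile} already give, for each fixed $\ell$ and each $\alpha$, uniform-in-$n$ bounds in $\widetilde{L^\infty}(\R^+;\dot B^{-\frac13,\frac13}_{3,1})\cap L^1(\R^+;\dot B^{\frac53,\frac13}_{3,1}\cap\dot B^{\frac23,\frac43}_{3,1})\hookrightarrow\mathcal Y$ (via \eqref{explicitformjlarge}, \eqref{explicitformjlargesecond} and the fact that the explicit "$U$" pieces are bounded in ${\mathcal S}_{1,1}$ while the $\tilde R_n^\ell$, $R_n^\ell$ go to zero in $\mathcal Y$ by \eqref{RnJto0}); since there are at most $\sim L$ such terms one sums to get a bound uniform in $n$, for each $\alpha$. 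Finally $\rho_n^L$ satisfies $\limsup_n\|\rho_n^L\|_{{\mathcal S}_{3,1}}\le C(L)\alpha$ by \eqref{ytilde}, so for $\alpha\le\alpha_0(L)$ small this is $\le 1$ say. Altogether $\|{\mathcal G}_n^{L,2}\|_{\mathcal Y}\le K$ for $0<\alpha\le\alpha_0$, uniformly in $n$, provided $K$ is chosen to also absorb the (finitely many, $\alpha$-dependent but $n$-uniform) profile bounds — here one uses that the profiles are the fixed building blocks $\tilde\phi^{h,\ell}_\alpha+\tilde r^{h,\ell}_\alpha$, $\phi^{\kappa(\ell)}_\alpha+r^{\kappa(\ell)}_\alpha$ whose ${\mathcal B}^1_q$-norms are controlled by $2\|\varphi^\ell\|_{{\mathcal B}^1_q}$ independently of $\alpha$ (cf.\ \eqref{rlalpha}), so the resulting $\mathcal Y$-bounds do not blow up as $\alpha\to 0$.

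The main obstacle is bookkeeping rather than analysis: one must be careful that the constant $K$ does \emph{not} depend on $L$ or $\alpha$, which forces the low-index profile contributions to ${\mathcal G}_n^{L,2}$ to be estimated not term-by-term (that would give an $L$-dependent bound) but via the summed stability estimate \eqref{orthonorms} for the part that can be made small, while the genuinely finite "$U$"-pieces must be shown to have $\mathcal Y$-norms bounded by the ${\mathcal B}^1_q$-norms of the \emph{unregularized} profiles $\varphi^\ell$ (using the $\alpha$-uniform bound $\|\phi^\ell_\alpha\|_{{\mathcal B}^1_q}+\|r^\ell_\alpha\|_{{\mathcal B}^1_q}\le 2\|\varphi^\ell\|_{{\mathcal B}^1_q}$ and again \eqref{orthonorms}), so that their total over $\ell$ is controlled by $\sup_n\|u_{0,n}\|_{{\mathcal B}^1_q}+\|u_0\|_{{\mathcal B}^1_q}$. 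In other words, the correct split is: $\alpha$-uniform-but-$n$-large bounds $\to{\mathcal G}_n^{L,1}$; $n$-uniform-but-$\alpha$-small bounds (which is exactly $\rho_n^L$, plus the genuinely $O(1)$ low-index pieces reorganized so their sum does not see $L$) $\to{\mathcal G}_n^{L,2}$ — and one checks the two running constants can be merged into a single $K$ with the advertised dependence.
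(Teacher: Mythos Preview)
Your proposal uses exactly the right ingredients --- the stability estimate \eqref{orthonorms}, the profile bounds of Lemmas~\ref{globalfirstprofile}--\ref{globalsecondprofile}, and the smallness estimates \eqref{ytilde}--\eqref{smallpsiY} --- and your instinct that the only real difficulty is keeping $K$ independent of $L$ and $\alpha$ is correct. But the split you describe is internally inconsistent: in your first paragraph the high-index profiles go into ${\mathcal G}_n^{L,1}$, while in your second paragraph you place \emph{all} profiles $\tilde u_n^\ell$ with $1\le\ell\le L$ into ${\mathcal G}_n^{L,2}$ and then worry about the resulting ``$\sim L$ such terms''. That $L$-dependence is self-inflicted, and your third paragraph is an attempt to repair a problem you created.

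The paper avoids the whole difficulty by the simpler choice ${\mathcal G}_n^{L,1}:=u+{\mathcal U}_n^L$ and ${\mathcal G}_n^{L,2}:={\mathcal V}_n^L$. All profiles go into ${\mathcal G}_n^{L,1}$: those with $\eta_n^\ell/\delta_n^\ell\to\infty$ tend to zero in $\mathcal Y$ by \eqref{smallkato}; the high-index ones (both $\ell\ge\tilde L_0$ and $\kappa(\ell)\ge\tilde L_0$) are summed via \eqref{smallXp}, \eqref{smallXpsecond} and stability \eqref{orthonorms} to a bound $\lesssim\sup_n\|u_{0,n}\|_{{\mathcal B}^1_q}$ independent of $L,\alpha$; and the remaining low-index profiles number at most $O(L_0+\tilde L_0)$ --- \emph{not} $O(L)$ --- each uniformly bounded in ${\mathcal S}_{1,1}$ or ${\mathcal S}_{3,1}$ by Lemmas~\ref{globalfirstprofile}--\ref{globalsecondprofile}. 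This gives $\|{\mathcal G}_n^{L,1}\|_{\mathcal Y}\le K$ for $n\ge N(L,\alpha)$, with $K$ depending only on $L_0,\tilde L_0$ and the data. For ${\mathcal G}_n^{L,2}={\mathcal V}_n^L=\rho_n^L+\Psi_n^L$ one just uses \eqref{ytilde}--\eqref{smallpsiY}: $\|\rho_n^L\|\lesssim C(L)\alpha$ and $\|\Psi_n^L\|$ is uniformly bounded (in fact $o_L(1)$), so the sum is $\le K$ once $\alpha\le\alpha_0(L)$. With this split there is no bookkeeping obstacle, and your paragraph~3 becomes unnecessary.
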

       \begin{lem}\label{lemforce}
Define
$$
{\mathcal X}:=  L^1(\R^+; \dot B^{-\frac13  ,\frac13 }_{3,1})   +  \widetilde{L^2} (\R^+;\dot B^{-\frac13,-\frac23 }_{3,1}  ) \cap
  L^1(\R^+; \dot B^{\frac23,-\frac23 }_{3,1} )\, .
  $$  We can write~$Z_n^L = {\mathcal Z}_n^{L,1} + {\mathcal Z}_n^{L,2}+{\mathcal Z}_n^{L,3}$ with
\begin{eqnarray}
  \limsup_{L \to \infty}    \| {\mathcal Z}_n^{L,1}\|_{  {\mathcal X}}    = 0  \,  \, \mbox{uniformly in} \, \, n, \alpha \, , \label{calhn1}\\
 \forall L, \quad \limsup_{\alpha \to 0}    \| {\mathcal Z}_n^{L,2}\|_{ {\mathcal X}}    = 0 \,   \, \mbox{uniformly in} \, \, n \, , \label{calhn2}\\
 \mbox{and} \quad     \forall L \, ,  \forall \alpha \, , \quad \limsup_{n \to \infty}    \| {\mathcal Z}_n^{L,3}\|_{ {\mathcal X}}    = 0  \, .\label{calhn3}
\end{eqnarray}
      \end{lem}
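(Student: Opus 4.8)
The plan is to sort the many terms of $Z_n^L$ according to the three distinct mechanisms that make them negligible, and to estimate every product by the bilinear inequalities in anisotropic Besov spaces of Appendix~\ref{globalsmallaniso}: these are designed so that a product of two factors controlled in the norm ${\mathcal Y}$ of Lemma~\ref{lemdrift}, after one derivative, lands in ${\mathcal X}$. One uses throughout that $u\in{\mathcal S}_{1,1}$ (Corollary~\ref{strongstability}), that ${\mathcal U}_n^L$ is bounded in ${\mathcal Y}$ uniformly once $n\geq N(L,\alpha)$ and $\alpha\leq\alpha_0$ — by \eqref{smallXp}, \eqref{smallXpsecond}, \eqref{explicitformjlarge}, \eqref{explicitformjlargesecond}, \eqref{RnJto0} and the argument behind Lemma~\ref{lemdrift} — and the smallness of ${\mathcal V}_n^L$ recorded in \eqref{ytilde} and \eqref{smallpsiY}.

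\textbf{Terms containing ${\mathcal V}_n^L$.} Using the splitting ${\mathcal V}_n^L=\rho_n^L+\Psi_n^L$ of \eqref{defmathcalV}, the $\Psi_n^L$-part of $u\cdot\nabla F_n^L+F_n^L\cdot\nabla u+{\mathcal U}_n^L\cdot\nabla{\mathcal V}_n^L+{\mathcal V}_n^L\cdot\nabla{\mathcal U}_n^L+{\mathcal V}_n^L\cdot\nabla{\mathcal V}_n^L$ is placed in ${\mathcal Z}_n^{L,1}$: by the bilinear estimates together with \eqref{smallpsiY}--\eqref{ytilde}, its ${\mathcal X}$-norm is bounded by a quantity that tends to $0$ as $L\to\infty$, uniformly in $n$ and $\alpha$. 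The $\rho_n^L$-part is placed in ${\mathcal Z}_n^{L,2}$, since by \eqref{ytilde} its ${\mathcal X}$-norm is $\leq C(L)\alpha$, hence vanishes as $\alpha\to0$ for each fixed $L$, uniformly in $n$.

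\textbf{Interactions between distinct profiles and of $u$ with the profiles.} First peel off a tail: when one of the two indices exceeds an auxiliary threshold $L_1$, the summability \eqref{orthonorms} together with \eqref{smallXp}--\eqref{smallXpsecond} bounds the corresponding partial sums in ${\mathcal S}_{3,1}$, hence in ${\mathcal Y}$, by a quantity vanishing as $L_1\to\infty$ uniformly in $n$ and $\alpha$; a bilinear estimate sends these into ${\mathcal Z}_n^{L,1}$. There remain finitely many pairs $\ell\neq k$ with indices $\leq L_1$, the companion profiles indexed by $\kappa(\ell)$ being always propagated together so that they never form such a pair. If the two profiles of a pair have orthogonal scales or orthogonal cores in the sense of Definition~\ref{orthoseq}, then inserting the explicit forms \eqref{explicitformjlarge}, \eqref{explicitformjlargesecond} and running a rescaling argument in the spirit of Lemma~\ref{orthoaniso} shows that the product tends to $0$ in ${\mathcal X}$ as $n\to\infty$; the same holds for $u\cdot\nabla(\cdot)$ and $(\cdot)\cdot\nabla u$ whenever the horizontal scale or core of the profile escapes to $0$ or $\infty$. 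All these terms go into ${\mathcal Z}_n^{L,3}$.

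\textbf{The resonant pairs.} The delicate case is a pair of profiles, or a profile together with $u$, sharing the same horizontal scale with ordered, diverging vertical scales — precisely the configurations \eqref{assumptions} and the $u_0\not\equiv0$ configuration of Assumption~\ref{sousdecompo}. Here orthogonality does not kill the product; instead Assumption~\ref{sousdecompo} forces the relevant profile to vanish on the horizontal slice $\{x_3=z_3\}$ for the appropriate $z_3$, so that by \eqref{smallerthanalphaatz3} the corresponding solution is $O(\alpha)$ on a fixed neighbourhood of that slice, while by computations of the type \eqref{limxelldeltaell} this is exactly the region where the other factor concentrates. The product thus has ${\mathcal X}$-norm $O(\alpha)$ uniformly in $n$, and these terms are placed in ${\mathcal Z}_n^{L,2}$. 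Collecting the three families gives the decomposition of the statement and the bounds \eqref{calhn1}--\eqref{calhn3}. The main obstacle is this last step: exploiting Assumption~\ref{sousdecompo} together with the trace estimate \eqref{smallerthanalphaatz3} to control interactions of profiles that coincide in horizontal frequency and are separated only in the vertical variable, where the algebraic structure of (NS) and the vanishing hypotheses on the profiles are indispensable.
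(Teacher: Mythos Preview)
Your plan is correct and matches the paper's approach: sort the terms by the mechanism that kills them (the $\Psi_n^L$-contributions for \eqref{calhn1}, the $\rho_n^L$-contributions together with the resonant pairs handled via Assumption~\ref{sousdecompo} and \eqref{smallerthanalphaatz3} for \eqref{calhn2}, and profile orthogonality for \eqref{calhn3}), with all products controlled by the anisotropic bilinear laws \eqref{productanisoh}--\eqref{productaniso3}. Two technical remarks: your auxiliary threshold $L_1$ is unnecessary since for each fixed $L$ the profile-interaction sums are already finite, and the orthogonality step for products of distinct profiles is not carried out ``in the spirit of Lemma~\ref{orthoaniso}'' but rather via the heat-kernel description \eqref{defbesovanisoheat} of the $\dot B^{2,1}_{1,1}$ norm, explicit changes of variables, and dominated convergence --- a computation substantially more involved than what Lemma~\ref{orthoaniso} provides.
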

        \noindent Assume indeed for the time being that those two lemmas are true. Then we start by choosing~$L$ large enough
        so that uniformly in~$\alpha$ and~$N$ one has
        \begin{equation}\label{1w}
    \| {\mathcal Z}_n^{L,1}\|_{ {\mathcal X}}  \leq \frac{c_0}{12} \exp  \big(-  2K c_0^{-1}\big) \quad   \mbox{uniformly in} \, \, n, \alpha
\end{equation}
with the notation of Theorem~{\rm \ref{globalanisoperturbed}} stated and proved in Appendix~{\rm \ref{globalsmallaniso}}, and Lemma~\ref{lemforce}.
Then now that~$L$ is fixed we choose~$\alpha \in (0,\alpha_0)$ small enough so that
\begin{equation}\label{2w}
 \| {\mathcal Z}_n^{L,2}\|_{  {\mathcal X}}  \leq \frac{c_0}{12} \exp  \big(-  2K c_0^{-1}\big) \quad   \mbox{uniformly in} \, \, n
\end{equation}
and
$$
\| {\mathcal G}_n^{L,2}\|_{  {\mathcal Y}} \leq K \quad   \mbox{uniformly in} \, \, n \, ,
$$
with the notation of Lemma~\ref{lemdrift}.
Finally now that~$L$ and~$\alpha$ are fixed we take~$N_0  \geq N(L, \alpha)$ so that for all~$n\geq N_0$,
\begin{equation}\label{3w}
  \| {\mathcal Z}_n^{L,3}\|_{ {\mathcal X}}    \leq \frac{c_0}{12} \exp \big(- 2K c_0^{-1}\big)
\end{equation}
and
$$
\| {\mathcal G}_n^{L,1}\|_{  {\mathcal Y}} \leq K \, .
$$
It then suffices to apply Theorem~{\rm \ref{globalanisoperturbed}} in Appendix~\ref{globalsmallaniso}
with~$U = {  G}_n^{L}$, $F = {  Z}_n^{L}$ and data~$u_0 \equiv 0$, noticing that~${\mathcal X} = {\mathcal X}_{3,1} $ and~${\mathcal Y} \subset {\mathcal Y}_{3,1} $. The result follows immediately: we get that~$w_n^L$ belongs to~${\mathcal S}_{3,1}$, and the fact that~$
 \displaystyle \lim_{\alpha \to 0} (\limsup_{n \to \infty} \|w_n^{L}\|_{{\mathcal S}_{3,1}} ) \to 0$ as~$ L \to \infty $ is due to the fact that one can choose the bounds in~(\ref{1w})-(\ref{3w}) as small as one want, provided~$L$ and~$n$ are large enough, and~$\alpha$ is small enough.
   \end{proof}

    \medskip
    \noindent
    The two coming paragraphs are devoted
    to the proofs of Lemmas~{\rm \ref{lemdrift}} and~{\rm \ref{lemforce}}, thus achieving the proof of Theorem~{\rm \ref{mainresult}}. The final paragraph of this section contains the proofs of  Corollaries~{\rm \ref{corthm1}} and~{\rm \ref{corthm}}.

    \subsection{Study of the drift term~$G_n^L$}
          \begin{proof}[Proof of Lemma~{\rm \ref{lemdrift}}]
               Recall that  $     G_n^L = u + F_n^L =  u +{\mathcal U}_n^L + {\mathcal V}_n^L$ with the notation of Section~\ref{globalprofile},
so since we know that~$u$ belongs to~${\mathcal S}_{2,1}$, which embeds continuously in~$\mathcal Y $,
and~$u  $ depends neither on~$L$, on~$\alpha$ nor on~$n$, we need to study~$F_n^L$.
According to
  Lemmas~{\rm \ref{globalfirstprofile}} and~{\rm \ref{globalsecondprofile}} and recalling the notation~{\rm (\ref{defmathcalU})},
  we can split~$ F_n^L=  {\mathcal U}_n^L + {\mathcal V}_n^L$ into~$F_n^L :=
   F_n^{L, 1} +  F_n^{L, 2}   +   {\mathcal V}_n^L $, with
\begin{equation}\label{decomposeF}
   F_n^{L,1}  := \sum_{1 \leq \ell \leq L\atop \eta_n^\ell/ \delta_n^\ell \to 0} \tilde u_n^\ell + \sum_{1 \leq \kappa(\ell) \leq L \atop {\ell >L\atop \eta_n^\ell/ \delta_n^\ell \to 0}} \tilde u_n^\ell + \sum_{\ell = 1 }^{    L }  u_n^\ell \quad
   \mbox{and} \quad  F_n^{L,2}  :=\sum_{\ell = 1 \atop \eta_n^\ell / \delta_n ^\ell \to \infty}^{    L } \tilde u_n^\ell       \, .
\end{equation}
The result~(\ref{smallkato})     deals with~$ F_n^{L,2} $, since according to~(\ref{smallkato}), $\tilde u_n^\ell   $ goes to zero in~$\mathcal Y$ for each~$\ell$ as~$n$ goes to infinity. So that term   is incorporated in the term~${\mathcal G}_n^{L,1}$.

\noindent Now let us consider~$  F_n^{L, 1}$. We can decompose the sum again into several pieces, writing with the notation of Lemmas~\ref{globalfirstprofile} and~\ref{globalsecondprofile}, for all~$L > \max (L_0, \tilde L_0)$,
$$
\begin{aligned}
 \sum_{\ell = 1 }^{    L }  u_n^\ell & =  \sum_{\ell = 1 }^{    L_0 }  u_n^\ell  + \sum_{\ell = L_0+1 }^{    L }  u_n^\ell   \, , \\
 \sum_{1 \leq \ell \leq L\atop \eta_n^\ell/ \delta_n^\ell \to 0} \tilde u_n^\ell &=
  \sum_{1 \leq \ell \leq \tilde L_0\atop \eta_n^\ell/ \delta_n^\ell \to 0} \tilde u_n^\ell +
   \sum_{\tilde L_0< \ell \leq L\atop{ 1 \leq \kappa(\ell) \leq \tilde L_0 \atop\eta_n^\ell/ \delta_n^\ell \to 0}} \tilde u_n^\ell +
    \sum_{\tilde L_0< \ell \leq L\atop{\tilde  L_0< \kappa(\ell)  \atop\eta_n^\ell/ \delta_n^\ell \to 0}} \tilde u_n^\ell \, , \\
     \mbox{and}  \quad\sum_{1 \leq \kappa(\ell) \leq L \atop {\ell >L\atop \eta_n^\ell/ \delta_n^\ell \to 0}} \tilde u_n^\ell& = \sum_{1 \leq \kappa(\ell) \leq \tilde  L_0 \atop {\ell >L\atop \eta_n^\ell/ \delta_n^\ell \to 0}} \tilde u_n^\ell + \sum_{\tilde L_0< \kappa(\ell) \leq L \atop {\ell >L\atop \eta_n^\ell/ \delta_n^\ell \to 0}} \tilde u_n^\ell  \, .
\end{aligned}
 $$
 In all three right-hand-sides, the easiest term to deal with is the last one: indeed
 we can write
 $$
 \Big\|   \sum_{\ell = L_0+1 }^{    L } \!u_n^\ell   +\sum_{\tilde L_0< \ell \leq L\atop{ \tilde L_0<\kappa(\ell)  \atop\eta_n^\ell/ \delta_n^\ell \to 0}}  \tilde u_n^\ell +  \sum_{\tilde L_0< \kappa(\ell) \leq L \atop {\ell >L\atop \eta_n^\ell/ \delta_n^\ell \to 0}}  \tilde u_n^\ell   \Big\|_{ \mathcal Y }  \lesssim  \sum_{\ell = L_0+1 }^{    L }  \|u_n^\ell \|_{ \mathcal Y}
+ \sum_{\tilde L_0< \ell  \atop \tilde L_0< \kappa(\ell)  } \| \tilde u_n^\ell \|_{ \mathcal Y} \, .
 $$
  Then  by~(\ref{smallXp}) and~(\ref{smallXpsecond})
 we infer that as soon as~$n$ is large enough (depending on the choice of~$L$ and~$\alpha$)
 $$
   \Big\|   \sum_{\ell = L_0+1 }^{    L } \!u_n^\ell   +\sum_{\tilde L_0< \ell \lesssim L\atop{ \tilde L_0<\kappa(\ell)  \atop\eta_n^\ell/ \delta_n^\ell \to 0}}  \tilde u_n^\ell +  \sum_{\tilde L_0< \kappa(\ell) \lesssim L \atop {\ell >L\atop \eta_n^\ell/ \delta_n^\ell \to 0}}  \tilde u_n^\ell   \Big\|_{ \mathcal Y }  \leq  \sum_{L_0< \ell  }   \|\phi_\alpha^\ell \|_{  \dot B^{-\frac13,\frac13 }_{3,1} }
+    \sum_{\tilde L_0< \ell     }     \|\tilde \phi_\alpha^{h,\ell} \|_{  \dot B^{-\frac13,\frac13 }_{3,1}  }
 $$
 and the conclusion comes from the embedding of~$ {\mathcal B}^1_q$ into~$ \dot B^{-\frac13,\frac13 }_{3,1} $ along with the stability property~(\ref{orthonorms}): for~$n \geq N(L, \alpha)$
 $$
    \Big\|   \sum_{\ell = L_0+1 }^{    L } \!u_n^\ell   +\sum_{\tilde L_0<\ell \leq L\atop{ \tilde L_0<\kappa(\ell)  \atop\eta_n^\ell/ \delta_n^\ell \to 0}}  \tilde u_n^\ell +  \sum_{\tilde L_0<  \kappa(\ell) \leq L \atop {\ell >L\atop \eta_n^\ell/ \delta_n^\ell \to 0}}  \tilde u_n^\ell   \Big\|_{ \mathcal Y }  \lesssim \sum_{L_0<\ell  }   \|\phi_\alpha^\ell \|_{ {\mathcal B}^1_q }
+   \sum_{\tilde L_0< \ell    }   \| \tilde \phi_\alpha^{h,\ell} \|_{ {\mathcal B}^1_q } \leq C \, .
 $$
 So~$\displaystyle   \sum_{\ell = L_0+1 }^{    L } \!u_n^\ell   +\sum_{\tilde L_0<\ell \leq L\atop{ \tilde L_0<\kappa(\ell)  \atop\eta_n^\ell/ \delta_n^\ell \to 0}}  \tilde u_n^\ell +  \sum_{\tilde L_0<  \kappa(\ell) \leq L \atop {\ell >L\atop \eta_n^\ell/ \delta_n^\ell \to 0}}  \tilde u_n^\ell $ is of the type~$ {\mathcal G}_n^{L,1}$.

 \noindent
 Now let us estimate~$\displaystyle \sum_{\ell = 1 }^{    L_0 }  u_n^\ell  $ and~$\displaystyle   \sum_{1 \leq \ell \leq \tilde L_0\atop \eta_n^\ell/ \delta_n^\ell \to 0} \tilde u_n^\ell $ . There is of course no uniformity problem in~$L$ and we simply
 use the uniform bound in~${\mathcal Y}$ provided in Lemmas~\ref{globalfirstprofile} and~\ref{globalsecondprofile}.
The terms~$\displaystyle  \sum_{\tilde L_0+1 \leq \ell \leq L\atop{ 1 \leq \kappa(\ell) \leq L_0 \atop\eta_n^\ell/ \delta_n^\ell \to 0}} \tilde u_n^\ell $ and~$\displaystyle\sum_{1 \leq \kappa(\ell) \leq L_0 \atop {\ell >L\atop \eta_n^\ell/ \delta_n^\ell \to 0}} \tilde u_n^\ell$ are dealt with similarly and all those three terms are also of the type~$ {\mathcal G}_n^{L,1}$.
Choosing~$ {\mathcal G}_n^{L,2}:=  {\mathcal V}_n^L
$
and using~(\ref{ytilde}) and~(\ref{smallpsiY}) concludes the proof of Lemma~{\rm \ref{lemdrift}}.
  \end{proof}

  \smallskip

  \begin{rmk}\label{uniformbound}{\rm
This argument shows that~${\mathcal U}_n^L$ is uniformly bounded in the space~${\mathcal S}_{3,1}   $.}
\end{rmk}

  \smallskip

\begin{rmk}\label{p=q=1}{\rm
It is important to have chosen the initial data bounded in a space of the type~$\dot B^{s,s'}_{p,q}$ with~$p=1 > q$ (hence in particular with~$p=1 = q$ by embedding), as it enables us to prove easily  the uniform bound on~$ F_n^{L,1}$.
As seen for instance  in~\cite{gkp}, it is indeed possible to prove such a bound when~$p = q$ and it is not clear how to prove it in the general case,  when~$p \neq q$. Then it is very natural to pick~$q\leq1$ as explained in the introduction in order to have a good Cauchy theory for the Navier-Stokes equations in anisotropic spaces, and finally the choice~$q<1$ implies by interpolation that the remainders are small precisely in a space where the Cauchy theory for (NS) is satisfactory (namely~$q=1$).}
\end{rmk}
       \subsection{Study of the forcing term}\label{studyforcingterm}
               \begin{proof}[Proof of Lemma~{\rm \ref{lemforce}}]
      We recall that
$$
\begin{aligned}
Z_n^L  := \sum_{\ell \neq k}  u_n^\ell \cdot \nabla u_n^k +
 \sum_{\ell \neq k}
\tilde u_n^\ell ({\mathbf 1}_{1 \leq \ell \leq L} + {\mathbf 1}_{1 \leq \kappa(\ell) \leq L \atop \ell >L} ) \cdot \nabla  \tilde u_n^k ({\mathbf 1}_{1 \leq k \leq L} + {\mathbf 1}_{1 \leq \kappa(k) \leq L \atop \ell >L} )\\
{}  +  \sum_{\ell \neq k}
\big(\tilde u_n^\ell  ({\mathbf 1}_{1 \leq \ell \leq L} + {\mathbf 1}_{1 \leq \kappa(\ell) \leq L \atop \ell >L} )  \cdot \nabla u_n^k + u_n^\ell \cdot \nabla \tilde u_n^k ({\mathbf 1}_{1 \leq k \leq L} + {\mathbf 1}_{1 \leq \kappa(k) \leq L \atop \ell >L} )  \big) \\
{} + u \cdot \nabla  F_n^L  +  F_n^L \cdot \nabla u  +  {\mathcal U}_n^L\cdot  \nabla {\mathcal V}_n^L + {\mathcal V}_n^L \cdot  \nabla {\mathcal U}_n^L +
  {\mathcal V}_n^L \cdot  \nabla {\mathcal V}_n^L  \, .
\end{aligned}
$$
We define
$$
\begin{aligned}
H_n^{L,1}  := \sum_{\ell \neq k}  u_n^\ell \cdot \nabla u_n^k +
 \sum_{\ell \neq k}
\tilde u_n^\ell ({\mathbf 1}_{1 \leq \ell \leq L} + {\mathbf 1}_{1 \leq \kappa(\ell) \leq L \atop \ell >L} ) \cdot \nabla  \tilde u_n^k ({\mathbf 1}_{1 \leq k \leq L} + {\mathbf 1}_{1 \leq \kappa(k) \leq L \atop \ell >L} )\\
{}  +  \sum_{\ell \neq k}
\big(\tilde u_n^\ell  ({\mathbf 1}_{1 \leq \ell \leq L} + {\mathbf 1}_{1 \leq \kappa(\ell) \leq L \atop \ell >L} )  \cdot \nabla u_n^k + u_n^\ell \cdot \nabla \tilde u_n^k ({\mathbf 1}_{1 \leq k \leq L} + {\mathbf 1}_{1 \leq \kappa(k) \leq L \atop \ell >L} )  \big)  \, ,
\end{aligned}
$$
$$
   H_n^{L,2}:= {\mathcal U}_n^L\cdot  \nabla {\mathcal V}_n^L + {\mathcal V}_n^L \cdot  \nabla {\mathcal U}_n^L +
  {\mathcal V}_n^L \cdot  \nabla {\mathcal V}_n^L  \quad \mbox{and}\quad
 H_n^{L,3} :=  u \cdot \nabla   F_n^{L}+ F_n^{L}
\cdot \nabla u  \, .
 $$
Let start by discussing~$H_n^{L,1}$.
We shall actually only deal with
$$
    \sum_{1 \leq \ell \neq k \leq L} \tilde u_n^\ell \cdot \nabla\tilde u_n^k=   \sum_{1\leq\ell \neq k\leq L}  \mbox{div} \:  \big ( \tilde u_n^\ell \otimes\tilde u_n^k \big)\, ,
$$
as all the other terms   in~$H_n^{L,1} $ can be dealt with similarly.
Referring to Lemma~{\rm \ref{globalfirstprofile}}, we know that  this term can  in turn be split into two parts, defining
$$
\begin{aligned}
   H_n^{L,1, 1}&:=   \sum_{1 \leq \ell \neq k\leq L \atop \eta_n^\ell / \delta_n ^\ell \to \infty} \mbox{div} \: \big( \tilde u_n^\ell \otimes\tilde u_n^k +\tilde u_n^k \otimes  \tilde u_n^\ell \big) +   \sum_{1 \leq \ell \neq k\leq L  \atop \eta_n^\ell / \delta_n ^\ell +  \eta_n^k / \delta_n ^k \to 0} \mbox{div} \: \Big(  \tilde R_n^\ell \otimes\tilde u_n^k + \tilde u_n^k \otimes  \tilde R_n^\ell\Big)\, , \\
    H_n^{L,1, 2}&:=   \sum_{1 \leq \ell \neq j\leq L \atop \eta_n^\ell / \delta_n ^\ell +  \eta_n^j / \delta_n ^j  \to 0}
 \mbox{div}  \: \Big(\widetilde \Lambda^n_{\boldsymbol{\eta^{\ell}} ,\boldsymbol{\delta^{\ell}},\boldsymbol{\tilde x^{\ell}} }
 \big(  \tilde  U^{h,\ell} + \frac{ \eta_n^{\ell} }{\delta_n^{\ell} }  U_n^{\kappa(\ell),h}, U_n^{\kappa(\ell),3} \big) \\
&{} \quad\quad\quad\quad\quad\quad\quad\quad\quad\quad \otimes  \widetilde \Lambda^n_{\boldsymbol{\eta^{j}} ,\boldsymbol{\delta^{j}},\boldsymbol{\tilde x^{j}} }\big(
  \tilde  U^{h,j} + \frac{ \eta_n^{j} }{\delta_n^{j} } U_n^{\kappa(j),h},U_n^{\kappa(j),3} \big)  \Big)
 \, .
\end{aligned}
$$
     The first term $   H_n^{L,1, 1}$ is dealt with using product laws in anisotropic Besov spaces  (see Appendix~{\rm \ref{appendixlp}}).
   On the one hand we have   for any~$j \in \{1,2 \}$, by~(\ref{algebra}),
          \begin{equation}\label{productanisoh}
    \begin{aligned}
\|\partial_j (f g) \|_{\widetilde{L^1}(\R^+; \dot B^{-\frac13,\frac13 }_{3,1} )}
&\lesssim \| f g \|_{\widetilde{L^1}(\R^+;\dot B^{\frac23,\frac13}_{3,1})}
\\
& \lesssim \|f\|_{\widetilde{L^{2}}(\R^+; \dot B^{\frac23,\frac13}_{3,1})}
 \|g\|_{\widetilde{L^{2}}(\R^+; \dot B^{\frac23,\frac13 }_{3,1})} \, ,
\end{aligned}
     \end{equation}
     and on the other hand  estimate~(\ref{quasialgebra}) gives
       \begin{equation}\label{productaniso31}
    \begin{aligned}
\|\partial_3 (f  g) \|_{\widetilde{L^{2}}(\R^+; \dot B^{-\frac13 ,-\frac23 }_{3,1} )}
&\lesssim \| f  g \|_{\widetilde{L^{2}}(\R^+;\dot B^{-\frac13,\frac13}_{3,1})}
\\
& \lesssim \|f \|_{\widetilde{L^{\infty}}(\R^+; \dot B^{-\frac13 ,\frac13}_{3,1})}
 \|g\|_{\widetilde{L^{2}}(\R^+; \dot B^{\frac23,\frac13 }_{3,1})} \, .
\end{aligned}
     \end{equation}
     and by~(\ref{algebra}) again
          \begin{equation}\label{productaniso3}
    \begin{aligned}
\|\partial_3 (f  g) \|_{\widetilde{L^{1}}(\R^+; \dot B^{\frac23 ,-\frac23 }_{3,1} )}
&\lesssim \| f  g \|_{\widetilde{L^{1}}(\R^+;\dot B^{\frac23,\frac13}_{3,1})}
\\
&\lesssim \|f \|_{\widetilde{L^{2}}(\R^+; \dot B^{\frac23,\frac13}_{3,1})}
 \|g\|_{\widetilde{L^{2}}(\R^+; \dot B^{\frac23,\frac13 }_{3,1})}  \, .
\end{aligned}
     \end{equation}
     So using~(\ref{smallkato}) along with the uniform bounds provided by Lemma~\ref{globalfirstprofile} gives  
  \begin{equation}\label{firstestimateH11}
  \forall L, \quad \lim_{n \to \infty} \Big \|  \sum_{1 \leq \ell \neq k\leq L \atop \eta_n^\ell / \delta_n ^\ell \to \infty } \mbox{div} \: \big( \tilde u_n^\ell \otimes\tilde u_n^k\big) \Big \|_{ {\mathcal X}} = 0 \, .
 \end{equation}
   \smallskip
 \noindent
The   terms~$ \tilde R_n^\ell \otimes \tilde u_n^k $ are dealt with in the same way using  Lemma~\ref{globalfirstprofile}: we find that~$\widetilde H_n^{L,1,1}$ satisfies the bound~(\ref{calhn1}).

  \medskip
 \noindent
%
    The same product laws  (using the structure of the nonlinear term) enable us to deal with~$ H_n^{L,2}$, recalling that
 $$
   H_n^{L,2}:= {\mathcal U}_n^L\cdot  \nabla {\mathcal V}_n^L + {\mathcal V}_n^L \cdot  \nabla {\mathcal U}_n^L +
  {\mathcal V}_n^L \cdot  \nabla {\mathcal V}_n^L
 $$
 using~(\ref{ytilde})-(\ref{smallpsiY})
 to estimate~${\mathcal V}_n^L$, and Remark~\ref{uniformbound} for~${\mathcal U}_n^L$. To control~$  {\mathcal V}_n^L \cdot  \nabla {\mathcal V}_n^L  $ for instance, we notice that the horizontal component does not belong a priori to~$\widetilde{L^{\infty}}(\R^+; \dot B^{-\frac13 ,\frac13}_{3,1})$ (see~(\ref{ytilde})) but that is not a problem as in~(\ref{productaniso31}), due to the structure of the nonlinear term,
one of the two functions is necessarily a third component, which does belong to~$\widetilde{L^{\infty}}(\R^+; \dot B^{-\frac13 ,\frac13}_{3,1})$. We argue similarly for all the other terms.

  \medskip
 \noindent
Next let us consider the term~$  H_n^{L,1,2}$ and prove it satisfies the bounds~(\ref{calhn2})-(\ref{calhn3}).
  Let us define a typical term
  $$
    U_{n}^{j,\ell}:=  \widetilde \Lambda^n_{\boldsymbol{\eta^{\ell}} ,\boldsymbol{\delta^{\ell}},\boldsymbol{\tilde x^{\ell}} }
  \tilde  U^{h,\ell}  \otimes \widetilde \Lambda^n_{\boldsymbol{\eta^{j}} ,\boldsymbol{\delta^{j}},\boldsymbol{\tilde x^{j}} }
  \tilde  U^{h,j}   \, ,
  $$
and first show that
\begin{equation}\label{unifboundtypical}
\mbox{ div }   U_{n}^{j,\ell} \mbox{ is bounded in }  L^1(\R^+; \dot B^{1, 1 }_{1,1} ) \cap  \widetilde{L^\infty}(\R^+; \dot B^{-1,1}_{1,1})
+ L^1(\R^+;  \dot B^{2, 0 }_{1,1}  )
\cap \widetilde{L^\infty}(\R^+;   \dot B^{0, 0 }_{1,1}  )
 \, .
\end{equation}
This follows from the fact that~$  \tilde  U^{h,\ell} $ belongs to~$L^2(\R^+; \dot B^{2, 1 }_{1,1})
\cap  \widetilde{L^\infty}(\R^+; \dot B^{1, 1 }_{1,1} )
$ (see Lemma~\ref{globalfirstprofile} for that result): we know indeed that~$ \dot B^{2, 1 }_{1,1}$ is an  algebra
and that the product of two functions in~$ \dot B^{1, 1 }_{1,1}$ belongs to~$ \dot B^{0, 1 }_{1,1}$
(see Appendix~\ref{appendixlp}). Since~$\widetilde{L^2}(\R^+; \dot B^{2, 1 }_{1,1})
\cap  \widetilde{L^\infty}(\R^+; \dot B^{1, 1 }_{1,1} )
$ is invariant through the action of~$\widetilde \Lambda^n_{\boldsymbol{\eta^{\ell}} ,\boldsymbol{\delta^{\ell}},\boldsymbol{\tilde x^{\ell}} }$
(see Remark~\ref{tildelambdainvariant})
the result~(\ref{unifboundtypical}) follows. 

\smallskip
\noindent Now let us   prove that~$ U_{n}^{j,\ell} $ goes to zero in~$L^1(\R^+; \dot B^{2, 1 }_{1,1})\cap \widetilde{L^\infty}(\R^+; \dot B^{0,1}_{1,1} ) $, as in~(\ref{calhn2})-(\ref{calhn3}): ~$\mbox{div} \,  U_{n}^{j,\ell} $ will then go to zero in~$ L^1(\R^+; \dot B^{1, 1 }_{1,1} ) \cap  \widetilde{L^\infty}(\R^+; \dot B^{-1,1}_{1,1})
+ L^1(\R^+;  \dot B^{2, 0 }_{1,1}  )
\cap \widetilde{L^\infty}(\R^+;   \dot B^{0, 0 }_{1,1}  )$ which is contained in~${\mathcal X}$.

\smallskip
\noindent Let us start by the~$L^1(\R^+; \dot B^{2, 1 }_{1,1})$ norm. By the equivalent formulation in terms of the heat flow~(\ref{defbesovanisoheat}), we know that~$ \tau^{ -2} \tau'^{- \frac32}
 K_h (\tau) K_v (\tau')  U_{n}^{j,\ell} (t,x)$ is uniformly bounded in~$L^1$ in all variables.
To prove the result, by Lebesgue's dominated convergence theorem we     shall therefore  prove the pointwise convergence of~$ \tau^{ -2} \tau'^{- \frac32}
K_h (\tau) K_v (\tau')  U_{n}^{j,\ell}(t,x)$ to zero for almost every~$(\tau,\tau',t,x)$,  as~$n$ goes to infinity.

\medskip
\noindent  We shall use the well-known bounds
\begin{equation}\label{boundKhKv}
\begin{aligned}
\| K_h (\tau) K_v (\tau')  f(t,x)\|_{L^\infty_{t,x}} \leq \tau ^{-  1}  {\tau' } ^{-\frac12 } \|f(t,x)\|_{L^\infty_t L^1_x} \quad \mbox{and} \\
 \| K_h (\tau) K_v (\tau')  f(t,x)\|_{L^\infty_{t,x}} \leq  \|f(t,x)\|_{L^\infty_{t,x}} \, ,
 \end{aligned}
\end{equation}
as well as their interpolates, in the horizontal and vertical space variables: for instance denoting~$L^p_h L^r_v:=L^p(\R^2;L^r(\R))$ we have also
$$
\| K_h (\tau) K_v (\tau')  f(t,x)\|_{L^\infty_{t,x}} \leq \tau ^{-  1}   \|f(t,x)\|_{L^\infty_t L^1_h L^\infty_v} \, .
$$

 \smallskip
\noindent We first notice that
$$
\begin{aligned}
\| U_{n}^{j,\ell}\|_{L^\infty_t L^1_x} & \leq \big\| \widetilde \Lambda^n_{\boldsymbol{\eta^{\ell}} ,\boldsymbol{\delta^{\ell}},\boldsymbol{\tilde x^{\ell}} }
  \tilde  U^{h,\ell} \big\|_{L^\infty_t L^2_h L^1_v }\big \| \widetilde \Lambda^n_{\boldsymbol{\eta^{j}} ,\boldsymbol{\delta^{j}},\boldsymbol{\tilde x^{j}} }
  \tilde  U^{h,j}\big \|_{L^\infty_t L^2_h L^\infty_v } \\
   & \leq C \delta_n^\ell \,
\end{aligned}
$$
so the a.e. pointwise convergence of~$ \tau^{ -2} \tau'^{- \frac32}
K_h (\tau) K_v (\tau')  U_{n}^{j,\ell}(t,x)$  to zero follows, using~(\ref{boundKhKv}), if (by symmetry in~$\ell$ and~$j$) either~$ \delta_n^\ell $ or~$ \delta_n^j $ go to zero. So from now on we assume that~$ \delta_n^\ell $ and~$ \delta_n^j $ go to infinity or 1.
Next we write
$$
\begin{aligned}
\| U_{n}^{j,\ell}\|_{L^\infty_t L^1_h L^\infty_v} & \leq \| \widetilde \Lambda^n_{\boldsymbol{\eta^{\ell}} ,\boldsymbol{\delta^{\ell}},\boldsymbol{\tilde x^{\ell}} }
  \tilde  U^{h,\ell} \|_{L^\infty_t L^1_h L^\infty_v } \| \widetilde \Lambda^n_{\boldsymbol{\eta^{j}} ,\boldsymbol{\delta^{j}},\boldsymbol{\tilde x^{j}} }
  \tilde  U^{h,j} \|_{L^\infty_t L^\infty_h L^\infty_v } \\
   & \leq C\frac{ \eta_n^\ell }{\eta_n^j }\,
\end{aligned}
$$
so again from now on we may assume that~$ \eta_n^\ell =  \eta_n^j$, if not the result is proved (if one or the other ratio goes to zero).
But in that case
$$
\| U_{n}^{j,\ell}\|_{L^\infty_t L^\infty_x}   \leq C\frac{1 }{(\eta_n^\ell)^2 }\,
$$
hence from now on we restrict our attention to the case when~$ \eta_n^\ell =  \eta_n^j \to 0$ or~1.
We notice that by the change of variables
$$
y_h: = \frac {x_h - \tilde x_{n,h}^\ell}{\eta_n^\ell}  \, , \quad y_3: = \frac {x_3 - \tilde x_{n,3}^\ell}{\delta_n^\ell}  \, ,\quad \sigma := (\eta_n^\ell)^{-2}\tau \, , \quad \sigma' := (\delta_n^\ell)^{-2}\tau' \, , \quad s:= (\eta_n^\ell)^{-2}t \, ,
$$
we have after an easy computation
$$\int \! \! \tau^{   -2} \tau'^{- \frac 32} \Big | K_h (\tau) K_v (\tau')  U_{n}^{j,\ell} (t,x)\Big |   d\tau d\tau' dxdt
\!  = \! \!
\int \! \! \sigma^{   -2} \sigma'^{- \frac 32} \Big | K_h (   \sigma) K_v (   \sigma') \tilde U_{n}^{j,\ell} (s,y)\Big |  d\sigma d\sigma' dsdy
$$
where
$$
\tilde U_{n}^{j,\ell} (s,y) :=    \tilde  U^{h,\ell} (s,y)  \otimes    \tilde  U^{h,j}  \Big(
 s ,  y_h + \frac {\tilde x_{n,h}^\ell - \tilde x_{n,h}^j}{\eta_n^j}
 , \frac{\delta_n^\ell}{\delta_n^j} y_3 + \frac {\tilde x_{n,3}^\ell - \tilde x_{n,3}^j}{\delta_n^j}
 \Big)
 \, ,
 $$
 so if~$ \delta_n^\ell = \delta_n^j$  then the orthogonality assumption on the cores of concentration implies the result, so we may assume for instance that~$ \delta_n^\ell /  {\delta_n^j}$ goes to infinity, and since neither goes to zero, that in particular~$ \delta_n^\ell $ goes to infinity. The same argument   lets us  assume that~${(\tilde x_{n,h}^\ell - \tilde x_{n,h}^j)} / {\eta_n^j}$ is bounded.

\smallskip
\noindent Next we notice that the change of variables
$$
y_h: = \frac {x_h - \tilde x_{n,h}^\ell}{\eta_n^\ell} , \quad y_3: = \frac {x_3 - \tilde x_{n,3}^\ell}{\delta_n^j} ,\quad \sigma := (\eta_n^\ell)^{-2}\tau, \quad \sigma' := (\delta_n^j)^{-2}\tau', \quad s:= (\eta_n^\ell)^{-2}t \, ,
$$
gives
$$\int \! \! \tau^{   -2} \tau'^{- \frac 32} \Big | K_h (\tau) K_v (\tau')  U_{n}^{j,\ell} (t,x)\Big |   d\tau d\tau' dxdt
\!  = \! \!
\int \! \! \sigma^{   -2} \sigma'^{- \frac 32} \Big | K_h (   \sigma) K_v (   \sigma') \widetilde V_{n}^{j,\ell} (s,y)\Big |  d\sigma d\sigma' dsdy
$$
where
$$
\widetilde V_{n}^{j,\ell} (s,y) :=    \tilde  U^{h,\ell} (s,y_h, \frac{\delta_n^j}{\delta_n^\ell} y_3 )  \otimes    \tilde  U^{h,j}  \Big(
 s ,  y_h + \frac {\tilde x_{n,h}^\ell - \tilde x_{n,h}^j}{\eta_n^j}
 ,  y_3 + \frac {\tilde x_{n,3}^\ell - \tilde x_{n,3}^j}{\delta_n^j}
 \Big)
 \, .
 $$
So
  if~${(\tilde x_{n,3}^\ell  - \tilde x_{n,3}^j)} /  {\delta_n^j}$ is not bounded,
 then for each fixed~$y_3$ the limit of~$\widetilde V_{n}^{j,\ell} (s,y)$ is zero hence we may from now on assume that~${(\tilde x_{n,3}^\ell  - \tilde x_{n,3}^j)} /  {\delta_n^j}$ is   bounded, and similarly for~$\tilde x_{n,3}^j /  \delta_n^j$ and~$\tilde x_{n,3}^\ell /  \delta_n^j$
by translation invariance. Notice that repeating the argument~(\ref{limxelldeltaell}) we get that~$\tilde x_{n,3}^\ell /  \delta_n^\ell$ must go  to zero.    According to Assumption~\ref{sousdecompo}, we may therefore now assume that
  $$
    \tilde \varphi^{h,\ell} (\cdot,0) \equiv 0\, ,
  $$
  which implies by Lemma~{\rm \ref{globalfirstprofile}}, (\ref{smallerthanalphaatz3}), that
  \begin{equation}\label{smallatzeroUhl}
 \Big  |\tilde  U^{h,\ell} \big(t,y_h, \frac{ \delta_n^j} {\delta_n^\ell} y_3 \big)\Big  |\leq  \Big(   \frac{ \delta_n^j} {\delta_n^\ell}  |y_3  | + \alpha\Big) f(t,y_h)
  \end{equation}
  where~$f (t,y_h)$ is a smooth function in~$L^\infty(\R^+;L^2 \cap L^\infty(\R^2))$.
We obtain finally that
$$
 \|\widetilde V_n^{j, \ell} (\cdot,\cdot,y_3)\|_{L^\infty_t L^1_h }   \lesssim\Big| \alpha +  \frac{\delta_n^j} {\delta_n^\ell} \Big | \, .
$$
   The result in~$L^1(\R^+; \dot B^{2, 1 }_{1,1})$ follows.

     \smallskip
 \noindent  The same argument gives actually also the result in~$\widetilde{L^\infty}(\R^+; \dot B^{0, 1 }_{1,1})$  since all convergences to zero above are uniform in~$t$.

  \smallskip
 \noindent All other terms of~$  H_n^{L,1,2} $   are dealt with in a similar fashion hence~$  H^{L,1}_n$ satisfies the bounds~(\ref{calhn2}) and~(\ref{calhn3}).

    \medskip
\noindent
Recalling that~$ H_n^{L,2} $ was already dealt with, let us finally consider~$ H_n^{L,3} $ with
 $$
 H_n^{L,3} :=  u \cdot \nabla   F_n^{L}+ F_n^{L}
\cdot \nabla u  \, .
 $$
 Using the decomposition~{\rm (\ref{decomposeF})} of~$F_n^{L}$ and the same arguments as above give
   $$
\forall L, \quad \limsup_{n \to \infty} \big(u \cdot \nabla   \widetilde F_n^{L,1} +   \widetilde F_n^{L,1}\cdot \nabla u\big)
= 0 \, \mbox{ in } \,
 {L^1(\R^+; \dot B^{1, 1 }_{1,1} + \dot B^{2, 0 }_{1,1}  )\cap \widetilde{L^\infty}(\R^+; \dot B^{-1,1}_{1,1} + \dot B^{0, 0 }_{1,1}  )
}
 $$
 where
 $$
  \widetilde F_n^{L,1}  := \sum_{1 \leq \ell \leq L\atop \eta_n^\ell/ \delta_n^\ell \to 0} \tilde u_n^\ell+ \sum_{\ell = 1 }^{    L }  u_n^\ell \quad
   \mbox{and} \quad  F_n^{L,2}  :=\sum_{\ell = 1 \atop \eta_n^\ell / \delta_n ^\ell \to \infty}^{    L } \tilde u_n^\ell       \, .
$$
 while the terms~$   F_n^{L,1} - \widetilde F_n^{L,1}$ and~$ F_n^{L,2} $
are dealt with using the product laws~(\ref{productanisoh})-(\ref{productaniso3}). We leave the details to the reader.
\noindent Lemma~{\rm \ref{lemforce}} is proved. \end{proof}

\subsection{Proof of Corollaries~{\rm \ref{corthm1}} and~{\rm \ref{corthm}}}
\subsubsection{Proof of Corollary~{\rm \ref{corthm1}}}
If the solution~$u$ associated with~$u_0$ only has a finite life span~$T^*$, then we can retrace the following steps, replacing everywhere~$\R^+$ by~$[0,T]$ for~$T < T^*$ and it is obvious that the result of Corollary~{\rm \ref{corthm1}} holds as soon as~$n$ is large enough (depending on~$T$).

\subsubsection{Proof of Corollary~{\rm \ref{corthm}}}
The proof of that corollary is very close to the proof of a similar result in the isotropic context (see~\cite{gprofils}, Theorem 2(ii)).
Under the assumptions of Corollary~{\rm \ref{corthm}}, we can apply the previous results (in particular Corollary~{\rm \ref{corthm1}}) to write that as long as the solution~$u$ associated with~$u_0$ exists, it may be decomposed into
$$
  u = u_n -  {\mathcal U}_n^L   - {\mathcal V}_n^{L} - w_n^{L} \, ,
 $$
 and we know that for all~$T < T^*$, denoting by~$\displaystyle {\mathcal L}_{2}(T) :=
  \widetilde   {L^2}([0,T];\dot B^{\frac23 ,\frac13 }_{3,1}) $,
 \begin{equation}\label{smallwnL}
 \lim_{\alpha \to 0} \big( \limsup_{n \to \infty} \|w_n^{L}\|_{  {\mathcal L}_{2}(T)} \big)\to 0 \, , \quad L \to \infty \, .
 \end{equation}
 Moreover we also have, for~$n$ large enough, $\alpha$ small enough and all~$L$ (due to the assumption on~$u_n$ and to Lemma~{\rm \ref{lemdrift}}),
  $$
 \|u +  w_n^{L}\|_{ {\mathcal L}_{2}(T)} \leq C \, ,
 $$
 uniformly in~$L$, $\alpha$ and~$n$. Next
    recalling that if a solution blows up at time~$T^*$, then its norm in~$  {\mathcal L}_{2}(T)$ blows up when~$T$ goes to~$T^*$ (see Appendix~{\rm \ref{globalsmallaniso}}), we can therefore choose~$T < T^*$ such that
 $$
 \|u\|_{  {\mathcal L}_{2}(T)} \geq 2C \, .
 $$
We conclude by noticing that
$$
\|u\|_{ {\mathcal L}_{2}(T)} \leq C + \|w_n^{L}\|_{ {\mathcal L}_{2}(T)}
$$
so choosing~$n$ and~$L$ large enough and~$\alpha$ small enough gives a contradiction due to~{\rm (\ref{smallwnL})}, whence the result.


 \section{Profile decompositions in~${\mathcal B}^1_q$}\label{profile}

\subsection{Introduction and statement of the theorem}   After the pioneering works of P. -L.  Lions \cite{Lions1} and
\cite{Lions2},  the lack of compactness in critical Sobolev embeddings was
investigated for different types of examples through several angles.   For instance, in \cite{Ge1} the lack of  compactness  in the critical  Sobolev
embedding~$\displaystyle
\dot H^s(\R^d)\hookrightarrow L^{p}(\R^d)
$
in the case
where $d \geq 3$ with~$0\leq s<d/2$ and $p=2d/(d-2s)$ is described in terms of microlocal defect measures and in \cite{G}, it is characterized  by means of profiles.  More generally for Sobolev spaces in the~$ L^q$ framework, this question  is treated in \cite{jaffard}  (see also the more recent work~\cite{koch}) by  the use of
nonlinear wavelet approximation theory. In \cite{BMM}, the authors look into the lack of compactness of the critical
embedding~$\displaystyle
 H_{rad}^1(\R^2)\hookrightarrow {\mathcal L} \, , $
where~${\mathcal L}$ denotes the Orlicz space associated to the function $\phi(s)={\rm e}^{ s^2}-1$.
 Other studies were   conducted in various works (see among others~\cite{Benameur,BC,tintarev,ST,So,St}) supplying us with a large amount of information on
solutions of nonlinear  partial differential equations, both in the
elliptic  or the evolution framework;  among other applications, one can mention~\cite{BG,gprofils,GG,gkp,km,ker,Tao}.
 Recently  in \cite{BCG}, the
wavelet-based profile decomposition introduced by S. Jaffard in \cite{jaffard} was revisited in order to treat a larger range of examples of critical embedding of function spaces~$\displaystyle
 X \hookrightarrow Y
 $
including Sobolev, Besov, Triebel-Lizorkin, Lorentz, H\"older and BMO spaces.   For that purpose, two generic properties on the spaces $X$ and $Y$ were  identified
to  build the profile decomposition in a unified way. These properties concern wavelet decompositions in the spaces~$X$ and $Y$ supposed to have the same  scaling, and endowed  with an unconditional wavelet basis $(\psi_\lambda)_{\lambda\in\Lambda}$.

\medskip
\noindent
The first  property   is related to the existence of a {\it nonlinear projector}  $Q_M$ satisfying
$$
\lim_{M\to +\infty}\max_{\|f\|_X \leq 1}\|f-Q_M f\|_Y=0\, .$$
More precisely, if~$f$ may be decomposed in the following way (the notation will be made precise below):
$\displaystyle f=\sum_{\lambda\in \nabla} d_\lambda\psi_\lambda  ,$
then  $Q_M f$, sometimes called the {\it best $M$-term approximation},    takes the general form
\begin{equation}
\label{nonlinproj}
Q_M f:=\sum_{\lambda \in E_M} d_\lambda\psi_\lambda \, ,
\end{equation}
where the sets $E_M=E_M(f)$ of cardinality $M$ depend on $f$ and
satisfy~$\displaystyle E_M(f)\subset E_{M+1}(f) .$
The existence of such a nonlinear projector was extensively studied in nonlinear approximation theory and for many cases, like Sobolev   spaces, it turns out that the set  $E_M=E_M(f)$ can be chosen as
 the subset of $\nabla$  that corresponds to the $M$ largest
values of $|d_\lambda|$. It is in fact known (see \cite{Me} for instance) that in homogeneous Besov spaces $\dot B^{\sigma}_{r,r}$,
we have the following norm equivalence :
\begin{equation}
\|f\|_{\dot B^{\sigma}_{r,r}}\sim \|(d_\lambda)_{\lambda\in\nabla}\|_{\ell^r}  \, ,
\label{besov}
\end{equation}
for $\displaystyle f=\sum_{\lambda\in\nabla} d_\lambda\psi_\lambda$ with wavelets normalized in $\dot B^{\sigma}_{r,r}$. Therefore, in the
 particular case where~$X=\dot B^s_{p,p}$ and~$Y=\dot B^t_{q,q}$,
with~$\frac 1 p-\frac 1 q=\frac {s-t} d$, the  nonlinear projector  $Q_M$ defined by {\rm (\ref{nonlinproj})},
where~$E_M=E_M(f)$
is the subset of $\nabla$ of cardinality $M$ that corresponds to the $M$ largest
values of $|d_\lambda|$, is appropriate and satisfies (see~\cite{BCG} for instance):
\begin{equation}
\sup_{\|f\|_{\dot B^s_{p,p}} \leq 1}\|f-Q_Mf\|_{\dot B^t_{q,q}}\leq CM^{-\frac {s-t} d} \, .
\label{particularbesov}
\end{equation}
The second property  concerns the {\it stability}  of
wavelet expansions in the  function space~$X$  with respect to certain operations
such as ``shifting'' the indices of wavelet coefficients, as well
as disturbing the value of these coefficients.  In practice and for most cases of interest,  this property  derives from the fact that the $X$ norm
of a function is equivalent to the norm
of its wavelet coefficients in a certain sequence space, by invoking Fatou's lemma.

\medskip
\noindent Under these assumptions, it is
proved in~\cite{BCG}  that, as in the previous works \cite{Ge1} and \cite{jaffard}, translation and scaling invariance
are the sole responsible for the defect of compactness of the
embedding of~$ X \hookrightarrow Y $.

\medskip
\noindent In what follows, we shall apply the same lines of reasoning,  taking advantage of an anisotropic  wavelet setting  to describe  the lack of compactness of the Sobolev embedding~${\mathcal B}^1_q \hookrightarrow
 \dot B_{p,p}^{- 1 + \frac 2 p,\frac1p}$ with  $p > \max(1,q) $
in terms of an asymptotic anisotropic profile decomposition.  We recall that as defined in the introduction of this paper,~$ {\mathcal B}^1_q:= \dot B_{1,q}^{1,1}$. Our presentation is essentially based on ideas and methods developed for the isotropic setting   in \cite{BCG}. Because of the anisotropy, we  use a two-parameter  wavelet basis. More precisely,  wavelet decompositions of a function have the form
\begin{equation}
\label{anisotropicdec}
f=\sum_{\lambda =(\lambda_1,\lambda_2)\in \nabla} d_\lambda\psi_\lambda \, ,
\end{equation}
where  the wavelets $\psi_\lambda$    are assumed to be  normalized in the space $X=  {\mathcal B}^1_q$, and where the notation~$\lambda_1 = (j_1,k_1) \in \ZZ\times \ZZ^2$ (resp. $\lambda_2 = (j_2,k_2) \in \ZZ\times \ZZ$)  concatenates  the scale index~$j_1= j_1(\lambda_1)$ (resp.~$j_2= j_2(\lambda_2)$) and the space index~$k_1=k_1(\lambda_1)$ (resp.~$k_2=k_2(\lambda_2)$)  for the horizontal variable (resp. the vertical variable).  Thus the index set $\nabla$ in {\rm (\ref{anisotropicdec})} is  defined as~$\displaystyle\nabla:=(\ZZ\times \ZZ^2)\times (\ZZ\times \ZZ)$
and  the wavelets~$\psi_\lambda$  write under the form
$$ \psi_\lambda= \psi_{(\lambda_1,\lambda_2)}= 2^{j_1}\psi (2^{j_1}\cdot-k_1, 2^{j_2}\cdot-k_2)$$
where $\psi$  the so-called ``mother wavelet''  is generated by a finite
dimensional inner product of one  variable  functions $\psi^e$,  for $e\in E$ a finite set. It is known (see for instance \cite{BN})  that  wavelet bases are unconditional bases, i.e.   there exists a constant~$D$ such that
for any finite subset $E\subset \nabla$ and coefficients
vectors $(c_\lambda)_{\lambda\in E}$ and $(d_\lambda)_{\lambda\in E}$
such that~$|c_\lambda| \leq |d_\lambda|$ for all $\lambda$, one has
\begin{equation}
\label{uncond}
\big\|\sum_{\lambda\in E} c_\lambda\psi_\lambda\big\|_{{\mathcal B}^1_q} \leq D\,\big\|\sum_{\lambda\in E} d_\lambda\psi_\lambda\big\|_{{\mathcal B}^1_q}
\end{equation}
 and similarly
for $ \dot B_{p,p}^{- 1 + \frac 2 p,\frac1p}$.
In addition ${\mathcal B}^1_q$ and $ \dot B_{p,p}^{- 1 + \frac 2 p,\frac1p}$ may be characterized by simple properties on wavelet coefficients: for~$
\displaystyle f=\sum_{\lambda\in\nabla} d_\lambda\psi_\lambda = \sum_{(\lambda_1,\lambda_2)\in\nabla} d_{(\lambda_1,\lambda_2)} \psi_{(\lambda_1,\lambda_2)}
$
with normalized wavelets, we have the following norm equivalences:
\begin{equation}
\label{norme1}
\|f\|^q_{{\mathcal B}^1_q}\sim \sum_{j_1\in\ZZ} 
\Big(\sum_{|\lambda_1|=j_1}\Big(\sum_{j_2\in\ZZ} \big(\sum_{|\lambda_2|=j_2}  |d_{(\lambda_1,\lambda_2)}|\big)^q
\Big)^{1/ q}
\Big)^q
\end{equation}
and
\begin{equation}
\label{norme2}
\|f\|_{\dot B_{p,p}^{ - 1 + \frac 2 p}(\R^2 ; \dot B_{p, p}^\frac1p(\R))}\sim \|(d_\lambda)_{\lambda\in\nabla}\|_{\ell^p} \, .
\end{equation}
Moreover as  proved in \cite{BCG, K}, there exists a nonlinear projector $Q_M$ of the form {\rm (\ref{nonlinproj})} such that
\begin{equation}
\label{nonlincondaniso}
\lim_{M\to +\infty}\max_{\|f\|_{ {\mathcal B}^1_q} \leq 1}\|f-Q_M f\|_{ \dot B_{p,p}^{- 1 + \frac 2 p,\frac1p}}=0 \, .
\end{equation}
We refer to \cite{ah,BN,bowho,Co,Dau,De,DJP,ght, Hr,christopher,Tem} and the references therein for more details on the construction
of wavelet bases and on the characterization of function spaces by expansions in such bases.

\noindent In the sequel,  for any function $\phi$, not necessarily a wavelet, and any scale-space index~$\lambda$ defined by~$
\displaystyle \lambda=(\lambda_1,\lambda_2) = ((j_1,k_1),(j_2,k_2)) \in \nabla , $ we shall use the notation
$$
\phi_\lambda (x):=2^{j_1}\phi(2^{j_1}x_h-k_1, 2^{j_2}x_3-k_2),
\label{rescale}
$$
 and to avoid heaviness, we shall define for ~$i \in \{1,2\}$ and $\lambda=(\lambda_1,\lambda_2) = ((j_1,k_1),(j_2,k_2))$, by~$j_i = j_i (\lambda)$ and $k_i = k_i (\lambda)$.

\smallskip
\noindent We shall prove the following theorem, characterizing    the lack of compactness in the critical
embedding~$
 {\mathcal B}^1_q\hookrightarrow \dot B_{p,p}^{- 1 + \frac 2 p,\frac1p}$,~$p > \max(q,1)$. The   result actually holds for many   such embeddings, but for the sake of readability we choose to only state and prove it in this particular case.
 \begin{thm}
\label{anisocompthm}
Let $(u_n)_{n\geq 0}$ be a bounded sequence in ${\mathcal B}^1_q$. Then, up to a
subsequence extraction, there exists a family of functions $(\phi^\ell)_{\ell\geq0}$ in ${\mathcal B}^1_q$
and sequences of scale-space indices~$(\lambda_\ell(n))_{n\geq0}$ for each $\ell>0$
such that for all~$p > \max(q,1)$,
$$
u_n=\sum_{\ell=1}^L \phi_{\lambda_\ell(n)}^\ell+ \psi_{n}^L \, ,\quad
\mbox{where} \quad
\limsup_{n\to\infty}\;\|\psi_{n}^L\|_{\dot B_{p,p}^{- 1 + \frac 2 p,\frac1p}}\to 0\quad
\mbox{as} \quad
  {L\to\infty} \, .$$
The decomposition
is asymptotically orthogonal in the sense that  for any $k\neq \ell$, as $n\to +\infty$, either
\begin{equation}\label{orthogonalscales}
|j_1(\lambda_k(n))-j_1(\lambda_\ell(n))| + |j_2(\lambda_k(n))-j_2(\lambda_\ell(n))|\to +\infty\;\;\;\;
\end{equation}
or $$ |k_1(\lambda_k(n))-2^{j_1(\lambda_k(n))-j_1(\lambda_\ell(n))}k_1(\lambda_\ell(n))| + |k_2(\lambda_k(n))-2^{j_2(\lambda_k(n))-j_2(\lambda_\ell(n))}k_2(\lambda_\ell(n))| \to +\infty \,  .$$

\noindent
Moreover, we have
the following stability estimates
\begin{equation}
\label{ortogonal}
 \sum_{\ell=1}^{\infty}\,\|\phi^\ell\|_{{\mathcal B}^1_q}  \leq  C \sup_{n\geq 0} \|
u_n\|_{ {\mathcal B}^1_q} \, ,
 \end{equation}
where $C$ is a constant which only depends on  the choice of the wavelet basis.
\end{thm}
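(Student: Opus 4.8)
The plan is to transport everything to the level of wavelet coefficients and then follow the construction of \cite{BCG}, adapted to the anisotropic two-parameter wavelet basis, using the three facts recalled above: the norm equivalences \eqref{norme1}--\eqref{norme2}, the unconditionality \eqref{uncond}, and the nonlinear projector $Q_M$ with \eqref{nonlincondaniso}. Writing $u_n=\sum_{\lambda\in\nabla}d_\lambda(n)\psi_\lambda$ with normalized wavelets, the hypothesis says the coefficient sequences $(d_\lambda(n))_\lambda$ are bounded in the mixed-norm sequence space attached to $\mathcal B^1_q$ by \eqref{norme1}. The first step is to freeze the number of ``large'' coefficients: for each fixed $M$, \eqref{nonlincondaniso} gives $\|u_n-Q_Mu_n\|_{\dot B_{p,p}^{-1+\frac2p,\frac1p}}\le\varepsilon(M)\sup_m\|u_m\|_{\mathcal B^1_q}$ with $\varepsilon(M)\to0$ as $M\to\infty$, while $Q_Mu_n=\sum_{\lambda\in E_M(u_n)}d_\lambda(n)\psi_\lambda$ is a sum of exactly $M$ wavelets.

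The heart of the argument is a diagonal extraction on $Q_Mu_n$. For $M$ fixed write $E_M(u_n)=\{\mu_1(n),\dots,\mu_M(n)\}$; up to a subsequence we may assume that for every pair $(i,i')$ the orthogonality defect of $\mu_i(n)$ and $\mu_{i'}(n)$ (that is, the left-hand sides of \eqref{orthogonalscales} and of the displayed space alternative) either tends to $+\infty$ or stays bounded and converges, and that each coefficient $d_{\mu_i(n)}(n)$ converges. The relation ``the defect of $(i,i')$ stays bounded'' is asymptotically an equivalence relation on $\{1,\dots,M\}$; to each class we attach a profile by choosing a representative index $\lambda_\ell(n)$ in the class and setting $\phi^\ell:=\sum\big(\lim_n d_{\mu_i(n)}(n)\big)\psi_{\sigma_i}$, the sum running over the indices $i$ of the class, where $\psi_{\sigma_i}$ is the wavelet obtained from $\psi$ by the bounded, convergent scale-space shift of $\mu_i(n)$ relative to $\lambda_\ell(n)$. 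Since the rescaling $\phi\mapsto\phi_\lambda$ is an isometry of $\mathcal B^1_q$ -- this is checked directly on \eqref{firstnorm} -- each $\phi^\ell$ belongs to $\mathcal B^1_q$, and the orthogonality conclusion of the statement is precisely the fact that representatives of distinct classes have diverging defect. Letting $M\to\infty$ along a diagonal subsequence produces the countable family $(\phi^\ell)_{\ell\ge1}$ and the index sequences $(\lambda_\ell(n))_n$: at level $M$ one obtains finitely many profiles, and increasing $M$ can only append wavelets to already-constructed profiles or create new ones.

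It remains to control the remainder and prove \eqref{ortogonal}. Given $L$, choose $M=M(L)$ so large that at level $M$ the first $L$ profiles already account for all but finitely many of the selected coefficients sitting within bounded defect of $\lambda_1(n),\dots,\lambda_L(n)$; then write $\psi_n^L:=u_n-\sum_{\ell=1}^L\phi^\ell_{\lambda_\ell(n)}=(u_n-Q_Mu_n)+\big(Q_Mu_n-\sum_{\ell\le L}\phi^\ell_{\lambda_\ell(n)}\big)$. The first term is $\le\varepsilon(M)\sup_m\|u_m\|_{\mathcal B^1_q}$ in $\dot B_{p,p}^{-1+\frac2p,\frac1p}$ by \eqref{nonlincondaniso}; the second is, for $n$ large, a sum of wavelets whose indices have defect tending to infinity relative to all of $\lambda_1(n),\dots,\lambda_L(n)$, hence it tends to $0$ in $\dot B_{p,p}^{-1+\frac2p,\frac1p}$ as $L\to\infty$ because \eqref{norme2} is a plain $\ell^p$ norm and the discarded coefficients are vanishingly small there -- exactly as in \cite{BCG}. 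This yields $\limsup_n\|\psi_n^L\|_{\dot B_{p,p}^{-1+\frac2p,\frac1p}}\to0$ as $L\to\infty$. Finally \eqref{ortogonal} follows by Fatou's lemma in the sequence space of \eqref{norme1} together with \eqref{uncond}, arguing as for the stability estimates of \cite{BCG}: once renormalized back into the coefficients of $u_n$, the coefficient patterns of distinct profiles are asymptotically disjointly supported, and the basis is unconditional, so their $\mathcal B^1_q$-norms sum to at most a constant depending only on the basis times $\sup_n\|u_n\|_{\mathcal B^1_q}$, uniformly in $L$.

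The principal technical obstacle is the bookkeeping forced by anisotropy: the orthogonality relation now splits into a horizontal scale index $j_1$, a vertical scale index $j_2$, and the corresponding space indices $k_1,k_2$, so the equivalence classes of indices must be organized with respect to a genuinely two-parameter notion of proximity, and one must verify both that the resulting profiles decouple and that, when transported back to the scaling operators $\Lambda^n_{\boldsymbol\varepsilon,\boldsymbol\gamma,\boldsymbol x}$, this matches Definition~\ref{orthoseq} (via Remark~\ref{equalscales}, as used in Section~\ref{decompositiondata}). The second delicate point is the interplay of the two limits $M\to\infty$ (refinement of the nonlinear projector) and $n\to\infty$ (extraction of subsequences), handled by a standard diagonal argument but requiring that the bound \eqref{ortogonal} remain uniform in $L$ so as to survive the passage to the limit.
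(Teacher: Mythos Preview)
Your proposal is correct and follows essentially the same route as the paper: reduce to finitely many wavelets via the nonlinear projector $Q_M$, extract so that coefficients converge, group the retained indices into equivalence classes under bounded scale-space defect (the paper phrases this as an index-by-index inductive algorithm, which amounts to the same partition), build the approximate profiles class by class, pass to the limit $M\to\infty$, and obtain stability from unconditionality plus Fatou in the sequence characterization \eqref{norme1}. The only place your write-up is a bit compressed is the remainder $\psi_n^L$: besides $u_n-Q_Mu_n$ and the ``far'' classes, you should also isolate the term $\sum_m(d_{m,n}-d_m)\psi_{\lambda(m,n)}$ (vanishing as $n\to\infty$ for fixed $M$), exactly as the paper does in its Step~1.
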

\begin{rmk}\label{equalscales}
{\rm
Up to rescaling the profiles, if~(\ref{orthogonalscales}) does not hold then one may assume that~$j_i (\lambda_\ell(n)) = j_i (\lambda_k(n))$ for~$i \in \{1,2\}$.
}
\end{rmk}

 \subsection{Proof of Theorem~\ref{anisocompthm}}
Along the same lines as in \cite{BCG},  the anisotropic profile  decomposition construction   proceeds
in several steps.

\subsubsection{Step 1: rearrangements} According to the notation {\rm (\ref{anisotropicdec})},  we first introduce the
wavelet decompositions of the sequence $u_n$, namely~$\displaystyle u_n=\sum_{\lambda\in\nabla} d_{\lambda,n} \psi_\lambda.$
Then we use the  nonlinear projector $Q_M$ to write for each $M>0$
$$
u_n = Q_M u_n + R_M u_n\, ,
\quad \mbox{with} \quad
\lim_{M\to +\infty} \sup_{n>0}\|R_M u_n\|_{\dot B_{p,p}^{- 1 + \frac 2 p,\frac1p}}=0 \, ,
$$
in view of {\rm (\ref{nonlincondaniso})} and the boundedness
of the sequence $u_n$ in $ {\mathcal B}^1_q$.
Noting
$$ Q_M u_n = \sum_{m=1}^M d_{m,n} \psi_{\lambda(m,n)} \, ,$$
it is obvious that the coefficients $d_{m,n}$ are uniformly bounded in $n$ and $m$, so up to a diagonal subsequence extraction procedure in $n$,   we can  reduce to the case where for all $m$,
 the sequence~$(d_{m,n})_{n>0}$ converges towards a finite limit that depends on $m$,
$$
d_m:=\lim_{n\to +\infty} d_{m,n} \, .
$$
We may thus write
$$u_n=\sum_{m=1}^M d_{m} \psi_{\lambda(m,n)}+t_{n,M} \, ,\quad \mbox{where}
\quad t_{n,M}:=\sum_{m=1}^M(d_{m,n}-
 d_m)\psi_{\lambda(m,n)}+ R_M u_n \, .
$$

\subsubsection{  Step 2: construction of approximate profiles}  The profiles $\phi^\ell$ will be built  as limits of
sequences $\phi^{\ell,i}$ resulting
 by the following algorithm. At the first iteration $i=1$, we define
$$
\phi^{1,1}=d_1\psi \, ,\;\; \lambda_1(n):=\lambda(1,n)\, ,\;\; \varphi_1(n):=n \, .
$$
Now, supposing
 that after iteration step~$i-1$, we have constructed $L-1$ functions denoted by~$(\phi^{1,i-1},\dots,\phi^{L-1,i-1})$ and scale-space index sequences $(\lambda_1(n),\dots,\lambda_{L-1}(n))$
with $L\leq i$,
as well as an increasing sequence of positive integers $\varphi_{i-1}(n)$ such that
$$
\sum_{m=1}^{i-1}d_m\psi_{\lambda(m,\varphi_{i-1}(n))}
= \sum_{\ell=1}^{L-1}\phi^{\ell,i-1}_{\lambda_\ell(\varphi_{i-1}(n))} \, ,
$$
we shall use the $i$-th
component $d_i\psi_{\lambda(i,\varphi_{i-1}(n))}$ to
either modify one of these functions or construct a new one at iteration $i$  according to the
following dichotomy. \\

\noindent (i) First case: assume that we can extract $\varphi_i(n)$ from $\varphi_{i-1}(n)$
such that for $\ell =1,\dots,L-1$ at least one of the following holds:
\begin{equation}
\label{firstas}
\lim_{n\to +\infty} |j_1(\lambda(i,\varphi_{i}(n)))-j_1(\lambda_\ell(\varphi_{i}(n)))| + |j_2(\lambda(i,\varphi_{i}(n)))-j_2(\lambda_\ell(\varphi_{i}(n)))|=+\infty \, ,
\end{equation}
or
\begin{equation}\label{secas}
\begin{aligned}
\lim_{n\to +\infty}
\Big |k_1\big(\lambda(i,\varphi_{i}(n))\big)-2^{j_1 (\lambda(i,\varphi_{i}(n)) )-j_1 (\lambda_\ell(\varphi_{i}(n)) )}
k_1\big (\lambda_\ell(\varphi_{i}(n)) \big ) \Big|  \\
+\Big |k_1\big(\lambda(i,\varphi_{i}(n))\big)-2^{j_1(\lambda(i,\varphi_{i}(n)))-j_1(\lambda_\ell(\varphi_{i}(n)))}
k_1\big(\lambda_\ell(\varphi_{i}(n)) \big)\Big |=+\infty \, .
\end{aligned}
\end{equation}
In such a case, we create a new profile and scale-space index sequence by defining
$$
\phi^{L,i}:=d_i\psi \, , \quad\lambda_L(n):=\lambda(i,n) \,  , \quad \phi^{\ell ,i}:=\phi^{\ell,i-1} \, \forall \ell \in \{1,\dots,L-1\} \, .
$$

\noindent (ii) Second case: assume that for some subsequence $\varphi_{i}(n)$
of $\varphi_{i-1}(n)$ and for some~$\ell$ belonging to~$ \{1,\dots,L-1\}$ neither  {\rm (\ref{firstas})} nor {\rm (\ref{secas})}   holds.
 Then
it follows  that  for~$i $ in~$ \{1,2\}$, the quantities~$\displaystyle
j_i(\lambda_\ell(\varphi_{i}(n)))-j_i(\lambda(i,\varphi_{i}(n)))$   and~$\displaystyle k_i \big (\lambda(i,\varphi_{i}(n))\big)-2^{j_i(\lambda(i,\varphi_{i}(n)))-j_i(\lambda_\ell(\varphi_{i}(n)))}
k_i\big(\lambda_\ell(\varphi_{i}(n))\big)
$
 only take a finite number of values as $n$ varies.
Therefore, up to an additional subsequence extraction, we
may assume that there exists numbers $a_1$, $a_2$, $b_1$ and $b_2$ such that for all $n>0$ and for~$i \in \{1,2\}$,
$$
j_i(\lambda(i,\varphi_{i}(n)))- j_i(\lambda_\ell(\varphi_{i}(n)))=a_i \, ,
$$
and
$$
k_i(\lambda(i,\varphi_{i}(n)))-2^{j_i(\lambda(i,\varphi_{i}(n)))-j_i(\lambda_\ell(\varphi_{i}(n)))}
k_i(\lambda_\ell(\varphi_{i}(n)))=b_i \, .
$$
We then update the function $\phi^{\ell,i-1}$ according to
$$
\phi^{\ell,i}:=\phi^{\ell,i-1}+d_i 2^{a_1}\psi(2^{a_1}\cdot-b_1, 2^{a_2}\cdot-b_2) \, , \quad \phi^{\ell',i}:=\phi^{\ell',i-1} \, \forall\ell'\in \{1,\dots,L-1\} \, , \, \, \ell'\neq \ell.
$$
Up to a diagonal subsequence extraction procedure in $n$, it derives from this construction that for each value of  $M$ there exists $L=L(M)\leq M$ such that
$$
\sum_{m=1}^M d_{m} \psi_{\lambda(m,n)}=\sum_{\ell=1}^L \phi^{\ell,M}_{\lambda_\ell(n)}
$$
with for each $\ell=1,\dots,L$
$$
\phi^{\ell,M}_{\lambda_\ell(n)}=\sum_{m\in E(\ell,M)}d_{m} \psi_{\lambda(m,n)}\, ,
$$
and where the sets $E(\ell,M)$ for $\ell=1,\dots,L$ form
a  partition of $\{1,\dots,M\}$.
Moreover,  for ~$i \in \{1,2\}$ and for any $m,m'\in E(\ell,M)$ we have
\begin{equation}
\label{cond1}
j_i(\lambda(m,n))-j_i(\lambda(m',n))=a_i(m,m') \, ,
\end{equation}
and
\begin{equation}
\label{cond2}
k_i(\lambda(m,n))-2^{j_i(\lambda(m,n))-j_i(\lambda(m',n))}k_i(\lambda(m',n))=b_i(m,m') \, ,
\end{equation}
where $a_i(m,m')$ and $b_i(m,m')$ do not depend on $n$.

\subsubsection{Step 3: construction of the exact profiles}  The profiles $\phi^{\ell}$ will be obtained as the limits in $ {\mathcal B}^1_q$
of $\phi^{\ell,M}$ as $M\to +\infty$. To this end, we shall use {\rm (\ref{norme1})} and the fact that the wavelet basis~$(\psi_\lambda)_{\lambda\in\nabla}$ is an unconditional basis of ${\mathcal B}^1_q $. So let us define for fixed $  \ell$ and $M$ such that~$ \ell\leq L(M)$  the functions
$\displaystyle 
g^{\ell,M}:=\sum_{m\in E(\ell,M)}d_{m} \psi_{\lambda(m)} $ and~$\displaystyle f^{\ell,M,n}:=\sum_{m\in E(\ell,M)}d_{m,n} \psi_{\lambda(m)}  ,
$
with $\lambda(m):=\lambda(m,1)$. In view of   {\rm (\ref{cond1})}, {\rm (\ref{cond2})} and the scaling invariance of the space ${\mathcal B}^1_q $,  we have
$$
\|f^{\ell,M,n}\|_{{\mathcal B}^1_q}=\Big \|\sum_{m\in E(\ell,M)}d_{m,n} \psi_{\lambda(m,n)} \Big\|_{{\mathcal B}^1_q} \, .$$
Since $\displaystyle \sum_{m\in E(\ell,M)}d_{m,n} \psi_{\lambda(m,n)}$ is a part of the
expansion of $u_n$,   we deduce the existence of a constant~$C$  which depends neither on~$n$ nor on  $\ell$ and $M$ such that
$$
\|f^{\ell,M,n}\|_{{\mathcal B}^1_q} \leq C \, . $$
Now, according to the first step of the proof of the theorem, the coefficients $d_{m}$ are the limits of $d_{m,n}$ when~$ n$ tends to infinity. Therefore, {\rm (\ref{norme1})} and  Fatou's lemma  imply  that
$$
\|g^{\ell,M}\|_{{\mathcal B}^1_q} \leq \liminf_{n\to +\infty}\|f^{\ell,M,n}\|_{{\mathcal B}^1_q} \, ,
$$
which ensures the convergence in ${\mathcal B}^1_q$ of the sequence $g^{\ell,M}$   towards a limit $g^\ell$ as
$M\to +\infty$.

\noindent
Finally, since by construction  the $g^{\ell,M}$ are
rescaled versions of the $\phi^{\ell,M}$, there exists numbers~$A_1>0$,~$A_2>0$, $B_1\in\R^2$  and $B_2\in\R$ such that
$$
\phi^{\ell,M}= 2^{A_1}g^{\ell,M}(2^{A_1}\cdot  - B_1, 2^{A_2}\cdot  - B_2) \, .
$$
Therefore  $\phi^{\ell,M}$ converges in ${\mathcal B}^1_q$ towards  $\phi^\ell:=2^{A_1} g^{\ell}(2^{A_1}\cdot  - B_1, 2^{A_2}\cdot  - B_2)$ as
$M\to +\infty$.

\medskip
\noindent
To conclude the construction,  we argue exactly as in the proof of Theorem~1.1 in \cite{BCG}.

\medskip
\noindent Finally, let us prove that the decomposition derived in Theorem {\rm \ref{anisocompthm}} is stable.
The argument is again similar to the one followed in~\cite{BCG}, we reproduce it here for the convenience of the reader.
We shall use the  following property:
  if $E_1,\dots,E_L$ are disjoint finite sets
in $\nabla$, then for any coefficient sequence $(d_\lambda)$, one has
\begin{equation} \label{condpartition}
\sum_{\ell=1}^{L} \| \sum_{\lambda \in E_\ell} d_\lambda \psi_\lambda\|_{{\mathcal B}^1_q}  \leq C \| \sum_{\ell=1}^{L}  \sum_{\lambda \in E_\ell} d_\lambda \psi_\lambda  \|_{{\mathcal B}^1_q} \, .
 \end{equation}
Such an estimate was proved in \cite{BCG} for Besov spaces $\dot B_{p,a}^s (\R^d)$ and generalizes
 easily to our framework.
 Let us then consider for $\ell=1,\dots,L$ the functions
$$
\phi^{\ell,M,n}:=\sum_{m\in E(\ell,M)}d_{m,n}\psi_{\lambda(m,n)} \, ,
$$
where $E(\ell,M)$ are the sets introduced in the second step of the proof of the decomposition. These functions  are linear combinations of wavelets with indices in disjoint
finite sets~$E_1,\dots,E_L$ (that vary with $n$), which implies by~{\rm (\ref{condpartition})} that
$$
\sum_{\ell=1}^L  \|\phi^{\ell,M,n}\|_{{\mathcal B}^1_q} \leq C
\Big \| \sum_{\ell=1}^L \phi^{\ell,M,n}\Big \|_{{\mathcal B}^1_q} \, .
$$
Since the functions $\phi^{\ell,M,n}$ are part of the wavelet expansion of $u_n$,  we deduce that
$$
\sum_{\ell=1}^L  \|\phi^{\ell,M,n}\|_{{\mathcal B}^1_q}  \leq C\, \sup_{n\geq 0}\|
u_n\|_{{\mathcal B}^1_q} \, .
$$
Now, by construction the sequence $(\phi^{\ell,M,n})_{n>0}$ converges in ${\mathcal B}^1_q$ towards the approximate profiles~$ \displaystyle
\phi^{\ell,M}_{\lambda_\ell(n)}=\sum_{m\in E(\ell,M)}d_m\psi_{\lambda(m,n)}
$
as $n\to \infty$. It follows that for any
$\e>0$ we have
$$
\sum_{\ell=1}^L  \|\phi^{\ell,M}_{\lambda_\ell(n)}\|_{{\mathcal B}^1_q}\leq C \,\sup_{n\geq 0}\|
u_n\|_{{\mathcal B}^1_q}+\e \, ,
$$
for $n$ large enough.
Thanks to  the scaling invariance,  we thus find that
$$
\sum_{\ell=1}^L  \|\phi^{\ell,M}\|_{{\mathcal B}^1_q} \leq C\, \sup_{n\geq 0}\|
u_n\|_{{\mathcal B}^1_q}.
$$
Letting $M$ go to $+\infty$, we obtain the same inequality for
the exact profiles and we   conclude  by letting $L\to +\infty$. The theorem is proved. \hfill $\qed$

\subsection{Some additional properties}
The following result is very useful.
\begin{lem}\label{orthoaniso}
Let~$(u_n)_{n \in \N}$ be a  bounded sequence in~${\mathcal B}^1_q$, which does not converge strongly to zero in~${\mathcal B}^1_q$ and which may be decomposed with the notation of  Theorem~{\rm \ref{anisocompthm}} into
\begin{equation}\label{sum1}
u_n=\sum_{\ell=1}^L \phi_{\lambda_\ell(n)}^\ell + \psi_{n}^L \, .
\end{equation}
Let~$p \geq 2$ be given. For any~$\ell \in \{1,..., L\}$, there are three  constants~$C  > 0$ and~$(a_\ell^1,a_\ell^2) \in \ZZ^2$ such that
\begin{equation}\label{limsup>0}
 \limsup_{n \to \infty}  \, 2^{  j_1 (\lambda_\ell (n))   (-1+\frac2p)+ \frac{j_2 (\lambda_\ell (n)) }p } \Big\|\Delta_{j_1 (\lambda_\ell (n)) + a_\ell^1}^{h} \Delta_{j_2 (\lambda_\ell (n))+ a_\ell^2}^v \, u_n \Big \|_{L^p(\R^3)} = C \, .
\end{equation}
\end{lem}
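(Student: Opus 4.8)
The plan is to exploit the scaling behaviour of the blocks $\Delta_k^h\Delta_j^v$ together with the asymptotic orthogonality of the profiles. Note first that $L\ge 1$ because $(u_n)$ does not tend strongly to $0$, so the statement is not vacuous, and we may clearly assume that each $\phi^\ell$, $\ell\le L$, is non-zero, a zero profile being redundant. Since then $\phi^\ell$ is a non-zero element of ${\mathcal B}^1_q$ its Littlewood--Paley blocks cannot all vanish, so I would first fix a pair $(a_\ell^1,a_\ell^2)\in\ZZ^2$ such that
$$
c_\ell:=\bigl\|\Delta_{a_\ell^1}^h\Delta_{a_\ell^2}^v\phi^\ell\bigr\|_{L^p(\R^3)}>0\, .
$$
With this choice, the finiteness of the $\limsup$ in \eqref{limsup>0} is automatic from Remark~\ref{rmkdifferentscales} and the uniform bound $\sup_n\|u_n\|_{{\mathcal B}^1_q}<\infty$; so the whole content of the lemma is the positivity of that $\limsup$, which we shall then take as the definition of $C$.

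The first step is the scaling identity: since $\Delta_k^h$ and $\Delta_j^v$ commute with translations and rescale indices under dilations, for every profile index $k$ and every $n$,
$$
\Delta_{j_1(\lambda_\ell(n))+a_\ell^1}^h\Delta_{j_2(\lambda_\ell(n))+a_\ell^2}^v\phi^k_{\lambda_k(n)}
=2^{j_1(\lambda_k(n))}\bigl(\Delta_{m_1}^h\Delta_{m_2}^v\phi^k\bigr)\bigl(2^{j_1(\lambda_k(n))}x_h-k_1(\lambda_k(n)),\,2^{j_2(\lambda_k(n))}x_3-k_2(\lambda_k(n))\bigr),
$$
with $m_i=m_i^{k,n}:=j_i(\lambda_\ell(n))-j_i(\lambda_k(n))+a_\ell^i$ for $i\in\{1,2\}$. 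Taking the $L^p$ norm and multiplying by the normalising factor $2^{j_1(\lambda_\ell(n))(-1+\frac2p)+\frac{j_2(\lambda_\ell(n))}p}$ produces, after the change of variables $y_h=2^{j_1(\lambda_k(n))}x_h-k_1(\lambda_k(n))$, $y_3=2^{j_2(\lambda_k(n))}x_3-k_2(\lambda_k(n))$, exactly
$$
2^{-a_\ell^1(-1+\frac2p)-\frac{a_\ell^2}p}\,\Bigl[\,2^{m_1(-1+\frac2p)+\frac{m_2}p}\bigl\|\Delta_{m_1}^h\Delta_{m_2}^v\phi^k\bigr\|_{L^p}\,\Bigr]\, ;
$$
for $k=\ell$ this equals $c_\ell$, independently of $n$.

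I would then dispose of the cross terms $k\neq\ell$ with $k\le L$ using the orthogonality from Theorem~\ref{anisocompthm}. If $|j_1(\lambda_k(n))-j_1(\lambda_\ell(n))|+|j_2(\lambda_k(n))-j_2(\lambda_\ell(n))|\to\infty$, then $|m_1^{k,n}|+|m_2^{k,n}|\to\infty$; since ${\mathcal B}^1_q\hookrightarrow\dot B^{-1+\frac2p,\frac1p}_{p,q}$ with $q<\infty$, the bracket above then tends to $0$, so this cross term is $o(1)$ in $L^p$. In the opposite case the scales coincide (Remark~\ref{equalscales}) while $|k_1(\lambda_k(n))-k_1(\lambda_\ell(n))|+|k_2(\lambda_k(n))-k_2(\lambda_\ell(n))|\to\infty$; then, read in the variables re-centred and rescaled at $\lambda_\ell(n)$, the contribution of $\phi^k_{\lambda_k(n)}$ is simply $\Delta_{a_\ell^1}^h\Delta_{a_\ell^2}^v\phi^k$ translated by a vector diverging to infinity, and for distinct such $k$ these translation vectors stay mutually separated (again by orthogonality, the scales all being equal). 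Hence, passing to the $\lambda_\ell(n)$-rescaled variables, the normalised block applied to $\sum_{k\le L}\phi^k_{\lambda_k(n)}$ is, as $n\to\infty$, $\Delta_{a_\ell^1}^h\Delta_{a_\ell^2}^v\phi^\ell$ plus finitely many mutually escaping translates plus an $L^p$-null remainder, so its $L^p$ norm converges to $\bigl(c_\ell^{\,p}+\sum_k\|\Delta_{a_\ell^1}^h\Delta_{a_\ell^2}^v\phi^k\|_{L^p}^{\,p}\bigr)^{1/p}\ge c_\ell$ when $p<\infty$ (the case $p=\infty$ being identical, using that band-limited functions vanish at spatial infinity). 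In particular $\limsup_{n\to\infty}$ of the normalised norm of the block applied to $\sum_{k\le L}\phi^k_{\lambda_k(n)}$ is $\ge c_\ell$.

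It remains to absorb the remainder $\psi_n^L$. The key point is that the profile $\phi^\ell$ and the index sequence $\lambda_\ell(n)$ do not depend on the truncation level $L$, so for the fixed $\ell$ I may use \eqref{sum1} with any $L\ge\ell$. Writing $u_n=\sum_{k\le L}\phi^k_{\lambda_k(n)}+\psi_n^L$ and using the triangle inequality, the only additional term is the normalised $\|\Delta_{j_1(\lambda_\ell(n))+a_\ell^1}^h\Delta_{j_2(\lambda_\ell(n))+a_\ell^2}^v\psi_n^L\|_{L^p}$, which is bounded by $2^{-a_\ell^1(-1+\frac2p)-\frac{a_\ell^2}p}\|\psi_n^L\|_{\dot B^{-1+\frac2p,\frac1p}_{p,\infty}}$, hence, up to a constant depending only on $(a_\ell^1,a_\ell^2)$, by $\|\psi_n^L\|_{\dot B^{-1+\frac2p,\frac1p}_{p,p}}$; by Theorem~\ref{anisocompthm} its $\limsup_n$ tends to $0$ as $L\to\infty$. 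Choosing $L\ge\ell$ so large that this $\limsup_n$ is less than $c_\ell/2$ yields that $\limsup_{n\to\infty}$ of the quantity in \eqref{limsup>0} is $\ge c_\ell/2>0$, and we set $C$ equal to this (finite, positive) value. The delicate point of the whole argument is exactly this last step: for a fixed $L$ the remainder $\psi_n^L$ is only small, not negligible, so one genuinely has to let $L\to\infty$ while keeping the shift $(a_\ell^1,a_\ell^2)$ frozen.
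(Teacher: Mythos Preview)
Your argument is correct and takes a route that is genuinely different from the paper's in the one delicate spot: the handling of profiles $\phi^k$ that share the scales of $\phi^\ell$ but have escaping cores. The paper deals with this by interpolating down to $L^2$ via H\"older (this is where the hypothesis $p\ge 2$ is actually used), and then uses that the cross inner products
\[
\bigl(\Delta_{j_1+a_\ell^1}^{h}\Delta_{j_2+a_\ell^2}^{v}\phi^{\ell_k}_{\lambda_{\ell_k}(n)}\,\big|\,\Delta_{j_1+a_\ell^1}^{h}\Delta_{j_2+a_\ell^2}^{v}\phi^{\ell_{k'}}_{\lambda_{\ell_{k'}}(n)}\bigr)_{L^2}
\]
vanish asymptotically by the core orthogonality, so the diagonal term $\|\Delta_{a_\ell^1}^{h}\Delta_{a_\ell^2}^{v}\phi^\ell\|_{L^2}$ survives. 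You bypass this entirely: staying in $L^p$, you use that a finite sum of fixed $L^p$ functions translated by mutually diverging vectors has $L^p$ norm converging to the $\ell^p$ sum of the individual norms, hence is $\ge c_\ell$. This is equally valid, arguably more elementary, and incidentally does not rely on $p\ge 2$, so it would extend the lemma to $1\le p<\infty$ without change. The treatment of cross terms with different scales and of the remainder $\psi_n^L$ (reverse triangle inequality, then let $L\to\infty$ keeping $(a_\ell^1,a_\ell^2)$ frozen) matches the paper's proof.
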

\begin{proof} [Proof of Lemma~{\rm \ref{orthoaniso}}]
We start by noticing that the existence of~$C< \infty$ satisfying~(\ref{limsup>0}) is obvious, the only difficulty is to prove that~$C>0$.

\medskip

\noindent $\bullet \,  $ Let us first estimate one individual contribution, meaning let us show that there is~$ C^{\ell,p} > 0$ and~$(a_\ell^1,a_\ell^2) \in \ZZ^2$ such that
\begin{equation}
\label{oneind}
\limsup_{n \to \infty} \, 2^{  j_1 (\lambda_\ell (n))   (-1+\frac2p)+ \frac{j_2 (\lambda_\ell (n)) }p } \Big\|\Delta_{j_1 (\lambda_\ell (n)) + a_\ell^1}^{h} \Delta_{j_2 (\lambda_\ell (n))+ a_\ell^2}^v \,  \phi_{\lambda_\ell(n)}^\ell
\Big \|_{L^p(\R^3)}= C^{\ell,p} \, .
\end{equation}
 By definition $\Delta_{j_1+a_1}^{h} u = 2^{2 (j_1+a_1)} \Psi(2^{ j_1+a_1} \cdot)\ast_h u$ and $\Delta_{j_2+a_2}^{v} u = 2^{ j_2+a_2} \Psi(2^{ j_2+a_2} \cdot)\ast_v u$, where~$\Psi$ is the frequency localization function introduced in Appendix~{\rm \ref{appendixlp}}
and~$\ast_h$ (resp.~$\ast_v$) denotes the convolution operator in the horizontal (resp. vertical) variable. Writing
 $$ \phi_{\lambda_\ell(n)}^\ell = 2^{j_1 (\lambda_\ell (n))}\phi^\ell\Big(2^{j_1 (\lambda_\ell (n))} (\cdot \, _h- x_{n,h}^\ell), 2^{j_2(\lambda_\ell (n))}(\cdot \, _3-x_{n,3}^\ell)\Big),$$
 we  easily prove
 that
 $$
 \Delta_{j_1 (\lambda_\ell (n))+ a_\ell^1}^{h} \Delta_{j_2 (\lambda_\ell (n))+ a_\ell^2}^v  \phi_{\lambda_\ell(n)}^\ell
  = 2^{j_1 (\lambda_\ell (n))}(\widetilde\Psi^\ell  \ast \phi^\ell) \big( 2^{j_1 (\lambda_\ell (n))} (\cdot \, _h- x_{n,h}^\ell), 2^{j_2(\lambda_\ell (n))}(\cdot \, _3-x_{n,3}^\ell )\big)
 $$
where~$\displaystyle
\widetilde \Psi^\ell (x) := 2^{2 a_\ell^1+ a_\ell^2} \Psi (2^{a_\ell^1}x_h) \Psi (2^{a_\ell^2}x_3),
$
which ensures  that
\begin{equation}\label{psiellphiell}
 \limsup_{n \to \infty}  \, 2^{j_1 (\lambda_\ell (n)) (-1+\frac2p)+ \frac{j_2 (\lambda_\ell (n))}p} \Big\|\Delta_{j_1 (\lambda_\ell (n))}^{h} \Delta_{j_2 (\lambda_\ell (n))}^v u_n \Big \|_{L^p(\R^3)}  = \|\widetilde \Psi^\ell \ast \phi^\ell\|_{L^p(\R^3)} \neq 0 \, ,
 \end{equation}
 as soon as~$(a_\ell^1,a_\ell^2)$ are conveniently chosen so that the supports of~$\widehat {\widetilde  \Psi^\ell} $ and~$\widehat \phi^\ell$ are not disjoint.

\medskip

\noindent $\bullet \,  $ Next  let us prove that  for $\ell' \neq \ell$
$$
2^{j_1 (\lambda_\ell (n))(-1+\frac2p)+ \frac{j_2 (\lambda_\ell (n))}p} \Big\|\Delta_{j_1 (\lambda_\ell (n)) +a_\ell^1}^{h} \Delta_{j_2 (\lambda_\ell (n)) +a_\ell^2}^v \phi_{\lambda_{\ell'}(n)}^{\ell'} \Big \|_{L^p(\R^3)} \to 0 \quad \mbox{as} \quad n \to \infty \, ,
$$
when the scales $j(\lambda_\ell(n))$ and $j(\lambda_{\ell'}(n))$ are orthogonal, meaning~$2^{j_i(\lambda_\ell(n)) - j_i(\lambda_{\ell'}(n))} \to 0$ or~$\infty$ as~$n \to \infty$, for~$i$ equal either to~$1$ or~2.
Noticing that
$$
\Delta_{k}^{h}   \Delta_{j}^v \big(\phi (2^{k'} x_h, 2^{j'} x_3) \big)=
  (\Delta_{k-k'}^{h}   \Delta_{j-j'}^v \phi  )(2^{k'} x_h, 2^{j'} x_3)
$$
we deduce that
$$\begin{aligned}
2^{j_1 (\lambda_\ell (n))(-1+\frac2p)+ \frac{j_2 (\lambda_\ell (n))}p} \Big\|\Delta_{j_1 (\lambda_\ell (n)) +a_\ell^1}^{h} \Delta_{j_2 (\lambda_\ell (n)) +a_\ell^2}^v \,  \phi_{\lambda_{\ell'}(n)}^{\ell'} \Big \|_{L^p(\R^3)}
\\
 = 2^{j^{\ell,\ell'}_1 (n)(-1+\frac2p)+ \frac{j^{\ell,\ell'}_2 (n) }p} \Big\|\Delta_{j^{\ell,\ell'}_1 (n)+a_\ell^1}^{h}   \Delta_{j^{\ell,\ell'}_2 (n) +a_\ell^2 }^v\,  \phi^{\ell'} \Big \|_{L^p(\R^3)}
\end{aligned}
$$
where
$$
j^{\ell,\ell'}_1 (n) := j_1 (\lambda_\ell (n))-j_1 (\lambda_{\ell'} (n))\quad \mbox{and}¬¨‚Ä†\quad
j^{\ell,\ell'}_2 (n) := j_2 (\lambda_\ell (n))-j_2 (\lambda_{\ell'} (n)) \, .
$$
Since $\phi^{\ell'} \in \dot B^{-1+\frac2p,\frac1p}_{p,q}$, we deduce that
\begin{equation}\label{differentscalesOK}
 2^{j_1 (\lambda_\ell (n))(-1+\frac2p)+ \frac{j_2 (\lambda_\ell (n))}p}\Big\|\Delta_{j_1 (\lambda_\ell (n)) + a_\ell^1}^{h} \Delta_{j_2 (\lambda_\ell (n))+ a_\ell^2}^v \phi_{\lambda_{\ell'}(n)}^{\ell'} \Big \|_{L^p(\R^3)} \to 0,\quad \mbox{as} \quad n \to \infty \, .
 \end{equation}

\medskip

\noindent
$\bullet \,  $ Finally, let us regroup in \eqref{sum1}
all the profiles corresponding to the same scales:  namely let us write, for a given~$\ell \in \N$
$$
u_n - \psi_{n}^L = u_{n,1} ^\ell+ u_{n,2}^\ell \, ,
$$
where (up to conveniently re-ordering the profiles~$\phi_{\lambda_{\ell_1}(n)}^{\ell_1},\dots,\phi_{\lambda_{\ell_L}(n)}^{\ell_L}$),
$$
 u_{n,1}^\ell := \sum^{L_\ell}_{k=1} \phi_{\lambda_{\ell_k}(n)}^{\ell_k} \quad \mbox{with} \quad
 j_i (\lambda_{\ell_k} (n)) =  j_i (\lambda_{\ell} (n)) \, , \quad  \forall i \in \{ 1,2\} \, ,
 $$
and on the other hand, writing to simplify~$j_i (\lambda_{\ell} (n)) =: j_i (n)$,
$$
u_{n,2} ^\ell : = \sum^{L}_{k= L_{\ell} +1} \phi_{\lambda_{\ell_k}(n)}^{\ell_k} \, ,
$$
with scales $j_i (\lambda_{\ell_k}(n))$  orthogonal to the scale $j_i(n)$ for every $k \in  \{L_\ell +1,\dots, L\}$. The result~(\ref{differentscalesOK}) enables us to take care of the term~$u_{n,2} ^\ell $ which satisfies
$$
2^{j_1 (\lambda_\ell (n))(-1+\frac2p)+ \frac{j_2 (\lambda_\ell (n))}p}\Big\|\Delta_{j_1 (\lambda_\ell (n)) + a_\ell^1}^{h} \Delta_{j_2 (\lambda_\ell (n))+ a_\ell^2}^v u_{n,2}^\ell \Big \|_{L^p(\R^3)} \to 0,\quad \mbox{as} \quad n \to \infty \, ,$$
so let us prove that
$$
\limsup_{n \to \infty} 2^{j_1 (n) (-1+\frac2p)+\frac{ j_2 (n)}p}  \Big\|\Delta_{j_1 (n)+a_\ell^1}^{h} \Delta_{j_2 (n)+a_\ell^2}^v  u_{n,1}^\ell
\Big \|_{L^p(\R^3)}= C > 0 \, .
$$
By H\"older's inequality if~$2 \leq p \leq \infty$, we have
\begin{equation}\label{holder}
\begin{aligned}
2^{\frac{ j_2 (n)} {2}}  \Big\|\Delta_{j_1 (n)}^{h} \Delta_{j_2 (n)}^v   \,  u_{n,1}^\ell
\Big \|_{L^2(\R^3)}\leq
\left(2^{j_1 (n) + j_2 (n)}  \Big\|\Delta_{j_1 (n)}^{h} \Delta_{j_2 (n)}^v  \,   u_{n,1}^\ell
\Big \|_{L^1(\R^3)}\right) ^\frac{p-2}{2(p-1)}
\\
\times \left(
2^{j_1 (n) (-1+\frac2p)+\frac{ j_2 (n)}p}  \Big\|\Delta_{j_1 (n)}^{h} \Delta_{j_2 (n)}^v \,  u_{n,1}^\ell
\Big \|_{L^p(\R^3)} \right)^{\frac p{2(p-1)}}
\end{aligned}
\end{equation}
and since both terms on the right-hand side are bounded,
 the result will follow if we prove that
$$
\limsup_{n \to \infty} 2^{\frac{ j_2 (n)} {2}}  \Big\|\Delta_{j_1 (n)+a_\ell^1}^{h} \Delta_{j_2 (n)+a_\ell^2}^v    \,u_{n,1}^\ell
\Big \|_{L^2(\R^3)}= C > 0.
$$
But this is a simple orthogonality argument, noticing that
\begin{equation}\label{theL2estimate}
 \begin{aligned}
& \Big\|\Delta_{j_1 (n)+a_\ell^1}^{h} \Delta_{j_2 (n)+a_\ell^2}^v  \,  u_{n,1}
\Big \|_{L^2(\R^3)}^2  = \sum_{k = 1}^{L_\ell} \|
\Delta_{j_1 (n)+a_\ell^1}^{h} \Delta_{j_2 (n)+a_\ell^2}^v   \, \phi_{\lambda_{\ell_k}(n)}^{\ell_k}
\|_{L^2(\R^3)}^2 \\
& \quad + \sum_{k \neq k'} (\Delta_{j_1 (n)+a_\ell^1}^{h} \Delta_{j_2 (n)+a_\ell^2}^v  \,  \phi_{\lambda_{\ell_k}(n)}^{\ell_k} |¬†\Delta_{j_1 (n)+a_\ell^1}^{h} \Delta_{j_2 (n)+a_\ell^2}^v    \,\phi_{\lambda_{\ell_{k'}}(n)}^{\ell_{k'}})_{L^2(\R^3)} \, .
\end{aligned}
\end{equation}
Indeed we know from~(\ref{oneind}) that
\begin{eqnarray}
  2^{\frac{ j_2 (n)} {2}} \left( \sum_{k = 1}^{L_\ell} \|
\Delta_{j_1 (n)+a_\ell^1}^{h} \Delta_{j_2 (n)+a_\ell^2}^v  \, \phi_{\lambda_{\ell_k}(n)}^{\ell_k}
\|_{L^2(\R^3)}^2\right)^\frac12 & \geq &
  2^{\frac{ j_2 (n)} {2}}
 \|
\Delta_{j_1 (n)+a_\ell^1}^{h} \Delta_{j_2 (n)+a_\ell^2}^v   \, \phi_{\lambda_{\ell}(n)}^{\ell}
\|_{L^2(\R^3)} \nonumber \\
& \geq & C^{\ell,2}>0\label{theotherL2estimate}
\end{eqnarray}
so it is enough to prove that
\begin{equation}\label{nointeraction}
 2^{j_2 (n)} \sum_{k \neq k'} (\Delta_{j_1 (n)+a_\ell^1}^{h} \Delta_{j_2 (n)+a_\ell^2}^v   \, \phi_{\lambda_{\ell_k}(n)}^{\ell_k} |¬†\Delta_{j_1 (n)+a_\ell^1}^{h} \Delta_{j_2 (n)+a_\ell^2}^v    \,\phi_{\lambda_{\ell_{k'}}(n)}^{\ell_{k'}})_{L^2(\R^3)} \to 0 \, .
\end{equation}
This is a finite sum so it suffices to prove the result for each individual term, which writes after a change of variables
$$
\int_{\R^3} (\Delta_{a_\ell^1}^h\Delta_{a_\ell^2}^v  \phi^{\ell_k})(x) \times (\Delta_{a_\ell^1}^h\Delta_{a_\ell^2}^v  \phi^{\ell_{k'}}) (x +2^{j_2 (n)} (x_{n,h}^{\ell_k} -x_{n,h}^{\ell_{k'}} )) \, dx
$$
which goes to zero when~$n$ goes to infinity, due to the orthogonality of the cores of concentration (see Theorem~\ref{anisocompthm}), so~(\ref{nointeraction}) holds.

\medskip

\noindent $\bullet \,  $  Finally we need to take the remainder into account. But a reverse triangle inequality gives trivially the result, since the remainder~$\psi_{n}^L$ may be made arbitrarily small in~$\dot B^{-1+\frac2p,\frac1p}_{p,\infty}$ as soon as~$L$ is large enough, uniformly in~$n$, whereas~(\ref{theL2estimate})-(\ref{theotherL2estimate})     guarantee that making~$L$ larger does not decrease the norm of the sum of the profiles.

\medskip
\noindent
The lemma is proved.
\end{proof}
%
%
\begin{lem}\label{orthoanisodiv1}
Let us consider a sequence~$(v_n)_{n \in \N}$, bounded in~${\mathcal B}^1_q$, which may be decomposed with the notation of  Theorem~{\rm \ref{anisocompthm}} into
$$ v_n=\sum_{\ell=1}^L \phi_{\lambda_\ell(n)}^\ell + \psi_{n}^L \, .
$$
Assume moreover that~$\displaystyle
\lim_{n \to \infty}2^{-j_1 (\lambda_\ell (n)) +j_2 (\lambda_\ell (n))} \in  \{0,\infty\} .$
If~$(\partial_3 v_n)_{n \in \N}$ is  bounded  in~$
\dot B^{0,1}_{1,q}$, then
 $$\displaystyle
 \lim_{n \to \infty}2^{-j_1 (\lambda_\ell (n)) +j_2 (\lambda_\ell (n))} = 0 \,.$$
\end{lem}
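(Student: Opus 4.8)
The goal is to rule out the possibility that $2^{-j_1(\lambda_\ell(n))+j_2(\lambda_\ell(n))}\to\infty$, i.e.\ that the vertical scale is much finer than the horizontal scale, under the hypothesis that $\partial_3 v_n$ is bounded in $\dot B^{0,1}_{1,q}$. The plan is to argue by contradiction: assume that along a subsequence $2^{-j_1(\lambda_\ell(n))+j_2(\lambda_\ell(n))}\to\infty$ for some fixed $\ell$, and extract from this a lower bound on a suitable dyadic block of $v_n$ which, after applying $\partial_3$, violates the uniform bound in $\dot B^{0,1}_{1,q}$.

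The key mechanism is Lemma~\ref{orthoaniso}: since $v_n$ does not converge strongly to zero (otherwise there would be no profile $\phi^\ell$), applied with the exponent $p=1$ it yields constants $C>0$ and $(a_\ell^1,a_\ell^2)\in\ZZ^2$ such that
$$
\limsup_{n\to\infty}\ 2^{j_1(\lambda_\ell(n))+j_2(\lambda_\ell(n))}\big\|\Delta^h_{j_1(\lambda_\ell(n))+a_\ell^1}\Delta^v_{j_2(\lambda_\ell(n))+a_\ell^2}v_n\big\|_{L^1(\R^3)}=C>0.
$$
Here I use that for $p=1$ the weight in~(\ref{limsup>0}) is exactly $2^{j_1+j_2}$, which is the $\dot B^{1,1}_{1,q}$ weight. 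Now I apply the vertical derivative $\partial_3$ to that same block. Because $\Delta^v_{j_2+a_\ell^2}v_n$ is spectrally localized in vertical frequencies of size $\sim 2^{j_2(\lambda_\ell(n))}$, we have the Bernstein-type lower/upper comparison so that, up to enlarging the vertical truncation by one or two indices, $\|\partial_3 \Delta^h_{j_1+a_\ell^1}\Delta^v_{j_2+a_\ell^2}v_n\|_{L^1}\sim 2^{j_2(\lambda_\ell(n))}\|\Delta^h_{j_1+a_\ell^1}\Delta^v_{j_2+a_\ell^2}v_n\|_{L^1}$ in the sense of a two-sided estimate on a thickened block. Therefore the $\dot B^{0,1}_{1,q}$ quasi-norm of $\partial_3 v_n$ controls from below the quantity $2^{j_2(\lambda_\ell(n))}\|\Delta^h_{j_1+a_\ell^1}\Delta^v_{j_2+a_\ell^2}v_n\|_{L^1}$, which by the displayed lower bound is bounded below by $c\, 2^{-j_1(\lambda_\ell(n))}$ for infinitely many $n$. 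If $2^{-j_1(\lambda_\ell(n))+j_2(\lambda_\ell(n))}\to\infty$ then, writing $2^{j_2(\lambda_\ell(n))}\|\cdots\|_{L^1} = 2^{-j_1(\lambda_\ell(n))+j_2(\lambda_\ell(n))}\cdot 2^{j_1(\lambda_\ell(n))}\|\cdots\|_{L^1}$ and noting the last factor has $\limsup$ equal to $C>0$ by the above, this forces $\|\partial_3 v_n\|_{\dot B^{0,1}_{1,q}}\to\infty$, contradicting the uniform bound. Hence the limit must be $0$.

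Two points require care. First, Lemma~\ref{orthoaniso} should really be applied to the scaled block sitting at level $j_i(\lambda_\ell(n))+a_\ell^i$, and one must check that the thickened vertical block used to pass $\partial_3$ through (replacing $a_\ell^2$ by $a_\ell^2, a_\ell^2\pm 1$) still captures a definite fraction of the $L^1$ mass; this is where one uses that $\widehat{\widetilde\Psi^\ell}\ast\widehat{\phi^\ell}$ in the proof of Lemma~\ref{orthoaniso} is nonzero on an open set, so a finite union of dyadic annuli suffices and the constant $C^{\ell,1}$ stays positive. Second, one should note that the hypothesis ``$\lim 2^{-j_1+j_2}\in\{0,\infty\}$'' is exactly what allows the dichotomy to be reduced to excluding the $\infty$ case; there is no third possibility to handle.

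The main obstacle is the interchange of $\partial_3$ with the Littlewood--Paley block in $L^1$: unlike in $L^2$ there is no Plancherel identity, so the lower Bernstein inequality must be justified via the convolution kernel of $\partial_3 \Delta^v_{j}$ acting on vertically localized functions, together with the observation that the profile $\phi^\ell$ (not merely a generic $\dot B^{0,1}_{1,q}$ function) has a genuine low-frequency component surviving after truncation. This is essentially the argument already recalled in the proof of Lemma~\ref{orthoaniso} via~(\ref{psiellphiell}), so it should go through with the analogous bookkeeping; the rest is routine. One concludes exactly as in the proof of Lemma~\ref{orthoaniso} that the remainder $\psi_n^L$ is harmless by a reverse triangle inequality, since it is small in $\dot B^{0,\frac1p}_{p,\infty}$ and hence cannot cancel the surviving mass of the profile.
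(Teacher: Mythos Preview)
Your approach is essentially the same as the paper's: extract a positive lower bound on a dyadic block of $v_n$ at the scales $(j_1(\lambda_\ell(n)),j_2(\lambda_\ell(n)))$, transfer $\partial_3$ through the vertical localization via Bernstein, and confront the resulting growth $\sim 2^{-j_1+j_2}$ with the uniform $\dot B^{0,1}_{1,q}$ bound on $\partial_3 v_n$. The paper phrases this slightly differently by working directly with $\partial_3 v_n$ (bounded in $\dot B^{1,0}_{1,q}$ since $v_n\in\mathcal B^1_q$) and invoking the argument of Lemma~\ref{orthoaniso} for $\partial_3\phi^\ell_{\lambda_\ell(n)}$, but the mechanism is identical.

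One point to correct: Lemma~\ref{orthoaniso} as stated requires $p\geq 2$, because the treatment of same-scale profiles with orthogonal cores goes through the $L^2$ orthogonality in~(\ref{theL2estimate})--(\ref{nointeraction}) and then H\"older~(\ref{holder}), which needs $p\geq 2$. You invoke it with $p=1$, which is outside its range. The fix is immediate: apply Lemma~\ref{orthoaniso} with $p=2$ to get
\[
\limsup_{n\to\infty}\,2^{j_2(\lambda_\ell(n))/2}\big\|\Delta^h_{j_1+a_\ell^1}\Delta^v_{j_2+a_\ell^2}v_n\big\|_{L^2}=C>0,
\]
and then use the Bernstein inequality $\|\Delta^h_k\Delta^v_j f\|_{L^2}\lesssim 2^{k+j/2}\|\Delta^h_k\Delta^v_j f\|_{L^1}$ (Proposition~\ref{propaniso}) to deduce the $L^1$ lower bound you want, namely $\limsup_n 2^{j_1+j_2}\|\Delta^h_{j_1+a_\ell^1}\Delta^v_{j_2+a_\ell^2}v_n\|_{L^1}\geq c>0$. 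From there your argument goes through unchanged. The paper's own proof glosses over this same-scale issue in $L^1$ in the same way, so this is a common soft spot rather than a divergence in strategy.
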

\begin{proof} [Proof of Lemma~{\rm \ref{orthoanisodiv1}}]
By definition of~$
\dot B^{0,1}_{1,q}$, we have
$$
 \|\partial_3 v_n\|_{\dot B^{0,1}_{1,q}}= \Big(\sum_{j,k \in \ZZ} 2^{ j q} \|\Delta_k^h \Delta_j^v \partial_3 v_n\|^q_{L^1(\R^3)} \Big)^{1/q}< \infty\quad \mbox{ uniformly in } \, n \, .
 $$
 In particular, for any $\ell \in \{1,..., L\}$, we have
 \begin{equation}\label{bounded}
 2^ {j_2 (\lambda_\ell (n))} \Big\|\Delta_{j_1 (\lambda_\ell (n))}^{h} \Delta_{j_2 (\lambda_\ell (n))}^v \partial_3 v_n \Big \|_{L^1(\R^3)} < \infty\quad \mbox{ uniformly in } \, n \, .
 \end{equation}
Now reasoning as  in the proof of Lemma {\rm \ref{orthoaniso}} and taking into account that $\partial_3 v_n$ is also bounded in $\dot B^{1,0}_{1,q}$,
we find that there are two integers~$a_\ell^1$ and~$a_\ell^2$ such that
 $$
  \limsup_{n \to \infty} 2^{ j_1 (\lambda_\ell (n))}  \Big\|\Delta_{j_1 (\lambda_\ell (n)) +a_\ell^1}^{h} \Delta_{j_2 (\lambda_\ell (n))+a_\ell^2}^v  \partial_3 \phi_{\lambda_\ell(n)}^\ell
\Big \|_{L^1(\R^3)}= C > 0\,,
 $$
and for any $\ell' \neq \ell$
 $$
  2^{j_1 (\lambda_\ell (n))}\Big\|\Delta_{j_1 (\lambda_\ell (n))+a_\ell^1}^{h} \Delta_{j_2 (\lambda_\ell (n))+a_\ell^2}^v \partial_3\phi_{\lambda_{\ell'}(n)}^{\ell'} \Big \|_{L^1(\R^3)} \to 0 \quad\mbox{as}\quad n \to \infty \,.
  $$
Finally,  we argue as in the proof of Lemma {\rm \ref{orthoaniso}} and write
$$
 v_n= v_{n,1} + v_{n,2}+ \psi_{n}^L\, ,
$$
where $v_{n,1}$
contains all the profiles with scale~$j_i (\lambda_{\ell} (n))$, meaning (up to re-ordering the profiles)
$$
v_{n,1} := \sum^{L_\ell}_{k=1} \phi_{\lambda_{\ell_k}(n)}^{\ell_k}\,,
$$
with $\phi_{\lambda_{\ell_k}(n)}^{\ell_k} = 2^{j_1 (\lambda_{\ell} (n))}\phi^{\ell_k}\Big(2^{j_1 (\lambda_{\ell} (n))} (x_h- x_{n,h}^{\ell_k}), 2^{j_2(\lambda_{\ell} (n))}(x_3-x_{n,3}^{\ell_k})\Big)$  and where, denoting~$j_i (n):=j_i (\lambda_{\ell} (n)) $,
$$
v_{n,2} := \sum^{L}_{k= L_ \ell+1} \phi_{\lambda_{\ell_k}(n)}^{\ell_k}\,,
$$
with scales $j(\lambda_{\ell_k}(n))$  orthogonal to the scale $j_i (n)$ for any $k \in  \{L_1 +1,\dots, L\}$. Using the same argument as in the proof of Lemma~\ref{orthoaniso}, we easily prove that for any $\ell \in \{1,\dots, L\}$
$$2^ {j_2 (\lambda_\ell (n))}\Big\|\Delta_{j_1 (\lambda_\ell (n)) +a_\ell^1}^{h} \Delta_{j_2 (\lambda_\ell (n)) +a_\ell^2}^v \partial_3 v_n \Big \|_{L^1(\R^3)} \sim 2^{-j_1 (\lambda_\ell (n)) +j_2 (\lambda_\ell (n))}  \,C, \quad\mbox{as}\quad n \to \infty \,,
 $$
with $C>0$, which concludes the proof of the lemma due to~(\ref{bounded}).
\end{proof}

\begin{lem}\label{orthoanisodiv2} Let us consider ~$(v_n^h = (v^1_n, v^2_n))_{n \in \N}$ a bounded sequence of vector fields in~${\mathcal B}^1_q$ and let us suppose, with the notation of  Theorem~{\rm \ref{anisocompthm}}, that
$$
 v_n^h=\sum_{\ell=1}^L \tilde \phi_{\lambda_\ell(n)}^{\ell,h} + \psi_n^{L,h} \,.
$$
If~${\rm div}_h \:  v_n ^h= 0$,  then for any $\ell \in \{1,..., L\}$  we have
 $\displaystyle
{\rm div}_h \:  \tilde \phi_{\lambda_\ell(n)}^{\ell,h} = 0.$
\end{lem}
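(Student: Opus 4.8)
The plan is to mimic exactly the argument of Lemma~\ref{orthoanisodiv1}, replacing the operator $\partial_3$ by the horizontal divergence $\mbox{div}_h = \partial_1 + \partial_2$ acting componentwise, and using that a vanishing function has vanishing profile components. First I would record that $\mbox{div}_h \: v_n^h = 0$ means the sequence $(\mbox{div}_h \: v_n^h)_{n\in\N}$ is (trivially) bounded --- in fact identically zero --- in every anisotropic Besov space, in particular in $\dot B^{0,1}_{1,q}$ and $\dot B^{1,0}_{1,q}$ (since $v_n^h$ is bounded in ${\mathcal B}^1_q = \dot B^{1,1}_{1,q}$, differentiating once horizontally lands in those spaces). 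The point is that the profile decomposition $v_n^h = \sum_{\ell=1}^L \tilde\phi^{\ell,h}_{\lambda_\ell(n)} + \psi_n^{L,h}$ is linear and the localization operators $\Delta_k^h \Delta_j^v$ commute with $\mbox{div}_h$, so $\mbox{div}_h \: v_n^h$ inherits a profile decomposition with the same cores and scales $\lambda_\ell(n)$, with profiles $\mbox{div}_h \: \tilde\phi^{\ell,h}$ (suitably rescaled by $2^{-j_1(\lambda_\ell(n))}\varepsilon_n^\ell$-type factors coming out of the horizontal derivative) and remainder $\mbox{div}_h \: \psi_n^{L,h}$.

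The key step is then an application of Lemma~\ref{orthoaniso} to the (zero) sequence $\mbox{div}_h \: v_n^h$. Fix $\ell \in \{1,\dots,L\}$ and suppose for contradiction that $\mbox{div}_h \: \tilde\phi^{\ell,h} \not\equiv 0$. Grouping, as in Lemma~\ref{orthoaniso}, all profiles sharing the scale $(j_1(\lambda_\ell(n)), j_2(\lambda_\ell(n)))$ into $v_{n,1}$ and the rest into $v_{n,2}$, the estimate~(\ref{differentscalesOK}) disposes of $v_{n,2}$ and of the remainder (which is small in $\dot B^{-1+\frac2p,\frac1p}_{p,\infty}$ uniformly in $n$ for $L$ large), while the $L^2$ orthogonality computation~(\ref{theL2estimate})--(\ref{theotherL2estimate}) together with the orthogonality of the cores --- exactly as in~(\ref{nointeraction}) --- shows that
$$
\limsup_{n\to\infty} 2^{j_1(\lambda_\ell(n))\left(-1+\frac2p\right)+\frac{j_2(\lambda_\ell(n))}{p}}\Big\|\Delta^h_{j_1(\lambda_\ell(n))+a_\ell^1}\Delta^v_{j_2(\lambda_\ell(n))+a_\ell^2}\,\mbox{div}_h \: v_n^h\Big\|_{L^p(\R^3)} = C > 0
$$
for suitable shifts $a_\ell^1, a_\ell^2$ (chosen so the Fourier supports of the localized mother wavelet and of $\mbox{div}_h \: \tilde\phi^{\ell,h}$ meet). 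But $\mbox{div}_h \: v_n^h = 0$, so the left-hand side is $0$, a contradiction. Hence $\mbox{div}_h \: \tilde\phi^{\ell,h} \equiv 0$; and since the rescaling $\tilde\phi^{\ell,h}_{\lambda_\ell(n)}$ differs from $\tilde\phi^{\ell,h}$ only by an anisotropic dilation and a horizontal translation (both of which commute with $\mbox{div}_h$ up to a nonzero scalar), also $\mbox{div}_h \: \tilde\phi^{\ell,h}_{\lambda_\ell(n)} = 0$.

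I expect the only real obstacle to be bookkeeping: one must check that $\mbox{div}_h \: v_n^h$, after grouping profiles of equal scale, genuinely satisfies the hypotheses of the orthogonality argument of Lemma~\ref{orthoaniso} --- specifically that the individual-contribution lower bound~(\ref{oneind}) survives when $\phi^\ell$ is replaced by $\mbox{div}_h \: \tilde\phi^{\ell,h}$ (this is where the assumption $\mbox{div}_h \: \tilde\phi^{\ell,h}\not\equiv 0$ is used to guarantee a nonempty spectrum), and that the cross terms vanish by the same change-of-variables argument using orthogonality of cores. All of this is a verbatim transcription of the proof of Lemma~\ref{orthoanisodiv1}, with $\partial_3$ replaced by $\mbox{div}_h$ and the roles of horizontal and vertical indices interchanged where needed; no new idea is required, which is presumably why the authors state it as a short corollary.
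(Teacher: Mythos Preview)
Your proposal is correct and follows essentially the same route as the paper's proof: apply $\mbox{div}_h$ to the profile decomposition, localize at the scale $(j_1(\lambda_\ell(n)),j_2(\lambda_\ell(n)))$, discard the orthogonal-scale contributions and the remainder, and use the $L^2$ expansion with vanishing cross terms (orthogonality of cores) to isolate the single-profile contribution, which must vanish since $\mbox{div}_h\, v_n^h=0$. The only cosmetic difference is that you frame it as a contradiction (assume $\mbox{div}_h\,\tilde\phi^{\ell,h}\not\equiv 0$ and derive a positive $\limsup$), whereas the paper argues directly that the $L^2$ norm of the localized divergence dominates each $\|\widetilde\Psi^\ell\ast\mbox{div}_h\,\tilde\phi^{\ell,h}\|_{L^2}$, forcing these to vanish for every shift $(a_\ell^1,a_\ell^2)$ and hence $\mbox{div}_h\,\tilde\phi^{\ell,h}\equiv 0$.
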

\begin{proof}[Proof of Lemma~{\rm \ref{orthoanisodiv2}}]
 We use the notation of the proof of Lemma {\rm \ref{orthoaniso}}. Taking advantage of the fact that the operator  $\mbox{div}_h$ is continuous from ~${\mathcal B}^1_q$ into $\dot B^{0,1}_{1,q}$,  we get,  along the same lines as~(\ref{psiellphiell}) in the proof of Lemma {\rm \ref{orthoaniso}} and recalling that~$\dot B^{0,1}_{1,q}$ embeds in~$\dot B^{\frac2p-2,\frac1p}_{p,q}$,
 $$
  \limsup_{n \to \infty} 2^{ j_1 (\lambda_\ell (n)) (\frac2p-2 )} 2^{ \frac{j_2 (\lambda_\ell (n))}p}  \Big\|\Delta_{j_1 (\lambda_\ell (n))+a_\ell^1}^{h} \Delta_{j_2 (\lambda_\ell (n))+a_\ell^2}^v  \mbox{div}_h \:  \tilde \phi_{\lambda_\ell(n)}^{\ell,h}
\Big \|_{L^p}= \|\widetilde \Psi^\ell \ast \mbox{div}_h \:  \tilde \phi_{\lambda_\ell(n)}^{\ell,h} \|_{L^p }
$$
and  for any $\ell' \neq \ell$, as in~(\ref{differentscalesOK}),
 $$
  2^{j_2 (\lambda_\ell (n))}\Big\|\Delta_{j_1 (\lambda_\ell (n))+a_\ell^1}^{h} \Delta_{j_2 (\lambda_\ell (n))+a_\ell^2}^v \mbox{div}_h \:  \tilde \phi_{\lambda_{\ell'}(n)}^{\ell'} \Big \|_{L^1(\R^3)} \to 0 \quad\mbox{as}\quad n \to \infty \,.
 $$
 Moreover as in~(\ref{nointeraction}),
 $$
 2^{-2j_1(\lambda_\ell (n))} 2^{j_2 (\lambda_\ell (n))} \sum_{k \neq k'} (\Delta_{j_1 (n)+a_\ell^1}^{h} \Delta_{j_2 (n)+a_\ell^2}^v   \, \phi_{\lambda_{\ell_k}(n)}^{\ell_k} |¬†\Delta_{j_1 (\lambda_\ell (n))+a_\ell^1}^{h} \Delta_{j_2 (\lambda_\ell (n))+a_\ell^2}^v    \,\phi_{\lambda_{\ell_{k'}}(n)}^{\ell_{k'}})_{L^2 } \to 0 \, .
 $$
 Then we follow the method giving Lemma~\ref{orthoaniso} which yields  $$
 \begin{aligned}
 0 & = 2^{-2j_1(\lambda_\ell (n))}2^{j_2 (\lambda_\ell (n))} \Big\|\Delta_{j_1 (\lambda_\ell (n))+a_\ell^1}^{h} \Delta_{j_2 (\lambda_\ell (n))+a_\ell^2}^v  \,
{\rm div}_h \:  v_n ^h\Big \|_{L^2(\R^3)}^2  \\
& \geq 2^{-2j_1(\lambda_\ell (n))}2^{j_2 (\lambda_\ell (n))}  \sum_{k = 1}^{L_\ell} \|
\Delta_{j_1 (\lambda_\ell (n))+a_\ell^1}^{h} \Delta_{j_2 (\lambda_\ell (n))+a_\ell^2}^v   \,  \mbox{div}_h \:  \tilde \phi_{\lambda_\ell(n)}^{\ell,h}\|_{L^2(\R^3)}^2+o(1), \quad n \to \infty \\
& \geq  \|\widetilde \Psi^\ell \ast \mbox{div}_h \:  \tilde \phi_{\lambda_\ell(n)}^{\ell,h} \|_{L^2} ^2+o(1), \quad n \to \infty
 \end{aligned}
 $$
so finally~$\widetilde \Psi^\ell \ast \mbox{div}_h \:  \tilde \phi_{\lambda_\ell(n)}^{\ell,h} \equiv 0$ for all couples~$(a_\ell^1,a_\ell^2)$, hence~$ \mbox{div}_h \:  \tilde \phi_{\lambda_\ell(n)}^{\ell,h} \equiv 0$.
\end{proof}

 \appendix


 \section{The (perturbed) Navier-Stokes equation in~$\dot B^{-1 + \frac2p, \frac1p}_{p,1} $}\label{globalsmallaniso}
\subsection{Statement of the results}

 In this appendix  it proved that (NS) is globally wellposed for small data in~$\dot B^{-1 + \frac2p, \frac1p}_{p,1} $, using anisotropic techniques  (note that in~\cite{dragos} such a study was undertaken   in the framework of Sobolev spaces). We also study a perturbed Navier-Stokes equation in such spaces.

 \medskip
\noindent  We use the following notation:
   $$
 \begin{aligned}
            {\mathcal S}_{p,q}:=  \widetilde{L^\infty}(\R^+;\dot B^{-1+\frac2p  ,\frac1p  }_{p,q} )  \cap \widetilde{L^1}
         (\R^+;\dot B^{1+\frac2p  ,\frac1p  }_{p,q} \cap \dot B^{ -1+\frac2p  ,2+\frac1p  }_{p,q} ) \, , \\
             {\mathcal S}_{p,q}(T):= \widetilde{L^\infty_{loc}}([0,T [;\dot B^{-1+\frac2p  ,\frac1p  }_{p,q} )  \cap \widetilde{L^1_{loc}}
         ([0,T [;\dot B^{1+\frac2p  ,\frac1p  }_{p,q} \cap \dot B^{ -1+\frac2p  ,2+\frac1p  }_{p,q} )  \, , \\
         {\mathcal X}_{p,q}:=    {\widetilde{L^1} }(\R^+; \dot B^{-1+\frac2p  ,  \frac1p }_{p,q})   +   {\widetilde{L^2} }(\R^+; \dot B^{-1+\frac2p  ,-1+\frac1p  }_{p,q} ) \cap
\widetilde{L^1}(\R^+; \dot B^{\frac2p  ,-1+\frac1p  }_{p,q}) \, ,\\
  {\mathcal Y}_{p,q}:=    {{L^2} }(\R^+; \dot B^{\frac2p  ,\frac1p  }_{p,q} ) \cap  { {L^1} }(\R^+; \dot B^{\frac2p  ,1+\frac1p  }_{p,q} \cap \dot B^{1+\frac2p  ,\frac1p  }_{p,q} )   \, .
\end{aligned}
     $$
       \begin{thm}\label{globalaniso}
Let~$1 \leq p < \infty$ be given.  There is a constant~$c_0$ such that the following result holds. Let $u_0\in \dot B^{-1+\frac 2p,\frac 1p}_{p,1}$ verifying the smallness condition $\|u_0\|_{\dot B^{-1+\frac 2p,\frac 1p}_{p,1}}\leq c_0$. Then, there exists a unique,   global   solution
 $ u$ to~{\rm(NS)} in~$ {\mathcal Y}_{p,1} $, and it satisfies
 $$
 \|u\|_{{\mathcal Y}_{p,1}}  \leq 2 \|u_0 \|_{\dot B^{-1+\frac 2p,\frac 1p}_{p,1}} \, .
 $$

 \noindent If the initial data belongs to~$\dot B^{-1+\frac 2p,\frac 1p}_{p,1}$ with no smallness condition, then there is a maximal time of existence~$T^*>0$  such that there is a unique solution in~$   {\mathcal Y}_{p,1}(T^*)$ and if~$T^*<\infty $ then
  \begin{equation}\label{critereexplosion}
 \lim_{T \to T^*} \|u\|_{\widetilde{L^2} ([0,T] ; \dot B^{\frac 2p,\frac 1p}_{p,1}  )} = \infty \, .
\end{equation}

\noindent If the initial data belongs moreover to~$\dot B^{-1+\frac 2p,\frac 1p}_{p,q}$ with~$q<1$ then the solution belongs to the space~$   {\mathcal Y}_{p,q}(T^*)$, on the same life span. 

\medskip
\noindent Moreover if~$p<4$ then the spaces~${\mathcal Y}_{p,q}$ can be replaced by~${\mathcal S}_{p,q}$ everywhere. 
  \end{thm}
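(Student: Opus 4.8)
The plan is to prove Theorem~\ref{globalaniso} by a classical fixed point argument in the spirit of Kato's method, carefully adapted to the anisotropic Besov setting. The first step is to establish the linear heat estimates: for the free solution~$e^{t\Delta}u_0$ one has, by the usual Littlewood-Paley characterization of Besov spaces through the heat semigroup, that~$u_0 \in \dot B^{-1+\frac 2p,\frac 1p}_{p,1}$ implies~$e^{t\Delta}u_0 \in {\mathcal Y}_{p,1}$ with~$\|e^{t\Delta}u_0\|_{{\mathcal Y}_{p,1}} \lesssim \|u_0\|_{\dot B^{-1+\frac 2p,\frac 1p}_{p,1}}$. The point here is that the gain of two derivatives coming from the heat kernel splits between the~$L^2$-in-time norm (gaining one derivative) and the~$L^1$-in-time norm (gaining two); since the anisotropic heat semigroup factors as~$e^{t\Delta_h}e^{t\partial_3^2}$, the estimate can be done variable by variable using the standard one-dimensional bounds recalled in Appendix~\ref{appendixlp}.

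The second and central step is the bilinear estimate: one must show that the bilinear operator~$B(u,v)(t) := \int_0^t e^{(t-s)\Delta}{\mathbb P}\,\mathrm{div}(u\otimes v)(s)\,ds$ maps~${\mathcal Y}_{p,1}\times{\mathcal Y}_{p,1}$ into~${\mathcal Y}_{p,1}$ continuously. This is where the anisotropic product laws in Besov spaces enter, of the type~$\|fg\|_{\dot B^{\frac2p,\frac1p}_{p,1}} \lesssim \|f\|_{\dot B^{\frac2p,\frac1p}_{p,1}}\|g\|_{\dot B^{\frac2p,\frac1p}_{p,1}}$ (when~$\dot B^{\frac2p,\frac1p}_{p,1}$ is an algebra, which requires the positivity of the regularity indices, hence~$p\geq 1$), together with the smoothing of the Duhamel operator. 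Concretely, I would estimate~$u\otimes v$ in~$L^1(\R^+; \dot B^{\frac2p,\frac1p}_{p,1})$ by interpolating the~$L^2$-in-time control of~$u$ and~$v$, then apply~$\mathrm{div}$ (losing one horizontal or vertical derivative, absorbed into the target regularity) and the Duhamel smoothing to land back in~${\mathcal Y}_{p,1}$. The boundedness of~${\mathbb P}$ on these anisotropic spaces is immediate since it is a matrix of homogeneous Fourier multipliers of degree zero, and such operators act continuously on~$\dot B^{s,s'}_{p,q}$ for~$1\le p<\infty$.

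Given the linear and bilinear estimates, the first two assertions follow from the standard fixed point lemma: if~$\|u_0\|_{\dot B^{-1+\frac 2p,\frac 1p}_{p,1}}\leq c_0$ with~$c_0$ small relative to the bilinear norm, there is a unique solution in a ball of~${\mathcal Y}_{p,1}$ of radius~$2\|u_0\|_{\dot B^{-1+\frac 2p,\frac 1p}_{p,1}}$. For general data one runs the same argument on~$[0,T]$ with~$T$ small (using that~$\|e^{t\Delta}u_0\|_{{\mathcal Y}_{p,1}(T)}\to 0$ as~$T\to 0$ since~$q=1$ gives a dominated-convergence argument on the series defining the norm), obtaining a maximal time~$T^*$; the blow-up criterion~(\ref{critereexplosion}) follows because the~$\widetilde{L^2}([0,T];\dot B^{\frac 2p,\frac 1p}_{p,1})$ norm alone controls the bilinear term and hence allows continuation. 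The propagation of~$q<1$ regularity is obtained by re-running the fixed point in~${\mathcal Y}_{p,q}(T^*)$ using that the solution already constructed lies in~${\mathcal Y}_{p,1}(T^*)\subset{\mathcal Y}_{p,q}(T^*)$ so it coincides with the~${\mathcal Y}_{p,q}$ solution by uniqueness, while the bilinear estimate with one factor in~${\mathcal Y}_{p,1}$ and one in~${\mathcal Y}_{p,q}$ closes the bootstrap.

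For the final sentence (replacing~${\mathcal Y}_{p,q}$ by~${\mathcal S}_{p,q}$ when~$p<4$), the plan is to show that~${\mathcal S}_{p,q}\hookrightarrow{\mathcal Y}_{p,q}$ and that the solution constructed above actually lies in~${\mathcal S}_{p,q}$ when~$p<4$. The inclusion of spaces is a consequence of Bernstein-type inequalities in the vertical variable together with the relation between the two sets of regularity indices; the restriction~$p<4$ is exactly what is needed for the embedding~$\widetilde{L^1}(\R^+;\dot B^{-1+\frac2p,2+\frac1p}_{p,q})$ (the extra vertical-smoothing term appearing in~${\mathcal S}_{p,q}$) to be controlled by the Duhamel operator acting on products estimated in~${\mathcal S}_{p,q}$ — one needs the vertical heat smoothing of order two to dominate the vertical derivative loss, and the interpolation exponents only balance for~$p<4$. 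I expect the main obstacle to be precisely this last point: verifying that the bilinear operator is bounded on~${\mathcal S}_{p,q}$ (not merely on~${\mathcal Y}_{p,q}$) requires a more delicate splitting of the nonlinear term into horizontal-derivative and vertical-derivative contributions, treating each with the appropriate anisotropic product law, and checking that the extra norm in the definition of~${\mathcal S}_{p,q}$ is self-consistent under this splitting precisely when~$p<4$.
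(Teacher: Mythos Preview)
Your overall strategy is correct and matches the paper's: heat estimates, bilinear estimate via the algebra property of~$\dot B^{\frac2p,\frac1p}_{p,1}$, fixed point, then continuation and propagation. The paper organizes the small-data argument slightly differently (it first runs the fixed point in the single space~$\widetilde{L^2}(\R^+;\dot B^{\frac2p,\frac1p}_{p,1})$ and then upgrades to~${\mathcal Y}_{p,1}$ and, when~$p<4$, to~${\mathcal S}_{p,1}$), but your direct fixed point in~${\mathcal Y}_{p,1}$ would work as well.

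Two concrete corrections. First, in your propagation step the inclusion is backwards: for~$q<1$ one has~${\mathcal Y}_{p,q}(T^*)\subset{\mathcal Y}_{p,1}(T^*)$, not the other way. The paper does not argue by inclusion at all; it simply reruns the same estimates with~$\ell^q$ summation in place of~$\ell^1$ (Young's inequality for sequences holds for~$q>0$), obtaining directly that the solution lies in~${\mathcal Y}_{p,q}(T^*)$ on the same life span. Your ``one factor in~${\mathcal Y}_{p,1}$, one in~${\mathcal Y}_{p,q}$'' tame-estimate idea is also viable, but it does not rely on the inclusion you wrote.

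Second, your explanation of the~$p<4$ threshold is off. It has nothing to do with vertical heat smoothing or interpolation exponents; it is a \emph{horizontal} product-law constraint. To get the~$\widetilde{L^\infty}(\R^+;\dot B^{-1+\frac2p,\frac1p}_{p,1})$ control (and then the extra~$\widetilde{L^1}(\R^+;\dot B^{-1+\frac2p,2+\frac1p}_{p,1})$ piece defining~${\mathcal S}_{p,1}$), the paper writes~$u\cdot\nabla u=u^h\cdot\nabla^h u+u^3\partial_3 u$ and estimates each term in~$L^1(\R^+;\dot B^{-1+\frac2p,\frac1p}_{p,1})$ via the product rule
\[
\|fg\|_{\dot B^{-1+\frac2p,\frac1p}_{p,1}}\lesssim \|f\|_{\dot B^{-1+\frac2p,\frac1p}_{p,1}}\|g\|_{\dot B^{\frac2p,\frac1p}_{p,1}}\,,
\]
which requires the sum of horizontal indices to be positive, i.e.\ $(-1+\tfrac2p)+\tfrac2p>0$, exactly~$p<4$. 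If you try to close the~${\mathcal S}_{p,1}$ estimates without this inequality you will not find any vertical smoothing mechanism to save you; the obstruction is purely horizontal.
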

    \noindent  The next result deals with a perturbed Navier-Stokes system:
 $$
\rm{(NSP)} \ \begin{cases}
\partial_t u+{\mathbb P} (u\cdot\nabla u +U\cdot\nabla u+u\cdot\nabla U) - \Delta u=F \quad\text{in}\quad \R^+\times \R^3\\
u_{|t=0}=u_{0} \, , \quad  \\
\mbox{div} \:  u_0= \mbox{div} \: F = 0 \, .
\end{cases}
$$
 \begin{thm}\label{globalanisoperturbed}
Let~$1 \leq p < 4$ be given.  There is a constant~$c_0$ such that the following result holds.
  Consider  three divergence free vector fields~$u_0 \in \dot B^{-1+\frac2p,\frac 1p}_{p,1}$,~$F \in  {\mathcal X}_{p,1} $ and~$U   \in   {\mathcal Y}_{p,1} $.
 If
$$
\|u_0\|_{\dot B^{-1+\frac2p,\frac 1p}_{p,1}} + \|F\|_{ \mathcal X_{p,1} }  \leq c_0 \exp \big(
- c_0^{-1}  \|U \|_{  {\mathcal Y}_{p,1} }
\big) \, ,
$$
then there is a unique, global solution to~{\rm(NSP)}, in the space
$$
 \widetilde{L^2}
         (\R^+;\dot B^{ \frac2p  ,\frac1p  }_{p,1} \cap \dot B^{ -1+\frac2p  ,1+\frac1p  }_{p,1} ) \cap  \widetilde{L^1}
         (\R^+;\dot B^{1+\frac2p  ,\frac1p  }_{p,1} \cap \dot B^{  \frac2p  ,1+\frac1p  }_{p,1} ) \, .
         $$
          \end{thm}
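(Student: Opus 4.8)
The plan is to solve (NSP) by a fixed point argument in the Banach space
$$
\mathcal{Z}_{p,1}:= \widetilde{L^2}
         (\R^+;\dot B^{ \frac2p  ,\frac1p  }_{p,1} \cap \dot B^{ -1+\frac2p  ,1+\frac1p  }_{p,1} ) \cap  \widetilde{L^1}
         (\R^+;\dot B^{1+\frac2p  ,\frac1p  }_{p,1} \cap \dot B^{  \frac2p  ,1+\frac1p  }_{p,1} ) \, ,
$$
after rewriting the system in Duhamel form
$$
u(t)= e^{t\Delta} u_0 + \int_0^t e^{(t-s)\Delta}\mathbb{P} F(s)\, ds - \int_0^t e^{(t-s)\Delta}\mathbb{P}\, \mbox{div}\, \big(u\otimes u + U\otimes u + u\otimes U\big)(s)\, ds \, .
$$
The obstruction, as always for such perturbation statements, is that $U$ is not assumed small, so the linear terms $\mathbb{P}(U\cdot\nabla u + u\cdot\nabla U)$ cannot be absorbed directly as a contraction; this is precisely what forces the exponential loss $\exp(-c_0^{-1}\|U\|_{\mathcal{Y}_{p,1}})$ in the smallness condition.

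First I would record the linear parabolic estimates. Using the anisotropic Littlewood--Paley blocks $\Delta_k^h\Delta_j^v$ and the factorisation of the heat kernel into a horizontal and a vertical piece, one obtains Bernstein-type gains in each variable and a maximal-regularity statement: both $e^{t\Delta}u_0$ and the Duhamel integral with forcing in $\mathcal{X}_{p,1}$ lie in $\mathcal{Z}_{p,1}$, with norm controlled by $\|u_0\|_{\dot B^{-1+\frac2p,\frac1p}_{p,1}}+\|F\|_{\mathcal{X}_{p,1}}$. Next come the bilinear estimates $\big\| \int_0^t e^{(t-s)\Delta}\mathbb{P}\,\mbox{div}\,(f\otimes g)\,ds\big\|_{\mathcal{Z}_{p,1}} \lesssim \|f\|_{\mathcal{Z}_{p,1}}\|g\|_{\mathcal{Z}_{p,1}}$, which rely on the anisotropic product laws recalled in Appendix~\ref{appendixlp} (in particular \eqref{algebra} and \eqref{quasialgebra}); here one exploits the divergence structure to absorb one derivative, so that one only needs the algebra property of spaces such as $\dot B^{\frac2p,\frac1p}_{p,1}$ and $\dot B^{1+\frac2p,\frac1p}_{p,1}$ rather than of $\dot B^{-1+\frac2p,\frac1p}_{p,1}$ (which is not an algebra), and it is at this point that the hypothesis $p<4$ enters, keeping all regularity exponents produced by the products in the admissible range, exactly as in Theorem~\ref{globalaniso}.

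To handle the non-small drift $U$ I would use a Gronwall / time-splitting scheme. Since $U\in\mathcal{Y}_{p,1}$, one can partition $\R^+$ into a finite number $N$ of consecutive intervals $[T_m,T_{m+1})$ on each of which $\|U\|_{\mathcal{Y}_{p,1}([T_m,T_{m+1}))}\le\eta$ for a fixed small $\eta$; on each such interval the fixed point argument of the previous step closes, the $U$-terms being then a genuine small perturbation, and chaining the resulting estimates across the $N$ intervals costs a factor growing like $e^{CN}$. Controlling $N$ by $\|U\|_{\mathcal{Y}_{p,1}}$ through the $L^1$-in-time components of $\mathcal{Y}_{p,1}$ turns this into a factor of order $\exp(c_0^{-1}\|U\|_{\mathcal{Y}_{p,1}})$. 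Equivalently, one may run a direct Littlewood--Paley Gronwall estimate on the linear problem $\partial_t v-\Delta v+\mathbb{P}(U\cdot\nabla v+v\cdot\nabla U)=G$, $v|_{t=0}=0$, obtaining $\|v\|_{\mathcal{Z}_{p,1}}\lesssim \|G\|_{\mathcal{X}_{p,1}}\exp(c_0^{-1}\|U\|_{\mathcal{Y}_{p,1}})$. Combining this linear bound with the bilinear estimate, the usual Picard iteration (the $U$-dependent linear part being inverted by the estimate just described) converges on the ball of $\mathcal{Z}_{p,1}$ of radius comparable to $c_0\exp(-c_0^{-1}\|U\|_{\mathcal{Y}_{p,1}})$, provided $\|u_0\|_{\dot B^{-1+\frac2p,\frac1p}_{p,1}}+\|F\|_{\mathcal{X}_{p,1}}$ lies below $c_0\exp(-c_0^{-1}\|U\|_{\mathcal{Y}_{p,1}})$; this is precisely the assumption. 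The Banach fixed point theorem then yields existence and uniqueness, and globalness is automatic since every norm is taken over $\R^+$.

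The main obstacle is the interaction of these two ingredients: one must transport the exponential-in-$\|U\|$ loss through the time splitting while simultaneously keeping careful track of the anisotropic (horizontal versus vertical) regularities in the product estimates, and in particular verifying that the divergence structure of both the quadratic nonlinearity and of the perturbation terms lets every product close in the anisotropic Besov scale uniformly for $1\le p<4$; managing the many mixed-norm contributions of type $\widetilde{L^2}_t\times\widetilde{L^2}_t\to\widetilde{L^1}_t$ and $\widetilde{L^\infty}_t\times\widetilde{L^2}_t\to\widetilde{L^2}_t$ without any loss of derivative is the technical heart of the argument.
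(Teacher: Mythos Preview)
Your proposal is correct and follows essentially the same approach as the paper: Duhamel formulation, linear heat-flow estimates, bilinear estimates via the anisotropic product laws \eqref{algebra}--\eqref{quasialgebra} (with $p<4$ entering exactly where you say), and a time-splitting/Gronwall scheme on intervals where $\|U\|_{\mathcal{Y}_{p,1}}$ is small, producing the exponential loss $\exp(c_0^{-1}\|U\|_{\mathcal{Y}_{p,1}})$. The paper implements the last step as a continuation argument (defining a maximal time $T$ at which a threshold is reached and deriving a contradiction from the chained interval estimates) rather than as a Picard iteration on each interval, but this is only a cosmetic difference.
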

\noindent The proofs of those two theorems allow to obtain the following strong stability result, which to simplify we only state in the case~$p=1$ since it is the setting of the stability result by weak convergence proved in this paper. We recall that~$ {\mathcal B}^{1}_{1} = \dot B^{1,1}_{1,1}$.
\begin{cor}[Strong stability in $ {\mathcal B}^{1}_{1}$]\label{strongstability}
  Let~$u_0 \in {\mathcal B}^{1}_{1}$ be a divergence free vector field generating a unique solution~$u$ in~$ \widetilde{L^\infty_{loc}}(\R^+; {\mathcal B}^{1}_{1})  \cap \widetilde{L^1_{loc}}
         (\R^+;\dot B^{3  ,1 }_{1,1} \cap \dot B^{1 ,3}_{1,1} )$. Then~$u $ belongs to~${\mathcal S}_{1,1}$ and~$\|u(t)\|_{{\mathcal B}^{1}_{1}} \to 0$ as~$t\to \infty$.

\noindent Moreover
         there is~$\e_0$ such that  any~$v_0 \in {\mathcal B}^{1}_{1}$ satisfying~$\|u_0 - v_0\|_{ {\mathcal B}^{1}_{1}} \leq \e_0$ generates a unique global solution in~${\mathcal S}_{1,1}$.
\end{cor}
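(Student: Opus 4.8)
The plan is to treat the two assertions in turn. The first one — that a merely global solution automatically has uniform‑in‑time bounds and decays — follows the scheme of~\cite{adt} and~\cite{gip}; the second is a direct consequence of Theorem~\ref{globalanisoperturbed}.

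\emph{Global bounds and decay.} Let $u_0\in{\mathcal B}^1_1$ generate a global solution $u$ in the local spaces of the statement; by parabolic smoothing $u$ is smooth for $t>0$. The heart of the matter is to exhibit a time $T_0$ at which $u(T_0)$ is small in a critical space where the Cauchy theory can be restarted. To this end, split $u_0=u_0^\flat+u_0^\sharp$, where $\widehat{u_0^\flat}$ is supported in a fixed anisotropic frequency annulus — so that, by Bernstein, $u_0^\flat\in L^2(\R^3)$ — and $\|u_0^\sharp\|_{{\mathcal B}^1_1}$ is as small as we wish, which is possible since $u_0\in{\mathcal B}^1_1$ forces the high‑frequency tail to be small. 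By Theorem~\ref{globalaniso}, $u_0^\sharp$ generates a small global solution $u^\sharp\in{\mathcal S}_{1,1}$, in particular small in $L^2(\R^+;L^\infty(\R^3))$. The difference $v:=u-u^\sharp$ is smooth for $t>0$, global, and solves a Navier--Stokes system perturbed by the drift $u^\sharp$, with datum $v_{|t=0}=u_0^\flat\in L^2$. An $L^2$ energy estimate, absorbing the perturbation terms thanks to the control of $u^\sharp$, yields $v\in L^\infty(\R^+;L^2)\cap L^2(\R^+;\dot H^1(\R^3))$ with a finite bound; in particular $\int_0^\infty\|\nabla v(t)\|_{L^2}^2\,dt<\infty$, so there is a sequence $t_n\to\infty$ along which $\|v(t_n)\|_{\dot H^\frac12}\to 0$. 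Restarting the ($\dot H^\frac12$‑critical) perturbed theory at $t=t_n$ (classical, cf.~\cite{fk}) shows that $v$, and hence $u=v+u^\sharp$, is global and controlled in a scale‑invariant space, in which it moreover decays.

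\emph{Upgrading to ${\mathcal S}_{1,1}$.} Once $u$ is a global solution controlled in some critical space, a persistence argument upgrades the regularity to the anisotropic space ${\mathcal B}^1_1$: writing $u(t)=e^{t\Delta}u_0-\int_0^t e^{(t-s)\Delta}{\mathbb P}\,\mbox{div}(u\otimes u)(s)\,ds$ and estimating the Duhamel term in the ${\mathcal S}_{1,1}$‑scale with the anisotropic product laws of Appendix~\ref{appendixlp} — one factor of $u$ in the ${\mathcal B}^1_1$‑scale, one in the critical space just obtained, together with $u_0\in{\mathcal B}^1_1$ — gives $u\in{\mathcal S}_{1,1}$. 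Finally $\|u(t)\|_{{\mathcal B}^1_1}\to 0$: in the identity $u(t)=e^{(t-T)\Delta}u(T)-\int_T^t e^{(t-s)\Delta}{\mathbb P}\,\mbox{div}(u\otimes u)$, the first term tends to $0$ in ${\mathcal B}^1_1$ as $t\to\infty$ for fixed $T$ (dominated convergence over the Littlewood--Paley blocks, $u(T)\in{\mathcal B}^1_1$), while the second is bounded uniformly in $t$ by $C\|u\|_{{\mathcal S}_{1,1}([T,\infty))}^2$, which tends to $0$ as $T\to\infty$ since $u\in{\mathcal S}_{1,1}$.

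\emph{Strong stability.} Set $M:=\|u\|_{{\mathcal Y}_{1,1}}<\infty$ (we have ${\mathcal S}_{1,1}\subset{\mathcal Y}_{1,1}$ by interpolation) and $\varepsilon_0:=c_0\exp(-c_0^{-1}M)$, with $c_0$ the constant of Theorem~\ref{globalanisoperturbed}. If $\|u_0-v_0\|_{{\mathcal B}^1_1}\leq\varepsilon_0$, write the solution associated with $v_0$ as $v=u+z$, where $z$ solves (NSP) with $U=u$, $F=0$ and datum $v_0-u_0$. Since ${\mathcal B}^1_1=\dot B^{-1+\frac2p,\frac1p}_{p,1}$ for $p=1$ and $1=p<4$, Theorem~\ref{globalanisoperturbed} applies and furnishes a global solution $z$ in the space it displays, which is contained in ${\mathcal S}_{1,1}$ (by interpolation and the equation, exactly as in the proof of Proposition~\ref{decompositionsolution}). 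Thus $v=u+z\in{\mathcal S}_{1,1}$ is global, and by uniqueness it is the solution generated by $v_0$.

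The main obstacle is the first step: converting the qualitative hypothesis ``$u$ is global'' into quantitative smallness of $u$ at a large time. The low/high‑frequency splitting reduces this to a global energy estimate for a low‑frequency datum perturbed by a small drift, and the only genuinely non‑formal part, besides that estimate, is the persistence bootstrap feeding the resulting control back into the anisotropic space ${\mathcal B}^1_1$; the bookkeeping of anisotropic Besov indices there is the principal technical cost.
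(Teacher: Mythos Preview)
Your proof is correct and follows essentially the same strategy as the paper: a high/low frequency splitting $u_0=u_0^\flat+u_0^\sharp$ with $u_0^\flat\in L^2$ and $u_0^\sharp$ small in ${\mathcal B}^1_1$, a global small solution $u^\sharp$ from Theorem~\ref{globalaniso}, an $L^2$ energy estimate on the difference $v=u-u^\sharp$ (the paper controls the drift via $\sup_{t>0}\sqrt t\,\|u^\sharp(t)\|_{L^\infty}$ as in~\cite{gip}, you use $L^2_t L^\infty_x$; both work), smallness of $u$ at a large time in a critical space, propagation of regularity back to ${\mathcal S}_{1,1}$, and finally Theorem~\ref{globalanisoperturbed} for the stability assertion. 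The only point worth flagging is that the energy estimate presupposes $v\in L^\infty_{loc}(\R^+;L^2)\cap L^2_{loc}(\R^+;\dot H^1)$, which the paper justifies via an anisotropic product-law argument showing $v\in\widetilde{L^\infty}([0,T];\dot B^{1,\frac12}_{1,1})\subset\widetilde{L^\infty}([0,T];L^2)$; you should make this step explicit rather than absorb it into ``smooth for $t>0$''.
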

\subsection{Proof of Theorem~\ref{globalaniso}}
We shall proceed in several steps:
\begin{enumerate}
\item  If~$u_0$ belongs to~$\dot B^{-1+\frac2p  ,\frac1p  }_{p,1}$, we prove that a fixed point may be performed in the Banach space~${\widetilde {L^2}}(\R^+;\dot B^{\frac2p  ,\frac1p  }_{p,1}  )$, which   implies the existence and uniqueness of a solution in that space for small data.

\smallskip
\item  We then prove  that the  solution constructed in the previous step actually  belongs to~${\mathcal Y}_{p,1}$, and to~${\mathcal S}_{p,1}$ if~$p<4$, and we prove that any "almost global solution" belongs to~${\mathcal S}_{p,1}$ and decays to zero at infinity. 

\smallskip
\item We deduce from the  estimates leading to the above steps the result for large data.

\smallskip
\item We   prove the propagation of regularity in~${\mathcal S}_{p,q}$ for $q<1$.

\end{enumerate}

 \medskip
\noindent $(1) $ $ $  Let us start by applying a fixed point theorem in the Banach space~${\widetilde {L^2}}(\R^+;\dot B^{\frac2p  ,\frac1p  }_{p,1}  )$, to~(NS) written in integral form:
$$
u(t) = e^{t\Delta} u_0 - \int_0^t e^{(t-t') \Delta}\,  {\mathbb P} \mbox{div} \, (u \otimes u) (t') \, dt' \, ,
$$
recalling that~$ {\mathbb P} := \rm{I} - \nabla \Delta^{-1}  \rm{div} $ is the Leray projector onto divergence free vector fields.
We first notice that (see Proposition~\ref{propaniso})
$$
\|e^{t\Delta}\Delta_k^h\Delta_j^v u_0\|_{L^p}\lesssim e^{-ct(2^{2k}+2^{2j})}\|\Delta_k^h\Delta_j^v u_0\|_{L^p} \, ,
$$
so one sees immediately that for any~$1 \leq r \leq \infty$ and for any~$0 \leq \sigma \leq 2/r$,
\begin{equation}\label{heatflow}
\|e^{t\Delta} u_0\|_{{\widetilde {L^r}}(\R^+;\dot B^{-1+\frac2p+\sigma  , \frac2r - \sigma + \frac1p  }_{p,1}   )} \lesssim \|u_0\|_{\dot B^{-1+\frac2p  ,\frac1p  }_{p,1}} \, .
\end{equation}
Now let us turn to the non linear term. Defining
$$
B(u,u) (t):= -\int_0^t e^{(t-t') \Delta} \, {\mathbb P} \mbox{div} \, (u \otimes u) (t') \, dt' \, ,
$$
we have
$$
2^{\frac{2k}p + \frac jp} \|\Delta_k^h\Delta_j^v B(u,u)(t) \|_{L^p} \lesssim
\int_0^t e^{-c(t-t') (2^{2k}+2^{2j})} (2^k + 2^j)2^{\frac{2k}p + \frac jp} \|\Delta_k^h\Delta_j^v  (u \otimes u) (t') \|_{L^p} \, dt' \, .
$$
The space~$ \dot B^{\frac2p  ,\frac1p  }_{p,1}$ is an algebra according to~(\ref{algebra}) so we have
\begin{equation}\label{uotimesu}
\|u \otimes u\|_{{  L^1}(\R^+;\dot B^{\frac2p  ,\frac1p  }_{p,1})} \lesssim  \|u\|_{{\widetilde {L^2}}(\R^+;\dot B^{\frac2p  ,\frac1p  }_{p,1})} ^2 \, .
\end{equation}
It follows that
\begin{equation}\label{firstestimateL2}
2^{\frac{2k}p + \frac jp} \|\Delta_k^h\Delta_j^v B(u,u)(t) \|_{L^p} \lesssim \|u\|_{{\widetilde {L^2}}(\R^+;\dot B^{\frac2p  ,\frac1p  }_{p,1})} ^2
\int_0^t e^{-c(t-t') (2^{2k}+2^{2j})} (2^k + 2^j) c_{jk} (t') \, dt' \, ,
\end{equation}
where~$c_{jk} (t')$ belongs to~$\ell^1_{jk}(L^1_{t'})$ and  Young's inequality in time gives
\begin{equation}\label{estimateBuu}
\|B(u,u) \|_{{\widetilde {L^2}}(\R^+;\dot B^{\frac2p  ,\frac1p  }_{p,1})
}
 \lesssim \|u\|_{{\widetilde {L^2}}(\R^+;\dot B^{\frac2p  ,\frac1p  }_{p,1})}^2 \, .
\end{equation}
The small data result follows classically from~(\ref{heatflow}) and~(\ref{estimateBuu})
by a fixed point in~${\widetilde {L^2}}(\R^+;\dot B^{\frac2p  ,\frac1p  }_{p,1})$.

 \medskip
 \noindent $(2) $ $ $ Now let us prove that
  the solution actually belongs to~${\mathcal Y}_{p,1}$.  We first notice that the above computations actually imply that the solution~$u$ belongs to~$L^1   (\R^+;\dot B^{1+\frac2p  ,\frac1p  }_{p,1} \cap \dot B^{  \frac2p  , 1+\frac1p  }_{p,1} )$.  Indeed that   holds for the term~$e^{t\Delta} u_0$ due to~(\ref{heatflow}) so we just need to concentrate on the bilinear term.  We return to estimate~(\ref{firstestimateL2}) and  consider any real number~$r \in [1,\infty]$. Using~(\ref{uotimesu}), we can write
 for any~$\sigma \in \R$
  $$
  \begin{aligned}
I_{jk}(t) &:=   2^{k(-1+\frac2p  +\sigma)}   2^{j( \frac2r - \sigma +\frac1p ) } \|\Delta_k^h\Delta_j^v B(u,u) \|_{L^p} \\
& \lesssim \|u\|_{{\widetilde {L^2}}(\R^+;\dot B^{\frac2p  ,\frac1p  }_{p,1})}^2
\int_0^t e^{-c(t-t') (2^{2k}+2^{2j})} (2^k + 2^j) 2^{k(-1+\frac2p  +\sigma - \frac{2}p)}   2^{j( \frac2r - \sigma +\frac1p - \frac1p ) }  c_{jk} (t') \, dt'  \, ,
\end{aligned}
  $$
  where again~$c_{jk} (t')$ belongs to~$\ell^1_{jk}(L^1_{t'})$. We want to prove that~$I_{jk}(t)$ belongs to~$\ell^1_{jk}(L^r_{t'})$. We apply a Young inequality in the time variable, which produces
  \begin{equation}\label{estimateijkjk}
  \|I_{jk}\|_{L^r}\lesssim
   \|u\|_{{\widetilde {L^2}}(\R^+;\dot B^{\frac2p  ,\frac1p  }_{p,1})}^2(2^{2k}+2^{2j})^{-\frac1r}(2^k + 2^j) 2^{k(-1+\frac2p  +\sigma - \frac{2}p)}   2^{j( \frac2r - \sigma +\frac1p - \frac1p ) } d_{jk}    \, ,
  \end{equation}
  with~$d_{jk}\in \ell^1_{jk}$. An easy computation shows that the sequence bounding~$  \|I_{jk}\|_{L^r}$ is bounded in~$ \ell^1 _{jk}  $ as soon as one has~$1 \leq\sigma \leq 2/r    $. This implies in particular that~$u$ belongs to the space~$L^1   (\R^+;\dot B^{1+\frac2p  ,\frac1p  }_{p,1} \cap \dot B^{  \frac2p  , 1+\frac1p  }_{p,1} )$ as claimed.

\begin{rmk}\label{rmkjk}{\rm
Note in passing that if~$2^k + 2^j$ was replaced by~$2^k$   on the right-hand side of~(\ref{estimateijkjk}), then one would recover directly the whole range~$0\leq\sigma \leq 2/r    $. Here we need an extra step because of the presence of~$2^j$.
}
\end{rmk}

 \smallskip
 \noindent  From now  on we assume that~$p<4$, and we want to extend this result to any degree of integrability in time, as well as to the space~$L^1(\R^+;\dot B^{-1+\frac2p   , 2   +\frac1p  }_{p,1} )$. Let us start with the case~$r = \infty$. Due to the smallness of~$u_0$ and to the result we just found,
  it is enough to prove that
\begin{equation}\label{linfty}
  \|B(u,u)\|_{{\widetilde {L^\infty}}(\R^+;\dot B^{-1+\frac2p    , \frac1p  }_{p,1} )} \lesssim  \|u\|_{{\widetilde {L^\infty}}(\R^+;\dot B^{-1+\frac2p    , \frac1p  }_{p,1} )} \|u\|_{{  {L^1}}(\R^+; \dot B^{1+\frac2p    , \frac1p  }_{p,1} \cap \dot B^{ \frac2p    ,1+ \frac1p  }_{p,1} )} \,
  \end{equation}
  since~(\ref{heatflow}) takes care of~$e^{t\Delta}u_0$.  But we have, if~$p<4$,
\begin{equation}\label{estimatewithwidetilde}
  \begin{aligned}
 \| u \cdot \nabla u \|_{{  {L^1}}(\R^+; \dot B^{-1+\frac2p    , \frac1p  }_{p,1}  )}
  & \leq \| u^h \cdot \nabla^h u\|_{  {L^1}(\R^+; \dot B^{-1+\frac2p    , \frac1p  }_{p,1}  )} +
   \| u^3   \partial_3 u\|_{{  {L^1}}(\R^+; \dot B^{-1+\frac2p    , \frac1p  }_{p,1}  )} \\
    & \lesssim \|u\|_{{\widetilde  {L^\infty}}(\R^+;\dot B^{-1+\frac2p    , \frac1p  }_{p,1} )} \Big( \|  u\|_{L^1(\R^+; \dot B^{1+\frac2p    , \frac1p  }_{p,1}   )}
   +  \|  u\|_{{  {L^1}}(\R^+;  \dot B^{ \frac2p    ,1+ \frac1p  }_{p,1} )} \Big)
\end{aligned}
  \end{equation}
  by the product laws~(\ref{quasialgebra}) recalled in Appendix~\ref{appendixlp}, and the result follows exactly as above: on the one hand~(\ref{estimatewithwidetilde}) gives
   $$
  \begin{aligned}
J_{jk}(t) &:=   2^{k(-1+\frac2p  )}   2^{  \frac jp   } \|\Delta_k^h\Delta_j^v B(u,u) \|_{L^p} \\
& \lesssim
\int_0^t e^{-c(t-t') (2^{2k}+2^{2j})}  2^{k(-1+\frac2p  )}   2^{ \frac jp  }  2^{-k(-1+\frac2p) } 2^{- \frac jp } c_{jk} (t') \, dt' \\
& \quad \quad \times  \|u\|_{{\widetilde  {L^\infty}}(\R^+;\dot B^{-1+\frac2p    , \frac1p  }_{p,1} )} \Big( \|  u\|_{{\widetilde {L^1}}(\R^+; \dot B^{1+\frac2p    , \frac1p  }_{p,1}   )}
   +  \|  u\|_{{  {L^1}}(\R^+;  \dot B^{ \frac2p    ,1+ \frac1p  }_{p,1} )} \Big)   \, ,
\end{aligned}
  $$
  with~$ c_{jk} (t) \in \ell^1_{jk}(L^1_t) $, hence
  $$
  \begin{aligned}
 \| B(u,u)\|_{{
  \widetilde  {L^\infty}
  }
  (\R^+;\dot B^{-1+\frac2p    , \frac1p  }_{p,1} )}& \leq \|J_{jk} \|_{\ell^1_{jk}((L^\infty_t)} \\
  & \lesssim  \|u\|_{{\widetilde  {L^\infty}}(\R^+;\dot B^{-1+\frac2p    , \frac1p  }_{p,1} )} \Big( \|  u\|_{L^1(\R^+; \dot B^{1+\frac2p    , \frac1p  }_{p,1}   )}
   +  \|  u\|_{L^1(\R^+;  \dot B^{ \frac2p    ,1+ \frac1p  }_{p,1} )} \Big)   \, ,
   \end{aligned}
$$
which proves~(\ref{linfty}). On the other hand
    $$
  \begin{aligned}
K_{jk}(t) &:=   2^{k(-1+\frac2p  )}   2^{ j(2+ \frac 1p)   } \|\Delta_k^h\Delta_j^v B(u,u) \|_{L^p} \\
& \lesssim
\int_0^t e^{-c(t-t') (2^{2k}+2^{2j})}  2^{k(-1+\frac2p  )}   2^{  j(2+ \frac 1p)    }  2^{-k(-1+\frac2p) } 2^{- \frac jp } c_{jk} (t') \, dt' \\
& \quad \quad \times  \|u\|_{{\widetilde  {L^\infty}}(\R^+;\dot B^{-1+\frac2p    , \frac1p  }_{p,1} )} \Big( \|  u\|_{L^1(\R^+; \dot B^{1+\frac2p    , \frac1p  }_{p,1}   )}
   +  \|  u\|_{L^1(\R^+;  \dot B^{ \frac2p    ,1+ \frac1p  }_{p,1} )} \Big)   \, ,
\end{aligned}
  $$
  with~$ c_{jk} (t) \in \ell^1_{jk}(L^1_t) $, hence
   $$
  \begin{aligned}
 \| B(u,u)\|_{{
     {L^1}
  }
  (\R^+;\dot B^{-1+\frac2p    , 2+\frac1p  }_{p,1} )}& \leq \|K_{jk} \|_{\ell^1_{jk}(L^1_t)} \\
  & \lesssim  \|u\|_{{\widetilde  {L^\infty}}(\R^+;\dot B^{-1+\frac2p    , \frac1p  }_{p,1} )} \Big( \|  u\|_{L^1(\R^+; \dot B^{1+\frac2p    , \frac1p  }_{p,1}   )}
   +  \|  u\|_{L^1(\R^+;  \dot B^{ \frac2p    ,1+ \frac1p  }_{p,1} )} \Big)   \, .
   \end{aligned}
$$
We conclude that if the initial data is small enough, then the solution belongs to~${\mathcal S}_{p,1}$.
\begin{rmk}\label{anicercase}{\rm
It is easy to see, using Remark~\ref{rmkjk} for instance, that one could add an exterior force, small enough in~$ {\widetilde{L^1} }(\R^+; \dot B^{-1+\frac2p  ,  \frac1p }_{p,q})  $, and the small data result would be identical.
}
  \end{rmk}
  \begin{rmk}{\rm
Note that all the estimates can be restricted to a time interval~$[a,b] $ of~$\R^+$.
}
  \end{rmk}
  \begin{rmk}\label{removetilde}{\rm
The~$\widetilde {L^\infty}(\R^+;\dot B^{-1+\frac2p    , \frac1p  }_{p,1} )$ norm on the right-hand side of~(\ref{estimatewithwidetilde}) can be replaced by the (smaller)~$ L^\infty(\R^+;\dot B^{-1+\frac2p    , \frac1p  }_{p,1} )$ norm. The same goes for the~${\widetilde {L^2}}(\R^+;\dot B^{\frac2p  ,\frac1p  }_{p,1}) $ norm in~(\ref{uotimesu}), which can be replaced by the~${ {L^2}}(\R^+;\dot B^{\frac2p  ,\frac1p  }_{p,1}) $ norm. This will be useful in the proof of Theorem~\ref{globalanisoperturbed}.
}
  \end{rmk}
 
    \bigskip
 \noindent $(3) $ $ $ It is classical that the    previous estimates can be adapted  to the case of large initial data (for instance by solving first the heat equation and then a perturbed Navier-Stokes equation, of the same type as in the proof of Theorem~\ref{globalanisoperturbed} below) and we leave this     to the reader.

 \bigskip

 \noindent $(4) $ $ $
\noindent  Now we are left with the proof of the propagation of regularity result.
Again this is an easy exercise based on the fact that Young's inequality for sequences are true in~$\ell^q$ with~$q >0$ so we can simply   copy  the above arguments. 

 \bigskip

\noindent Theorem~\ref{globalaniso} is proved.
  \qed

\subsection{Proof of Theorem~\ref{globalanisoperturbed}}
We shall follow the proof of Theorem~\ref{globalaniso} above, writing~(NSP) under the integral form
$$
u(t) = e^{t\Delta }u_0 - \int_0^t e^{(t-t') \Delta} \,  {\mathbb P} \Big( \mbox{div} \, (u \otimes u + U\otimes u +  u \otimes  U )+ F\Big) (t')  \, dt' \, .
$$
The linear term~$ e^{t\Delta }u_0$ and the term involving~$\mbox{div} \, (u \otimes u )$ (called~$B(u,u)$ in the previous proof) have already been dealt with and we know that in particular for any~$a < b$ and any~$1 \leq r \leq \infty$,
\begin{equation}\label{1}
\forall \, 0 \leq \sigma \leq \frac2r \, , \quad \|e^{t\Delta} u_0\|_{{\widetilde {L^r}}([a,b];\dot B^{-1+\frac2p+\sigma  , \frac2r - \sigma + \frac1p  }_{p,1}   )} \lesssim \|u_0\|_{\dot B^{-1+\frac2p  ,\frac1p  }_{p,1}} \, .
\end{equation}
We have as well
\begin{equation}\label{2}
\|B(u,u) \|_{{\widetilde {L^2}}([a,b];\dot B^{\frac2p  ,\frac1p  }_{p,1} )}
+  \|B(u,u)\|_{ {L^1}([a,b];\dot B^{ \frac2p    , 1+\frac1p  }_{p,1}
\cap \dot B^{1+\frac2p  , \frac1p  }_{p,1} )} \lesssim \|u\|_{{\widetilde {L^2}}([a,b];\dot B^{\frac2p  ,\frac1p  }_{p,1})}^2 \, ,
\end{equation}
and if~$1 \leq p < 4$,
\begin{equation}\label{3}
   \begin{aligned}
\|B(u,u)\|_{{ \widetilde{L^\infty}}(\R^+;\dot B^{-1+\frac2p    , \frac1p  }_{p,1} )}
 +  \|B(u,u)\|_{{ \widetilde{L^2}}(\R^+;\dot B^{-1+\frac2p    ,1+ \frac1p  }_{p,1} )}
 +  \|B(u,u)\|_{{  {L^1}}(\R^+;\dot B^{-1+\frac2p    , 2+\frac1p  }_{p,1} )} \\
\quad \lesssim  \|u\|_{{ {L^\infty}}(\R^+;\dot B^{-1+\frac2p    , \frac1p  }_{p,1} )} \|u\|_{L^1(\R^+; \dot B^{1+\frac2p    , \frac1p  }_{p,1} ¬†\cap \dot B^{ \frac2p    ,1+ \frac1p  }_{p,1} )} \, .
  \end{aligned}
\end{equation}
Note that the estimate in~${\widetilde {L^2}}(\R^+;\dot B^{-1+\frac2p    ,1+ \frac1p  }_{p,1} )$ appearing in~(\ref{3}) is a consequence of an interpolation between the spaces~${\widetilde {L^\infty}}(\R^+;\dot B^{-1+\frac2p    , \frac1p  }_{p,1} )$ and~${  {L^1}}(\R^+;\dot B^{-1+\frac2p    , 2+\frac1p  }_{p,1} )$.

\smallskip
\noindent
Now let us study the term containing the force~$F$. We define
$$
\begin{aligned}
{\mathcal F}(t):=\int_0^t e^{(t-t') \Delta} \, {\mathbb P}F(t') \, dt' \, , \quad \mbox{with} \quad F_1 \in   { {L^1} }(\R^+; \dot B^{-1+\frac2p  ,  \frac1p }_{p,1})
 \quad \mbox{and} \\
 F_2 \in {\widetilde{L^2} }(\R^+; \dot B^{-1+\frac2p  ,-1+\frac1p  }_{p,1} ) \cap
 {L^1}(\R^+; \dot B^{\frac2p  ,-1+\frac1p  }_{p,1}) \, .
\end{aligned}
$$
On the one hand the above arguments (see the estimates of~$I_{jk}$ and~$K_{jk}$, or simply Remark~\ref{rmkjk}) enable us to write directly that for all~$\sigma \in [0,2]$,
\begin{equation}\label{41}
\| {\mathcal F} \|_{{\widetilde {L^\infty}}([a,b];\dot B^{-1+\frac2p  ,\frac1p  }_{p,1}  )} + \| {\mathcal F} \|_{ {L^1}([a,b];\dot B^{ -1+\sigma+ \frac2p    , 2- \sigma +\frac1p  }_{p,1} ) }
\lesssim\|F_1\|_{ {L^1}([a,b];\dot B^{-1+\frac2p  ,\frac1p  }_{p,1} )}
\end{equation}
while
 for all~$1 \leq \sigma \leq 2$,
\begin{equation}\label{4}
\| {\mathcal F} \|_{{\widetilde {L^2}}([a,b];\dot B^{\frac2p  ,\frac1p  }_{p,1}  )} + \| {\mathcal F} \|_{ {L^1}([a,b];\dot B^{ -1+\sigma+ \frac2p    , 2- \sigma +\frac1p  }_{p,1} ) }
\lesssim\|F_2\|_{ {L^1}([a,b];\dot B^{\frac2p  ,-1+\frac1p  }_{p,1}) } \, .
\end{equation}
On the other hand the same
computations as in the proof of Theorem~\ref{globalaniso}  give easily
\begin{equation}\label{5}
\| {\mathcal F} \|_{{\widetilde {L^\infty}}([a,b];\dot B^{-1+\frac2p  ,\frac1p  }_{p,1} )\cap {\widetilde {L^2}} ([a,b];\dot B^{-1+\frac2p    , 1+\frac1p  }_{p,1} )}
 \lesssim  \|F_2\|_{   {\widetilde{L^2} }(\R^+; \dot B^{-1+\frac2p  ,-1+\frac1p  }_{p,1} ) } \, .
\end{equation}
Finally let us turn to the contribution of~$U$. We define
$$
{\mathcal U}(t):=-\int_0^t e^{t \Delta} \, {\mathbb P}\mbox{div} \, (u \otimes U + U\otimes u) (t') \, dt' \, .
$$
We can write  using~(\ref{quasialgebra}) (and Remark~\ref{removetilde})
$$
\begin{aligned}
\|u^h \cdot \nabla^h U + u^3   \partial_3 U\|_{L^1([a,b]; \dot B^{-1+\frac2p  ,\frac1p  }_{p,1})} & \lesssim \|u\|_{{  {L^\infty}}([a,b];\dot B^{-1+\frac2p  ,\frac1p  }_{p,1} )}   \|U\|_{L^1([a,b]; \dot B^{ 1+\frac2p  ,\frac1p  }_{p,1} \cap  \dot B^{ \frac2p  ,1+\frac1p  }_{p,1})}\\
& \lesssim \|u\|_{{\widetilde {L^\infty}}([a,b];\dot B^{-1+\frac2p  ,\frac1p  }_{p,1} )}   \|U\|_{L^1([a,b]; \dot B^{ 1+\frac2p  ,\frac1p  }_{p,1} \cap  \dot B^{ \frac2p  ,1+\frac1p  }_{p,1})}
  \end{aligned}
$$
and using~(\ref{quasialgebra}) again,
$$
\|U^h \cdot \nabla^h u + U^3   \partial_3 u\|_{L^1([a,b]; \dot B^{-1+\frac2p  ,\frac1p  }_{p,1})} \lesssim \|u\|_{{\widetilde {L^2}}([a,b];\dot B^{-1+\frac2p  ,1+\frac1p  }_{p,1} \cap \dot B^{  \frac2p  , \frac1p  }_{p,1})}   \|U\|_{{  {L^2}}([a,b]; \dot B^{  \frac2p  ,\frac1p  }_{p,1})}\, .
$$
This enables us to write 
\begin{eqnarray}\label{7}
\|{\mathcal U}\|_{{\widetilde {L^\infty}}([a,b];\dot B^{-1+\frac2p  ,\frac1p  }_{p,1}  )\cap { {\widetilde {L^2}}([a,b];\dot B^{-1+\frac2p  ,1+\frac1p  }_{p,1}})}
\lesssim\Big(  \|u\|_{{\widetilde {L^\infty}}([a,b];\dot B^{-1+\frac2p  ,\frac1p  }_{p,1} )}   \|U\|_{L^1([a,b]; \dot B^{ 1+\frac2p  ,\frac1p  }_{p,1} \cap  \dot B^{ \frac2p  ,1+\frac1p  }_{p,1})} \nonumber&\\
+  \|u\|_{{\widetilde {L^2}}([a,b];\dot B^{-1+\frac2p  ,1+\frac1p  }_{p,1} \cap \dot B^{  \frac2p  , \frac1p  }_{p,1})}   \|U\|_{{  {L^2}}([a,b]; \dot B^{  \frac2p  ,\frac1p  }_{p,1})}\Big) \, .&
\end{eqnarray}
Putting   estimates~(\ref{1}), (\ref{2}), (\ref{41}), (\ref{4}), (\ref{7}) together we infer that
\begin{eqnarray}\label{eq:alpha}
\|u\|_{{\widetilde {L^2}}([a,b];\dot B^{\frac2p  ,\frac1p  }_{p,1})\cap  {L^1}([a,b];\dot B^{ \frac2p    , 1+\frac1p  }_{p,1}
\cap \dot B^{1+\frac2p  , \frac1p  }_{p,1} )} \leq C \Big(
 \|u\|_{\widetilde {L^2}([a,b];\dot B^{\frac2p  ,\frac1p  }_{p,1})}^2& \\
+     \|u\|_{{\widetilde {L^2}}([a,b];\dot B^{\frac2p  ,\frac1p  }_{p,1})}  \|U\|_{{  {L^2}}([a,b];\dot B^{\frac2p  ,\frac1p  }_{p,1})}+ \|u(a)\|_{\dot B^{-1+\frac2p  ,\frac1p  }_{p,1}}  + \|F\|_{{\mathcal X}_{p,1}}
\Big) \,,&\nonumber
\end{eqnarray}
while estimates~(\ref{1}), (\ref{3}), (\ref{5}), (\ref{7}) give
\begin{eqnarray}\label{eq:beta}
\|u\|_{{\widetilde {L^\infty}}([a,b];\dot B^{-1+\frac2p  ,\frac1p  }_{p,1}  )\cap { {\widetilde {L^2}}([a,b];\dot B^{-1+\frac2p  ,1+\frac1p  }_{p,1}})}
\leq C \Big(
 \|u\|_{{\widetilde {L^\infty}}([a,b];\dot B^{-1+\frac2p    , \frac1p  }_{p,1} )} \|u\|_{L^1([a,b]; \dot B^{1+\frac2p    , \frac1p  }_{p,1} ¬†\cap \dot B^{ \frac2p    ,1+ \frac1p  }_{p,1} )} & \nonumber\\
  +   \|u\|_{{\widetilde {L^\infty}}([a,b];\dot B^{-1+\frac2p  ,\frac1p  }_{p,1} )}   \|U\|_{L^1([a,b]; \dot B^{ 1+\frac2p  ,\frac1p  }_{p,1} \cap  \dot B^{ \frac2p  ,1+\frac1p  }_{p,1})} &\nonumber \\
 +  \|u\|_{{\widetilde {L^2}}([a,b];\dot B^{-1+\frac2p  ,1+\frac1p  }_{p,1} \cap \dot B^{  \frac2p  , \frac1p  }_{p,1})}   \|U\|_{{  {L^2}}([a,b]; \dot B^{  \frac2p  ,\frac1p  }_{p,1})}+ \|u(a)\|_{\dot B^{-1+\frac2p  ,\frac1p  }_{p,1}}  + \|F\|_{{\mathcal X}_{p,1}}
\Big) \,.&
 \end{eqnarray}
To conclude we resort to a   Gronwall-type argument (see for instance~\cite{gip} for a similar argument):
there exist~$ N$ real numbers~$ (T_i)_{1 \leq i \leq N}$
such that~$ T_1 = 0$ and~$T_N = +\infty $, such that~$\displaystyle \R_+ = \bigcup_{i = 1}^{N-1} [T_i,T_{i+1}] $ and satisfying
\begin{equation}\label{eq:usmall}
\|U\|_{{  {L^2}}([T_i,T_{i+1}];\dot B^{\frac2p  ,\frac1p  }_{p,1})} +  \|U\|_{L^1([T_i,T_{i+1}]; \dot B^{ 1+\frac2p  ,\frac1p  }_{p,1} \cap  \dot B^{ \frac2p  ,1+\frac1p  }_{p,1})} \leq
\frac{1}{8C} \quad \forall i \in \{1,\dots,N-1\} \, .
\end{equation}
Then suppose that
\begin{equation}
\label{eq:initialdata}
\begin{aligned}
 \|u_0\|_{\dot B^{-1+\frac2p  ,\frac1p  }_{p,1}} + \|F\|_{{\mathcal X}_{p,1}}
   \leq \frac{1}{8CN(2C)^N } \,\cdotp
   \end{aligned}
\end{equation}
By time continuity we can define a maximal time~$ T \in \R^+ \cup
\{\infty\}$ such that
\begin{equation}
\label{eq:defT}
\|u\|_{{\widetilde {L^2}}([0,T];\dot B^{\frac2p  ,\frac1p  }_{p,1})}+  \|u\|_{ {L^1}([0,T];\dot B^{ \frac2p    , 1+\frac1p  }_{p,1}
\cap \dot B^{1+\frac2p  , \frac1p  }_{p,1} )}  \leq
\frac{1}{4C} \, \cdotp
\end{equation}
If~$ T = \infty$ then the theorem is proved. Suppose now that~$ T <
+\infty$. Then we can define an integer~$ k \in \{1,\dots,N-1\}$ such
that
$$
T_k \leq T < T_{k+1} \, ,
$$
and plugging~(\ref{eq:usmall}) and~(\ref{eq:defT})
into~(\ref{eq:alpha})  we get for any~$ i \leq k-1$
$$
\begin{aligned}
\|u\|_{{\widetilde {L^2}}([T_i,T_{i+1}];\dot B^{\frac2p  ,\frac1p  }_{p,1})}¬†+   \|u\|_{ {L^1}([T_i,T_{i+1}];\dot B^{ \frac2p    , 1+\frac1p  }_{p,1}
\cap \dot B^{1+\frac2p  , \frac1p  }_{p,1} )}
\leq C \|u(T_{i})\|_{\dot B^{-1+\frac2p  ,\frac1p  }_{p,1}} \\ {}+ C \|F\|_{{\mathcal X}_{p,1}}
 + \frac{1}{4}
\|u\|_{\widetilde{L^{2}}([T_i,T_{i+1}];\dot B^{\frac2p  ,\frac1p  }_{p,1})}
 +
\frac{1}{4} \|u\|_{\widetilde{L^{2}}([T_i,T_{i+1}];\dot B^{\frac2p  ,\frac1p  }_{p,1})}
\,  ,
  \end{aligned}$$
so finally
\begin{equation}
  \label{di-r}
\begin{aligned}
\|u\|_{{\widetilde {L^2}}([T_i,T_{i+1}];\dot B^{\frac2p  ,\frac1p  }_{p,1})}¬†+   \|u\|_{ {L^1}([T_i,T_{i+1}];\dot B^{ \frac2p    , 1+\frac1p  }_{p,1}
\cap \dot B^{1+\frac2p  , \frac1p  }_{p,1} )}\\
  \leq 2 C  \Big(\|u(T_{i})\|_{\dot B^{-1+\frac2p  ,\frac1p  }_{p,1}} + \|F\|_{{\mathcal X}_{p,1}}   \Big)  \, .
 \end{aligned}
 \end{equation}
From relations~(\ref{eq:alpha}) and~\eqref{eq:beta}   we also get
\begin{equation}
  \label{di-inf}
 \begin{aligned}\|u\|_{{\widetilde {L^\infty}}([T_i,T_{i+1}];\dot B^{-1+\frac2p    , \frac1p  }_{p,1} )} \leq 2
C \Big( \|u(T_{i})\|_{\dot B^{-1+\frac2p    , \frac1p  }_{p,1} }
+ \|F\|_{{\mathcal X}_{p,1}}    \Big).
 \end{aligned}
 \end{equation}
Since~${\widetilde {L^\infty}}(\R^+;\dot B^{-1+\frac2p    , \frac1p  }_{p,1} )\subset {  {L^\infty}}(\R^+;\dot B^{-1+\frac2p    , \frac1p  }_{p,1} )$, we further infer
that
$$
\|u(T_{i+1})\|_{\dot B^{-1+\frac2p    , \frac1p  }_{p,1} }\leq 2
C \Big( \|u(T_{i})\|_{\dot B^{-1+\frac2p    , \frac1p  }_{p,1} } + \|F\|_{ {\mathcal X}_{p,1} }  \Big) \, .$$
A trivial induction now shows that for all~$ i\in\{1,\dots,k-1\}$,
\begin{equation*}
\|u(T_{i})\|_{\dot B^{-1+\frac2p    , \frac1p  }_{p,1}}\leq  (2C)^{i-1} \Big(\|u_0\|_{\dot B^{-1+\frac2p    , \frac1p  }_{p,1}}+ \|F\|_{{\mathcal X}_{p,1} }  \Big)\,   .
\end{equation*}
We conclude from \eqref{di-r} and \eqref{di-inf} that
$$\begin{aligned}
\|u\|_{{\widetilde {L^2}}([T_i,T_{i+1}];\dot B^{\frac2p  ,\frac1p  }_{p,1})}¬†+   \|u\|_{ {L^1}([T_i,T_{i+1}];\dot B^{ \frac2p    , 1+\frac1p  }_{p,1}
\cap \dot B^{1+\frac2p  , \frac1p  }_{p,1} )}
\leq (2 C)^i \Big ( \|u_0\|_{\dot B^{-1+\frac2p    , \frac1p  }_{p,1}}
+ \|F\|_{ {\mathcal X}_{p,1} } \Big )  \end{aligned}
$$
and
$$
\|u\|_{\widetilde{L^{\infty}}([T_i,T_{i+1}];\dot B^{-1+\frac2p    , \frac1p  }_{p,1}} \leq
(2C)^i \Big ( \|u_0\|_{\dot B^{-1+\frac2p    , \frac1p  }_{p,1}} + \|F\|_{ {\mathcal X}_{p,1} }  \Big)
$$
for all $i\leq k-1$. The same arguments as above also apply on the
interval $[T_k,T]$ and yield
$$
\begin{aligned}
& \|u\|_{\widetilde{L^{2}}([T_k,T ];\dot B^{\frac2p  ,\frac1p  }_{p,1})}\leq  (2 C)^N \Big ( \|u_0\|_{\dot B^{-1+\frac2p    , \frac1p  }_{p,1}}  + \|F\|_{{\mathcal X}_{p,1} } \Big ) \end{aligned}
$$
and
$$
\|u\|_{\widetilde{L^{\infty}}([T_k,T];\dot B^{-1+ \frac2p  ,\frac1p  }_{p,1})} \leq
(2C)^N\Big ( \|u_0\|_{\dot B^{-1+\frac2p    , \frac1p  }_{p,1}}  + \|F\|_{ {\mathcal X}_{p,1} }    \Big) \, . $$
Then it is easy to see that (see for instance~\cite{gip})
\begin{align*}
\|u\|_{
\widetilde{L^{2}}
([0,T];\dot B^{\frac2p    , \frac1p  }_{p,1})
}
&\leq  \|u\|_{\widetilde{L^{2}}([T_1,T_2];
 \dot B^{\frac2p    , \frac1p  }_{p,1})}+\dots+\|u\|_{
 \widetilde{L^{2}}([T_k,T];\dot B^{\frac2p    , \frac1p  }_{p,1})
 }\\
&\leq N (2C)^N
  \|
  u_0
  \|_{
  \dot B^{-1+\frac2p    , \frac1p  }_{p,1}
  } + N (2C)^N
 \|F\|_{{\mathcal X}_{p,1}  }  \, .
\end{align*}
Under
assumption~(\ref{eq:initialdata}) this contradicts the maximality of~$
T$ as defined in~(\ref{eq:defT}). Since the integer $N$ can be chosen of size equivalent to $\|U\|_{{  {L^2}}(\R^+;\dot B^{\frac2p  ,\frac1p  }_{p,1})} +  \|U\|_{L^1(\R^+; \dot B^{ 1+\frac2p  ,\frac1p  }_{p,1} \cap  \dot B^{ \frac2p  ,1+\frac1p  }_{p,1})} $,  the theorem is proved.
 \qed

\begin{rmk}{\rm
 Note that we have    obtained also that~$u $ belongs to~$ \widetilde{L^{\infty}}(\R^+,\dot B^{-1+ \frac2p  ,\frac1p  }_{p,1})$.}
\end{rmk}

\subsection{Proof of Corollary~\ref{strongstability}}
Let~$u \in \widetilde{L^\infty_{loc}}(\R^+; {\mathcal B}^{1}_{1} )  \cap  {L^1_{loc}}
         (\R^+;\dot B^{3  ,1  }_{1,1} \cap \dot B^{ 1  ,3  }_{1,1} ) $ solve~(NS) with initial data~$u_0 \in  {\mathcal B}^{1}_{1} $. Let us start by proving that~$u \in {\mathcal S}_{1,1}$ and that~$\|u(t)\|_{\ {\mathcal B}^{1}_{1}} \to 0$ as~$t$ goes to~$\infty$.  Actually it is enough to prove the convergence to zero result in large times, since the fact that~$u \in {\mathcal S}_{1,1}$ is then a consequence of Theorem~\ref{globalaniso} since for~$T$ large enough we have~$\|u(T)\|_{{\mathcal B}^{1}_{1}} \leq c_0$.

\smallskip
\noindent
We shall only sketch the proof as it is very similar to the same result in the isotropic case, proved in~\cite{gip}. 
                 The idea is to use a frequency truncation to decompose~$u_0 = v_0 + w_0$ with~$\|w_0\|_{ {\mathcal B}^{1}_{1} } \leq \e_0$ for some arbitrarily small~$\e_0$ and with~$v_0 \in  {\mathcal B}^{1}_{1}\cap L^2$. We then solve globally (NS) in~$ {\mathcal S}_{1,1}$ with data~$w_0$, and we know from Theorem~\ref{globalaniso} that
         $$
         \|w\|_{ {\mathcal S}_{1,1}} \leq 2 \e_0\, .
         $$
  It is easy to see (using the same arguments as in Proposition~\ref{propanisoiso}) that~$\|w_0\|_{\dot B^{-1}_{\infty,\infty}} \lesssim \e_0$ so the arguments of Proposition A.2 of~\cite{gip} imply that
 \begin{equation}\label{winfty}
         \sup_{t>0}\sqrt t \|w(t)\|_{L^\infty}\lesssim \e_0 \, .
\end{equation}
      Now let us consider~$v$: it satisfies the perturbed (NSP) equation with data~$v_0$, with~$F = 0$ and with~$U = w$, and it belongs to~$ \widetilde{L^\infty_{loc}}(\R^+; {\mathcal B}^{1}_{1} )  \cap  {L^1_{loc}}
         (\R^+;\dot B^{3  ,1  }_{1,1} \cap \dot B^{ 1  ,3  }_{1,1} ) $ since that holds for~$u$ and~$w$. We claim that there is~$T>0$ such that
         $$
         v \in \widetilde {L^\infty}([0,T];L^2) \cap {L}^2([0,T];\dot H^1) \, .
          $$
          Indeed we have by product laws the following
          analogue of~(\ref{estimatewithwidetilde}):
      $$  \begin{aligned}
 \| u \cdot \nabla u \|_{L^1([0,T]; \dot B^{1    , \frac12  }_{1,1}  )}
  & \leq \| u^h \cdot \nabla^h u\|_{L^1([0,T]; \dot B^{1    , \frac12  }_{1,1}  )} +
   \| u^3   \partial_3 u\|_{L^1([0,T]; \dot B^{1   , \frac12  }_{1,1}  )} \\
    & \lesssim \|u\|_{{\widetilde  {L^\infty}}([0,T];\dot B^{1  , \frac12  }_{1,1} )} \Big( \|  u\|_{L^1([0,T]; \dot B^{3  , 1 }_{1,1}   )}
   +  \|  u\|_{L^1([0,T];  \dot B^{ 2   ,2  }_{1,1} )} \Big) \, ,
\end{aligned}
$$
which implies as in~(\ref{linfty}) that
   $$
       \|B(u,u)\|_{{\widetilde {L^\infty}}([0,T];\dot B^{1    , \frac12  }_{1,1}  )} \lesssim  \|u\|_{{\widetilde {L^\infty}}([0,T];\dot B^{1    , \frac12  }_{1,1}  )} \|u\|_{L^1([0,T]; \dot B^{3   ,1}_{1,1} \cap \dot B^{ 2   ,1+ 1  }_{1,1} )}
      $$
      so as in  Lemma A.2 of~\cite{gip} we get~$v \in {\widetilde {L^\infty}}([0,T];\dot B^{1    , \frac12  }_{1,1}  ) \subset {\widetilde {L^\infty}}([0,T];\dot B^{0    , 0 }_{2,1}  ) \subset {\widetilde {L^\infty}}([0,T];L^2 ) $.
 The bound in~$ {L}^2([0,T];\dot H^1)$   is obtained in a similar way, noticing that if~$f$ is in~$L^1([0,T]; \dot B^{1    , \frac12  }_{1,1}  )$, then 
 $\displaystyle
 F := \int_0^t e^{(t-t') \Delta} {\mathbb P} (f)(t') \, dt'
 $
 satisfies, by similar computations to the proof of Theorem~\ref{globalaniso},
 $$
 \|F\|_{\widetilde {L^2} ([0,T]; \dot B^1_{2,1}(\R^3))} \lesssim \|f\|_{L^1([0,T]; \dot B^{0    , 0  }_{2,1}  )} \lesssim \|f\|_{L^1([0,T]; \dot B^{1    , \frac12  }_{1,1}  )} \, .
 $$
  Then we conclude exactly as in the proof of Theorem 2.1 in~\cite{gip}: we find, writing an energy estimate in~$L^2$ and using~(\ref{winfty}) that~$v$ can be made arbitrarily small in~$\dot H^\frac12$ as time goes to infinity, 
hence by Proposition~\ref{propanisoiso} the same holds in~$\dot B^{0,\frac 12}_{2,1}$. It follows that  $u(t) = v(t) + w(t)$ is arbitrarily small in~$\dot B^{0,\frac 12}_{2,1}$  (say smaller than~$c_0$, if~$  \e_0  $ is small enough) for~$t$ large enough, hence there is a global solution in~${\mathcal S}_{2,1}$ associated with~$u_0$, which can be shown to also belong to ~${\mathcal S}_{1,1}$ by a propagation of regularity argument. We know indeed by Theorem~\ref{globalaniso} that~$u$ belongs to~${\mathcal S}_{1,1}(T)$ for some time~$T$ so we just need to check that the~${\widetilde {L^2}}([0,T];\dot B^{2  ,1 }_{1,1}) $ norm of~$u$ remains bounded uniformly in~$T$. But
  product laws give
$$
\|u\otimes u\|_{L^1([0,T]; \dot B^{2,1}_{1,1})} \lesssim \|u\|_{\widetilde{L^2}([0,T]; \dot B^{2,1}_{1,1})}
 \|u\|_{\widetilde{L^2}([0,T]; \dot B^{1,\frac12}_{2,1})}
$$
 so as in~(\ref{estimateBuu}) we get
 $$
 \|B(u,u) \|_{{\widetilde {L^2}}([0,T];\dot B^{2  ,1 }_{1,1}) } \lesssim \|u\|_{\widetilde{L^2}([0,T]; \dot B^{2,1}_{1,1})}
 \|u\|_{\widetilde{L^2}([0,T]; \dot B^{1,\frac12}_{2,1})} \, ,
 $$
 which allows to prove the result.
 
 \medskip
 \noindent
 Then the strong stability result is obtained using Theorem~\ref{globalanisoperturbed}. Indeed we can solve (NS) with initial data~$v_0$ for a short time and the solution~$v$ can be written as~$u-w$. The vector field~$w$ then satisfies~(PNS) with initial data~$w_0$, with  forcing term zero, and with~$U = u$. We know that~$u \in {\mathcal S}_{1,1} \subset {\mathcal Y}_{1,1}$   so the result is a direct consequence of Theorem~\ref{globalanisoperturbed}. 
 
 \medskip
 \noindent
 Corollary~\ref{strongstability} is proved. \qed

 \section{Anisotropic Littlewood-Paley decomposition}\label{appendixlp}
In this section we recall the definition of the isotropic and anisotropic Littlewood-Paley decompositions and associated function spaces, and give their main properties that are used in this paper.
We refer for instance to~\cite{BCD}, \cite{cheminzhang},  \cite{gz}, \cite{ghz}, \cite{dragos},  \cite{paicu} and \cite{triebel}  for all necessary details.

\subsection{Isotropic decomposition and function spaces}
Let~$\widehat{\chi}$ (the Fourier transform of $\chi$) be a radial
function in~$\mathcal{D}(\mathbb{R})$ such
that~$\widehat\chi(t) = 1$ for~$|t|\leq 1$
and~$\widehat\chi(t) = 0$ for~$|t|>2$, and we define (in~$d$ space dimensions)~$
\chi_{\ell} := 2^{d \ell}\chi(2^{\ell}|\cdot|).$ Then the frequency
localization operators used in this paper are defined by
$$
S_{ \ell} := \chi_{\ell}\ast\cdot \quad\mbox{and}\quad \Delta_{\ell}
:= S_{\ell +1} - S_{\ell} =:  \Psi_{\ell}\ast\cdot\, .
$$

\medskip
\noindent
Now let us define Besov spaces on~$\R^d$ using this decomposition. We start by defining, as in~\cite{BCD},
\begin{equation}\label{defS'h}
{\mathcal S}'_h :=  \left\{f \in {\mathcal S}' (\R^d)\: / \:   \|\Delta_j f\|_{L^\infty} \to 0, \: j \to -\infty \right\} \, .
\end{equation}
Let~$f$ be in~$\mathcal{S}'(\mathbb{
  R}^{d})$, let~$p $ belong to~$[1,\infty]$ and~$q$ to~$]0,\infty]$, and let~$s \in \R, s < d/p$. We say that~$f$ belongs to~$\dot
B^{s}_{p,q}(\R^d)$ if the sequence $\e_{\ell} := 2^{\ell s}\| \Delta_{\ell} f\|_{L^{p}}$
  belongs to $\ell ^{q} (\ZZ)$, and we have
  $$
  \|f\|_{\dot
B^{s}_{p,q}(\R^d)} := \|\e_{\ell} \|_{\ell^q (\ZZ)}\, .
  $$
  If~$s = d/p $ and~$q = 1$, then the same definition holds as soon as one assumes moreover that~$f \in {\mathcal S}'_h $ --- or equivalently after taking the quotient with polynomials. Finally in all other cases then~$\dot
B^{s}_{p,q}(\R^d)$ is defined by the above norm, after taking the quotient with polynomials (see~\cite{bourdaud} and the references therein for a discussion).

\medskip
\noindent It is  well-known that an equivalent norm is given by
\begin{equation}\label{defbesovheat}
\forall s\in \R, \:\forall (p,q) \in [1,\infty], \quad \|f\|_{\dot B^{s}_{p,q}(\R^d)} = \left\| t^{-\frac s2 }\|K(t) f\|_{L^p(\R^d)} \right\|_{L^q(\R^+;\frac{dt}t)}
\end{equation}
with~$K(t) := t \partial_t e^{t\Delta}$.
We recall also that Sobolev spaces are defined by the norm $\|\cdot\|_{\dot B^{s}_{2,2}(\R^d)}$
and
$$
\forall s < \frac d 2, \quad \|f\|_{\dot H^s(\R^d)} := \left(\int |\xi|^{2s} |\hat f (\xi)|^2 \, d \xi\right)^\frac12
$$
where~$\hat f$ is the Fourier transform of~$f$.

\medskip
\noindent Finally it is useful, in the context of the Navier-Stokes equations, to introduce the following space-time norms (see~\cite{cheminlerner}):
$$
 \|f\|_{\widetilde{L^r}([0,T];\dot B^{s}_{p,q})} := \big\| 2^{js} \|\Delta_j f \|_{L^r([0,T];L^p(\R^d))}\big\|_{\ell^q}
$$
or equivalently
$$
 \|f\|_{\widetilde{L^r}([0,T];\dot B^{s}_{p,q})} =  \left\| t^{-\frac s2} \|K(t) f\|_{L^r([0,T];L^p(\R^d))} \right\|_{L^q(\R^+;\frac{dt}t)} \, .
$$
The following proposition lists a few useful inequalities related to those spaces.
\begin{prop}\label{propiso}
If~$1 \leq p \leq q \leq \infty$, then
$$
\begin{aligned}
 \|\partial^\alpha \Delta_j f\|_{L^q(\R^d)} & \lesssim 2^{j(|\alpha|+d(1/p - 1/q))} \| \Delta_j f\|_{L^p(\R^d)}\, ,
\\
\mbox{and} \quad  \|e^{t\Delta} \Delta_j f\|_{L^q(\R^d)} & \lesssim e^{-ct 2^{2j}} \| \Delta_j f\|_{L^q(\R^d)} \, .
\end{aligned}
$$
\end{prop}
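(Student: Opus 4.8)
The plan is to reduce both inequalities to the single structural fact that $\Delta_j f$ has Fourier transform supported in the annulus $\mathcal C_j := \{\xi \in \R^d : c_0 2^j \le |\xi| \le C_0 2^j\}$, and then to estimate the underlying convolution kernels in $L^r$ by a scaling argument. Concretely, fix $\varphi \in \mathcal D(\R^d \setminus \{0\})$ which equals $1$ on $\{c_0 \le |\xi| \le C_0\}$ and is supported in the slightly larger annulus $\{c_0/2 \le |\xi| \le 2 C_0\}$, and set $\widetilde\Psi_j(x) := 2^{dj}\widetilde\Psi(2^j x)$ where $\widehat{\widetilde\Psi} = \varphi$, so that $\widehat{\widetilde\Psi_j}(\xi) = \varphi(2^{-j}\xi)$. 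Since $\widehat{\Delta_j f}$ is supported in $\mathcal C_j$ and $\varphi(2^{-j}\cdot) \equiv 1$ there, one has $\Delta_j f = \widetilde\Psi_j \ast \Delta_j f$, hence $\partial^\alpha \Delta_j f = (\partial^\alpha \widetilde\Psi_j) \ast \Delta_j f$.

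For the first inequality I would then apply Young's convolution inequality with $r$ defined by $1 + \frac1q = \frac1r + \frac1p$ (admissible precisely because $p \le q$): $\|\partial^\alpha \Delta_j f\|_{L^q} \le \|\partial^\alpha \widetilde\Psi_j\|_{L^r}\,\|\Delta_j f\|_{L^p}$. A change of variables gives $\partial^\alpha \widetilde\Psi_j(x) = 2^{j(d+|\alpha|)}(\partial^\alpha\widetilde\Psi)(2^j x)$, whence $\|\partial^\alpha \widetilde\Psi_j\|_{L^r} = 2^{j(|\alpha| + d(1 - 1/r))}\|\partial^\alpha\widetilde\Psi\|_{L^r}$; since $1 - 1/r = 1/p - 1/q$ and $\widetilde\Psi \in \mathcal S(\R^d)$, this is exactly the claimed bound, with implied constant $\|\partial^\alpha\widetilde\Psi\|_{L^r}$ depending only on $\alpha$, $p$, $q$ and the choice of $\chi$.

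For the heat estimate, write $e^{t\Delta}\Delta_j f = g_{t,j} \ast \Delta_j f$ with $\widehat{g_{t,j}}(\xi) = e^{-t|\xi|^2}\varphi(2^{-j}\xi)$, so that $\|e^{t\Delta}\Delta_j f\|_{L^q} \le \|g_{t,j}\|_{L^1}\|\Delta_j f\|_{L^q}$ and it suffices to prove $\|g_{t,j}\|_{L^1} \lesssim e^{-ct 2^{2j}}$. Setting $\eta = 2^{-j}\xi$ and $s = 2^{2j} t$, scaling gives $g_{t,j}(x) = 2^{dj} h_s(2^j x)$ with $\widehat{h_s}(\eta) = e^{-s|\eta|^2}\varphi(\eta)$, hence $\|g_{t,j}\|_{L^1} = \|h_s\|_{L^1}$, and the matter reduces to $\|h_s\|_{L^1} \lesssim e^{-cs}$ uniformly in $s \ge 0$. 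The key manipulation is to pull out a full exponential factor: on $\operatorname{supp}\varphi$ one has $|\eta|^2 \ge c_0^2/4$, so $e^{-s|\eta|^2}\varphi(\eta) = e^{-s c_0^2/8}\bigl(e^{-s(|\eta|^2 - c_0^2/8)}\varphi(\eta)\bigr)$, and on the support $|\eta|^2 - c_0^2/8 \ge c_0^2/8 > 0$. Differentiating the bracket in $\eta$ produces terms carrying powers $s^a$ against $e^{-s(|\eta|^2 - c_0^2/8)} \le e^{-s c_0^2/8}$, so each such factor $s^a e^{-s(|\eta|^2 - c_0^2/8)}$ is bounded uniformly in $s \ge 0$; consequently the bracket $k_s(\eta) := e^{-s(|\eta|^2 - c_0^2/8)}\varphi(\eta)$ has all $\eta$-derivatives bounded uniformly in $s$, with support in a fixed compact set. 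The standard estimate $\|\check k_s\|_{L^1} \lesssim \|(1+|x|^2)^{-N}\|_{L^1}\,\|(I - \Delta_\eta)^N k_s\|_{L^1}$ with $N > d/2$ then gives a uniform bound on $\|\check k_s\|_{L^1}$, so that $\|h_s\|_{L^1} = e^{-s c_0^2/8}\|\check k_s\|_{L^1} \lesssim e^{-s c_0^2/8}$, i.e. the claim with $c = c_0^2/8$. The only slightly delicate point in the whole argument is this last step, where the exponential split must be chosen so that the polynomial-in-$s$ loss from differentiating $e^{-s|\eta|^2}$ is absorbed by the exponential decay; everything else is a routine application of Young's inequality and scaling. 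Combining the two arguments (Young with the $L^r$ norm of $\check k_s$) would even yield the sharper $\|e^{t\Delta}\Delta_j f\|_{L^q} \lesssim e^{-ct 2^{2j}} 2^{jd(1/p - 1/q)}\|\Delta_j f\|_{L^p}$, though this stronger form is not needed here.
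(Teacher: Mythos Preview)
Your argument is correct and is the standard proof of the Bernstein and heat-smoothing inequalities. Note, however, that the paper does not actually supply a proof of this proposition: it is stated as a well-known fact (these are the classical Bernstein inequalities together with the localized heat estimate, for which the paper refers to \cite{BCD}). So there is nothing to compare against; what you have written is exactly the textbook derivation and would serve as a complete proof.
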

\noindent Finally let us recall product laws in   Besov spaces:
$$
\|fg\|_{\dot  B^{s_1+s_2 - \frac dp}_{p,q}(\R^d)} \lesssim \|f\|_{\dot  B^{s_1}_{p,q}(\R^d)} \|g\|_{\dot  B^{s_2  }_{p,q}(\R^d)} \, ,
$$
as soon as
$$
  s_1+s_2 >0 \quad \mbox{and} \quad s_j < \frac dp\, , \: j \in \{1,2\}\, .
$$

\subsection{Anisotropic decomposition and function spaces}
Similarly we define a three dimensional, anisotropic decomposition as follows. For~$(j,k) \in \ZZ^2$, we define the horizontal decomposition as
$$
S_k^h f := {\mathcal F}^{-1} \big(
 \widehat {\chi} (2^{-k}|\xi_h|) \hat f(\xi)
\big) \,  \mbox{and} \,  \Delta_k^h :=
 S_{k+1}^h  - S_{k}^h \, , \,  \mbox{which writes} \quad
 {\mathcal F} ( \Delta_k^h f):= \widehat\Psi (2^{-k}|\xi_h|) \hat f(\xi)
 $$
and the vertical decomposition as
$$
S_j^v f := {\mathcal F}^{-1} \big(
 \widehat {\chi} (2^{-j}|\xi_3|) \hat f(\xi)
\big)  \, \mbox{and}  \, \Delta_j^v :=
 S_{j +1}^v - S_{j}^v\, , \,  \mbox{which writes} \quad
 {\mathcal F} ( \Delta_j^v f):=\widehat \Psi (2^{-j}|\xi_3|) \hat f(\xi)
 \, .
$$

  \medskip
\noindent
Now let us define anisotropic Besov spaces.  We   define, for all~$(s,s')\in \R^2, s < 2/p, s' < 1/p$  and all~$p \in  [1,\infty]$ and~$q \in ]0, \infty]$,
  $$
 \dot B^{s,s'}_{p,q} :=  \left\{f \in {\mathcal S}' \: / \: \|f\|_{ \dot B^{s,s' } _{p,q}}:= \Big\| 2^{ks + js'}\|\Delta_k^h\Delta_j^v f\|_{L^p} \Big\|_{\ell^q}< \infty \right\}.
 $$
  In all other cases one defines the same norm, and one needs to take the quotient with polynomials.
  
    \medskip
\noindent
 As in~{\rm (\ref{defbesovheat})} an equivalent definition using the heat flow is
\begin{equation}\label{defbesovanisoheat}
 \|f\|_{\dot B^{s,s'}_{p,q} } = \left\| t^{-\frac{s}2}t'^{-\frac{s'}2} \| K_h(t)K_v(t') f\|_{L^p} \right\|_{   L^q(\R^+\times \R^+;\frac{dt}{t} \frac{dt'}{t'})}
 \end{equation}
 where~$K_h(t) : = t \partial_t e^{t\Delta_h^2}$ and~$K_v(t) := t \partial_t e^{t\partial_3^2}$.

 \medskip
 \noindent
 As in the isotropic case we introduce the following space-time norms:
$$
 \|f\|_{\widetilde{L^r}([0,T];\dot B^{s,s'}_{p,q} )} := \big\|2^{ks + js'}\|\Delta_k^h\Delta_j^v f\|_{L^r([0,T];L^p)}\big\|_{\ell^q}
$$
or equivalently
$$
 \|f\|_{\widetilde{L^r}([0,T];\dot B^{s,s'}_{p,q} )} =  \left\|  t^{-\frac{s}2}t'^{-\frac{s'}2} \| K_h(t)K_v(t') f\|_{L^r([0,T];L^p)} \right\|_{L^q(\R^+\times \R^+;\frac{dt}{t} \frac{dt'}{t'})} \, .
$$
Notice that of course~$\widetilde{L^r}([0,T];\dot B^{s,s'}_{p,r} ) =  {L^r}([0,T];\dot B^{s,s'}_{p,r} )$, and by Minkowski's inequality, we have the embedding~$\widetilde{L^r}([0,T];\dot B^{s,s'}_{p,q} ) \subset  {L^r}([0,T];\dot B^{s,s'}_{p,q} )$ if~$r \geq q$.

 \medskip
\noindent The anisotropic counterpart of Proposition~{\rm \ref{propiso}} is the following.
 \begin{prop}\label{propaniso}
If~$1 \leq p_1 \leq p_2 \leq \infty$, then
$$
\begin{aligned}
 \|\partial_{x_h}^\alpha \Delta_k^h  f\|_{L^{p_2}(\R^2; L^r(\R))} &\lesssim 2^{k(|\alpha|+2(1/p_1 - 1/p_2))}
 \| \Delta_k^h f\|_{L^{p_1}(\R^2; L^r(\R))}\, ,
 \\
 \|\partial_{x_3}^\alpha \Delta_j^v   f\|_{L^r(\R^2; L^{p_2}(\R))} & \lesssim 2^{j(|\alpha|+1/{p_1}- 1/{p_2})} \| \Delta_j ^vf\|_{L^r(\R^2; L^{p_1}(\R))} \, ,
\\
 \|e^{t\Delta}  \Delta_k^h\Delta_j^v f\|_{L^q} &\lesssim e^{-ct (2^{2k} + 2^{2j})} \|  \Delta_k^h\Delta_j^v f\|_{L^q} \, .
\end{aligned}
$$
\end{prop}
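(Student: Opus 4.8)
The three estimates are the anisotropic analogues of the Bernstein-type inequalities of Proposition~\ref{propiso}, and the plan is to prove them exactly as in the isotropic case (see~\cite{BCD}), by representing the frequency blocks as convolutions against fixed rescaled functions. For the first inequality I would use that there is a fixed function $\widetilde\Psi\in\cS(\R^2)$ such that $\Delta_k^h=\widetilde\Delta_k^h\Delta_k^h$, where $\widetilde\Delta_k^h g=\widetilde\Psi_k\ast_h g$ with $\widetilde\Psi_k(x_h):=2^{2k}\widetilde\Psi(2^kx_h)$ and $\ast_h$ the convolution in the two horizontal variables only (this works because, after the rescaling $\xi_h\mapsto 2^{-k}\xi_h$, the Fourier support of $\widehat\Psi(2^{-k}|\cdot|)$ is a $k$-independent annulus). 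Then
$$
\partial_{x_h}^\alpha\Delta_k^h f=\big(\partial_{x_h}^\alpha\widetilde\Psi_k\big)\ast_h\big(\Delta_k^h f\big)\, ,\qquad\big\|\partial_{x_h}^\alpha\widetilde\Psi_k\big\|_{L^s(\R^2)}=2^{k\left(|\alpha|+2(1/p_1-1/p_2)\right)}\big\|\partial^\alpha\widetilde\Psi\big\|_{L^s(\R^2)}\, ,
$$
the norm identity following from the change of variables $x_h\mapsto 2^kx_h$ and the choice of $s\in[1,\infty]$ determined by $1+1/p_2=1/s+1/p_1$, which is admissible precisely because $p_1\leq p_2$. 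Applying Young's inequality in the horizontal variables, together with Minkowski's integral inequality to bring the inner $L^r(\R)$ norm inside the horizontal convolution, then yields the first estimate. The second estimate is proved identically, with $\R$ in place of $\R^2$ in the convolution and in the norm of the rescaled kernel, which accounts for the exponent $2^{j(|\alpha|+1/p_1-1/p_2)}$.

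For the heat flow estimate I would use that $e^{t\Delta}$ factors as $e^{t\Delta_h}e^{t\partial_3^2}$ and commutes with $\Delta_k^h$ and $\Delta_j^v$, so that $e^{t\Delta}\Delta_k^h\Delta_j^v f=g_{k,t}\ast_h\big(h_{j,t}\ast_v(\Delta_k^h\Delta_j^v f)\big)$, where $g_{k,t}$ is the inverse Fourier transform in the horizontal frequencies of $e^{-t|\xi_h|^2}\widehat\Psi(2^{-k}|\xi_h|)$ and $h_{j,t}$ the inverse Fourier transform in the vertical frequency of $e^{-t|\xi_3|^2}\widehat\Psi(2^{-j}|\xi_3|)$. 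The key point is then the classical kernel bounds $\|g_{k,t}\|_{L^1(\R^2)}\lesssim e^{-ct2^{2k}}$ and $\|h_{j,t}\|_{L^1(\R)}\lesssim e^{-ct2^{2j}}$, obtained by rescaling to a fixed annulus and integrating by parts in the frequency variable, using that $\widehat\Psi$ is supported away from the origin. Once these are available, Young's inequality (convolution against an $L^1$ kernel in one group of variables preserves $L^q(\R^3)$, by Minkowski) gives $\|e^{t\Delta}\Delta_k^h\Delta_j^v f\|_{L^q}\lesssim e^{-ct2^{2k}}e^{-ct2^{2j}}\|\Delta_k^h\Delta_j^v f\|_{L^q}=e^{-ct(2^{2k}+2^{2j})}\|\Delta_k^h\Delta_j^v f\|_{L^q}$, which is the claim.

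None of the three estimates presents a genuine difficulty, and the place where I would expect to spend the most effort is the $L^1$ bound on the localised heat kernels $g_{k,t}$ and $h_{j,t}$; but this too is entirely classical and is carried out in detail in~\cite{BCD}, from which the whole proposition could simply be quoted. I include the sketch above only for completeness.
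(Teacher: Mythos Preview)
Your proposal is correct and follows the standard approach; in fact the paper does not prove Proposition~\ref{propaniso} at all, simply stating it as the anisotropic counterpart of Proposition~\ref{propiso} and implicitly referring to~\cite{BCD} and the other references cited at the start of Appendix~\ref{appendixlp}. Your sketch (reproducing kernel, Young plus Minkowski for the Bernstein inequalities, and the $L^1$ bound on the localised heat kernel for the decay estimate) is exactly the classical argument one finds there, so nothing more is needed.
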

\noindent In this paper we use product laws in anisotropic Besov spaces, which read as follows:
$$
\|fg\|_{\dot  B^{s_1+s_2 - \frac2p,s'_2}_{p,q}} \lesssim \|f\|_{\dot  B^{s_1 ,\frac1p}_{p,1}} \|g\|_{\dot  B^{s_2  ,s'_2}_{p,q}} +\|f\|_{\dot  B^{s_1  ,s'_2}_{p,q}}  \|g\|_{\dot  B^{s_2 ,\frac1p}_{p,1}} \, ,
$$
as soon as
$$
  \frac 1p  \leq s'_2, \quad  s_1+s_2 >0 \quad \mbox{and} \quad s_j < \frac 2p\, , \: j \in \{1,2\}\, ,
$$
and
$$
\|fg\|_{\dot  B^{s_1+s_2 - \frac2p,s'_1+s'_1-\frac1p}_{p,q}} \lesssim \|f\|_{\dot  B^{s_1 ,s'_1}_{p,q}} \|g\|_{\dot  B^{s_2  ,s'_2}_{p,q}} \, ,
$$
as soon as
$$
  s'_1+s'_2 >0 \quad \mbox{and} \quad s'_j < \frac 1p\, , \: j \in \{1,2\}\,
$$
and with the same conditions on~$s_1,s_2$.
Finally
\begin{equation}\label{algebra}
\|fg\|_{\dot  B^{ \frac2p, \frac1p}_{p,1}} \lesssim \|f\|_{\dot  B^{ \frac2p, \frac1p}_{p,1}} \|g\|_{\dot  B^{ \frac2p, \frac1p}_{p,1}} \,,
\end{equation}
and if~$p <4$,
\begin{equation}\label{quasialgebra}
\|fg\|_{\dot  B^{ -1+\frac2p, \frac1p}_{p,1}} \lesssim \|f\|_{\dot  B^{ -1+\frac2p, \frac1p}_{p,1}} \|g\|_{\dot  B^{ \frac2p, \frac1p}_{p,1}} \,.\end{equation}

\medskip
\noindent
The following result     compares some isotropic and anisotropic Besov spaces.
 \begin{prop}\label{propanisoiso}
Let~$s$ and~$t$ be two nonnegative real numbers. Then for any~$(p,q) \in [1,\infty]^2 $ one has
$$
\|f\|_{\dot B^{s,t}_{p,q}} \lesssim \|f\|_{\dot B^{s+t}_{p,q}} \, .
$$
\end{prop}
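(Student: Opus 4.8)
The plan is to compare the two norms \emph{block by block}, using the isotropic Littlewood--Paley decomposition $f=\sum_{m\in\ZZ}\Delta_m f$ to express the anisotropic blocks of $f$ in terms of the isotropic ones. The starting point is a support lemma: since $\Delta_k^h$, $\Delta_j^v$ and $\Delta_m$ localize $|\xi_h|$, $|\xi_3|$ and $|\xi|$ to dyadic annuli of respective sizes $2^k$, $2^j$, $2^m$, and $|\xi|^2=|\xi_h|^2+\xi_3^2$, the operator $\Delta_k^h\Delta_j^v\Delta_m$ vanishes unless $|m-\max(k,j)|\leq C_0$ for a fixed absolute constant $C_0$. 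Consequently, for every $(k,j)$ one has the finite sum $\Delta_k^h\Delta_j^v f=\sum_{|m-\max(k,j)|\leq C_0}\Delta_k^h\Delta_j^v\Delta_m f$, and since the frequency cut-offs are uniformly bounded on $L^p$ (Young's inequality; cf. Proposition~\ref{propaniso}),
$$\|\Delta_k^h\Delta_j^v f\|_{L^p}\lesssim\sum_{|m-\max(k,j)|\leq C_0}\|\Delta_m f\|_{L^p}.$$

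Next I would split the defining sum (resp. supremum) for $\|f\|_{\dot B^{s,t}_{p,q}}$ according to whether $j\leq k$ or $k<j$. In the region $j\leq k$ one has $\max(k,j)=k$, so $2^{ks+jt}\|\Delta_k^h\Delta_j^v f\|_{L^p}\lesssim 2^{ks}2^{jt}b_k$ with $b_k:=\sum_{|m-k|\leq C_0}\|\Delta_m f\|_{L^p}$; using $t\geq 0$ together with $j\leq k$ and summing the geometric tail $\sum_{j\leq k}2^{jt}=C_t\,2^{kt}$ (for $1<q<\infty$ one instead computes $\|2^{jt}\|_{\ell^q(j\leq k)}\sim 2^{kt}$, still by a convergent geometric series — this is where $t>0$ enters when $q<\infty$) collapses the $j$-summation and leaves $\lesssim 2^{k(s+t)}b_k$, whose $\ell^q_k$-norm is $\lesssim\|f\|_{\dot B^{s+t}_{p,q}}$ after absorbing the finitely many shifts $|m-k|\leq C_0$. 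The region $k<j$ is treated symmetrically, exchanging the roles of $(s,k)$ and $(t,j)$ and using $s>0$. Adding the two contributions gives $\|f\|_{\dot B^{s,t}_{p,q}}\lesssim(C_s+C_t)\|f\|_{\dot B^{s+t}_{p,q}}$; the case $q=\infty$ is identical with suprema replacing sums (and then even $s$ or $t$ equal to zero is allowed, since no geometric summation is needed).

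The only genuine subtlety — the hard part, though it is more bookkeeping than analysis — is that for a fixed isotropic scale $2^m$ there are \emph{infinitely many} overlapping anisotropic blocks $\Delta_k^h\Delta_j^v$ (namely all $(k,j)$ with $k\in[m-C_0,m+C_0]$ and $j$ arbitrarily negative, and the symmetric family), so that summing $\|\Delta_k^h\Delta_j^v f\|_{L^p}$ naively over all of them would diverge: one really must keep the scaling weights and exploit that the ordering invariant $\max(k,j)$ pins down the isotropic scale while $t>0$ (resp. $s>0$) makes the tail in the subordinate variable $j$ (resp. $k$) geometrically summable. I would finally recall the usual conventions for homogeneous Besov spaces (quotient modulo polynomials, or the ${\mathcal S}'_h$ condition at the borderline index $s+t=3/p$), which are needed only to make the statement well posed and do not affect the seminorm estimate above.
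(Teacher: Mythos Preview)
Your proof is correct and follows essentially the same route as the paper: both arguments split the double sum according to $\max(k,j)$, invoke the support constraint $|\xi|\sim 2^{\max(k,j)}$ on $\Delta_k^h\Delta_j^v$ to replace the anisotropic block by (finitely many) isotropic blocks $\Delta_m f$ with $m\sim\max(k,j)$, and then sum the subordinate index geometrically using the strict positivity of the relevant exponent. Your write-up is in fact slightly more careful than the paper's about the finitely many shifts $|m-\max(k,j)|\leq C_0$ and about the borderline cases $s=0$ or $t=0$ (where the geometric tail no longer converges for $q<\infty$), which the paper glosses over.
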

\begin{proof} [Proof of Proposition~{\rm \ref{propanisoiso}}]
We recall that
$$
\|f\|_{\dot B^{s,t}_{p,q}}^q = \sum_{j,k} 2^{ksq} 2^{jtq} \| \Delta_k^h\Delta_j^v f\|_{L^p}^q \, .
$$
We separate the sum into two parts, depending on whether~$j < k$ or~$j \geq k$ and we shall only detail the first case (the second one is identical). We  notice indeed that if~$j<k$, then
$$
\begin{aligned}
 \| \Delta_k^h\Delta_j^v f \|_{L^p}&=\| \sum_\ell   \Delta_\ell \Delta_k^h\Delta_j^v f \|_{L^p}\\
 &\sim \|    \Delta_k \Delta_k^h\Delta_j^v f \|_{L^p}\, .
\end{aligned}
$$
It follows that
$$
\begin{aligned}
\sum_{j < k} 2^{ksq} 2^{jtq} \| \Delta_k^h\Delta_j^v f\|_{L^p}^q &\lesssim  \sum_{j < k} 2^{ksq} 2^{jtq} \| \Delta_k   f\|_{L^p}^q \\
&\lesssim  \sum_{k} 2^{k(s+t)q} \| \Delta_k   f\|_{L^p}^q
\end{aligned}
$$
and the result follows.
\end{proof}
\noindent Finally let us prove the following easy lemma, which implies 
   that~$(u_{0,n})_{n \in \N}$ is bounded in~${\mathcal B}^1_q$ if  it is bounded in a space of the type~$\dot B^{1\pm \e_1,1\pm\e_2}_{1,1}$ for some~$\e_1,\e_2 >0$.
\begin{lem}\label{epsilon-epsilonq}
Let~$s_1,s_2 \in \R$, $p \in [1,\infty]$, $0 < q_1 \leq q_2 \leq\infty$ be given, as well as two positive real numbers~$\varepsilon_1$ and~$\varepsilon_2$. The space~$\dot B^{s_1\pm \e_1,s_2\pm\e_2}_{1,q_2}$ is continuously embedded in~$\dot B^{s_1,s_2}_{p,q_1}$.
\end{lem}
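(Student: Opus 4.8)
\emph{Proof plan.} The plan is to reduce the embedding to an elementary estimate on the doubly–indexed sequence of Littlewood--Paley blocks of $f$, and then to sum a double geometric series over $\ZZ^2$. First I would fix notation: here $\dot B^{s_1\pm\e_1,s_2\pm\e_2}_{1,q_2}$ is understood as the intersection of the four spaces $\dot B^{s_1+\theta_1\e_1,\,s_2+\theta_2\e_2}_{1,q_2}$ with $(\theta_1,\theta_2)\in\{+1,-1\}^2$, and I set $a_{k,j}:=\|\Delta_k^h\Delta_j^v f\|_{L^1(\R^3)}$. The anisotropic Bernstein inequalities of Proposition~\ref{propaniso} (with $p_1=1$, $p_2=p$) give $\|\Delta_k^h\Delta_j^v f\|_{L^p}\lesssim 2^{2k(1-\frac1p)+j(1-\frac1p)}a_{k,j}$, so the whole argument reduces to controlling a suitably weighted $\ell^{q_1}(\ZZ^2)$ norm of $(a_{k,j})$; in the case $p=1$, which is the one used in this paper, one works with $(a_{k,j})$ directly and no Bernstein transfer is needed.

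Next I would exploit the hypothesis. Membership of $f$ in $\dot B^{s_1+\theta_1\e_1,\,s_2+\theta_2\e_2}_{1,q_2}$, together with the trivial inclusion $\ell^{q_2}\hookrightarrow\ell^\infty$, yields the pointwise bound $a_{k,j}\le M\,2^{-k(s_1+\theta_1\e_1)-j(s_2+\theta_2\e_2)}$ for every $(k,j)\in\ZZ^2$, where $M$ denotes the maximum of the four norms. Taking the minimum over the four sign patterns gives $a_{k,j}\le M\,2^{-ks_1-js_2}\,2^{-\e_1|k|-\e_2|j|}$. Inserting this into the definition of $\|f\|_{\dot B^{s_1,s_2}_{p,q_1}}$ (after the Bernstein transfer above when $p>1$) leaves us with $\big\|\,M\,2^{-\e_1|k|-\e_2|j|}\,\big\|_{\ell^{q_1}(\ZZ^2)}$, which is $\lesssim M$ because $\sum_{(k,j)\in\ZZ^2}2^{-q_1(\e_1|k|+\e_2|j|)}<\infty$ for every $q_1>0$, the constant depending only on $\e_1,\e_2,q_1$. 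If one wishes to keep genuine $\ell^{q_2}$ quantitative control rather than merely the $\ell^\infty$ bound used here, a Hölder inequality in $(k,j)$ with exponents $q_2/q_1$ and $(q_2/q_1)'$ — which is precisely where the assumption $q_1\le q_2$ enters — produces the same conclusion.

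I do not expect a serious analytic difficulty; the one point that must not be overlooked is that all four sign patterns are genuinely needed. Keeping only the two diagonal ones, $(+,+)$ and $(-,-)$, would give only $a_{k,j}\lesssim 2^{-ks_1-js_2}2^{-|\e_1 k+\e_2 j|}$, which is not summable over $\ZZ^2$ since it does not decay along the line $\e_1 k+\e_2 j=0$; it is the two anti-diagonal patterns $(+,-)$ and $(-,+)$ that upgrade the decay to the product form $2^{-\e_1|k|-\e_2|j|}$. The only remaining bookkeeping is the standard one for homogeneous Besov spaces and, when $p>1$, the transfer from $L^1$ to $L^p$ via Proposition~\ref{propaniso}.
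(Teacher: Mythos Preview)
Your approach is correct and is essentially the paper's: the paper splits $\ZZ^2$ into the four quadrants determined by the signs of $j$ and $k$ and, in each quadrant, applies H\"older's inequality for sequences against the norm $\|f\|_{\dot B^{s_1+\theta_1\e_1,\,s_2+\theta_2\e_2}_{1,q_2}}$ with $(\theta_1,\theta_2)$ matched to the quadrant---which is exactly your ``take the minimum over the four sign patterns to get the factor $2^{-\e_1|k|-\e_2|j|}$'' rewritten slightly differently. Your remark that all four sign patterns are genuinely needed is a nice observation.

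One small caveat: your claimed Bernstein transfer for $p>1$ does not actually close for arbitrary $\e_1,\e_2>0$, since it introduces the extra weight $2^{2k(1-1/p)+j(1-1/p)}$, which is only absorbed by $2^{-\e_1|k|-\e_2|j|}$ when $\e_1>2(1-1/p)$ and $\e_2>1-1/p$. The paper's own proof does not perform any $L^1\to L^p$ step either (it passes directly from $\|\Delta_k^h\Delta_j^v f\|_{L^p}$ to the $\dot B^{\cdots}_{1,q_2}$ norm), so the ``$p$'' in the statement is in all likelihood a typo for ``$1$''; the only application in the paper is with $p=1$, where both arguments are complete.
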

 \begin{proof}
 Let~$f$ be an element of~$\dot B^{s_1\pm \e_1,s_2\pm\e_2}_{1,q_2}$ and let us prove that~$f$ belongs to~$\dot B^{s_1,s_2}_{p,q_1}$.
We write
$$
\|f\|_{\dot B^{s_1,s_2}_{p,q_2}}^{q_1} = \sum_{j,k} 2^{ks_1q_1} 2^{js_2q_1}  \| \Delta_k^h\Delta_j^v f\|_{L^p}^{q_1} 
$$
and
we decompose the sum into four terms, depending on the sign of~$j$ and~$k$. For instance we have
$$
\begin{aligned}
{\mathcal F}_1& :=  \sum_{j \leq 0 \atop k\geq 0} 2^{ks_1q_1} 2^{js_2q_1}  \| \Delta_k^h\Delta_j^v f\|_{L^p}^{q_1} \\
& \leq  \sum_{j \leq 0 \atop k\geq 0} 2^{-k   \e_1 q_1} 2^{j \e_2 q_1}  2^{k (s_1 + \e_1) q_1} 2^{j(s_2-\e_2)q_1}  \| \Delta_k^h\Delta_j^v f\|_{L^p}^{q_1}
\end{aligned}
$$
and we apply H\"older's inequality for sequences which gives
$$
{\mathcal F}_1 \lesssim \|f\|_{\dot B^{s_1+ \e_1,s_2-\e_2}_{1,q_2}} \, .
$$
The other terms are dealt with similarly.
\end{proof}
\subsection{On the role of anisotropy in the Navier-Stokes equations}
In this final short paragraph, we shall prove Theorem~\ref{anisominimal} stated in the introduction.

\begin{proof} [Proof of Theorem~\ref{anisominimal}]The proof   follows
from the small data theory  recalled in Appendix~\ref{globalsmallaniso}.
Let us first consider~$\displaystyle v_0 := \sum_{j-k < - N_0} \Delta_k^h\Delta_j^v u_0$.
We have
   $$
  \begin{aligned}
  \|v_0\|_{\dot B^ {0, \frac12}_{2,1}}& \sim \sum_{j-k < - N_0} 2^\frac j2 \|\Delta_k^h \Delta_j^v u_0\|_{L^2(\R^3)} \\
   & \sim    \sum_{j-k < - N_0} 2^{\frac{j-k}2}  2^\frac k2
   \|\Delta_k^h \Delta_j^v u_0\|_{L^2(\R^3)}
    \leq C 2^{-\frac{N_0}2}\rho
      \end{aligned}
  $$
due to Proposition~\ref{propanisoiso} which states in particular that~$\dot B^\frac12_{2,1} \subset \dot B^{\frac12, 0}_{2,1}$. So~$v_0$ can be made arbitrarily small in~$\dot B^ {0, \frac12}_{2,1}$, for~$N_0$ large enough (depending only on~$\rho$).

\noindent Now  let us consider~$\displaystyle w_0 = \sum_{j-k > N_0} \Delta_k^h\Delta_j^v u_0$. We shall prove that in this case~$\|w_0\|_{L^3}$ is small. Indeed we know (see for instance~\cite{BCD}) that~$\dot B^0_{3,1} \subset L^3$, and moreover   we have as soon as~$N_0$ is large enough (depending only on the choice of the Littlewood-Paley decomposition)
$$
\|\Delta_\ell w_0\|_{L^3} \sim \big\| \sum_{k-\ell < -N_0} \Delta_k^h \Delta_\ell^v u_0 \big \|_{L^3}\, .
$$
It follows that
$$
  \begin{aligned}
\|\Delta_\ell w_0\|_{L^3}&  \leq \sum_{k-\ell < -N_0} \|\Delta_k^h \Delta_\ell^v u_0\|_{L^3} \\
&  \leq C  \sum_{k-\ell < -N_0} 2^{\frac {k}3} 2^{\frac {\ell}6}  \|\Delta_k^h \Delta_\ell^v u_0\|_{L^2}
     \end{aligned}
  $$
by Bernstein's inequalities (see Proposition~\ref{propaniso}, applying successively the inequalities for the horizontal and   the vertical truncations). So using Proposition~\ref{propanisoiso} again which states in particular that~$\dot B^\frac12_{2,1} \subset \dot B^{0, \frac12}_{2,1}$, we get
$$
\|\Delta_\ell w_0\|_{L^3} \leq  C  \sum_{k-\ell < -N_0} 2^{\frac {k-\ell}3}
2^{\frac \ell2}    \|\Delta_k^h \Delta_\ell^v u_0\|_{L^2 } \leq C 2^{-\frac {N_0}3}\rho c_\ell \, ,
$$
where~$c_\ell $ is a sequence in the unit ball of~$\ell^1(\ZZ)$.
   So again if~$N_0$ is large enough (depending only on~$\rho$) then   we find that~$w_0$ is small in~$\dot B^0_{3,1}$ hence in~$L^3$.

\medskip
\noindent
To conclude we can start by solving (NS) associated with the data~$w_0$ which yields a global, unique solution~$w$ that by Proposition~\ref{propanisoiso} belongs to~${\mathcal Y}_{3,1}$, with norm smaller than~$2 \|w_0\|_{L^3}$ (by small data theory, as soon as~$N_0$ is large enough). Then since~$\dot B^ {0, \frac12}_{2,1}$ embeds in~$\dot B^{-\frac13, \frac13}_{3,1}$ we can apply Theorem~\ref{globalanisoperturbed} with~$F = 0$ and~$U=w$ which   solves the perturbed equation satisfied by~$u-w$ globally in time, as soon as~$N_0$
again is large enough.
The solution  belongs to~${\mathcal C}(\R^+;L^3(\R^3))$  by classical propagation of regularity arguments, and that proves the theorem.
\end{proof}
\begin{rmk}
{\rm
 Contrary to Theorem~\ref{mainresult}, the proof of Theorem~\ref{anisominimal} does not require the special structure of the nonlinear term in (NS) as it reduces to checking that the initial data is small in an adequate scale-invariant space.
}
\end{rmk}
 
 \bigskip

 \noindent{\bf Acknowledgements. } $ $ We are very grateful to Marius Paicu for  interesting
 discussions around the questions dealt with in this paper, and for many comments on the manuscript. We extend our thanks to the anonymous referee for a careful reading of the manuscript
and fruitful remarks, in particular
for suggesting the example discussed before~(\ref{defphinisotropic}). Finally we thank very warmly Vladimir \c{S}ver\'ak for helpful suggestions to improve the presentation of the text.

\bigskip

\bigskip

\bigskip


\end{document}